\theoremstyle{plain}
\newtheorem{theorem}{Theorem}
\newtheorem{lemma}[theorem]{Lemma}
\newtheorem{proposition}[theorem]{Proposition}
\newtheorem{corollary}[theorem]{Corollary}
\theoremstyle{definition}
\newtheorem{remark}[theorem]{Remark}
\newtheorem{definition}[theorem]{Definition}
 \numberwithin{equation}{section}
 \numberwithin{theorem}{section}
\newcommand\void[1]{}
\newcommand\be            {\begin{equation}}
\newcommand\ee            {\end{equation}}
\DeclareMathOperator{\End}{End}
\DeclareMathOperator{\Hom}{Hom}
\DeclareMathOperator{\Rep}{Rep}
\DeclareMathOperator{\ev}{ev}
\DeclareMathOperator{\coev}{coev}
\DeclareMathOperator{\Gr}{Gr}
\DeclareMathOperator{\Irr}{Irr}
\newcommand{\rmod}[1]{\mathrm{mod}\text{-}#1}
\newcommand{\smap}{\sigma}
\newcommand{\ot}{\otimes}
\newcommand{\coend}{\mathcal{L}}
\newcommand\SF{\mathcal{S}\hspace{-.65pt}\mathcal{F}}
\newcommand\h            {\mathfrak{h}}
\newcommand{\Rad}{\rho}
\newcommand{\Hig}{\mathrm{Hig}}
\newcommand{\Rey}{\mathrm{Rey}}
\newcommand{\Jac}{\mathrm{Jac}}
\newcommand\Proj{\mathcal{P}roj}
\newcommand{\CM}{\mathsf{C}}
\newcommand\coint		{\Lambda^{\mathrm{co}}}
\newcommand\eps           {\varepsilon}
\newcommand\id            {id}
\newcommand\Id            {I\hspace{-1pt}d}
\newcommand\one           {{\bf1}}
\newcommand\Cb            {\mathbb{C}}
\newcommand\Rb            {\mathbb{R}}
\newcommand\Zb            {\mathbb{Z}}
\newcommand\Ac            {\mathcal{A}}
\newcommand\Cc            {\mathcal{C}}
\newcommand\Mc            {\mathcal{M}}
\newcommand\Tc            {\mathcal{T}}
\newcommand{\sVect}{{\it s}{\cal V}{\it ect}}
\newcommand\ldX{{}^*\!X}
\begin{document}

\thispagestyle{empty}
\def\thefootnote{\fnsymbol{footnote}}
\begin{flushright}
ZMP-HH/17-11\\
Hamburger Beitr\"age zur Mathematik 651
\end{flushright}
\vskip 3em
\begin{center}\LARGE
Projective objects and 
the modified trace\\
in factorisable finite tensor categories 
\end{center}

\vskip 2em
\begin{center}
{\large 
Azat M. Gainutdinov\,$^{a,b}$~~and~~Ingo Runkel\,$^b$~~\footnote{Emails: {\tt azat.gainutdinov@lmpt.univ-tours.fr}, {\tt ingo.runkel@uni-hamburg.de}}}
\\[1.5em]
{\sl\small $^a$ Institut Denis Poisson, CNRS, Universit\'e de Tours, Universit\'e d'Orl\'eans,\\ Parc de Grandmont, 37200 Tours, France}\\[0.5em]
{\sl\small $^b$ Fachbereich Mathematik, Universit\"at Hamburg\\
Bundesstra\ss e 55, 20146 Hamburg, Germany}
\end{center}

\vskip 1mm

\begin{abstract}
For $\Cc$ a factorisable and pivotal finite tensor category over an algebraically closed field of characteristic zero we show:
\begin{enumerate}
\item $\Cc$ always contains a simple projective object;
\item if $\Cc$ is in addition ribbon, the internal characters of projective modules span a submodule for the projective $SL(2,\Zb)$-action;
\item 
	the action of the Grothendieck ring 
of $\Cc$ on the span of
	internal characters of projective objects can be diagonalised;
\item the linearised Grothendieck ring of $\Cc$ is semisimple iff $\Cc$ is semisimple.
\end{enumerate}
Results 1--3 remain true in positive characteristic under an extra assumption. 
Result~1 implies that the tensor ideal of projective objects in $\Cc$ carries a unique-up-to-scalars modified trace function.
We express the modified trace of open Hopf links coloured by projectives in terms of  $S$-matrix elements.
Furthermore, we give a Verlinde-like formula for the decomposition of tensor products of projective objects which uses only the modular 
$S$-transformation restricted to
	internal characters of projective objects.

We compute 
the modified trace
 in the example of symplectic fermion categories, and we illustrate how the Verlinde-like formula for projective objects can be applied there.
\end{abstract}

\vskip 1mm
\textit{Keywords:} Braided and pivotal categories, projective objects, modified traces,\\
\mbox{}\qquad\qquad\qquad\; Grothendieck rings, Vertex-Operator Algebras, Verlinde-like formula.\\
\mbox{}\quad\;\,\textit{MSC codes:}  18D10, 18G05, 18F30, 17B69.

\setcounter{footnote}{0}
\def\thefootnote{\arabic{footnote}}

\newpage

\tableofcontents

\section{Introduction}

Let $H$ be a finite-dimensional quasi-triangular Hopf algebra over a field $k$
and denote by $R$ its universal $R$-matrix.
$H$ is called {\em factorisable} if the map 
$H^* \to H:
f\mapsto (f\otimes\id)(R_{21}R)$ 
(the {\em Drinfeld map} \cite{Drinfeld}) is bijective. If $H$ is factorisable and ribbon, the centre $Z(H)$ of $H$ carries a projective representation of $SL(2,\Zb)$, the mapping class group of the torus \cite{Lyubashenko:1994ab}. In this case, the Drinfeld map furthermore allows one to give a faithful representation of the 
	$k$-linear
Grothendieck ring of $\Rep(H)$ on $Z(H)$ \cite{Drinfeld}.

A factorisable Hopf algebra $H$ can be endowed with a central form $\nu : H \to k$ such that it becomes a symmetric Frobenius algebra \cite[Rem.\,3.1]{Cohen:2008}. The central form $\nu$ induces an isomorphism between $Z(H)$ and the space $C(H)$ of all central forms on $H$:
\be\label{eq:intro-Z-C}
	Z(H) \longrightarrow C(H)	
	\quad , \quad
	z \longmapsto \nu(z \cdot -) \ .
\ee
Characters of finite-dimensional $H$-modules are examples of central forms. We are particularly interested in the preimage in $Z(H)$ of the characters of projective modules. This subspace is called the {\em Higman ideal} $\Hig(H)$ 
	(see e.g.\ \cite{Broue:2009}).

Denote by $\Rep(H)$ the category of finite-dimensional representations of $H$. 
In \cite{Cohen:2008}, Cohen and Westreich prove, amongst other things, the following remarkable results.

\begin{theorem}[\cite{Cohen:2008}]\label{thm:cw-hopf-results}
Let $H$ be a factorisable ribbon Hopf algebra over an algebraically closed field of characteristic zero. 
\begin{enumerate}
\item There is at least one simple and projective $H$-module.
\item $\Hig(H)$ is an $SL(2,\Zb)$-submodule of $Z(H)$.
\item $\Hig(H)$ is a submodule for the action of the Grothendieck ring of $\Rep(H)$ on $Z(H)$, and this action can be diagonalised on $\Hig(H)$ (``the fusion rules can be diagonalised on $\Hig(H)$'').
\end{enumerate}
\end{theorem}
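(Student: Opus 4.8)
The plan is to reduce all three statements to a single structural fact about the Higman ideal and then harvest the consequences essentially formally. Recall that the central form $\nu$ makes $H$ a symmetric Frobenius algebra, so the map $\Phi\colon Z(H)\to C(H)$, $z\mapsto\nu(z\cdot-)$ of \eqref{eq:intro-Z-C} is an isomorphism; $\Hig(H)$ may be described equivalently as $\Phi^{-1}$ of the span of the projective characters, or as $\tau(H)$ where $\tau\colon H\to Z(H)$, $\tau(h)=\sum_i e_ihe^i$ is the Higman trace built from a pair of $\nu$-dual bases. From the latter description $\Hig(H)$ is an ideal of $Z(H)$, since $z\,\tau(h)=\tau(zh)$ for $z\in Z(H)$. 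Factorisability enters through the Drinfeld map $\mathfrak f\colon C(H)\to Z(H)$, which is an algebra isomorphism from the convolution product on $C(H)$ to the multiplication on $Z(H)$; it is bijective precisely because $H$ is factorisable. Composing, $\mathcal S:=\mathfrak f\circ\Phi$ is, up to normalisation, Lyubashenko's modular $S$-transformation on $Z(H)$; the modular $T$-transformation is multiplication by the (invertible) ribbon element $v$; and $[M]\mapsto z_M:=\mathfrak f(\chi_M)$ is Drinfeld's faithful representation of the Grothendieck ring of $\Rep(H)$ on $Z(H)$ by multiplication operators. I will use this dictionary throughout.

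The key lemma I would isolate is: \emph{$\Hig(H)$ is a semisimple (commutative, in general non-unital) algebra which is stable under the $SL(2,\Zb)$-action on $Z(H)$.} The $T$-stability is immediate, since $v^{\pm1}\in Z(H)$ and $\Hig(H)$ is an ideal. For $S$-stability one computes $\mathcal S(\Hig(H))=\mathfrak f(\Phi(\Hig(H)))=\mathfrak f(\mathrm{span}\{\chi_P:P\text{ projective}\})=\mathrm{span}\{z_P\}$, and then shows $\mathrm{span}\{z_P\}=\Hig(H)$: one uses that $z_Pz_M=z_{P\otimes M}$ with $P\otimes M$ again projective, so $\mathrm{span}\{z_P\}$ is an ideal of $Z(H)$ of the same dimension as $\Hig(H)$, together with $\mathcal S^2=$ charge conjugation (an algebra automorphism of $Z(H)$ permuting the classes of projectives, hence preserving $\Hig(H)$), which forces the two ideals to coincide. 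The semisimplicity is the genuinely hard point, and I expect it to be the main obstacle of the whole argument: it is here that both $\mathrm{char}\,k=0$ and factorisability must be used in an essential way, to deduce $\mathrm{Rad}(Z(H))\cap\Hig(H)=0$ — equivalently $\mathrm{Rad}(\Hig(H))=0$, since the radical of an ideal of an artinian commutative ring is its intersection with the radical of the ring — from the non-degeneracy of the monodromy/$S$-pairing.

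Granting the lemma, statement 2 is exactly the $SL(2,\Zb)$-stability just recorded. For statement 1, note first that $\Hig(H)\neq0$, because it is the $\Phi$-preimage of the span of projective characters, which contains the nonzero character $\chi_{P_0}$ of the projective cover of the trivial module; being a nonzero commutative semisimple algebra over the algebraically closed field $k$, it has a primitive idempotent $e$. The ideal property gives $eZ(H)e\subseteq\Hig(H)$, hence $eZ(H)e=e\,\Hig(H)\,e=ke$, so $e$ is in fact a primitive idempotent of $Z(H)$, i.e.\ $e=1_B$ for a block $B$ of $H$. Then $1_B\in\Hig(B)=e\,\Hig(H)$ shows $\Hig(B)=Z(B)$, so $1_B\in\tau(B)$, whence (by Higman's criterion) every $B$-module is projective and $B$ is semisimple, i.e.\ $B\cong M_n(k)$; its unique simple module is then simple and projective as an $H$-module. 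For statement 3, $\Hig(H)$ is a submodule for Drinfeld's $G_0$-action since each $z_M$ lies in $Z(H)$ and $\Hig(H)$ is an ideal; writing $\Hig(H)=\bigoplus_i k\,e_i$ with $e_i$ orthogonal primitive idempotents (possible as $k$ is algebraically closed and $\Hig(H)$ is commutative semisimple), the ideal property yields $z_Me_i\in e_i\Hig(H)=ke_i$, so the $e_i$ form a simultaneous eigenbasis for all fusion operators — that is, the action is diagonalised, with eigenvalues $z_Me_i=\lambda_{M,i}e_i$ furnishing the one-dimensional representations of the Grothendieck ring realised on $\Hig(H)$ (the Verlinde-type output). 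Thus everything rests on the semisimplicity of $\Hig(H)$, which is where I would expect to spend the real effort.
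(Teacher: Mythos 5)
Your argument hinges on the claim that $\Hig(H)$ is a semisimple algebra, and that claim is false whenever $H$ is not itself semisimple. Under the isomorphism $Z(H)\cong\End(\Id_{\Rep H})$ the Higman ideal is spanned by the elements $\phi_{P_U}$ (the Radford-map images of the characters of the indecomposable projectives), and these multiply according to $\phi_{P_U}\circ\phi_{P_V}=\delta_{U,V}\,\delta_{V\in\mathrm{IrrProj}}\,b_V\,\phi_{P_V}$ (Proposition~\ref{prop:prod-Rey-Hig}); in particular $\phi_{P_U}$ is a non-zero square-zero element of $\Hig(H)$ for every non-projective simple $U$, so $\Jac(Z(H))\cap\Hig(H)\neq 0$ in general --- this already happens for the symplectic fermion examples of Section~\ref{sec:sf-example}, where $\phi_{P_\one}\circ\phi_{P_\one}=0$. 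What your proof of part~1 actually needs is only that $\Hig(H)$ is not a nil ideal, i.e.\ that it contains a non-zero idempotent; but by the multiplication rule above that statement is \emph{equivalent} to the existence of a simple projective, and you give no argument for it beyond an appeal to ``non-degeneracy of the $S$-pairing''. So the heart of part~1 is missing (and the paper's route is entirely different: if no simple projective existed, $\smap([P])$ would vanish on all simples, hence be nilpotent by Corollary~\ref{cor:nat-endo-id_nilpot}, contradicting injectivity of the algebra map $\smap$ and $[P]^m\neq0$ in characteristic zero). The false lemma also undermines part~3: $\Hig(H)$ has no basis of orthogonal idempotents in the non-semisimple case; the correct simultaneous eigenbasis is $\{\phi_{P_U}\}$, and these are eigenvectors not because $\Hig$ is semisimple but because $(\phi_U)_G$ factors through a semisimple module, so every central element acts on it by the scalar attached to the block of $U$ (Proposition~\ref{prop:phiU-End-diagonal}).

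The $S$-stability argument in part~2 has an independent gap. From $z_Pz_M=z_{P\otimes M}$ you only obtain that $\mathrm{span}_k\{z_P\}$ is stable under multiplication by the image of the Grothendieck ring, which is a \emph{proper} subalgebra of $Z(H)$ unless $H$ is semisimple (Theorem~\ref{thm:chiinj}\,(2)); so ``$\mathrm{span}_k\{z_P\}$ is an ideal of $Z(H)$'' does not follow. Even granting it, two ideals of equal dimension need not coincide, and $\mathcal{S}^2(\Hig)=\Hig$ is perfectly compatible with $\mathcal{S}$ swapping $\Hig$ with a different subspace. The actual proof (Theorem~\ref{thm:projective-S-inv}) identifies $\Hig$ as the orthogonal complement, with respect to the modified-trace pairing, of $\ker(\widehat\CM)$ inside $\Rey$, verifies by a direct open-Hopf-link computation that $\mathscr{S}_\Cc(\phi_P)$ lies in this complement, and closes the dimension count using $\mathrm{rank}(\widehat\CM)=\dim_k\Hig$ --- none of which is formal.
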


Part 2 generalises~\cite[Thm.\,5.5]{Lachowska-center}, 
\cite[Cor.\,4.1]{Lachowska-Verlinde} 
established for small quantum groups associated to simple Lie algebras of ADE type.

\medskip
The motivation of the present paper is to give a generalisation of these results to factorisable finite tensor categories. Let us describe our setting in more detail.

Let $k$ be a field. An abelian $k$-linear category is called {\em finite} if it is equivalent to the category of finite-dimensional representations of a finite-dimensional $k$-algebra. A {\em finite tensor category} is a finite abelian category which is rigid monoidal with bilinear tensor product and simple tensor unit. 
	If the category is equipped with a natural monoidal isomorphism from the identity functor to the endofunctor given by taking double duals, it is called {\em pivotal}.
An important class of examples of finite tensor categories are the categories $\Rep(H)$ for finite-dimensional Hopf algebras $H$. 
	A Hopf algebra $H$ is called pivotal if the squared antipode of $H$ is given by conjugating with a group-like element~\cite{AAGTV}. In this case,  $\Rep(H)$ is pivotal. A ribbon Hopf algebra is automatically pivotal.

For a braided finite tensor category one can write down at least four natural non-degeneracy 
	conditions
for the braiding. These have only recently been shown to all be equivalent \cite{Shimizu:2016}, and we briefly review this result in Section~\ref{sec:conventions}. We refer to braided finite tensor categories satisfying these equivalent conditions as {\em factorisable}. Indeed, for a quasi-triangular finite-dimensional Hopf algebra $H$, $\Rep(H)$ is factorisable iff $H$ is factorisable~\cite{Lyubashenko:1995}.

Our first main result generalises part 1 of Theorem~\ref{thm:cw-hopf-results} 
(see Theorem~\ref{thm:main-simpleproj}).

\begin{theorem}\label{thm:intro2}
A factorisable and pivotal finite tensor category $\Cc$ over an algebraically closed field of characteristic zero contains a simple projective object.
\end{theorem}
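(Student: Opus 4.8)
The plan is to categorify the argument behind Theorem~\ref{thm:cw-hopf-results}(1). Fix an equivalence $\Cc\simeq A\text{-}\mathrm{mod}$ with $A$ a finite-dimensional $k$-algebra. Since $\Cc$ is factorisable it is unimodular (see Section~\ref{sec:conventions}), and together with the pivotal structure this makes $A$ a \emph{symmetric} Frobenius algebra. For such an $A$ one has the standard dictionary: $\Cc$ has a simple projective object $\iff$ some block of $A$ is a matrix algebra over $k$ $\iff$ the Higman ideal $\Hig(A)\subseteq Z(A)$ --- the image of the norm map built from the Frobenius form --- contains a non-zero idempotent. (A block is separable, equivalently a matrix algebra over the algebraically closed $k$, iff it contributes its whole central part to $\Hig(A)$, and a block with a projective simple module is a matrix algebra.) So it is enough to exhibit a non-zero idempotent inside $\Hig(A)$.

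That idempotent will be a rescaling of the internal character of the regular object. Let $\coend=\int^{X}X^{\vee}\ot X$ be Lyubashenko's coend, a Hopf algebra object in $\Cc$; factorisability of $\Cc$ means precisely that the associated Drinfeld-type morphism is invertible, and one obtains from it an algebra isomorphism $\mathfrak{D}\colon\Cc(\one,\coend)\xrightarrow{\,\sim\,}\End(\Id_{\Cc})\cong Z(A)$ --- from the convolution product of internal characters to composition --- which moreover carries $\mathcal{P}:=\mathrm{span}_{k}\{\mathrm{ch}(P)\mid P\ \mathrm{projective}\}$ into $\Hig(A)$; this containment is the categorical counterpart of the statement that the Drinfeld map of a projective character lies in the Higman ideal. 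Let $A\in\Cc$ also denote the object corresponding to the regular left $A$-module; it is projective and satisfies $A\ot X\cong A^{\oplus\dim_{k}X}$ for every $X$, so in the convolution algebra $\Cc(\one,\coend)$ we get $\mathrm{ch}(A)\cdot\mathrm{ch}(A)=\mathrm{ch}(A\ot A)=(\dim_{k}A)\,\mathrm{ch}(A)$. Since $\operatorname{char}k=0$ we have $\dim_{k}A\neq 0$, so $e_{0}:=(\dim_{k}A)^{-1}\mathrm{ch}(A)$ is idempotent; it lies in $\mathcal{P}$, and it is non-zero because the internal character map $\Gr(\Cc)\ot_{\Zb}k\to\Cc(\one,\coend)$ is injective for factorisable $\Cc$ while $[A]=\sum_{i}(\dim_{k}S_{i})[P_{i}]\neq 0$ in $\Gr(\Cc)\ot_{\Zb}k$.

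Then $\mathfrak{D}(e_{0})\in\Hig(A)$ is a non-zero central idempotent, hence a sum $\sum_{b\in B_{0}}\epsilon_{b}$ of primitive central idempotents with $B_{0}\neq\varnothing$; for $b\in B_{0}$ one has $\epsilon_{b}=\epsilon_{b}\,\mathfrak{D}(e_{0})\in\Hig(A)$, so the block $A\epsilon_{b}$ contributes its whole central part to $\Hig(A)$, is therefore a matrix algebra over $k$, and its simple module is the sought simple projective object of $\Cc$. The two hypotheses enter transparently: $\operatorname{char}k=0$ makes $\dim_{k}A$ invertible, so that $\mathrm{ch}(A)$ is a genuine scaled idempotent rather than a nilpotent element (in positive characteristic the same argument works as soon as $\dim_{k}A\neq 0$ in $k$), and factorisability is what guarantees that $\mathfrak{D}$ is an isomorphism --- hence $\mathfrak{D}(e_{0})\neq 0$ --- taking values in $Z(A)$ and satisfying $\mathfrak{D}(\mathcal{P})\subseteq\Hig(A)$. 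The step I expect to be the main obstacle is precisely establishing these properties of $\mathfrak{D}$ intrinsically, in terms of the coend $\coend$ and its (co)integral, since the Hopf-algebraic proofs in \cite{Cohen:2008} rely on $A=H$ being a Hopf algebra; a related technical point is to pin down the regular object $A$ and the isomorphism $A\ot X\cong A^{\oplus\dim_{k}X}$ in coordinate-free terms.
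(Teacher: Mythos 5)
Your strategy is a direct categorification of the Cohen--Westreich argument, and it is not the route the paper takes; more importantly, as written it has two genuine gaps and one circularity. The most concrete problem is the identity $A\ot X\cong A^{\oplus\dim_k X}$. This is a Hopf-algebra fact (it needs a fiber functor, i.e.\ a bialgebra structure on $A$ compatible with $\ot$): the equivalence $\Cc\simeq\rmod{A}$ is only $k$-linear, so $\dim_k$ of the underlying module has no compatibility whatsoever with the tensor product of $\Cc$, and a general factorisable pivotal finite tensor category admits no fiber functor (its Frobenius--Perron dimensions need not even be integers). Consequently $\mathrm{ch}(A)\cdot\mathrm{ch}(A)=(\dim_kA)\,\mathrm{ch}(A)$ is unjustified. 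This step could be repaired by replacing the regular module with the Frobenius--Perron regular element $R=\sum_{U\in\Irr(\Cc)}\mathrm{FPdim}(U)\,[P_U]\in\Gr_k(\Cc)$, which does satisfy $R^2=\mathrm{FPdim}(\Cc)\,R$ with $\mathrm{FPdim}(\Cc)>0$, but you would have to say this.

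The second gap is that your two structural inputs are, within this paper, \emph{downstream} of the theorem you are proving. (i) That $A$ is symmetric (equivalently, that $\Proj(\Cc)$ is Calabi--Yau) is not deduced here from unimodularity plus pivotality alone: the modified trace is constructed in Proposition~\ref{prop:mtrace} starting from a simple projective object $Q$, and the paper explicitly reverses the Cohen--Westreich logic for exactly this reason. (ii) The containment $\mathfrak{D}(\mathrm{ch}(P))\in\Hig(A)$ for projective $P$ is Theorem~\ref{thm:projective-S-inv} (via $\sigma([P])=\mathscr{S}_\Cc(\phi_P)$ and Proposition~\ref{prop:ReyHig=span}), whose proof uses the modified trace $t_G$ and hence again the existence of a simple projective. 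You flag (ii) as ``the main obstacle'' but do not close it, and you assert (i) without proof; both would need independent arguments to avoid circularity. By contrast, the paper's proof needs none of this machinery: if no simple object were projective, then for any projective $P$ the natural transformation $\smap([P])$ would vanish on every simple object (its component at a non-projective simple $U$ factors through $P_U\to U$, Lemma~\ref{lem:sigma-proj-zero-simple}), hence would be nilpotent (Corollary~\ref{cor:nat-endo-id_nilpot}); but $\smap:\Gr_k(\Cc)\to\End(\Id_\Cc)$ is an injective algebra map (Corollary~\ref{cor:inj-alg-hom}) and $[P]$ is not nilpotent in characteristic zero (Lemma~\ref{lem:powers-nonzero-in-Gr}), a contradiction. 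If you want to salvage your block-theoretic argument, the honest comparison is that it trades this short nilpotency argument for substantial extra input (symmetry of $A$, $\mathscr{S}_\Cc$-invariance of the Higman ideal) that the paper only obtains \emph{after} Theorem~\ref{thm:intro2}.
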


\begin{remark}
One can formulate the above theorem for positive characteristic if one adds an extra assumption, which we call ``Condition~P'' (Section~\ref{sec:projsimpleex}). Namely, we say that a finite braided tensor category satisfies Condition~P if there exists a projective object $P$ such that~$[P]$ is not nilpotent in the linearised Grothendieck ring (that is, in the Grothendieck ring tensored with the field). In characteristic zero, Condition~P is always satisfied (Lemma~\ref{lem:powers-nonzero-in-Gr}). 
Theorems~\ref{thm:intro2}, \ref{thm:intro3} and part 1 of Theorem~\ref{thm:intro5} 
remain true if instead of requiring the field to be of characteristic zero, one demands that the category satisfies Condition~P
(but for part 2 of Theorem~\ref{thm:intro5} we still need to require characteristic zero). 
The details are given in the body of the paper. 
\end{remark}

One noteworthy consequence is that the category $\Cc$ in the above theorem allows for 
a unique-up-to-scalars non-zero modified trace function
on the tensor ideal $\Proj(\Cc)$, 
that is, on the full subcategory of all projective objects in $\Cc$
	\cite{Geer:2010,Geer:2011a,Geer:2011b}, see Section~\ref{sec:traces}. 

A modified  trace on $\Proj(\Cc)$ is a family of $k$-linear maps $t_P : \End(P) \to k$, $P \in \Proj(\Cc)$, subject to a cyclicity and partial trace condition.
Recall that the 
categorical trace defined in terms of the pivotal structure on $\Cc$  vanishes identically on $\Proj(\Cc)$ 
unless $\Cc$ is semisimple
(Remark~\ref{rem:modified-trace-consequences}). In contrast to this, for the modified trace the
pairings $\Cc(P,Q) \times \Cc(Q,P) \to k$, 
	$(f,g) \mapsto t_Q(f \circ g)$ 
are non-degenerate for all $P,Q \in \Proj(\Cc)$ 
(see \cite{Costantino:2014sma} and Proposition~\ref{prop:mtrace} below).
In particular, the $t_P$ turn $\Proj(\Cc)$ into a Calabi-Yau category (Definition~\ref{def:CY-cat}).

One can use modified traces to define new link invariants in finite ribbon categories which are not accessible via the usual quantum 
traces~\cite{Geer:2009}. However, we will not pursue this point in the present paper.

\medskip

Let $\Ac$ be a finite abelian category over a field $k$ and assume that $\Proj(\Ac)$ is Calabi-Yau.
In Section~\ref{sec:sym-alg-ideal} we give a categorical definition of the Higman ideal as an ideal 
\be
	\Hig(\Ac) ~\subset~ \End(\Id_\Ac)
\ee
in the algebra of
natural endomorphisms of the identity functor on $\Ac$.
Recall that for algebras, $Z(A) \cong \End(\Id_{\rmod{A}})$, i.e.\ the centre of $A$ is isomorphic as a $k$-algebra to the
algebra of natural endomorphisms of the identity functor on the category of finite-dimensional (right, say) $A$-modules. The definition of $\Hig(\Ac)$ is such that via this isomorphism, $\Hig(A)$ gets mapped to $\Hig(\rmod{A})$.

	As in Theorem~\ref{thm:intro2} 
above, let $\Cc$ be a factorisable and pivotal finite tensor category over an algebraically closed field $k$ of characteristic zero. 
Let $G$ be a projective generator of $\Cc$ and $E = \End(G)$. Then $\Cc$ is equivalent as a $k$-linear category to $\rmod{E}$. As discussed above, we obtain a modified trace $t_G : E \to k$ which turns $E$ into a symmetric Frobenius algebra, and as in \eqref{eq:intro-Z-C} we get isomorphisms
\be\label{eq:intro-End-Z-C}
	\End(\Id_\Cc) \longrightarrow Z(E) \longrightarrow C(E)
	\quad , \quad
	\eta \longmapsto \eta_G \longmapsto t_G(\eta_G \circ -) \ .
\ee
Our second main result is Theorem~\ref{thm:trace-vs-tG}, which links characters of $E$-modules to
	the modified trace over certain elements in $\End(\Id_\Cc)$ obtained from the internal character of the corresponding object in $\Cc$. 
The details of this are
too lengthy for this introduction, and we refer to Section~\ref{sec:ch-tr}.

Theorem~\ref{thm:trace-vs-tG} is important 	for what follows  as it implies that $\Hig(\Cc)$ is spanned by the
	images of	
internal characters of projective objects (Proposition~\ref{prop:ReyHig=span}).

\medskip

For a factorisable finite ribbon category one obtains a projective $SL(2,\Zb)$-action on $\End(\Id_\Cc)$, and in fact actions of all surface mapping class groups on appropriate Hom-spaces \cite{Lyubashenko:1994tm}.

Our third main result generalises part 2 of Theorem~\ref{thm:cw-hopf-results} (see Corollary~\ref{cor:Hig-SL2Z-sub})

\begin{theorem}\label{thm:intro3}
Let $\Cc$ be a factorisable finite ribbon category over an algebraically closed field of characteristic zero. Then $\Hig(\Cc)$ is an $SL(2,\Zb)$-submodule of $\End(\Id_\Cc)$.
\end{theorem}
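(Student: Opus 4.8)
The plan is to reduce Theorem~\ref{thm:intro3} to the Hopf-algebra statement (part 2 of Theorem~\ref{thm:cw-hopf-results}) transported through the categorical dictionary set up in the excerpt, using the characterisation of $\Hig(\Cc)$ as the span of (images of) internal characters of projective objects. More precisely, by Proposition~\ref{prop:ReyHig=span} the subspace $\Hig(\Cc) \subset \End(\Id_\Cc)$ is spanned by the elements of $\End(\Id_\Cc)$ coming from internal characters $\chi_P$ of projective objects $P \in \Cc$ under the map of \eqref{eq:intro-End-Z-C}. So it suffices to show this spanning set is preserved (up to taking spans) by the generators $S$ and $T$ of the projective $SL(2,\Zb)$-action on $\End(\Id_\Cc)$.

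First I would recall the explicit description of the $SL(2,\Zb)$-action of \cite{Lyubashenko:1994tm} on $\End(\Id_\Cc)$ in terms of the coend $\coend = \int^{X} X^\vee \ot X$ and the canonical identification of $\End(\Id_\Cc)$ with a suitable Hom-space built from $\coend$; internal characters $\chi_P$ are precisely distinguished elements of this space (the ``$\coend$-valued characters''), so $S$ and $T$ act on them by concrete formulas. The $T$-action is essentially multiplication by the ribbon twist and its inverse on the various $X$, hence manifestly sends $\chi_P$ to $\chi_{P}$ composed with twist data; the key point here is that $T \cdot \chi_P$ is again (a scalar combination of) internal characters of projective objects, which follows because $\Proj(\Cc)$ is a tensor ideal closed under the relevant operations and the twist is an isomorphism. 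The genuinely substantive generator is $S$.

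The main obstacle is therefore to show that $S \cdot \chi_P$ lies in the span of $\{\chi_Q : Q \in \Proj(\Cc)\}$ for every projective $P$. Here I would use two inputs. The first is the compatibility, established in Theorem~\ref{thm:trace-vs-tG} / \eqref{eq:intro-End-Z-C}, between the modified trace $t_G$ on $E = \End(G)$ and the modified trace / characters in $\Cc$: this lets me translate the $S$-action on $\chi_P$ into the $S$-action on the character of the corresponding projective $E$-module inside $Z(E) \cong C(E)$, i.e.\ exactly the setup of \cite{Cohen:2008}. The second input is that $\Cc$ factorisable implies $E$ is a factorisable ribbon Hopf algebra (or at least that the relevant structure — Drinfeld-type map, $SL(2,\Zb)$-action, symmetric Frobenius form — matches), so that part 2 of Theorem~\ref{thm:cw-hopf-results} applies verbatim and tells us that $\Hig(E)$, which is precisely the span of characters of projective $E$-modules, is $SL(2,\Zb)$-stable. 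Pulling this back along the isomorphisms of \eqref{eq:intro-End-Z-C} and using Proposition~\ref{prop:ReyHig=span} gives that $\Hig(\Cc)$ is $SL(2,\Zb)$-stable, which is the claim.

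Alternatively — and this is probably the cleaner route, avoiding any appeal to a projective generator being a genuine Hopf algebra — I would argue entirely inside $\Cc$: realise $S$ acting on internal characters via the ``Fourier transform'' on the coend (the pairing $\omega_\coend$ together with the integral/cointegral of $\coend$), and use that for $P$ projective the internal character $\chi_P$ is, via the non-degenerate modified-trace pairing on $\Proj(\Cc)$, dual to an explicitly describable element; then $S \cdot \chi_P$ is computed by a Hopf-link-type diagram coloured by $P$ and the regular coend, which by dominance of $\Proj(\Cc)$ (every object receiving a projective cover, and $P \ot X$ projective for all $X$) decomposes as a finite sum of $\chi_{Q_i}$ with $Q_i$ projective. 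Either way, the crux is the closure of the projective internal characters under $S$, and I expect the bookkeeping with the coend, the modified trace, and the compatibility isomorphisms \eqref{eq:intro-End-Z-C} to be the main technical burden rather than any deep new idea.
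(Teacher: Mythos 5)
You have correctly located the crux --- showing that $\mathscr{S}_\Cc(\phi_P)=\smap([P])$ lies back in the span of the $\phi_Q$ with $Q$ projective --- but neither of your two routes actually closes it. Route 1 does not work as stated: the algebra $E=\End(G)$ of a projective generator is only a symmetric Frobenius algebra, not a (quasi-triangular ribbon) Hopf algebra, so Theorem~\ref{thm:cw-hopf-results}\,(2) of Cohen--Westreich cannot be applied ``verbatim'' to $E$; there is no intrinsic $SL(2,\Zb)$-action on $Z(E)$ to invoke, only the one transported from $\End(\Id_\Cc)$, and proving that \emph{that} action preserves $\Hig$ is exactly the theorem to be proved. (Indeed, the whole point of the paper is that a factorisable finite tensor category need not be $\Rep(H)$ for any Hopf algebra $H$.) Route 2 has a more subtle gap: the ``dominance'' argument --- $P\ot X$ projective for all $X$, so $\smap([P])_X$ factors through projective covers --- only yields that $\smap([P])$ kills the Jacobson radical, i.e.\ that $\smap([P])\in\Rey(\Cc)$ (this is Lemma~\ref{lem:sigP-ann-Jac} and Corollary~\ref{cor:sig(P)inReyC}, and even that step needs the modified trace and Corollary~\ref{cor:dinat-jac-0}). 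It does \emph{not} show that $\smap([P])$ is a combination of characters of \emph{projective} objects rather than of arbitrary objects, and that distinction is precisely the gap between $\Rey(\Cc)$ and $\Hig(\Cc)$, which are different whenever $\Cc$ is non-semisimple (Proposition~\ref{prop:cy-ssi-inclusion}).

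The paper bridges this gap by a duality-plus-dimension-count argument that your sketch is missing: one identifies $\Hig(\Cc)$ inside $\Rey(\Cc)$ as the annihilator $K^\perp$, under the modified-trace pairing with the primitive idempotents $e_U$, of the kernel of the Cartan matrix $\widehat\CM$ (Lemma~\ref{lem:Hig-Kperp}, using the orthogonality \eqref{eq:phiU-eV-ortho} and the fact that $\mathrm{rank}(\widehat\CM)=\dim_k\Hig(\Cc)$ from Corollary~\ref{cor:cartan-sym+rank_general}). One then checks by an explicit partial-trace computation, using the symmetry $\widehat\CM_{U,V}=\widehat\CM_{V^*,U^*}$, that $t_G\big((\mathscr{S}_\Cc(\phi_P))_G\circ\tilde x\big)=0$ for all $x\in\ker(\widehat\CM)$, so $\mathscr{S}_\Cc(\Hig(\Cc))\subset K^\perp=\Hig(\Cc)$, and equality follows since $\mathscr{S}_\Cc$ is invertible. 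Your treatment of the $T$-generator (ideal property plus $T=\theta\circ(-)$) agrees with the paper and is fine; it is the $S$-generator where your proposal stops short of a proof.
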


We note that under the conditions of this theorem, $\Hig(\Cc)=\End(\Id_\Cc)$ iff $\Cc$ is semisimple (Proposition~\ref{prop:cy-ssi-inclusion}).

Still under the conditions of the above theorem, one can define an injective ring homomorphism from the Grothendieck ring $\Gr(\Cc)$ to $\End(\Id_\Cc)$ \cite{Shimizu:2015} (see Section~\ref{sec:conventions}). 
This, in particular, turns $\End(\Id_\Cc)$ into a faithful $\Gr(\Cc)$-module by defining the representation map 
	$\rho^L : \Gr(\Cc) \to \End_k(\End(\Id_\Cc))$ 
to be left multiplication. 
We show the following statement, the first part of which generalises part 3 
of Theorem~\ref{thm:cw-hopf-results} (see Proposition~\ref{prop:GrC-diagonalisability}).

\begin{theorem}\label{thm:intro5}
Let $\Cc$ be a factorisable and pivotal finite tensor category over an algebraically closed field
	of characteristic zero.
\begin{enumerate}
\item The restriction of $\rho^L$ to the submodule $\Hig(\Cc)$ is diagonalisable.
\item The action $\rho^L$ can be diagonalised on $\End(\Id_\Cc)$ if and only if $\Cc$ is semisimple.
\end{enumerate}
\end{theorem}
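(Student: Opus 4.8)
The plan is to prove Theorem~\ref{thm:intro5} by combining the structural results already announced in the excerpt: the existence of the modified trace, the identification of $\Hig(\Cc)$ with the span of internal characters of projective objects, and the injective ring homomorphism $\Gr(\Cc) \to \End(\Id_\Cc)$ of \cite{Shimizu:2015}.

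For part~1, the key point is that $\Hig(\Cc)$ is a commutative algebra which, via \eqref{eq:intro-End-Z-C}, is identified with (the image of) the Higman ideal $\Hig(E)$ of the symmetric Frobenius algebra $E = \End(G)$. The classical fact about Higman ideals (over an algebraically closed field of characteristic zero) is that $\Hig(E)$ is a \emph{semisimple} commutative algebra: it is spanned by the images under the ``Reynolds-type'' projection of the characters of the indecomposable projectives, and one checks --- using that the Cartan matrix is invertible over a field of characteristic zero, so that the characters of projectives are linearly independent and that the relevant symmetrising form is non-degenerate on this subspace --- that $\Hig(E)$ has no nilpotents. Concretely, I would exhibit a basis of $\Hig(\Cc)$ consisting of pairwise orthogonal idempotents (up to scalars), one for each block, coming from the block idempotents of $E$ intersected with $\Hig(E)$. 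Since $\rho^L$ acts by multiplication in the commutative algebra $\End(\Id_\Cc)$, and $\Hig(\Cc)$ is a subalgebra (indeed an ideal) on which it acts, and a module over a commutative semisimple algebra is a direct sum of one-dimensional modules, the restriction of $\rho^L$ to $\Hig(\Cc)$ is simultaneously diagonalisable. (One must be slightly careful: $\rho^L$ is the $\Gr(\Cc)$-action, so one uses that $\Gr(\Cc)$ maps into $\End(\Id_\Cc)$ and that $\Hig(\Cc)$ is closed under multiplication by the image of $\Gr(\Cc)$; then diagonalisability of multiplication on $\Hig(\Cc)$ gives a common eigenbasis.)

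For part~2, the ``if'' direction is immediate: if $\Cc$ is semisimple then $\Gr(\Cc) \otimes_\Zb k$ is a commutative semisimple algebra (its a finite-dimensional commutative algebra over an algebraically closed field whose regular representation has non-degenerate trace form, by the Verlinde formula / orthogonality of characters), hence multiplication on any module, in particular on $\End(\Id_\Cc)$, is diagonalisable. For the ``only if'' direction, the strategy is contrapositive: assume $\Cc$ is not semisimple and show $\rho^L$ cannot be diagonalised on all of $\End(\Id_\Cc)$. Here I would use Proposition~\ref{prop:cy-ssi-inclusion}, which states that $\Hig(\Cc) = \End(\Id_\Cc)$ iff $\Cc$ is semisimple; so when $\Cc$ is non-semisimple the inclusion $\Hig(\Cc) \subsetneq \End(\Id_\Cc)$ is proper. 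One then argues that $\End(\Id_\Cc)$ contains a nilpotent element not killed by the $\Gr(\Cc)$-action structure --- more precisely, that the algebra $\End(\Id_\Cc)$, being isomorphic to $Z(E)$ for the non-semisimple algebra $E$, is itself non-semisimple, hence has a non-zero nilpotent radical; and left multiplication by an appropriate element of the image of $\Gr(\Cc)$ (using faithfulness of that representation and that the image generates enough of $Z(E)$) produces a nilpotent non-zero operator, which cannot be diagonalised. The cleanest route is probably: diagonalisability of $\rho^L$ on $\End(\Id_\Cc)$ would force the image of $\Gr(\Cc) \otimes k$ in $\End_k(\End(\Id_\Cc))$ to be a commutative algebra of simultaneously diagonalisable operators, hence semisimple; combined with faithfulness of $\Gr(\Cc) \otimes k \hookrightarrow \End(\Id_\Cc)$ (the latter acting on itself, so the regular representation is a summand up to the module structure) one concludes $\Gr(\Cc) \otimes k$ is semisimple, and then one invokes part~4 of the abstract (the linearised Grothendieck ring is semisimple iff $\Cc$ is) to conclude $\Cc$ is semisimple.

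I expect the main obstacle to be the ``only if'' direction of part~2, specifically making rigorous the passage from ``$\rho^L$ diagonalisable on $\End(\Id_\Cc)$'' to ``$\Gr(\Cc)\otimes k$ semisimple''. The subtlety is that $\End(\Id_\Cc)$ as a $\Gr(\Cc)$-module need not be free, so one cannot directly read off semisimplicity of the acting ring from diagonalisability of its action on this particular module; one needs to know the module is faithful \emph{and} large enough (e.g.\ that its annihilator-free part detects nilpotents of $\Gr(\Cc)\otimes k$). The natural fix is to use that the ring map $\Gr(\Cc)\otimes k \to \End(\Id_\Cc)$ is injective and lands in a commutative ring, so a nilpotent $x \in \Gr(\Cc)\otimes k$ maps to a nilpotent $0 \neq \bar x \in \End(\Id_\Cc)$, and then left multiplication by $\bar x$ on $\End(\Id_\Cc)$ is a non-zero nilpotent operator (non-zero because $\bar x \cdot 1 = \bar x \neq 0$), contradicting diagonalisability. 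This reduces everything to: $\Cc$ non-semisimple $\Rightarrow$ $\Gr(\Cc)\otimes k$ non-semisimple, which is exactly part~4 of the abstract and may be proved independently (or cited), so the logical dependency should be checked to avoid circularity.
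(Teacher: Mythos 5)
There are genuine gaps in both parts. For part 1, your key premise --- that $\Hig(\Cc)\cong\Hig(E)$ is a semisimple commutative algebra spanned by pairwise orthogonal idempotents, one per block --- is false whenever $\Cc$ is not semisimple. The dimension of $\Hig(\Cc)$ is the rank of the Cartan matrix, not the number of blocks, and by Proposition~\ref{prop:prod-Rey-Hig} one has $\phi_{P_U}\circ\phi_{P_U}=0$ for every non-projective simple $U$, so $\Hig(\Cc)$ contains non-zero nilpotents (the paper's footnoted example $A=k[X]/\langle X^2\rangle$ with $\Hig(A)=kX$ makes the same point for symmetric algebras, and the symplectic fermion example realises it categorically). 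Consequently the basis of orthogonal idempotents does not exist, and the argument ``a module over a semisimple commutative algebra is a sum of one-dimensional modules'' does not apply --- note also that $\rho^L$ is the action of $\Gr_k(\Cc)$ via $\sigma$, whose image is not contained in $\Hig(\Cc)$, so semisimplicity of $\Hig(\Cc)$ as an algebra would not control it in any case. The mechanism that actually makes part 1 work is that each $\phi_{P_V}$ is an eigenvector of left multiplication by \emph{every} $\alpha\in\End(\Id_\Cc)$ (Proposition~\ref{prop:phiU-End-diagonal} and Remark~\ref{rem:also-PU-diag}); this rests on the computation $(\phi_U)_{P_V}=\delta_{U,V}\,\nu_U\circ\pi_U$ of Proposition~\ref{prop:phi_U_PV}, i.e.\ on the fact that $(\phi_U)_G$ factors through a semisimple module. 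Your proposal contains no substitute for this step.

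For part 2, your reduction of the ``only if'' direction to ``$\Cc$ non-semisimple $\Rightarrow$ $\Gr_k(\Cc)$ contains a non-zero nilpotent'' is exactly right and coincides with the paper's final step, but you leave that statement unproved: invoking ``part 4 of the abstract'' (Corollary~\ref{cor:Grss-Css}) is circular, since in the paper that corollary is deduced from the present theorem, and it is not a fact one can cite from general theory --- for non-factorisable finite tensor categories it fails (e.g.\ for the Taft algebra $\Gr_k$ is semisimple while the category is not). The paper produces the nilpotent using the $S$-invariance of the Higman ideal (Theorem~\ref{thm:projective-S-inv}), one of its main results: for a non-simple indecomposable projective $P$ one has $\phi_P\neq0$ (characteristic zero and injectivity of $\chi$), $\phi_P\circ\phi_P=0$ (Proposition~\ref{prop:prod-Rey-Hig}), and $\phi_P\in\Hig(\Cc)=\mathscr{S}_\Cc(\Hig(\Cc))=\mathrm{span}_k\{\sigma([P_U])\}\subset\sigma(\Gr_k(\Cc))$, whence injectivity of $\sigma$ gives a non-zero nilpotent in $\Gr_k(\Cc)$. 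Your alternative sketch (``$Z(E)$ is non-semisimple and the image of $\Gr$ generates enough of it'') does not close the gap either, since $\sigma(\Gr_k(\Cc))$ is in general a small subalgebra of $Z(E)$ and the whole difficulty is to show that it meets the nilradical non-trivially. The ``if'' direction via the classical Verlinde formula for the semisimple case is a workable alternative to the paper's route (which instead combines part 1 with $\Hig(\Cc)=\End(\Id_\Cc)$ from Proposition~\ref{prop:cy-ssi-inclusion}), but given the above it does not rescue the rest of the argument.
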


See Proposition~\ref{prop:GrC-diagonalisability} for an explicit choice of basis diagonalising the $\Gr(\Cc)$-action. 
In particular, the $k$-linear Grothendieck ring $\Gr_k(\Cc)$ is semisimple iff $\Cc$ is semisimple (Corollary~\ref{cor:Grss-Css}).

\smallskip

We also investigate in Section~\ref{sec:Sinv-proj} how the restriction of the 
	projective
$SL(2,\Zb)$-action to $\Hig(\Cc)$ can be used to gain information about the decomposition of tensor products of projective objects, see Proposition~\ref{prop:M-via-S} for details.
 This can be thought of as a non-semisimple variant 
of the Verlinde formula \cite{Verlinde:1988sn}. 
See e.g.\ \cite{Fuchs:2003yu,Fuchs:2006nx,Gainutdinov:2016qhz,Creutzig:2016fms} for discussions of the Verlinde formula in the finitely non-semisimple setting, and~\cite{Cohen:2008} for a related result in the context of factorisable ribbon Hopf algebras.
As a corollary of the projective
$SL(2,\Zb)$-action on $\Hig(\Cc)$, we also give  a purely categorical counterpart
of the conjectural relation between modular properties of pseudo-trace functions and modified trace of Hopf link operators  \cite{Creutzig:2016fms} --- we prove a special case of this (namely when all labels are projective) in Proposition~\ref{prop:mod-tr-S}.

\begin{remark}\label{rem:motivation}
~\\[-1.5em]
\begin{enumerate}\setlength{\leftskip}{-1em}
\item 
A semisimple factorisable finite ribbon category is a modular tensor category \cite{tur,baki}. Since the qualifier ``modular'' refers to the projective action of the modular group $SL(2,\Zb)$, it would also be reasonable to refer to modular tensor categories in the sense of \cite{tur,baki} as {\em modular fusion categories}, and to call factorisable finite ribbon categories {\em modular finite tensor categories}. An important application of modular fusion categories is that they are precisely the data needed to construct certain three-dimensional topological field theories \cite{retu,tur,Bartlett:2015baa}.
For modular finite tensor categories in the above sense, a corresponding result is not known. However, a version of three-dimensional topological field theory build form such categories is studied in \cite{Kerler:2001}.
\item
	So far,
we have motivated our results from the theory of Hopf algebras. 
Another reason to look at factorisable finite tensor categories is provided by vertex operator algebras. A particularly benign 
class of vertex operator algebras $V$ are the so-called $C_2$-cofinite ones, for which we in addition require that they are simple, non-negatively graded and isomorphic to their contragredient module. Let $V$ be such a vertex operator algebra.

If $\Rep(V)$, the category of (quasi-finite dimensional, generalised) $V$-modules, is in addition semisimple, it is in fact a modular fusion category \cite{Huang2005}.	What is more, it is proved in \cite{Zhu1996} that the characters of $V$-modules form a projective representation of the modular group, which by \cite{Huang:2004bn} agrees with the representation obtained from the modular fusion category $\Rep(V)$.

If $\Rep(V)$ is not semisimple, 
it is known from \cite{Huang:2010,Huang:2007mj} that $\Rep(V)$ satisfies all properties of a finite braided tensor category except for rigidity (which is, however, conjectured in \cite{Huang:2009xq,Gainutdinov:2016qhz} and proven in a different setting for $W_p$ models in~\cite{Tsuchiya:2012ru}). 
It is furthermore conjectured that also in the non-semisimple case, $\Rep(V)$ is factorisable \cite{Gainutdinov:2016qhz,Creutzig:2016fms}.
For the modular group action one now has to pass to pseudo-trace functions \cite{Miyamoto:2002ar,Arike:2011ab}, and it is tempting to conjecture that this modular group action agrees with the one on $\End(\Id_{\Rep(V)})$ discussed above, see \cite[Conj.\,5.10]{Gainutdinov:2016qhz} for a precise formulation. 
For so-called symplectic fermion conformal field theories \cite{Kausch:1995py,Gaberdiel:1996np,Abe:2005}, the modular group actions obtained from pseudo-trace functions and from $\Rep(V)$ have been compared in \cite{Gainutdinov:2015lja,FGRprep2} and they indeed agree.

If these conjectures were true,
 Theorems~\ref{thm:intro2} and \ref{thm:intro3} would imply that such a~$V$ always has at least one simple and projective module, and that the characters (not the pseudo-trace functions) of projective modules transform into each other under the modular group action. Both properties hold in the very few non-semisimple examples where they can be verified 
\cite{Kausch:1995py,Fuchs:2003yu,[FGST],Nagatomo:2009xp,Abe:2005}.

The study of properties of factorisable tensor categories and in particular of $\Rep(V)$ is also important input for the construction of bulk
conformal field theory correlators in the non-semisimple setting as in \cite{Fuchs:2010mw,Fuchs:2013lda,Fuchs:2016wjr}.
\end{enumerate}
\end{remark}

We conclude the paper in Section~\ref{sec:sf-example} with the computation of the modified trace
in a class of examples of factorisable finite ribbon categories 
\cite{Davydov:2012xg,Runkel:2012cf}
arising in symplectic fermion conformal field theories.
We also illustrate in this example how the projective $SL(2,\Zb)$-action on the Higman ideal can be used to obtain information about the tensor product of projective objects.

\medskip

\noindent
{\bf Acknowledgements:}
We thank
	Anna Beliakova,
	Christian Blanchet,
	Alexei Davydov,
	J\"urgen Fuchs,
	Nathan Geer,
	Jan Hesse,
	Dimitri Nikshych,
	Ehud Meir,
	Victor Ostrik and
	Christoph Schweigert
for helpful discussions, and we are grateful to
	Miriam Cohen,
	Alexei Davydov,
	Ehud Meir and
	Sara Westreich
for comments on a draft of this paper.
The work of AMG was  supported by  DESY and CNRS. 

\mbox{}\\
\textbf{Convention}:
Through this paper, $k$ denotes an algebraically closed field, possibly of positive characteristic.\footnote{
	Some definitions and results of this paper do not actually require algebraic closedness of $k$. For example, algebraic closedness is not required in the first half of Section \ref{sec:conventions}, up to and excluding Theorem~\ref{thm:chiinj}, 
or in the definition of a modified right trace in Section \ref{sec:traces}, etc. But to avoid confusion we prefer to require algebraic closedness for the entire paper.}

\section{Factorisable finite tensor categories}\label{sec:conventions}

In this section we introduce notation and recall some definitions and results that will be used later, such as the different characterisations of a factorisable finite tensor category. The reader is invited to just skim through this section and return to it when necessary.

\medskip

For $\Ac$ an
	essentially small
abelian category we write (see e.g.\ \cite{EGNO-book} for more details)
\begin{itemize}
\item $\Irr(\Ac)$: a choice of representatives for the isomorphism classes of simple objects 
of~$\Ac$, i.e.\ every simple object is isomorphic to one and only one element of $\Irr(\Ac)$, 
\item $\Proj(\Ac)$: the full subcategory of $\Ac$ consisting of all projective objects,
\item $\Gr(\Ac)$: the Grothendieck group of $\Ac$. We write $[X] \in \Gr(\Ac)$ for the class of $X \in \Ac$ in the Grothendieck group,
\item $\Gr_k(\Ac)$: if $\Ac$ is $k$-linear
we set $\Gr_k(\Ac) := k \otimes_\Zb \Gr(\Ac)$. Note that the canonical ring homomorphism $\Gr(\Ac) \to \Gr_k(\Ac)$ is injective iff $k$ has characteristic zero.
\end{itemize}
A $k$-linear abelian category $\Ac$ is called {\em locally finite} if all Hom-spaces are finite dimensional over $k$ and every object of $\Ac$ has finite length. A locally finite abelian category is called {\em finite} if $\Irr(\Ac)$ is finite and every simple object has a projective cover. For a finite abelian category $\Ac$ we set
\begin{itemize}
\item $P_U$: a choice of projective cover $\pi_U : P_U \to U$ for each simple $U \in \Irr(\Ac)$.
\end{itemize}

One can show that a $k$-linear abelian category is finite iff it is equivalent as
	a
$k$-linear category to the category of finite-dimensional modules over a finite-dimensional $k$-algebra.

For a finite abelian category $\Ac$ we have $\Gr(\Ac) = \bigoplus_{U \in \Irr(\Ac)} \Zb\, [U]$. (On the other hand, if infinite direct sums are allowed in $\Ac$ we have $\Gr(\Ac)=0$ because of the short exact sequence $X \to \bigoplus_{\mathbb{N}} X \to \bigoplus_{\mathbb{N}} X$.)

\medskip

A monoidal category $\Mc$ is called {\em rigid} if for each $X \in \Mc$ there is  $X^* \in \Mc$ 
(the {\em left dual}) and 
$\ldX$ (the {\em right dual}), together with morphisms
\begin{align}
	\ev_X &: X^* \otimes X \to \one \ ,
	& \coev_X &: \one \to X \otimes X^* \ ,
\nonumber \\
	\widetilde\ev_X &: X \otimes \ldX \to \one \ ,
	& \widetilde\coev_X &: \one \to \ldX \otimes X \ ,
\end{align}
where $\one \in \Mc$ is the tensor unit. These morphisms have to satisfy the zig-zag identities, see e.g.\ \cite{EGNO-book} for more details.
We will later use string diagram notation. Our diagrams will be read bottom to top and the above duality morphisms will be written as
\begin{align}
  \raisebox{-0.3\height}{
  \begin{picture}(26,22)
   \put(0,6){\scalebox{.75}{\includegraphics{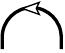}}}
   \put(0,6){
     \setlength{\unitlength}{.75pt}\put(-146,-155){
     \put(143,145)  {\scriptsize $ X^* $}
     \put(169,145)  {\scriptsize $ X $}
     }\setlength{\unitlength}{1pt}}
  \end{picture}}  
  &~=~ \ev_X 
  \quad , &
  \raisebox{-8pt}{
  \begin{picture}(26,22)
   \put(0,0){\scalebox{.75}{\includegraphics{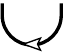}}}
   \put(0,0){
     \setlength{\unitlength}{.75pt}\put(-146,-155){
     \put(143,183)  {\scriptsize $ X $}
     \put(169,183)  {\scriptsize $ X^* $}
     }\setlength{\unitlength}{1pt}}
  \end{picture}}  
  &~=~ \coev_X 
  ~~,
\nonumber \\[1.3em]
  \raisebox{-8pt}{
  \begin{picture}(26,22)
   \put(0,6){\scalebox{.75}{\includegraphics{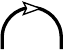}}}
   \put(0,6){
     \setlength{\unitlength}{.75pt}\put(-146,-155){
     \put(143,145)  {\scriptsize $ X $}
     \put(169,145)  {\scriptsize $ \ldX $}
     }\setlength{\unitlength}{1pt}}
  \end{picture}}  
  &~=~ \widetilde\ev_X 
  \quad , &
  \raisebox{-8pt}{
  \begin{picture}(26,22)
   \put(0,0){\scalebox{.75}{\includegraphics{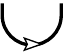}}}
   \put(0,0){
     \setlength{\unitlength}{.75pt}\put(-146,-155){
     \put(143,183)  {\scriptsize $ \ldX $}
     \put(169,183)  {\scriptsize $ X $}
     }\setlength{\unitlength}{1pt}}
  \end{picture}}  
  &~=~ \widetilde\coev_X 
  ~~.
\end{align}

A monoidal category $\Mc$ with left duals is {\em pivotal} if it is equipped with a natural monoidal isomorphism 
\be
	\delta : \Id_{\Mc} \to (-)^{**} \ .
\ee
It is then automatically rigid via $\ldX := X^*$ and (we omit the tensor product between objects for better readability)
\begin{align}\label{eq:ev-coev-tilde}
	\widetilde\ev_X &~=~ \big[\, XX^* \xrightarrow{\delta_X \otimes \id} X^{**} X^* \xrightarrow{\ev_{X^*}} \one \,\big] \ ,
\nonumber\\	
	\widetilde\coev_X &~=~ \big[\, \one \xrightarrow{\coev_{X^*}}X^{*} X^{**} 
	\xrightarrow{\id \otimes \delta_X^{-1}} X^{*} X \,\big] \ .
\end{align}

Let $\Mc$ be a monoidal category with right duals. Below we will make use of the functor ${}^*(-) \otimes (-) : \Mc^{\mathrm{op}} \times \Mc \to \Mc$, $(X,Y) \mapsto \ldX \otimes Y$, and of dinatural transformations between this functor and the constant functor with value $X$ for some $X \in \Mc$. We refer to \cite{MacLane-book} for dinatural transformations in general and 
	to \cite[Sec.\,4]{Fuchs:2010mw} or \cite[Sec.\,3]{FGRprep1}
for these specific functors. 

The following lemma is a slight generalisation of 
\cite[Lem.\,2.5.1]{Geer:2010} which in turn follows \cite{Deligne:2004}.
	It is proven in the same way as in \cite{Geer:2010} and we relegate the proof to Appendix~\ref{app:dinat-exact}.

\begin{lemma}\label{lem:dinat-Gr}
Let $\Mc$ be an
abelian monoidal category with right duals and biexact tensor product functor.
Suppose that there is a projective object $P \in \Mc$ and a surjection $p : P \to \one$.
Let $X \in \Mc$ and let
$\eta$ be a dinatural transformation from ${}^*(-) \otimes(-)$ to $X$. Consider the following commuting diagram in $\Mc$ with exact rows:
\be\label{eq:dinat-Gr_diag}
\xymatrix{
 0 \ar[r] & A \ar[r]^f \ar[d]^a & B \ar[r]^g \ar[d]^b & C \ar[r] \ar[d]^c & 0
\\
 0 \ar[r] & A \ar[r]^f & B \ar[r]^g & C \ar[r] & 0
 }
\ee
In this case, the following equality of morphisms $\one \to X$ holds
for any choice of endomorphisms $a$, $b$,  and $c$
	that make \eqref{eq:dinat-Gr_diag} 
commute:
\be\label{eq:dinat-Gr_rel}
\eta_B \circ (\id_{{\rule{0pt}{6pt}}^*\hspace{-2pt}B} \otimes b) \circ \widetilde\coev_B
~=~
\eta_A \circ (\id_{\,{\rule{0pt}{6pt}}^*\hspace{-2.5pt}A} \otimes a) \circ \widetilde\coev_A
+
\eta_C \circ (\id_{{\rule{0pt}{6pt}}^*\hspace{-1pt}C} \otimes c) \circ \widetilde\coev_C \ .
\ee
\end{lemma}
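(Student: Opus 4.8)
The plan is to reduce the statement to the defining dinaturality property of $\eta$ together with additivity of the various constituent morphisms in short exact sequences. First I would observe that by the snake lemma / five lemma applied to \eqref{eq:dinat-Gr_diag}, the middle map $b$ determines $a$ and $c$ (they are the induced maps on sub- and quotient object), so the content of \eqref{eq:dinat-Gr_rel} is that the assignment $X \mapsto \eta_B \circ (\id_{{}^*B} \otimes b) \circ \widetilde\coev_B$ behaves additively on the short exact sequence $0 \to A \to B \to C \to 0$. The key point is that although $B$ need not split as $A \oplus C$, the relevant composite $\one \to X$ only sees the "trace-like" data of $b$, which is additive.

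The main technical tool is the hypothesis that there is a projective $P$ with a surjection $p : P \to \one$. This is what lets one replace $\widetilde\coev_B$ (and similarly for $A$, $C$) by something functorial enough to run an argument: using projectivity of $P$ and biexactness of $\otimes$, one can lift the relevant maps through $P$-resolutions and reduce checking the identity to the level of projective objects, where Hom-spaces into $X$ are controlled and the dinatural transformation $\eta$ can be evaluated explicitly. Concretely, I would pick projective covers (or just projective presentations using $P$) of $A$, $B$, $C$ compatibly with the exact sequence — here one uses the horseshoe lemma together with the existence of enough projectives coming from tensoring $P$ with arbitrary objects, as in \cite{Geer:2010} — and then push the computation onto these projectives, where the statement becomes a direct consequence of the dinaturality square for $\eta$ plus the fact that $\widetilde\coev$ interacts additively with direct sums.

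The step I expect to be the main obstacle is precisely the globalisation: verifying that one can choose the horseshoe-type diagram of projective presentations so that the three instances of $\eta_{(-)} \circ (\id \otimes -) \circ \widetilde\coev_{(-)}$ fit into a single commuting diagram, and that the "error terms" coming from the non-canonical splittings of the presentations cancel when composed with $\eta$. This is exactly where dinaturality of $\eta$ is used essentially rather than formally: the wedge condition for $\eta$ forces the contributions of the off-diagonal (connecting) components of $b$ lifted to the projective level to vanish after composing with $\eta_B$. Once this is set up, \eqref{eq:dinat-Gr_rel} follows by a diagram chase. Since the excerpt defers the actual proof to Appendix~\ref{app:dinat-exact} and notes it follows \cite{Geer:2010}, I would structure the write-up to mirror the argument there, only tracking the extra generality (biexactness and the weaker hypothesis on $P$) through each step.
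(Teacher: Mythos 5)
Your proposal identifies the right reference but reconstructs the wrong argument, and the step you yourself flag as the main obstacle is exactly where the plan breaks down. The proof in Appendix~\ref{app:dinat-exact} (following \cite[Lem.\,2.5.1]{Geer:2010}) does not resolve $A$, $B$, $C$ by projectives at all. Its key construction is the $3\times 3$ product complex obtained by tensoring the dualised exact sequence ${}^*C \to {}^*B \to {}^*A$ with $A \to B \to C$ (exact rows and columns by biexactness), and the central object is the kernel $K$ of ${}^*f \otimes g : {}^*B\otimes B \to {}^*A \otimes C$. One shows that $\hat b := (\id\otimes b)\circ\widetilde\coev_B$ factors through $K$ (because $g\circ b\circ f = 0$), that $K$ receives a surjection $u$ from ${}^*B\otimes A \,\oplus\, {}^*C\otimes B$, and that $K$ maps to ${}^*A\otimes A$ and ${}^*C\otimes C$ compatibly with $(\id\otimes a)\circ\widetilde\coev_A$ and $(\id\otimes c)\circ\widetilde\coev_C$. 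The projective $P$ with $p:P\to\one$ is used exactly once: to lift $\tilde b\circ p$ through the surjection $u$ (one cannot lift $\tilde b$ itself since $\one$ need not be projective). Dinaturality then enters in the precise form $\eta_A\circ({}^*f\otimes\id) = \eta_B\circ(\id\otimes f)$ and $\eta_B\circ({}^*g\otimes\id) = \eta_C\circ(\id\otimes g)$, and one concludes the identity after precomposition with the epimorphism $p$.

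The gap in your route is that $\widetilde\coev_B$ is a morphism $\one\to{}^*B\otimes B$ attached to the object $B$ itself; it does not transport along a projective presentation of $B$ in any functorial way, and you do not say how it would. Replacing $B$ by a horseshoe-lemma resolution changes the object whose coevaluation you need, so "pushing the computation onto the projectives" is not a well-defined operation on the composite $\eta_B\circ(\id\otimes b)\circ\widetilde\coev_B$. Likewise, the assertion that dinaturality makes "off-diagonal components of $b$ lifted to the projective level vanish after composing with $\eta_B$" is not the mechanism of the actual argument and is not substantiated; dinaturality is used only to move ${}^*f$ and ${}^*g$ from the left tensor factor to $f$ and $g$ on the right, never to kill any terms. (A smaller point: $\Mc$ is not assumed to have enough projectives, only the single $P\to\one$; one could manufacture enough projectives from it, but the proof does not need to.) To repair the write-up you should abandon the resolution strategy and carry out the kernel-of-${}^*f\otimes g$ construction.
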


\begin{remark}\label{rem:all-endo-are-id}
~\\[-1.5em]
\begin{enumerate}\setlength{\leftskip}{-1em}
\item 
A corresponding statement as that in Lemma~\ref{lem:dinat-Gr} holds for a dinatural transformation $\xi$ from $(-) \otimes (-)^*$ to $X$. In this case, for $a,b,c$ as in \eqref{eq:dinat-Gr_diag} we have
\be\label{eq:dinat-Gr_rel2}
\xi_B \circ (b \otimes \id_{B^*}) \circ \coev_B
~=~
\xi_A \circ (a \otimes \id_{A^*}) \circ \coev_A
\,+\,
\xi_C \circ (c \otimes \id_{C^*}) \circ \coev_C
\ .
\ee
\item
As a consequence of Lemma~\ref{lem:dinat-Gr}, the morphism $\eta_A \circ \widetilde\coev_A \in \Mc(\one,X)$ only depends on the class $[A] \in \Gr(\Mc)$. Indeed, if one chooses $a,b,c$ in \eqref{eq:dinat-Gr_diag} to be identities, then \eqref{eq:dinat-Gr_rel} states that given a short exact sequence 
$0\to A \to B \to C\to 0$ 
we have the identity
\be
\eta_B \circ \widetilde\coev_B
~=~
\eta_A \circ \widetilde\coev_A
+
\eta_C \circ \widetilde\coev_C
\ee
of morphisms in $\Mc(\one,X)$. For example, if $\Mc$ is in addition pivotal, one can take $X=\one$ and the dinatural transformation $\eta_A = \ev_A$. One obtains the statement that the quantum dimension only depends on the class in $\Gr(\Mc)$.
\end{enumerate}
\end{remark}

A {\em finite tensor category} $\Cc$ is a category which (see \cite{Etingof:2003})
\begin{itemize}
\item is a $k$-linear finite abelian category,
\item is a rigid monoidal category with $k$-bilinear tensor product functor,
\item has a simple tensor unit.
\end{itemize}
The tensor product functor of an abelian rigid monoidal category is automatically biadditive and biexact \cite[Prop.\,4.2.1]{EGNO-book}.
A finite tensor category is called {\em unimodular} if $(P_\one)^* \cong P_\one$, see \cite{Etingof:2004,EGNO-book} for more details.

The following technical corollary to Lemma~\ref{lem:dinat-Gr} generalises \cite[Cor.\,2.5.2]{Geer:2010} and will be needed later. The proof is given in Appendix~\ref{app:dinat-exact}.

\begin{corollary}\label{cor:dinat-jac-0}
Let $\Cc$ be a finite tensor category over
$k$. 
Let $X \in \Cc$ and let
$\eta$ be a dinatural transformation from ${}^*(-) \otimes(-)$ to $X$
and $\xi$ from $(-) \otimes(-)^*$ to $X$.
Let $A \in \Cc$ and $f \in \End(A)$ be such that for all simple $U \in \Cc$ and all $u : A \to U$ we have $u \circ f = 0$.
 Then:
\begin{enumerate}
\item 
$\eta_A \circ (\id \otimes f) \circ \widetilde\coev_A = 0$ and
$\xi_A \circ (f \otimes \id) \circ \coev_A = 0$.
\item If $\Cc$ is in addition braided 
	and pivotal,
then for any $B \in \Cc$,
\be \label{eq:f-loop-zero}
  \raisebox{-0.5\height}{\setlength{\unitlength}{.75pt}
  \begin{picture}(60,170)
   \put(0,0){\scalebox{.75}{\includegraphics{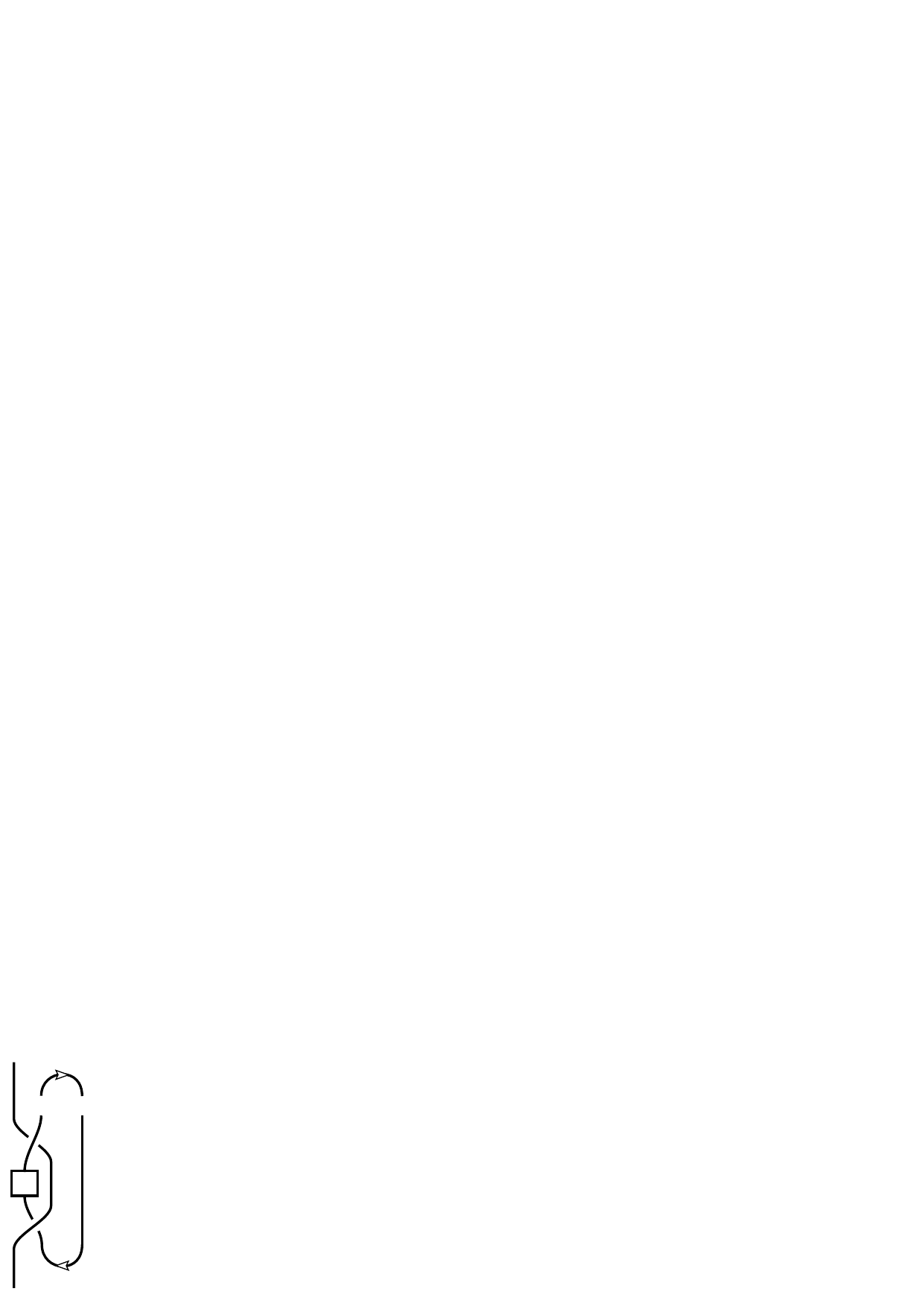}}}
   \put(0,0){
     \put(0,0){
     \put (5,0) {\scriptsize$ B $}
     \put (5,161) {\scriptsize$ B $}
     \put (22,125) {\scriptsize$ A $}
     \put (47,125) {\scriptsize$ A^* $}
     \put (12,76) {\scriptsize$ f $}
     }\setlength{\unitlength}{1pt}}
  \end{picture}}
 ~= 0 \ .
\ee	
\end{enumerate}
\end{corollary}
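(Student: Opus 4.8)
\textbf{Proof strategy for Corollary~\ref{cor:dinat-jac-0}.}

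The plan is to derive part 1 directly from Lemma~\ref{lem:dinat-Gr} by a clever choice of the commuting diagram \eqref{eq:dinat-Gr_diag}, and then to bootstrap part 2 from part 1 using the braiding and pivotal structure together with the dinaturality of $\eta$. For part 1, first observe that the hypothesis on $f$ — that $u\circ f=0$ for every morphism $u$ from $A$ to a simple object — says precisely that the image of $f$ lies in the radical $\mathrm{rad}(A)$ (the intersection of kernels of all maps to simples), equivalently that $f$ factors through the inclusion $\mathrm{rad}(A)\hookrightarrow A$ and in fact that $f$ induces the zero map on the semisimplification; more usefully, it means $f$ acts as zero on $\Gr(\Cc)$-level and each power/composite of $f$ stays in the radical. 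I would then take the short exact sequence $0\to A \xrightarrow{\id} A \xrightarrow{} 0 \to 0$? — no; rather, I would feed into \eqref{eq:dinat-Gr_diag} the sequence with $B=A$, $a=b=f$ decomposed appropriately, or more cleanly use the sequence $0 \to \ker f \to A \to \mathrm{im}\,f \to 0$ and exploit that $f$ factors as $A \twoheadrightarrow \mathrm{im}\,f \hookrightarrow A$. The key point is that Lemma~\ref{lem:dinat-Gr} lets us replace $b$ in a commuting ladder by any endomorphism; choosing the ladder so that the outer vertical maps $a,c$ are the restrictions/corestrictions of $f$, and iterating, one reduces $\eta_A\circ(\id\otimes f)\circ\widetilde\coev_A$ to a sum of analogous expressions for objects of strictly smaller length on which the induced endomorphism still satisfies the hypothesis, until length forces the term to vanish. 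The $\xi$-version is identical using Remark~\ref{rem:all-endo-are-id}(1) and \eqref{eq:dinat-Gr_rel2}.

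For part 2, the diagram in \eqref{eq:f-loop-zero} is a morphism $B\to B$ built from $\widetilde\coev_A$, the insertion of $f$ on the $A$-strand, a braiding of the $A$ (or $A^*$) strand around the $B$-strand, and $\widetilde\ev_A$. The idea is to recognise this composite as $\eta'_A\circ(\id\otimes f)\circ\widetilde\coev_A$ (post-composed with structure morphisms on $B$) for a suitable dinatural transformation $\eta'$ from ${}^*(-)\otimes(-)$ to $X=\End(B)\otimes\mathbf{1}$ or, more precisely, to the object underlying $B\otimes B^*$-type data. Concretely, define $\eta'_Y : {}^*Y\otimes Y\to \underline{?}$ by wrapping the $Y$-strand around a fixed $B$-strand using the double braiding; the hexagon axioms make this dinatural in $Y$. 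Then \eqref{eq:f-loop-zero} is exactly $\eta'_A\circ(\id_{{}^*A}\otimes f)\circ\widetilde\coev_A$ read inside the appropriate Hom-space, so part 1 applied to $\eta'$ (which requires checking $\eta'$ is genuinely dinatural, the routine hexagon computation) kills it. Pivotality is needed to have $\widetilde\coev$ and $\widetilde\ev$ available and to identify right and left duals so that the strand labelled $A^*$ in the picture is the correct dual object for the dinaturality statement.

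The main obstacle I anticipate is the reduction argument in part 1: Lemma~\ref{lem:dinat-Gr} as stated takes a \emph{fixed} ladder and produces one identity, so to actually run the induction on length one must choose the ladders carefully so that the endomorphisms appearing on the sub- and quotient objects again satisfy "kills all maps to simples", and one must ensure the process terminates. The cleanest route is probably not an explicit induction but rather: write $f = \iota\circ\bar f$ with $\iota:\mathrm{rad}(A)\hookrightarrow A$; apply Lemma~\ref{lem:dinat-Gr} to the sequence $0\to\mathrm{rad}(A)\to A\to A/\mathrm{rad}(A)\to 0$ with vertical maps $(\bar f\circ\iota,\, f,\, 0)$ — these commute since $f$ descends to $0$ on $A/\mathrm{rad}(A)$ — to get $\eta_A\circ(\id\otimes f)\circ\widetilde\coev_A = \eta_{\mathrm{rad}(A)}\circ(\id\otimes(\bar f\iota))\circ\widetilde\coev_{\mathrm{rad}(A)}$, and then iterate on $\mathrm{rad}(A)$, whose radical filtration is finite because $A$ has finite length. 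Verifying that $\bar f\circ\iota$ still annihilates all maps from $\mathrm{rad}(A)$ to simples is the one genuinely nontrivial check; it follows because any such map extends (projectivity of the simple's cover is not needed — one uses that $f(A)\subseteq\mathrm{rad}(A)$ so $f$ maps $\mathrm{rad}(A)$ into $\mathrm{rad}^2(A)$, which lies in the kernel of every map $\mathrm{rad}(A)\to U$).
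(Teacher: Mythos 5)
Your proposal is correct. Part 2 is essentially the paper's argument: the wrapping of the $A$-strand around the fixed $B$-strand is packaged into a dinatural transformation with target $B^*\otimes B$ (dinaturality being naturality of the double braiding), part 1 kills it, and one closes the $B$-strand; the paper uses the $(-)\otimes(-)^*$ variant with $\coev_A$, but that is immaterial since part 1 covers both variances. Part 1, however, takes a genuinely different route. The paper first splits $A$ into indecomposables, invokes Fitting's lemma to conclude that $f$ is nilpotent on each summand, and then inducts on the nilpotency degree using the ladder built on $0\to\ker f\to A\to\mathrm{im}\,f\to 0$ with vertical maps $(0,f,c)$. You instead apply Lemma~\ref{lem:dinat-Gr} to $0\to\mathrm{rad}(A)\to A\to A/\mathrm{rad}(A)\to 0$ with vertical maps $(\bar f\circ\iota,\,f,\,0)$ and iterate down the radical filtration, which terminates because $A$ has finite length. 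This avoids both the decomposition into indecomposables and Fitting's lemma, at the cost of the one check you rightly isolate: that the induced endomorphism of $\mathrm{rad}^k(A)$ again annihilates all maps to simples. That check does go through, but your parenthetical "$f(A)\subseteq\mathrm{rad}(A)$ so $f$ maps $\mathrm{rad}(A)$ into $\mathrm{rad}^2(A)$" is precisely the point that needs an argument rather than an assertion: transporting to $\rmod{E}$ via the abelian equivalence, one has $\mathrm{rad}(N)=N\Jac(E)$, whence $f(M\Jac(E))=f(M)\Jac(E)\subseteq M\Jac(E)^2=\mathrm{rad}^2(M)$ — this uses that $f$ is a module map, not merely that its image is small. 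With that line spelled out, both proofs are complete; the paper's stays entirely inside the category and only uses exact sequences canonically attached to $f$, while yours makes the finiteness driving the induction (the finite radical filtration) more transparent and needs no case distinction via Fitting.
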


For the rest of this section, we fix
\begin{itemize}
\item $\Cc$: a braided and pivotal finite tensor category 
	over $k$.
\end{itemize}
Our notation for the coherence and braiding isomorphisms in $\Cc$ is, for $U,V,W \in \Cc$,
\begin{align}
\alpha_{U,V,W} &\,:\; U \otimes (V \otimes W) \longrightarrow (U \otimes V) \otimes W
& \text{(associator)} \ ,
\nonumber \\
\lambda_U &\,:\; \one \otimes U \longrightarrow U
\quad , \quad
\rho_U \,:\; U \otimes \one \longrightarrow U
& \text{(unit isomorphisms)} \ ,
\nonumber \\
c_{U,V} &\,:\; U \otimes V \longrightarrow V \otimes U
& \text{(braiding)} \ .
\end{align}
We write $\End(\Id_\Cc)$ for the natural endomorphisms of the identity functor 
on $\Cc$.  

\medskip

Given $V \in \Cc$, consider the natural transformation $\smap(V) 
\in \End(\Id_\Cc)$: for all $X \in \Cc$,
\begin{align}
	\smap(V)_X
&~=~
\big[
 X 
\xrightarrow{\sim} \one X
\xrightarrow{\widetilde\coev_{V} \ot \id} (V^{*} V) X
\xrightarrow{\sim} V^{*} (V X)
\xrightarrow{\id \ot (c^{-1}_{V,X} \circ c^{-1}_{X,V})} V^{*} (V X)
\nonumber \\ 
& \hspace{4em} \xrightarrow{\sim} (V^{*} V) X
\xrightarrow{\ev_{V} \otimes \id} \one X
\xrightarrow{\sim}X
\big] 
\nonumber \\ 
&~=~
   \raisebox{-0.5\height}{\setlength{\unitlength}{.75pt}
  \begin{picture}(70,118)
   \put(0,0){\scalebox{.75}{\includegraphics{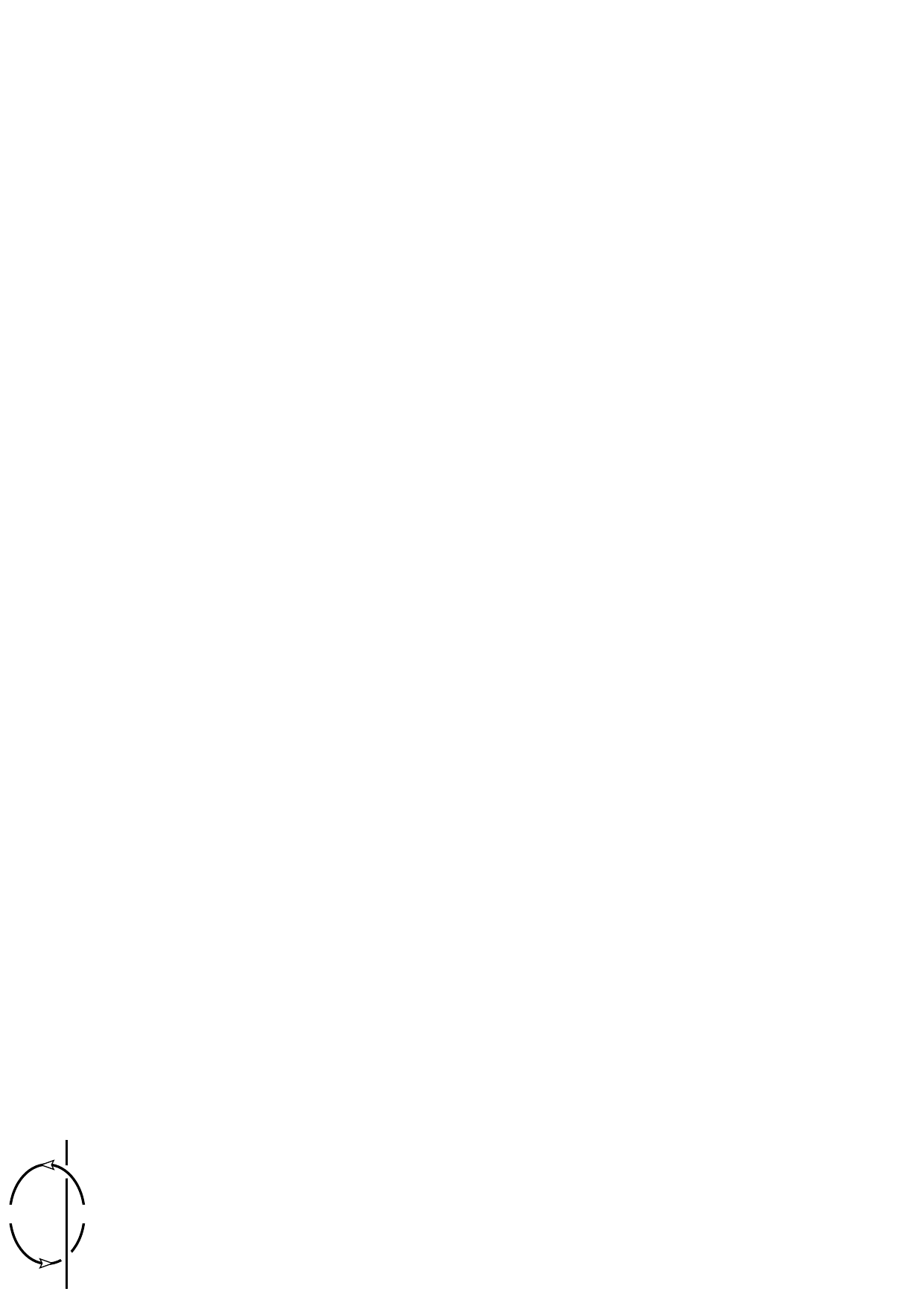}}}
   \put(0,0){
     \put(0,0){
     \put (38,0) {\scriptsize$ X $}
     \put (38,110) {\scriptsize$ X $}
     \put (50,55.5) {\scriptsize$ V $}
     \put (2,55.5) {\scriptsize$ V^* $}
     }\setlength{\unitlength}{1pt}}
  \end{picture}}
~\overset{(*)}=~  
   \raisebox{-0.5\height}{\setlength{\unitlength}{.75pt}
  \begin{picture}(70,118)
   \put(0,0){\scalebox{.75}{\includegraphics{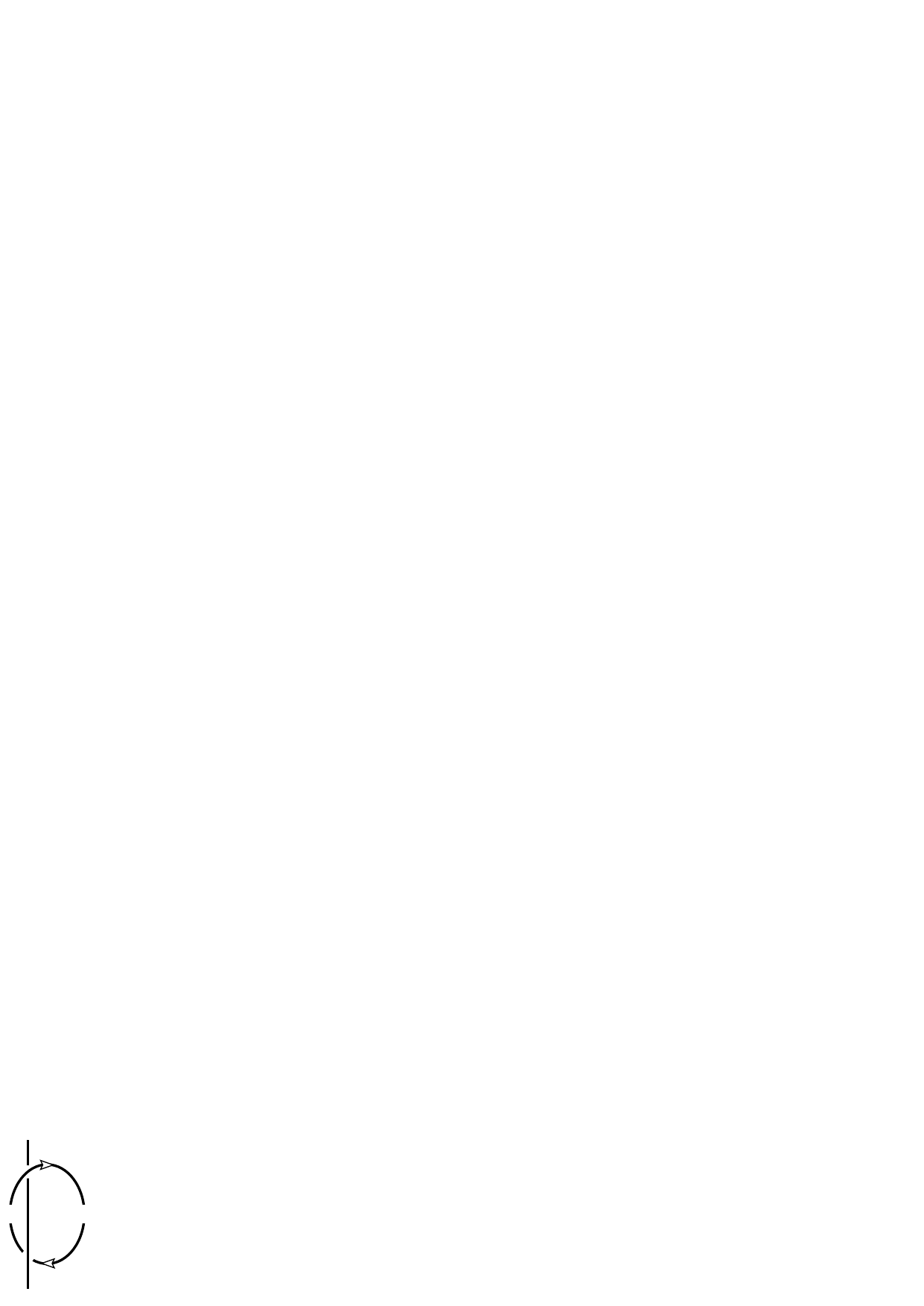}}}
   \put(0,0){
     \put(0,0){
     \put (13,0) {\scriptsize$ X $}
     \put (13,110) {\scriptsize$ X $}
     \put (0,55.5) {\scriptsize$ V^* $}
     \put (50,55.5) {\scriptsize$ V^{**} $}
     }\setlength{\unitlength}{1pt}}
  \end{picture}}
 \quad ,
\label{eq:eq:sigma-Gr-EndId-def_pic}
\end{align}
where in (*) we used properties of the braiding
to deform the string diagram, as well as pivotality to exchange left and right duality morphisms. 
This natural transformation appears
	in \cite[Sec.\,I.1.5]{tur} and
 also in \cite{Costantino:2014sma,Creutzig:2016fms} where it is called 
 ``general Hopf link'' and
 ``open Hopf link operator'', respectively
	(see~\cite[Rem.\,3.10\,(2)]{Gainutdinov:2016qhz}
	 for the conventions used here).

\begin{lemma}\label{lem:sigma_Gr_ringhom}
$\sigma(V)$ only depends on the class of $V$ in $\Gr(\Cc)$, and the resulting map $\Gr(\Cc) \to \End(\Id_\Cc)$
 is a ring homomorphism.
\end{lemma}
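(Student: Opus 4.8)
The plan is to prove the two assertions in sequence, using Lemma~\ref{lem:dinat-Gr} (more precisely its consequence in Remark~\ref{rem:all-endo-are-id}) for the first one, and a direct string-diagram computation for the second.

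First I would show that $\sigma(V)$ depends only on $[V] \in \Gr(\Cc)$. The key observation is to read off the string diagram in \eqref{eq:eq:sigma-Gr-EndId-def_pic}: the assignment $V \mapsto \sigma(V)_X$ is built from the dinatural transformation sending $V$ to the "double braiding loop" morphism $\eta^{(X)}_V : {}^*V \otimes V \to X^* \otimes X$ (equivalently, composing with $\widetilde\coev$ and $\mathrm{ev}$ one lands in $\End(X)$), and this $\eta^{(X)}$ is indeed dinatural in $V$ with respect to the functor ${}^*(-)\otimes(-)$ — dinaturality is exactly the statement that the loop can be slid across a morphism $V \to V'$, which follows from naturality of the braiding. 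Since $\Cc$ is a finite tensor category, its tensor product is biexact, and there is a projective object $P_\one$ surjecting onto $\one$; hence Lemma~\ref{lem:dinat-Gr} applies. By Remark~\ref{rem:all-endo-are-id}(2), $\eta^{(X)}_V \circ \widetilde\coev_V$ depends only on $[V]$, i.e.\ a short exact sequence $0 \to V' \to V \to V'' \to 0$ yields $\sigma(V)_X = \sigma(V')_X + \sigma(V'')_X$ for every $X$; since this holds for all $X$ compatibly, $\sigma(V) = \sigma(V') + \sigma(V'')$ in $\End(\Id_\Cc)$. Thus $\sigma$ factors through $\Gr(\Cc)$ and is additive.

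Next I would check the ring-homomorphism property: $\sigma(\one) = \id_{\Id_\Cc}$ and $\sigma(V \otimes W) = \sigma(V) \circ \sigma(W)$. The unit statement is immediate since the duality morphisms for $\one$ compose to the identity. For multiplicativity, the idea is purely diagrammatic: stack the loop around $V$ and the loop around $W$ both encircling the strand $X$. Using that the (co)evaluations of $V \otimes W$ factor through those of $V$ and $W$, and that the double braiding $c^{-1}_{V,X} c^{-1}_{X,V}$ is "multiplicative" in the encircling object — concretely, the double braiding of $V \otimes W$ past $X$ equals the composite of the double braiding of $W$ past $X$ followed by that of $V$ past $X$ (a standard consequence of the hexagon axioms / naturality of $c$) — one rewrites $\sigma(V \otimes W)_X$ as $\sigma(V)_X \circ \sigma(W)_X$. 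One should also note the order of composition matches because the two loops, being nested, can be separated vertically without crossing. I would present this as a short sequence of labelled string-diagram equalities rather than a coordinate computation.

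The main obstacle I anticipate is the bookkeeping in the second part: making precise, with the pivotal structure inserted as in step~$(*)$ of \eqref{eq:eq:sigma-Gr-EndId-def_pic}, that the nested double-braiding loops genuinely compose to the product, keeping track of associators, unit isomorphisms, and the places where $\delta$ (or $\delta^{-1}$) enters when passing between left and right duals of $V \otimes W$ versus of $V$ and $W$ separately. This is routine coherence juggling, but it is the step where a careful diagram is essential; everything else (additivity, well-definedness on $\Gr(\Cc)$, the unit) follows cleanly from Lemma~\ref{lem:dinat-Gr} and naturality of the braiding.
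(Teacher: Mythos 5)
Your proposal is correct and follows essentially the same route as the paper: the well-definedness on $\Gr(\Cc)$ is obtained by packaging the open Hopf link as a dinatural transformation (the paper bends the $X$-strand up to land in $\Cc(\one, X\otimes X^*)$, you land in $\Cc(\one,X^*\otimes X)$ — a cosmetic difference) and invoking Lemma~\ref{lem:dinat-Gr} via Remark~\ref{rem:all-endo-are-id}, while multiplicativity $\sigma(V\otimes W)_X=\sigma(V)_X\circ\sigma(W)_X$ is the same standard nested-loop computation the paper delegates to \cite[Sec.\,4.5]{tur}. Your sketch of that computation, and the explicit remark that $\sigma(\one)=\id$, are consistent with what the cited calculation gives.
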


\begin{proof}
Let $X \in \Cc$ be arbitrary and
consider the dinatural transformation from $(-)^* \otimes(-)$ to $X \otimes X^*$ given by
	(see also \cite[Sec.\,4.4]{FGRprep1})
\be
	\eta_V ~:=~   
	\raisebox{-0.5\height}{\setlength{\unitlength}{.75pt}
  \begin{picture}(76,135)
   \put(0,0){\scalebox{.75}{\includegraphics{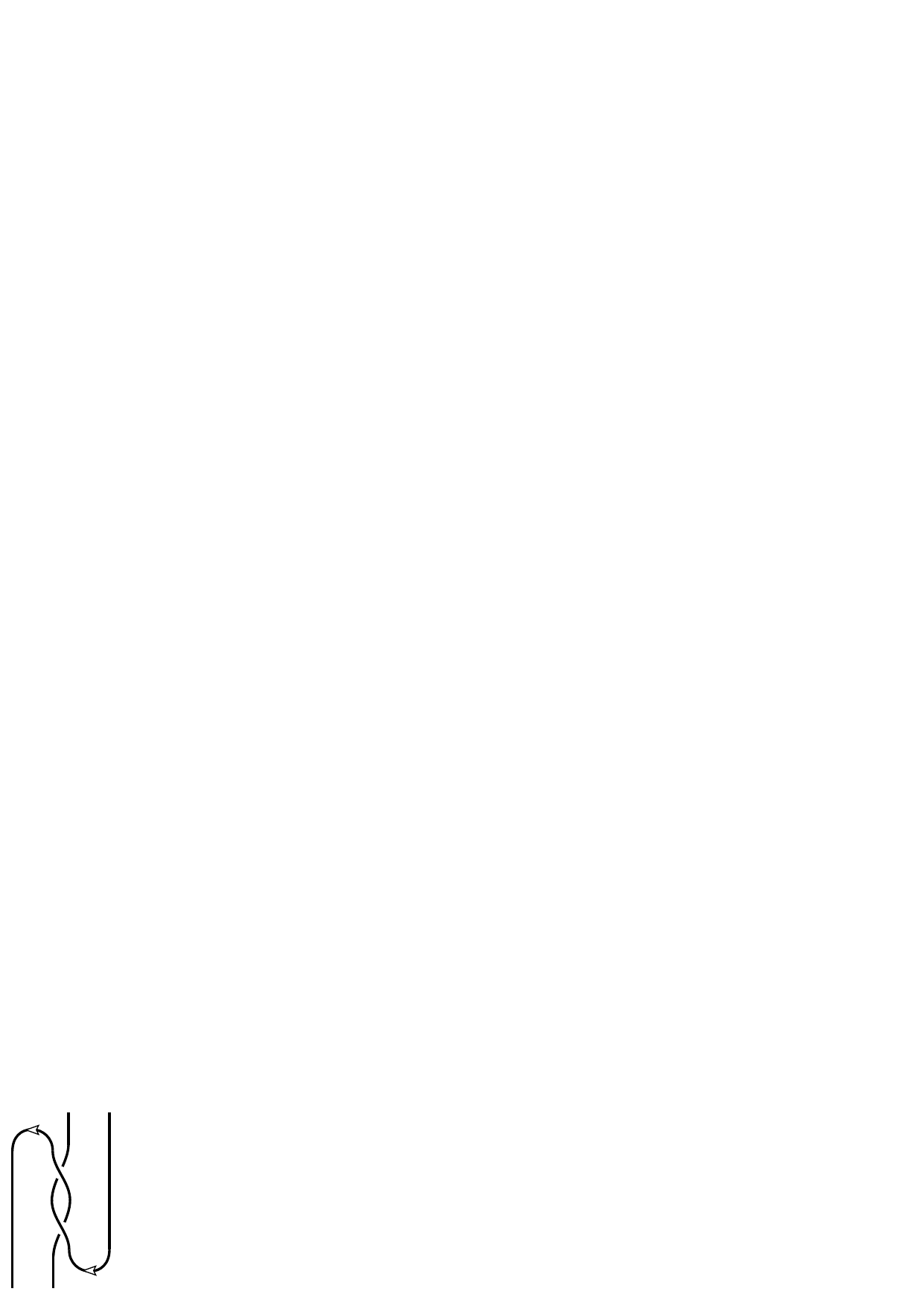}}}
   \put(0,0){
     \put(0,0){
     \put (4,0) {\scriptsize$ V^* $}
     \put (30,0) {\scriptsize$ V $}
     \put (40,128) {\scriptsize$ X $}
     \put (67,128) {\scriptsize$ X^* $}
     }\setlength{\unitlength}{1pt}}
  \end{picture}}
 \quad .
\ee
Note that $(\sigma(V)_X \otimes \id_{X^*}) \circ \coev_X = \eta_V \circ \widetilde\coev_V$. By Lemma~\ref{lem:dinat-Gr} (or rather Remark~\ref{rem:all-endo-are-id}\,(2)), the right hand side factors through $\Gr(\Cc)$, 
hence so does $\sigma$.
That the resulting map is a ring map follows from the identity $\sigma(V \otimes W)_X = \sigma(V)_X \circ \sigma(W)_X$, which one can verify by a straightforward calculation~\cite[Sec.\,4.5]{tur}
(see also \cite[Sec.\,3.1]{Creutzig:2016fms}).
\end{proof}

By abuse of notation, we also use the symbol $\smap$ for the resulting ring homomorphism
\be\label{eq:sigma-Gr-EndId-def}
\smap : \Gr(\Cc) \to \End(\Id_\Cc) \quad ,\quad \smap([V])_X \text{ is given by \eqref{eq:eq:sigma-Gr-EndId-def_pic}} \ .
\ee

Let $\coend$ be the coend 
(see e.g.\ \cite{Kerler:2001} or the review
 in~\cite[Sec.\,4]{Fuchs:2010mw} or in~\cite[Sec.\,3]{FGRprep1})
\be
	\coend ~=\, \int^{X \in \Cc} \hspace{-.5em} X^* \ot X 
\ee
and denote by 
$\iota_X : X^* \ot X \to \coend$, $X \in \Cc$,
the corresponding dinatural transformation.
The coend $\coend$ exists since $\Cc$ is a finite tensor category (see e.g.\ \cite[Cor.\,5.1.8]{Kerler:2001}). It carries the structure of a Hopf algebra
in $\Cc$ and is equipped with a Hopf pairing $\omega : \coend \ot \coend \to \one$ \cite{Lyubashenko:1995}. 
We denote the product, coproduct, unit, counit and antipode of the Hopf algebra $\coend$ by $\mu_\coend$, $\Delta_\coend$, $\eta_\coend$, $\eps_\coend$, $S_\coend$, respectively	(we use the conventions in \cite[Sec.\,3]{FGRprep1}).

If $\Cc$ is unimodular,
the coend $\coend$ admits a 
	two-sided
integral $\Lambda_\coend : \one \to \coend$
\cite[Thm.\,6.8]{Shimizu:2014}.\footnote{
	For factorisable $\Cc$ (see Definition~\ref{def:factorisable} below), the existence of two-sided integrals has been shown in \cite[Thm.\,6.11]{Lyubashenko:1995}, see also \cite[Cor.\,5.2.11]{Kerler:2001}.}
The integral is unique up to a scalar factor. 
Dually, $\coend$ admits a
	two-sided
 cointegral
  $\coint_\coend : \coend \to \one$ which can be normalised such that $\coint_\coend \circ \Lambda_\coend = \id_\one$ (see e.g.\ \cite[Sec.\,4.2.3]{Kerler:2001}).

We will need the linear maps 
\be
\xymatrix{
\End(\Id_\Cc)
 \ar[r]^{\psi}
 &
\Cc(\coend,\one)
 \ar@/^1pc/[r]^{\Rad}
 &
\Cc(\one,\coend)
\ar@/^1pc/[l]_\Omega
}
\quad ,
\ee
which are defined as follows.
\begin{itemize}
\item
For $\alpha \in \End(\Id_\Cc)$, the value $\psi(\alpha)$ is defined uniquely by the universal property of the coend
	$(\coend,\iota)$
via,  for all $X \in \Cc$,
\be
	\psi(\alpha) \circ \iota_X = \ev_X \circ (\id \ot \alpha_X) \ .
\ee
\item
$\Omega$ is defined via the Hopf pairing of $\coend$ as, for $f : \one \to \coend$,
\be\label{eq:Omega-def}
	\Omega(f) = \big[\, \coend \xrightarrow{\sim} \one \coend \xrightarrow{f \otimes \id} \coend\coend \xrightarrow{\omega} \one \,\big]	 \ . 
\ee
\item
The definition of $\rho$ requires $\Cc$ to be unimodular and the choice of a non-zero integral $\Lambda_\coend : 1 \to \coend$. In this case we set, for $g : \coend \to \one$,
\be\label{eq:rho-def}
	\rho(g) = \big[\, \one \xrightarrow{\Lambda_\coend} \coend \xrightarrow{\Delta_\coend} \coend\coend \xrightarrow{g \otimes \id} \one \coend \xrightarrow{\sim} \coend \,\big]	 \ . 
\ee
\end{itemize}

The bialgebra structure on
	$\coend$
allows one to endow $\Cc(\one,\coend)$ and $\Cc(\coend,\one)$ with the structure of a $k$-algebra. $\End(\Id_\Cc)$ is equally a $k$-algebra. We have:

\begin{lemma}\label{lem:Omega-rho-properties}
\begin{enumerate}
\item $\psi$ is an isomorphism of $k$-algebras.
\item $\Omega$ is a $k$-algebra homomorphism (but not necessarily an isomorphism).
\item For $\Cc$ unimodular,
$\Rad$ is an isomorphism (but not necessarily a $k$-algebra homomorphism).
\end{enumerate}
\end{lemma}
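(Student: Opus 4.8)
The plan is to treat the three claims in turn, in each case working with the explicit (co)algebra structures on the three spaces --- composition on $\End(\Id_\Cc)$, the product $\mu_\coend\circ(-\ot-)$ with unit $\eta_\coend$ on $\Cc(\one,\coend)$, and the convolution product $(-\ot-)\circ\Delta_\coend$ with unit $\eps_\coend$ on $\Cc(\coend,\one)$ --- and reducing everything to the defining dinatural maps $\iota_X$ and to the Hopf-pairing and (co)integral axioms for $\coend$.

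For claim~1, the bijectivity of $\psi$ is the universal property of the coend: a morphism $\coend\to\one$ is the same datum as a dinatural family $\{\delta_X:X^*\ot X\to\one\}_X$, and under the adjunction-and-pivotality isomorphisms $\Cc(X^*\ot X,\one)\cong\Cc(X,X)$ such a family is the same as a natural transformation $\Id_\Cc\to\Id_\Cc$; unwinding the identifications shows the resulting bijection is exactly $\psi$. That $\psi$ is a unital homomorphism is then checked after precomposition with $\iota_X$: one has $\psi(\id_{\Id_\Cc})\circ\iota_X=\ev_X=\eps_\coend\circ\iota_X$, and, using $\Delta_\coend\circ\iota_X=(\iota_X\ot\iota_X)\circ(\id_{X^*}\ot\coev_X\ot\id_X)$, a short zig-zag computation yields $\psi(\alpha\circ\beta)\circ\iota_X=\ev_X\circ(\id\ot(\alpha_X\circ\beta_X))=\bigl(\psi(\alpha)*\psi(\beta)\bigr)\circ\iota_X$, with $*$ the convolution product. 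Since a morphism out of $\coend$ is determined by its composites with the $\iota_X$, this gives $\psi(\alpha\circ\beta)=\psi(\alpha)*\psi(\beta)$.

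For claim~2 one only has to unfold the axioms of the Hopf pairing $\omega$, which by construction is a pairing of bialgebras: $\omega\circ(\eta_\coend\ot\id_\coend)=\eps_\coend$, and $\omega\circ(\mu_\coend\ot\id_\coend)$ equals $\omega$ paired diagonally against $\Delta_\coend$ of the second tensor factor (with the relevant braiding inserted), see \cite{Lyubashenko:1995} and \cite[Sec.\,3]{FGRprep1}. Read through the definition \eqref{eq:Omega-def}, the first identity says $\Omega(\eta_\coend)=\eps_\coend$ (so $\Omega$ is unital) and the second says precisely $\Omega\bigl(\mu_\coend\circ(f\ot g)\bigr)=\Omega(f)*\Omega(g)$ for all $f,g:\one\to\coend$. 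I do not expect $\Omega$ to be injective or surjective in general --- indeed, $\Omega$ being an isomorphism is one of the equivalent characterisations of factorisability (Definition~\ref{def:factorisable}).

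For claim~3 one recognises $\Rad$, as defined in \eqref{eq:rho-def}, as the categorical form of the Larson--Sweedler/Radford theorem that a finite-dimensional Hopf algebra is free of rank one over its dual via a two-sided integral. The plan is to write down an explicit inverse built from the two-sided cointegral $\coint_\coend$ (normalised by $\coint_\coend\circ\Lambda_\coend=\id_\one$) and the antipode $S_\coend$ --- schematically $h\mapsto\coint_\coend\circ\mu_\coend\circ(S_\coend\ot\id_\coend)\circ(\id_\coend\ot h)$ up to unit isomorphisms, with the exact placement of $S_\coend$ and of the braidings dictated by the conventions of \cite[Sec.\,3]{FGRprep1} --- and to verify it is a two-sided inverse of $\Rad$ using that $\Lambda_\coend$ is a two-sided integral (so $\mu_\coend\circ(\id\ot\Lambda_\coend)=\Lambda_\coend\circ\eps_\coend$ and its left analogue), the dual identities for the cointegral, the antipode axioms, and the normalisation; alternatively one may invoke the fundamental theorem of Hopf modules for Hopf algebras in braided categories \cite{Kerler:2001,Shimizu:2014}. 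Unimodularity is used only to guarantee that the two-sided integral $\Lambda_\coend$ --- and hence the cointegral $\coint_\coend$ --- entering the definition of $\Rad$ exists. I expect this last claim to be the main obstacle: carrying out the verification honestly requires the full integral/cointegral and antipode calculus for the Hopf algebra $\coend$ inside the braided category $\Cc$, with careful bookkeeping of braidings and of left-versus-right conventions, which is appreciably more delicate than the essentially formal manipulations in claims~1 and~2.
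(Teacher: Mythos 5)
Your proposal is correct and follows essentially the same route as the paper, which in fact gives no detailed proof of its own but defers to the references \cite{Lyubashenko:1995,Kerler:2001,Shimizu:2016} and merely records the explicit inverses of $\psi$ and $\rho$; your treatment of parts 1 and 2 (universal property of the coend plus the standard check against $\iota_X$, and the bialgebra-pairing axioms for $\omega$) is exactly the standard argument those references contain. The only caveat concerns part 3: the paper's recorded inverse is $\rho^{-1}(g)=\coint_\coend\circ\mu_\coend\circ(S_\coend\ot\id_\coend)\circ(g\ot\id_\coend)$ up to unit isomorphisms (antipode applied to $g$, which sits in the \emph{first} tensor factor), whereas your schematic formula applies $S_\coend$ to the incoming copy of $\coend$ and places $h$ in the second factor --- as you yourself anticipate, pinning down this placement and the braidings, and then verifying the two-sided-inverse identities from the integral/cointegral and antipode axioms, is where the actual work lies.
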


For the proof of this lemma, see e.g.\ \cite{Lyubashenko:1995}, 
\cite[Prop.\,5.2.5]{Kerler:2001},
\cite[Cor.\,4.2.13]{Kerler:2001}
and
\cite[Sec.\,3.1]{Shimizu:2016}
(and also \cite[Sec.\,2]{Gainutdinov:2016qhz}).
One verifies that the inverses of $\psi$ and $\rho$ are, for $X \in \Cc$, $f : \coend \to \one$, $g : \one \to \coend$,
\begin{align}
	\psi^{-1}(f)_X &~=~ \big[ X \xrightarrow{\sim} \one X
	\xrightarrow{\coev_X \ot \id} (X  X^*)  X
	\xrightarrow{\sim} X  (X^*  X)
	\xrightarrow{\id \ot \iota_X} X \coend
	\xrightarrow{\id \ot f} X \one
	\xrightarrow{\sim} X\big] \ ,
	\nonumber\\
\Rad^{-1}(g) &~=~ \big[\, \coend \xrightarrow{\sim} \one \coend \xrightarrow{g \ot \id} \coend\coend
\xrightarrow{S_\coend \ot \id} \coend\coend \xrightarrow{\mu_\coend} \coend \xrightarrow{\coint_\coend} \one \,\big] \ .
\end{align}

The {\em internal character}	of $V \in \Cc$ is the element $\chi_V \in \Cc(\one,\coend)$ given by 
\cite{Fuchs:2010mw,Shimizu:2015} 
(we use the convention in \cite{Gainutdinov:2016qhz})
\be\label{eq:chiV-def}
	\chi_V ~=~ 
	\big[\, \one \xrightarrow{\widetilde\coev_V} V^* \ot V   \xrightarrow{\iota_{V}} \coend \,\big] \ .
\ee
By Lemma~\ref{lem:dinat-Gr}, this map factors through the Grothendieck ring, i.e.\ we obtain a map 
\be
\chi :\; \Gr(\Cc) \to \Cc(\one,\coend), \qquad [V] \mapsto \chi_V\ .
\ee
 By abuse of notation, we will denote the induced map $\Gr_k(\Cc) \to \Cc(\one,\coend)$, 
$1 \otimes_\Zb [V] \mapsto \chi_V$ by $\chi$ as well.

\begin{theorem}[{\cite[Sec.\,4.5]{Fuchs:2010mw} and \cite[Thm.\,3.11, Prop.\,3.14, Cor.\,4.2, Thm.\,5.12]{Shimizu:2015}}]~
\label{thm:chiinj}
\begin{enumerate}
\item
The linear map $\chi : \Gr_k(\Cc) \to \Cc(\one,\coend)$ is an injective $k$-algebra homomorphism. 
\item
Suppose in addition that $\Cc$ is unimodular. Then $\chi$ is surjective iff $\Cc$ is semisimple.
\end{enumerate}
\end{theorem}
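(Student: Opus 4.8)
The plan is to prove the two parts separately, in both cases working through the maps $\chi$, $\psi$, $\Omega$, $\Rad$ introduced above and reducing everything to known structural results on the coend $\coend$.

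For part~1, the key observation is that the internal character $\chi_V$ should coincide, up to the isomorphisms in Lemma~\ref{lem:Omega-rho-properties}, with the natural transformation $\smap([V])$ packaged as a morphism $\one \to \coend$. Concretely, I would first show that $\chi_V = \rho \circ \psi(\smap([V]))$ when $\Cc$ is unimodular, or more robustly that $\Omega(\chi_V) = \psi(\smap([V]))$ in general: unwinding \eqref{eq:Omega-def}, \eqref{eq:chiV-def} and the definition of the Hopf pairing $\omega$ in terms of the double braiding, the composite $\Omega(\chi_V) \circ \iota_X$ should reduce exactly to $\ev_X \circ (\id \ot \smap([V])_X)$, which is $\psi(\smap([V])) \circ \iota_X$. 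Since $\psi$ is an algebra isomorphism (Lemma~\ref{lem:Omega-rho-properties}(1)) and $\Omega$ is an algebra homomorphism (Lemma~\ref{lem:Omega-rho-properties}(2)), and $\smap : \Gr(\Cc) \to \End(\Id_\Cc)$ is a ring homomorphism (Lemma~\ref{lem:sigma_Gr_ringhom}), it follows that $\chi$ is an algebra homomorphism on the image. The multiplicativity $\chi_V \cdot \chi_W = \chi_{V \ot W}$ can also be checked directly from the fact that $\coend$ is a coalgebra and $\widetilde{\coev}$ behaves well under tensor products, together with the dinaturality of $\iota$. For injectivity of $\chi : \Gr_k(\Cc) \to \Cc(\one,\coend)$, I would argue that the classes $[P_U]$ of projective covers, or rather suitable evaluation functionals, separate the $\chi_V$: composing $\chi_V$ with $\coint_\coend$ and with morphisms $\coend \to \one$ coming from $\psi$ applied to the ``$U$-isotypic projector'' in $\End(\Id_\Cc)$ recovers the multiplicities $[V:U]$, so a non-zero element of $\Gr_k(\Cc)$ cannot map to $0$. (In characteristic zero one can also invoke that $\Gr_k(\Cc)$ has a basis of simples and that the $\chi_{P_U}$ are linearly independent since they pair non-degenerately with simple characters via $\coint_\coend$.)

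For part~2, assume $\Cc$ unimodular. If $\Cc$ is semisimple it is a fusion category, $\coend \cong \bigoplus_{U \in \Irr(\Cc)} U^* \ot U$, and $\dim_k \Cc(\one,\coend) = |\Irr(\Cc)| = \dim_k \Gr_k(\Cc)$, so $\chi$ is an isomorphism by part~1. Conversely, suppose $\chi$ is surjective. Then $\dim_k \Gr_k(\Cc) = \dim_k \Cc(\one,\coend)$. Now $\Cc(\one,\coend) \cong \coend^*$-ish; more usefully, using that $\Rad$ (Lemma~\ref{lem:Omega-rho-properties}(3)) is an isomorphism $\Cc(\coend,\one) \to \Cc(\one,\coend)$ and $\psi$ is an isomorphism $\End(\Id_\Cc) \to \Cc(\coend,\one)$, one gets $\dim_k \End(\Id_\Cc) = \dim_k \Cc(\one,\coend) = \dim_k \Gr_k(\Cc) = |\Irr(\Cc)|$ in the surjective case. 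On the other hand $\End(\Id_\Cc)$ is isomorphic to the centre $Z(E)$ of $E = \End(G)$ for a projective generator $G$, and $\dim_k Z(E) \ge |\Irr(\Cc)|$ always, with equality iff $E$ is semisimple, i.e.\ iff $\Cc$ is semisimple. Thus surjectivity of $\chi$ forces $\dim_k Z(E) = |\Irr(\Cc)|$ and hence $\Cc$ semisimple. (Alternatively one shows that the image of $\chi$ always lands in a proper subspace — the span of internal characters — unless $\Cc$ is semisimple, using that $\chi_V \circ \coint_\coend$-type pairings degenerate on the radical.)

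The main obstacle I expect is part~2's converse direction: establishing the dimension equality $\dim_k \Cc(\one,\coend) = \dim_k \End(\Id_\Cc)$ cleanly and, more essentially, pinning down why surjectivity of $\chi$ is incompatible with $E$ being non-semisimple. The clean route is through the interplay of the three isomorphisms $\psi$, $\Rad$ and the symmetric-Frobenius structure on $E$ (via the modified trace, though for this theorem one may prefer a more elementary count): one needs that $\coend$, as an object of $\Cc$, has length strictly greater than $|\Irr(\Cc)|$ in the non-semisimple case, which is where the genuine input — finiteness plus non-semisimplicity implies some $P_U \ncong U$ — enters. Identifying the image of $\chi$ with the "projective part" and showing it is proper is the crux; the algebra-homomorphism and injectivity statements of part~1 are comparatively routine diagram chases with coends.
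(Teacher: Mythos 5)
First, note that the paper does not actually prove Theorem~\ref{thm:chiinj}: it is imported verbatim from \cite{Fuchs:2010mw} and \cite{Shimizu:2015}, so there is no internal proof to compare with. Judged on its own merits, your sketch has two genuine gaps. The first is injectivity in part~1, which is the real content of the theorem (Shimizu's Thm.\,3.11), and your separating-functional argument does not deliver it. The only functionals on $\Cc(\one,\coend)$ available via $\psi$ are composition with elements of $\Cc(\coend,\one)\cong\End(\Id_\Cc)$, and $\psi(\alpha)\circ\chi_V=\ev_V\circ(\id\otimes\alpha_V)\circ\widetilde\coev_V$ is a categorical trace of $\alpha_V$; for $V$ simple this equals $\langle\alpha_V\rangle\cdot\dim(V)$ and hence vanishes identically whenever $V$ has categorical dimension zero --- which happens in essentially every non-semisimple example (by Remark~\ref{rem:modified-trace-consequences}, simple projectives have vanishing dimension unless $\Cc$ is semisimple). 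So these functionals do not separate the irreducible characters, and the ``$U$-isotypic projector'' does not rescue the argument; moreover, invoking $\coint_\coend$ presupposes unimodularity, which is not assumed in part~1. Shimizu's actual proof passes through the adjunction with the Drinfeld centre and is not a routine diagram chase. A smaller issue: one cannot infer that $\chi$ is an algebra map from $\Omega\circ\chi=\psi\circ\smap$ unless $\Omega$ is injective, i.e.\ unless $\Cc$ is factorisable; the direct verification of $\mu_\coend\circ(\chi_V\otimes\chi_W)=\chi_{V\otimes W}$ that you mention in passing is the correct route.

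The second gap is the converse direction of part~2. The claim that $\dim_k Z(E)\geq|\Irr(\Cc)|$ ``always, with equality iff $E$ is semisimple'' is false for general finite-dimensional algebras: for the upper-triangular $2\times 2$ matrices one has $\dim_k Z=1<2=|\Irr|$, and a product such as $T_2\times k[x]/\langle x^2\rangle$ gives equality without semisimplicity. The statement you want does hold for \emph{symmetric} algebras, via $Z(E)\cong C(E)\supseteq R(E)$, $\dim_k R(E)=|\Irr|$, and Lemma~\ref{lem:ZA=ReyA}; but that $E$ is symmetric is not available from ``unimodular braided pivotal'' alone at this point --- in the paper it is only obtained from the modified trace, which requires a simple projective object (factorisability plus Condition~P). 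So you would either have to prove separately that unimodularity and pivotality force $E$ to be symmetric, or argue as in \cite{Shimizu:2015} by identifying the image of $\chi$ with the span of projective characters and comparing with the rank of the Cartan matrix, which is essentially what the paper does later in Proposition~\ref{prop:cy-ssi-inclusion}.
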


This theorem
	remains true for $\Cc$ not braided
(but still a pivotal finite tensor category over an algebraically closed field).

Define, 	for
	$\Cc$ unimodular and 
$M \in \Cc$,
\be\label{eq:tildechi-def}
\phi_M ~:=~ \psi^{-1}\big(\Rad^{-1}(\chi_M)\big) ~\in~ \End(\Id_\Cc)\ .
\ee
After substituting the definitions, 	one can check that, for all $X \in \Cc$,
\be\label{eq:phi_M-explicit}
	(\phi_M)_X ~= 
	  \raisebox{-0.5\height}{\setlength{\unitlength}{.75pt}
  \begin{picture}(100,231)
   \put(0,0){\scalebox{.75}{\includegraphics{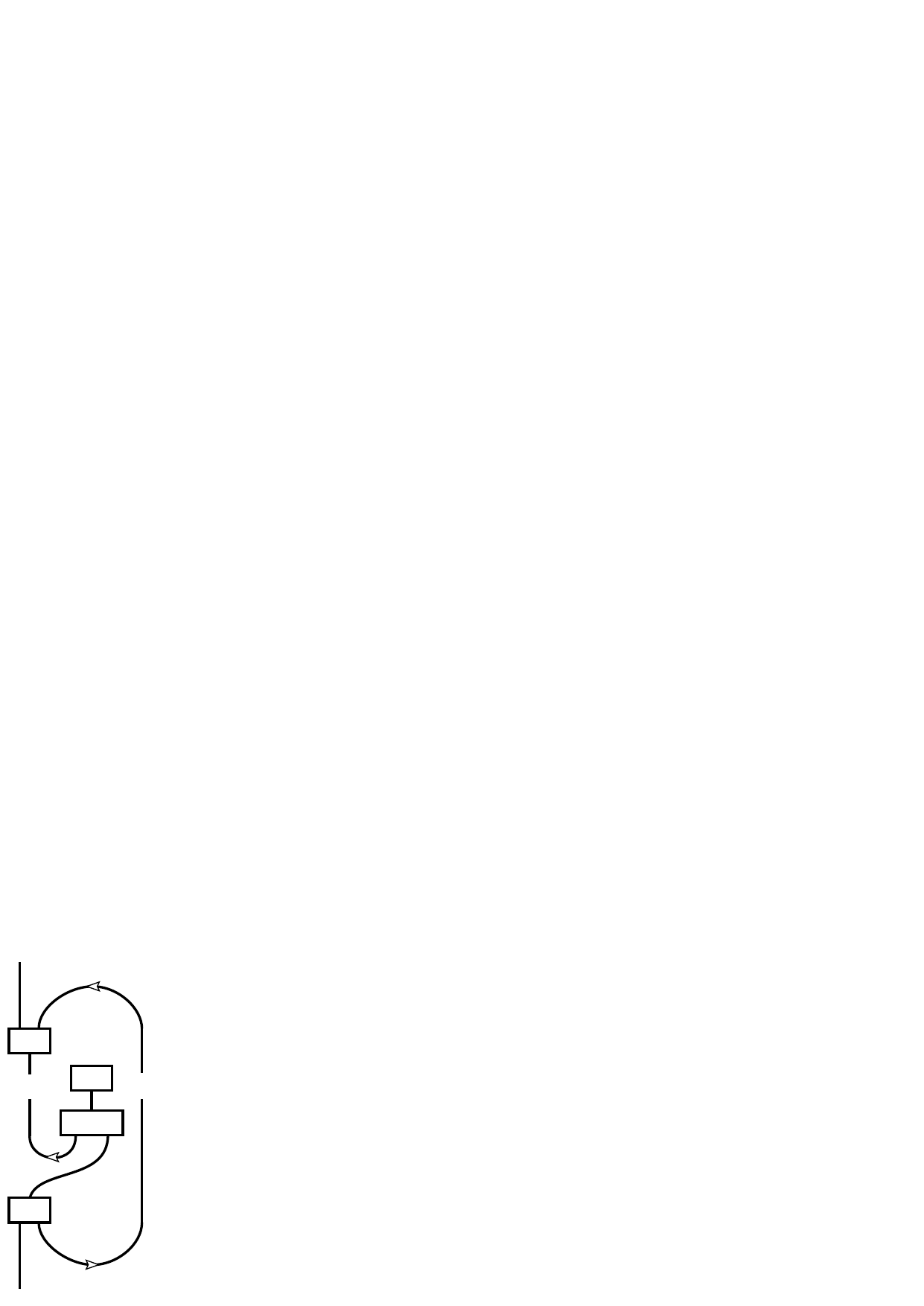}}}
   \put(0,0){
     \put(0,0){
     \put (8,0) {\scriptsize$ X $}
     \put (8,226) {\scriptsize$ X $}
     \put (14,58) {\scriptsize$ \id $}
     \put (14,167) {\scriptsize$ \id $}
     \put (42,116) {\scriptsize$ \iota_{X\otimes M^*} $}
     \put (52,145) {\scriptsize$ \Lambda^{\mathrm{co}}_\coend $}
     \put (0,139) {\scriptsize$ X{\otimes}M^* $}
     \put (86,139) {\scriptsize$ M $}
     }\setlength{\unitlength}{1pt}}
  \end{picture}}
 \quad .
\ee
As $\chi_M$ only depends on $[M] \in \Gr(\Cc)$, so does $\phi_M$.
Since $\psi$ and $\Rad$ are isomorphisms, by Theorem~\ref{thm:chiinj} the set $\{\, \phi_U \,|\, U \in \Irr(\Cc)\,\}$ is linearly independent in $\End(\Id_\Cc)$.
We note that since $\rho$ is not necessarily an algebra map, neither is the linear map $\Gr_k(\Cc) \to \End(\Id_\Cc)$, $[M] \mapsto \phi_M$.

We define the linear map $\mathscr{S}_\Cc : \End(\Id_\Cc) \to \End(\Id_\Cc)$, the {\em modular $S$-transformation}, as
\be\label{eq:SC-def}
\mathscr{S}_\Cc ~=~ \big[\, \End(\Id_\Cc) \xrightarrow{\,\psi\,} \Cc(\coend,\one) \xrightarrow{~\Rad~} \Cc(\one,\coend) \xrightarrow{~\Omega~} \Cc(\coend,\one) \xrightarrow{~\psi^{-1}~} \End(\Id_\Cc) \, \big] \ .
\ee
We have seen that $\psi$ and $\Omega$ are algebra maps, while $\Rad$ is in general not.
Thus in general $\mathscr{S}_\Cc$ is not an algebra map, either.
Recall the definition of $\smap$ in \eqref{eq:sigma-Gr-EndId-def}.
A straightforward calculation shows that $\smap([M]) = \psi^{-1}\circ \Omega (\chi_M)$. Combining this with \eqref{eq:tildechi-def} and \eqref{eq:SC-def} gives, for all $M \in \Cc$,
\be\label{eq:smap-via-SC-phiM}
	\smap([M]) ~=~ \mathscr{S}_\Cc(\phi_M) \ ,
\ee
cf.\ \cite[Rem.\,3.10\,(2)]{Gainutdinov:2016qhz}.
 In particular, by Lemma~\ref{lem:sigma_Gr_ringhom}, the combination $[M] \mapsto \mathscr{S}_\Cc(\phi_M)$ is an algebra homomorphism from $\Gr_k(\Cc)$ to $\End(\Id_\Cc)$.

\begin{remark}
Since the coend $\coend$ is unique up to unique isomorphism, and the cointegral $\coint_\coend$ is unique up to sign, 
it is straightforward to verify that
the $\phi_M$, $M \in \Cc$ and $\mathscr{S}_\Cc$ depend on the choice of 
$(\coend,\coint_\coend)$
only up to an overall sign, see \cite[Prop.\,5.3]{FGRprep1}. In particular, $\mathscr{S}_\Cc(\phi_M)$ is independent of the choice of 
$(\coend,\coint_\coend)$, as we already saw explicitly in \eqref{eq:smap-via-SC-phiM}.
\end{remark}

Next we introduce 
	four equivalent 
non-degeneracy requirement for the braiding. (This does not require a pivotal structure, hence we use the letter $\mathcal{D}$ and reserve $\Cc$ for the pivotal case as declared above.)

\begin{theorem}[{\cite[Thm.\,1.1]{Shimizu:2016}}]\label{thm:factequiv}
Let $\mathcal{D}$ be a braided finite tensor category over an algebraically closed field.
The following conditions are equivalent:
\begin{enumerate}
\item Every transparent object in $\mathcal{D}$ is isomorphic to a direct sum of tensor units. ($T \in\mathcal{D}$ is {\em transparent} if for all $X \in \mathcal{D}$, $c_{X,T} \circ c_{T,X} = \id_{T\ot X}$.)
\item The canonical braided monoidal functor 
$\mathcal{D} \boxtimes \overline{\mathcal{D}} \to \mathcal{Z}(\mathcal{D})$ 
is an equivalence. (Here, $\boxtimes$ is the Deligne product, $\overline{\mathcal{D}}$ is the same tensor category as $\mathcal{D}$, but has inverse braiding, 
and $\mathcal{Z}(\mathcal{D})$ is the Drinfeld centre of $\mathcal{D}$.)
\item The pairing $\omega : \coend \ot \coend \to \one$ is non-degenerate (in the sense that there exists a copairing $\one \to \coend \ot \coend$).
\item $\Omega$ is an isomorphism.
\end{enumerate}
\end{theorem}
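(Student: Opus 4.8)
I would prove the four conditions equivalent by exhibiting each as a different face of a single ``non-degeneracy defect'' controlled by the Hopf pairing $\omega$ on the coend $\coend$. The equivalence $(3)\Leftrightarrow(4)$ is the most nearly formal: by construction $\Omega$ is just the map induced by $\omega$, namely $\Omega(f)=\omega\circ(f\otimes\id_{\coend})$. If $\omega$ is non-degenerate, the copairing $\one\to\coend\otimes\coend$ yields a two-sided inverse of $\Omega$ by a zig-zag manipulation; conversely one uses that $\omega$ is a \emph{Hopf} pairing on $\coend$, so that an inverse of $\Omega$ can be bootstrapped --- via the product and coproduct of $\coend$ --- into a genuine copairing. (In the Hopf-algebra special case this is the classical equivalence between non-degeneracy of the monodromy pairing and bijectivity of the Drinfeld map.)

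For the remaining conditions I would pass through the Drinfeld centre. The main external inputs are that $\mathrm{FPdim}\,\mathcal{Z}(\mathcal{D})=(\mathrm{FPdim}\,\mathcal{D})^2=\mathrm{FPdim}(\mathcal{D}\boxtimes\overline{\mathcal{D}})$ and that a fully faithful tensor functor between finite tensor categories of equal Frobenius--Perron dimension is an equivalence. Hence the canonical braided functor $F\colon\mathcal{D}\boxtimes\overline{\mathcal{D}}\to\mathcal{Z}(\mathcal{D})$ is an equivalence iff it is fully faithful, and everything reduces to computing $\Hom_{\mathcal{Z}(\mathcal{D})}\big(F(X\boxtimes Y),F(X'\boxtimes Y')\big)$. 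Describing $\mathcal{Z}(\mathcal{D})$ as modules in $\mathcal{D}$ via the right adjoint $R$ of the forgetful functor $U\colon\mathcal{Z}(\mathcal{D})\to\mathcal{D}$, and using that $\omega$ provides a comparison morphism relating $\coend$ to $R(\one)$, one finds that $F$ is essentially ``induction along $\omega$'', so that the failure of $F$ to be fully faithful is measured precisely by the radical of $\omega$; this vanishes iff $\Omega$ is an isomorphism, which gives $(2)\Leftrightarrow(4)$. For $(2)\Leftrightarrow(1)$: if $T$ is transparent, the half-braidings $c_{-,T}$ and $c_{T,-}^{-1}$ coincide, so $F(T\boxtimes\one)\cong F(\one\boxtimes T)$ in $\mathcal{Z}(\mathcal{D})$, whence full faithfulness of $F$ forces $T\boxtimes\one\cong\one\boxtimes T$ in $\mathcal{D}\boxtimes\overline{\mathcal{D}}$, i.e.\ $T$ is a direct sum of tensor units; conversely, the same $\Hom$-computation identifies the nontrivial transparent objects with exactly what populates the radical of $\omega$.

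The main obstacle is this $\Hom$-computation in the non-semisimple generality of finite tensor categories. In the semisimple (fusion) case it is M\"uger's theorem, established with characters and the semisimplicity of $\mathcal{Z}(\mathcal{D})$; those tools are unavailable here, and one must instead argue directly with the coend $\coend$, its two-sided integral $\Lambda_\coend$ and cointegral $\coint_\coend$, and the behaviour of the (possibly non-exact) adjunction $U\dashv R$, tracking carefully how $\omega$ enters it. Put differently, the real content is to show that the three a priori distinct defects --- a nontrivial transparent object, a nonzero radical of $\omega$, and a non-invertible $\Omega$ --- coincide in this setting; once that is in place, the Frobenius--Perron dimension count closes the proof.
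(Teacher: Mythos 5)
This theorem is not proved in the paper at all: it is quoted verbatim from Shimizu's work \cite{Shimizu:2016} (his Theorem~1.1), so there is no in-paper argument to compare yours against. Judged on its own, your outline does track the broad architecture of the known proof — reduction of ``equivalence'' to ``fully faithful'' via the Frobenius--Perron dimension count $\mathrm{FPdim}\,\mathcal{Z}(\mathcal{D})=(\mathrm{FPdim}\,\mathcal{D})^2$, the use of the adjunction between $\mathcal{Z}(\mathcal{D})$ and $\mathcal{D}$ to compute the relevant Hom-spaces, and the identification of the M\"uger centre with the radical of $\omega$. But as written it has genuine gaps beyond the one you flag yourself.

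First, the claim that $(3)\Leftrightarrow(4)$ is ``nearly formal'' is wrong in the direction $(4)\Rightarrow(3)$. In this paper $\Omega$ is a $k$-linear map $\Cc(\one,\coend)\to\Cc(\coend,\one)$ on Hom-spaces, whereas non-degeneracy of $\omega$ in the sense of condition~3 means the morphism $\coend\to\coend^*$ induced by $\omega$ is invertible \emph{in the category}; the latter is strictly stronger a priori, and bijectivity of $\Omega$ does not yield a copairing by any zig-zag or Hopf-algebraic bootstrap alone. In Shimizu's argument this implication is obtained by closing a cycle through the other conditions (using integrals/cointegrals of $\coend$ and unimodularity), not directly. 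Second, your derivation of condition~1 from condition~2 only shows that every \emph{simple} transparent object is isomorphic to $\one$ (via $F(T\boxtimes\one)\cong F(\one\boxtimes T)$ and the classification of simples in $\mathcal{D}\boxtimes\overline{\mathcal{D}}$); to conclude that every transparent object is a \emph{direct sum} of units you must additionally show that the M\"uger centre is closed under subquotients and has no nontrivial self-extensions of $\one$, which is a separate argument. Finally, the central Hom-computation relating full faithfulness of $F$ to the radical of $\omega$ — which you correctly identify as the real content — is deferred rather than carried out, so the proposal is an accurate road map but not a proof.
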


\begin{definition}\label{def:factorisable}
$\mathcal{D}$ as in Theorem~\ref{thm:factequiv} is called {\em factorisable} if it satisfies the equivalent conditions 
1--4 there.
\end{definition}

	In \cite[Prop.\,4.5]{Etingof:2004} the following result is proved (using formulation 2 of factorisability in Theorem~\ref{thm:factequiv}):

\begin{theorem} 
Let $\Cc$ be factorisable. Then $\Cc$ is unimodular.
\end{theorem}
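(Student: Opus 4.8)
The plan is to follow the argument behind \cite[Prop.\,4.5]{Etingof:2004}: deduce unimodularity of $\Cc$ from formulation~2 of Theorem~\ref{thm:factequiv} together with the general fact that the Drinfeld centre of any finite tensor category is unimodular. Recall that a finite tensor category $\Mc$ has a distinguished invertible object $D_\Mc$ (see \cite{Etingof:2004}), which is isomorphic to $\one$ precisely when $\Mc$ is unimodular, and which depends only on the underlying non-braided finite tensor category of $\Mc$, being defined in terms of $P_\one$ and the duality alone.

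First I would record two formal facts. (a) Deligne products: for finite tensor categories $\Mc,\Mc'$ one has $P_\one^{\Mc\boxtimes\Mc'}\cong P_\one^\Mc\boxtimes P_\one^{\Mc'}$, the simple objects of $\Mc\boxtimes\Mc'$ are exactly the $S\boxtimes T$ with $S,T$ simple, and duality is computed componentwise; hence $D_{\Mc\boxtimes\Mc'}\cong D_\Mc\boxtimes D_{\Mc'}$. Since an invertible object is simple and $X\boxtimes Y\cong\one\boxtimes\one$ forces $X\cong\one$ and $Y\cong\one$, it follows that $\Mc\boxtimes\Mc'$ is unimodular iff both $\Mc$ and $\Mc'$ are. (b) Since $\overline{\Cc}$ has the same underlying finite tensor category as $\Cc$, we get $D_{\overline\Cc}\cong D_\Cc$, so $\overline\Cc$ is unimodular iff $\Cc$ is. Combining (a) and (b): $\Cc\boxtimes\overline\Cc$ is unimodular iff $\Cc$ is.

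Next, since $\Cc$ is factorisable, formulation~2 of Theorem~\ref{thm:factequiv} gives a braided monoidal equivalence $\Cc\boxtimes\overline\Cc\xrightarrow{\;\sim\;}\mathcal{Z}(\Cc)$; in particular these categories are monoidally equivalent. As unimodularity is a monoidal-equivalence invariant, $\Cc\boxtimes\overline\Cc$ is unimodular iff $\mathcal{Z}(\Cc)$ is. With the previous paragraph this reduces the theorem to the single statement that $\mathcal{Z}(\Cc)$ is unimodular.

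That statement is where I expect the genuine content to lie (the rest being the bookkeeping above). For $\Cc=\Rep(H)$ it is the classical fact that the Drinfeld double $D(H)$ is unimodular; for general $\Cc$ it can be obtained from the analogue of Radford's $S^{4}$ formula applied to $\mathcal{Z}(\Cc)$, by computing that its distinguished invertible object is trivial, or from an explicit two-sided integral in the coend/central-monad presentation of $\mathcal{Z}(\Cc)$ (one must take the non-circular route here, since $\mathcal{Z}(\Cc)$ is itself factorisable). A self-contained alternative that avoids the centre altogether: by formulation~3 of Theorem~\ref{thm:factequiv} the Hopf pairing $\omega$ is non-degenerate, so by \cite{Lyubashenko:1995} the coend $\coend$ carries a two-sided integral (cf.\ the footnote after Theorem~\ref{thm:chiinj}); one then uses that unimodularity of $\Cc$ is equivalent to the existence of a two-sided integral on $\coend$ (cf.\ \cite[Thm.\,6.8]{Shimizu:2014}). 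Either route isolates exactly one non-formal input, with the $D$-bookkeeping doing the rest.
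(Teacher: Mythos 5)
Your proposal is correct and follows essentially the same route as the paper, which simply cites \cite[Prop.\,4.5]{Etingof:2004}: that proof is precisely the reduction, via formulation~2 of Theorem~\ref{thm:factequiv}, to unimodularity of the Drinfeld centre, combined with the observation that the distinguished invertible object of $\Cc\boxtimes\overline{\Cc}$ is $D_\Cc\boxtimes D_\Cc$. The one non-formal input you isolate, unimodularity of $\mathcal{Z}(\Cc)$, is exactly \cite[Prop.\,4.4]{Etingof:2004}, so no genuinely different argument is involved.
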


In particular, for factorisable $\Cc$ the Hopf algebra given by the coend $\coend$ admits integrals and cointegrals.
Moreover, in this case we can normalise the integral such that
\be
	\omega \circ (\Lambda_\coend \ot \Lambda_\coend) = \id_\one \ ,
\ee
see e.g.\ \cite[Sec.\,5.2.3]{Kerler:2001}. Since the space of integrals is one-dimensional, this determines $\Lambda_\coend$ up to a sign.

\begin{remark}\label{rem:SC-iso-iff-fact}
Suppose $\Cc$ is factorisable. Then by the above theorem, $\Cc$ is unimodular and we have the isomorphism $\rho$ from \eqref{eq:rho-def} at our disposal (Lemma~\ref{lem:Omega-rho-properties}). Hence the linear endomorphism $\mathscr{S}_\Cc$ of $\End(\Id_\Cc)$ in \eqref{eq:SC-def} is defined, and 
by condition 4 in Theorem~\ref{thm:factequiv}, $\mathscr{S}_\Cc$ is invertible.
\end{remark}

Using Lemma~\ref{lem:sigma_Gr_ringhom} and \eqref{eq:tildechi-def}, \eqref{eq:smap-via-SC-phiM}, we get the following corollary to Theorem~\ref{thm:chiinj}.

\begin{corollary}\label{cor:inj-alg-hom}
Let $\Cc$ be factorisable. Then the algebra map 
$\sigma \colon \Gr_k(\Cc) \to \End(\Id_\Cc)$
 is injective.
If in addition $\mathrm{char}(k)=0$, the ring homomorphism
 $\smap \colon \Gr(\Cc) \to \End(\Id_\Cc)$ is injective,
too.
\end{corollary}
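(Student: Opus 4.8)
The plan is to deduce Corollary~\ref{cor:inj-alg-hom} directly from the results already assembled in this section, without any new categorical computation. Recall the two ingredients at hand: Theorem~\ref{thm:chiinj}(1) asserts that $\chi \colon \Gr_k(\Cc) \to \Cc(\one,\coend)$ is an injective $k$-algebra homomorphism, and equation~\eqref{eq:smap-via-SC-phiM} together with the factorisation $\smap([M]) = \psi^{-1}\circ\Omega(\chi_M)$ (noted just above) identifies $\smap$ with the composite $\psi^{-1}\circ\Omega\circ\chi$ as a map $\Gr_k(\Cc) \to \End(\Id_\Cc)$.

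First I would invoke the theorem stated immediately before the corollary: a factorisable $\Cc$ is unimodular, so Lemma~\ref{lem:Omega-rho-properties} applies in full and, more importantly, condition~4 of Theorem~\ref{thm:factequiv} tells us that $\Omega$ is an isomorphism. By Lemma~\ref{lem:Omega-rho-properties}(1) the map $\psi$ (hence $\psi^{-1}$) is an isomorphism of $k$-algebras as well. Therefore $\smap = \psi^{-1}\circ\Omega\circ\chi$ is a composite of the injective algebra map $\chi$ with two algebra isomorphisms, and consequently $\smap \colon \Gr_k(\Cc)\to\End(\Id_\Cc)$ is an injective algebra homomorphism. That it is an algebra homomorphism is already recorded in Lemma~\ref{lem:sigma_Gr_ringhom}; the new content is injectivity, which is now immediate.

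For the second assertion, I would use the fact, stated in the bullet list on $\Gr_k(\Ac)$ in this section, that the canonical ring homomorphism $\Gr(\Cc)\to\Gr_k(\Cc)$ is injective when $\mathrm{char}(k)=0$. The map $\smap \colon \Gr(\Cc)\to\End(\Id_\Cc)$ is by definition the composite of this inclusion with $\smap \colon \Gr_k(\Cc)\to\End(\Id_\Cc)$; being a composite of two injective ring homomorphisms, it is injective, which finishes the proof.

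There is essentially no obstacle here: the corollary is a packaging statement whose entire force has been pre-loaded into Theorem~\ref{thm:chiinj}, Theorem~\ref{thm:factequiv}, the unimodularity theorem, and Lemma~\ref{lem:Omega-rho-properties}. The only point requiring a moment's care is making sure the identification $\smap = \psi^{-1}\circ\Omega\circ\chi$ is read correctly at the level of $\Gr_k(\Cc)$ (not just $\Gr(\Cc)$), so that injectivity of $\chi$ on $\Gr_k(\Cc)$ can be used — but this is exactly the statement surrounding~\eqref{eq:smap-via-SC-phiM}, so no further work is needed.
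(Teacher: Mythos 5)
Your proof is correct and is exactly the argument the paper intends: the corollary is stated without a separate proof, being presented as an immediate consequence of Theorem~\ref{thm:chiinj}, the factorisation $\smap = \psi^{-1}\circ\Omega\circ\chi$ from \eqref{eq:tildechi-def} and \eqref{eq:smap-via-SC-phiM}, the invertibility of $\Omega$ (condition 4 of Theorem~\ref{thm:factequiv}) and of $\psi$, and the injectivity of $\Gr(\Cc)\to\Gr_k(\Cc)$ in characteristic zero. Nothing is missing.
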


The following important theorem is proved in \cite{Lyubashenko:1995},
see also 
	\cite[Sec.\,5.1]{FGRprep1} 
for a summary in the notation used here.

\begin{theorem}\label{thm:proj-sl2Z-EndId}
Let $\Cc$ be factorisable and ribbon with ribbon twist
$\theta\in\End(\Id_\Cc)$. Then there is 
a projective representation of $SL(2,\Zb)$ on $\End(\Id_\Cc)$ 
	for which the $S$ and $T$ generators act as
\be
\big(
\begin{smallmatrix}
0 & -1 \\
1 & 0
\end{smallmatrix}
\big)
~\longmapsto~ \mathscr{S}_\Cc
\quad , \quad
\big(
\begin{smallmatrix}
1 & 1 \\
0 & 1
\end{smallmatrix}
\big)
~\longmapsto~ \theta \circ (-)
\quad .
\ee
\end{theorem}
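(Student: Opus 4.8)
The plan is to deduce the statement from Lyubashenko's construction of a projective mapping-class-group action on the Hom-spaces built out of the coend $\coend$, specialised to the torus, and to transport that action to $\End(\Id_\Cc)$ along the isomorphisms $\psi$ and $\Rad$ of Lemma~\ref{lem:Omega-rho-properties}. Concretely, since $\Cc$ is factorisable it is unimodular, so $\Lambda_\coend$ and the isomorphism $\Rad$ are at our disposal; and since $\Cc$ is ribbon, the twist $\theta\in\End(\Id_\Cc)$ induces, by the universal property of $\coend$, a Hopf-algebra automorphism $\hat\theta_\coend:\coend\to\coend$. Lyubashenko's theorem (see \cite{Lyubashenko:1995}, and \cite[Sec.\,5.1]{FGRprep1} for a summary in the conventions used here) then equips $\Cc(\one,\coend)$ with a projective $SL(2,\Zb)$-action in which $\big(\begin{smallmatrix}0&-1\\1&0\end{smallmatrix}\big)$ acts as the composite $\Rad\circ\Omega$ — assembled from the Hopf pairing $\omega$ and the integral $\Lambda_\coend$ — and $\big(\begin{smallmatrix}1&1\\0&1\end{smallmatrix}\big)$ acts by post-composition $f\mapsto\hat\theta_\coend\circ f$.

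Next I would transport this along the isomorphism $\Rad\circ\psi:\End(\Id_\Cc)\to\Cc(\one,\coend)$, letting $g\in SL(2,\Zb)$ act on $\eta\in\End(\Id_\Cc)$ by $(\Rad\circ\psi)^{-1}\big(g\cdot(\Rad\circ\psi)(\eta)\big)$, and then identify the images of the generators. For $S$, conjugating $\Rad\circ\Omega$ by $\Rad\circ\psi$ yields $\psi^{-1}\circ\Rad^{-1}\circ(\Rad\circ\Omega\circ\Rad)\circ\psi=\psi^{-1}\circ\Omega\circ\Rad\circ\psi$, which is exactly $\mathscr{S}_\Cc$ by its definition \eqref{eq:SC-def}; as a consistency check, \eqref{eq:smap-via-SC-phiM} shows this $S$ sends $\phi_M$ to $\smap([M])$, as it must. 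For $T$, conjugating $f\mapsto\hat\theta_\coend\circ f$ by $\Rad\circ\psi$ should give $\eta\mapsto\theta\circ\eta$ on $\End(\Id_\Cc)$; equivalently, one has to verify $\hat\theta_\coend\circ\Rad(\psi(\eta))=\Rad(\psi(\theta\circ\eta))$ for all $\eta$. This is a direct diagram chase using the defining relation $\psi(\alpha)\circ\iota_X=\ev_X\circ(\id\otimes\alpha_X)$, the explicit formula \eqref{eq:rho-def} for $\Rad$, the compatibility of $\hat\theta_\coend$ with $\mu_\coend$, $\Delta_\coend$ and $\Lambda_\coend$, and the naturality of $\theta$; here $\End(\Id_\Cc)$ is commutative, so $\theta\circ(-)=(-)\circ\theta$ and there is no left/right ambiguity. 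Keeping track of the antipode factors and of which leg of $\coend$ the twist acts on is the only delicate bookkeeping, but it is routine.

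Since conjugation by an isomorphism carries a projective representation to a projective representation with the same multiplier, the resulting operators on $\End(\Id_\Cc)$ again form a projective $SL(2,\Zb)$-action, now with generators $\mathscr{S}_\Cc$ and $\theta\circ(-)$. The single genuinely hard input is Lyubashenko's verification that $\Rad\circ\Omega$ and $f\mapsto\hat\theta_\coend\circ f$ satisfy the defining relations of $SL(2,\Zb)$ up to scalars — notably $S^4=\id$ up to a scalar, $S^2$ central, and $(ST)^3$ a scalar multiple of $S^2$ — which rests on the non-degeneracy of $\omega$, i.e.\ condition~3 of Theorem~\ref{thm:factequiv}, together with the ribbon axioms. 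I would invoke this rather than reprove it, and it is the step I expect to be the main obstacle in a self-contained treatment.
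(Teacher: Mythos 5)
The paper offers no proof of this theorem at all: it is quoted directly from \cite{Lyubashenko:1995}, with \cite[Sec.\,5.1]{FGRprep1} cited for the translation into the present conventions. Your proposal --- invoking Lyubashenko's verification of the $SL(2,\Zb)$ relations on the coend and transporting the action along $\Rad\circ\psi$, so that the $S$-generator conjugates to $\mathscr{S}_\Cc$ by its definition \eqref{eq:SC-def} and the $T$-generator to $\theta\circ(-)$ --- is precisely that translation, hence essentially the same (implicit) argument as the paper's.
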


\begin{remark}
For $\Cc=\Rep H$ for $H$ a 
	finite-dimensional
ribbon Hopf algebra over $k$, we have $\one=k$, and the coend is given by $\coend = H^*$ with the coadjoint action \cite{Lyubashenko:1994tm,Kerler:1996}. 
Let $S$ be the antipode of $H$, $R$ the universal R-matrix, $v$ the ribbon element, and
   $u=\sum_{(R)}S(R_2)R_1$ the Drinfeld element.
We will
	(for the sake of this remark)
identify $\End(\Id_\Cc)$ with the centre $Z(H)$ of $H$.
Then we have:
\begin{itemize}
\item
The internal characters $\chi_V$ are the  $q$-characters:
the images $\chi_V(1)$
are linear forms on~$H$ 
invariant under the coadjoint action. They are the trace functions $\chi_V(1)=\mathrm{Tr}_V(u^{-1}v\,\cdot\,)$
introduced and studied in~\cite{Drinfeld},
	see also \cite{[Kerler]}.
\item
The map 
$S\circ
\psi^{-1}\circ\,\Omega$ is then \textit{the Drinfeld mapping} from the space of $q$-characters to $Z(H)$ given by 
$\chi(\cdot)\mapsto \sum_{(M)} \chi(M_1)M_2$ 
for the monodromy matrix $M=R_{21}R$,
see~\cite[Prop.\,1.2]{Drinfeld}. 
The central elements $\smap([V])_H$
(where $H$ is the left regular $H$-module)
are the images of $\mathrm{Tr}_V(u^{-1}v\,\cdot\,)$ under the Drinfeld mapping composed with~$S^{-1}$.

\item The map 
$S\circ
\psi^{-1}\circ\rho^{-1}$ is \textit{the Radford mapping} from the space of $q$-characters to $Z(H)$ 
given by  $\chi(\cdot)\mapsto (\chi\otimes\id)(\Delta(\boldsymbol{c}))$, for the (co)integral $\boldsymbol{c}\in H$ (that can be computed from $\coint_{\coend}$ using the duality maps),
see~\cite{R1} for properties of this map. 
	The
central elements $(\phi_V)_H$ from~\eqref{eq:phi_M-explicit}  are the images of $\mathrm{Tr}_V(u^{-1}v\,\cdot\,)$ under the Radford mapping 
	composed with~$S^{-1}$,
	see also \cite[Rem.\,7.10]{FGRprep1}.

\item 
The modular $S$-transformation on a quasi-triangular Hopf algebra was introduced in~\cite{Lyubashenko:1994ab}
following the categorical construction of~\cite{Lyubashenko:1995}. 
	The images of $\mathrm{Tr}_V(u^{-1}v\,\cdot\,)$ under the Drinfeld and Radford mappings are related by the modular 
$S$-trans\-for\-ma\-ti\-on~\cite[Sec.\,2]{[Kerler]}, see~\cite[Sec.\,5]{[FGST]} for the statement in this form. This result is generalised by~\eqref{eq:smap-via-SC-phiM}.
\end{itemize}
\end{remark}

\section{Existence of a simple projective object}\label{sec:projsimpleex}

Throughout the rest of this paper, the following technical condition will play an important role.
\begin{quote}
{\bf Condition P}:
A finite tensor category $\mathcal{M}$ over $k$ satisfies 
{\em Condition~P} 
if there exists a projective object $P \in \mathcal{M}$ such that $[P]$ is not nilpotent in the linearised Grothendieck ring $\Gr_k(\mathcal{M})$.
\end{quote}
Two important classes of categories which satisfy Condition~P are described in the next two lemmas.

\begin{lemma}\label{lem:ssi-sat-condP}
Every semisimple finite tensor category $\mathcal{M}$ satisfies Condition~P.
\end{lemma}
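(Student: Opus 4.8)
The plan is to show that a semisimple finite tensor category satisfies Condition~P by exhibiting a projective object whose class is not nilpotent in $\Gr_k(\mathcal{M})$. The key observation is that in a semisimple category, \emph{every} object is projective; in particular, the tensor unit $\one$ is projective. So it suffices to show that $[\one]$ is not nilpotent in $\Gr_k(\mathcal{M})$.

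This is immediate: $[\one]$ is the multiplicative identity of the ring $\Gr_k(\mathcal{M})$, and $\Gr_k(\mathcal{M})$ is a non-zero ring (it contains $\one$, which is simple, hence $[\one] \neq 0$; more concretely $\Gr_k(\mathcal{M}) = \bigoplus_{U \in \Irr(\mathcal{M})} k\,[U]$ is non-zero since $\Irr(\mathcal{M})$ is non-empty). Therefore $[\one]^n = [\one] \neq 0$ for all $n \geq 1$, so $[\one]$ is not nilpotent. Taking $P = \one$ verifies Condition~P.

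I would write the proof essentially in this one-line form: in a semisimple finite tensor category the unit object $\one$ is projective, and $[\one]$ is the unit of $\Gr_k(\mathcal{M})$, hence not nilpotent, so Condition~P holds with $P = \one$. There is no real obstacle here; the only thing to be slightly careful about is making explicit why $\one$ is projective in the semisimple case (every short exact sequence splits, so every object is projective) and why $\Gr_k(\mathcal{M}) \neq 0$ (the unit is a non-zero simple object, or equivalently $\Irr(\mathcal{M})$ is a non-empty index set for a free $k$-module basis). Both are standard facts recalled in Section~\ref{sec:conventions}.

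\begin{proof}
In a semisimple finite tensor category every short exact sequence splits, so every object is projective; in particular the tensor unit $\one$ is a projective object. Its class $[\one]$ is the multiplicative unit of the ring $\Gr_k(\mathcal{M})$. Since $\one$ is simple, $[\one] \neq 0$ in $\Gr(\mathcal{M}) = \bigoplus_{U \in \Irr(\mathcal{M})} \Zb\,[U]$, and hence $[\one] \neq 0$ in $\Gr_k(\mathcal{M}) = k \otimes_\Zb \Gr(\mathcal{M})$. Therefore $[\one]^n = [\one] \neq 0$ for all $n \geq 1$, so $[\one]$ is not nilpotent in $\Gr_k(\mathcal{M})$. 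Taking $P = \one$ shows that $\mathcal{M}$ satisfies Condition~P.
\end{proof}
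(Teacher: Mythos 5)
Your proof is correct and follows essentially the same route as the paper: take $P=\one$, note it is projective by semisimplicity, and observe that $[\one]^m=[\one]\neq 0$ in $\Gr_k(\mathcal{M})$. One small point of care: the step from $[\one]\neq 0$ in $\Gr(\mathcal{M})$ to $[\one]\neq 0$ in $\Gr_k(\mathcal{M})$ should not be read as using injectivity of $\Gr(\mathcal{M})\to\Gr_k(\mathcal{M})$ (which fails in positive characteristic), but is justified exactly as you indicate, because $[\one]$ is a basis element of the free abelian group $\Gr(\mathcal{M})=\bigoplus_{U\in\Irr(\mathcal{M})}\Zb\,[U]$ and hence remains part of a $k$-basis of $\Gr_k(\mathcal{M})$.
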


\begin{proof}
Since $\mathcal{M}$ is semisimple, every simple object is projective. Furthermore, for every simple object $U \in \mathcal{M}$, $[U]$ is non-zero in $\Gr_k(\mathcal{M})$. Hence we may take $P = \one$, the tensor unit. Then for all $m>0$, $[\one]^m = [\one] \neq 0$ in $\Gr_k(\mathcal{M})$.
\end{proof}

\begin{lemma}\label{lem:powers-nonzero-in-Gr}
Let $\Cc$ be a locally finite braided tensor category over some field. For all $X \in \Cc$, $X \neq 0$ and $m>0$ we have $[X]^m \neq 0$ in $\Gr(\Cc)$.
If in addition $\Cc$ is finite and if the field is of characteristic zero, then $\Cc$ satisfies Condition~P.
\end{lemma}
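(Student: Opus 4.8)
The statement has two parts. For the first part --- that $[X]^m \neq 0$ in $\Gr(\Cc)$ for any nonzero $X$ and any $m > 0$ --- the key observation is that $[X]^m = [X^{\otimes m}]$, so it suffices to show $X^{\otimes m} \neq 0$ whenever $X \neq 0$, and then that a nonzero object has nonzero class in the Grothendieck group (which is clear since $\Gr(\Cc) = \bigoplus_{U \in \Irr(\Cc)} \Zb\,[U]$ and a nonzero object of finite length has at least one simple subquotient). So the real content is: \emph{in a braided rigid monoidal category, the tensor product of nonzero objects is nonzero}. First I would recall why $X^{\otimes 2} \neq 0$: if $X \otimes X = 0$, then applying the rigidity/duality structure, e.g.\ tensoring with $X^*$ and using that $X^* \otimes X \otimes X = 0$ while $\mathrm{ev}_X, \mathrm{coev}_X$ exhibit $\one$ as a retract of $X^* \otimes X$ only after one more tensor factor --- more cleanly: the evaluation $\mathrm{ev}_X : X^* \otimes X \to \one$ and coevaluation give that $X$ is a retract of $X \otimes X^* \otimes X$, so $X \otimes X^* \otimes X = 0$ would force $X = 0$. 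Here the braiding enters: $c$ gives $X \otimes X^* \otimes X \cong$ (up to reordering) a summand-related object to $X^{\otimes 2} \otimes X^*$, hence $X^{\otimes 2} = 0 \Rightarrow X \otimes X^* \otimes X = 0 \Rightarrow X = 0$, contradiction. Induction on $m$ then handles general powers: $X^{\otimes m} = X^{\otimes(m-1)} \otimes X$, both factors nonzero by the inductive hypothesis, so the product is nonzero by the $m=2$ case applied with possibly different objects --- so I'd actually prove the slightly stronger lemma ``$A \otimes B \neq 0$ for $A, B \neq 0$'' directly, which makes the induction trivial.

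For the second part, assume $\Cc$ is finite and $\mathrm{char}(k) = 0$; I must produce a projective $P$ with $[P]$ not nilpotent in $\Gr_k(\Cc)$. The natural candidate is $P = P_\one$, the projective cover of the tensor unit (or any projective generator, or $P_U \otimes P_U^*$, etc.). The point is that $[P]$ is nilpotent in $\Gr_k(\Cc)$ iff $[P]^m = 0$ in $\Gr_k(\Cc)$ for some $m$, and since $\mathrm{char}(k) = 0$ the canonical map $\Gr(\Cc) \to \Gr_k(\Cc)$ is injective (stated explicitly in the excerpt: ``the canonical ring homomorphism $\Gr(\Cc) \to \Gr_k(\Cc)$ is injective iff $k$ has characteristic zero''). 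Therefore $[P]^m = 0$ in $\Gr_k(\Cc)$ would force $[P]^m = 0$ already in $\Gr(\Cc)$, i.e.\ $[P^{\otimes m}] = 0$, i.e.\ $P^{\otimes m} = 0$ by the first part --- but $P$ is a nonzero projective object (projective covers of simples are nonzero), so $P^{\otimes m} \neq 0$. Contradiction, hence $[P]$ is not nilpotent in $\Gr_k(\Cc)$, and Condition~P holds.

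**Where the difficulty lies.** The only genuinely non-formal step is the ``$A \otimes B \neq 0$'' lemma in the first part. Everything downstream (passing to powers, invoking $\mathrm{char}(k) = 0$ for injectivity of $\Gr(\Cc) \hookrightarrow \Gr_k(\Cc)$, observing $P_\one \neq 0$) is bookkeeping. I expect the cleanest argument for the lemma to be the retract argument: rigidity gives that $A$ is a direct summand (a retract, via the zig-zag identity) of $A \otimes A^* \otimes A$; if $A \otimes B = 0$ one wants to conclude $A \otimes A^* \otimes A = 0$, which needs $A^* \otimes A$ to be ``reachable'' from $B$ --- this is false in general without braiding, which is exactly why the hypothesis that $\Cc$ is braided is invoked. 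With the braiding, $A \otimes B = 0$ implies $B \otimes A = 0$ (apply $c$), and more usefully $A \otimes A = 0$ implies, tensoring with $A^*$ on the appropriate side and commuting factors past each other with $c$, that a reordering of $A \otimes A^* \otimes A$ vanishes; since $\otimes$ is exact (hence a zero object stays zero under reordering via the coherence iso) we get $A = 0$. I would state and prove this as a short standalone lemma, or simply cite the standard fact; the subtlety to flag explicitly is the role of the braiding, since without it the claim can fail. One should also remember to note $P_\one \neq 0$: this holds because $\one \neq 0$ in a finite tensor category (the tensor unit is simple, in particular nonzero) and a projective cover of a nonzero object is nonzero.
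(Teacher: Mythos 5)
Your proposal is correct, and it runs on the same engine as the paper's proof: rigidity plus exactness of the tensor product forces tensor powers of a nonzero object to be nonzero, finite length identifies vanishing of an object with vanishing of its class in $\Gr(\Cc)$, and injectivity of $\Gr(\Cc)\to\Gr_k(\Cc)$ in characteristic zero, applied to any nonzero projective, yields Condition~P. The route differs in one cosmetic respect: the paper observes that $\ev_X\neq 0$ (by the zig-zag identity), hence $X^*\otimes X\neq 0$, iterates $Y\mapsto Y^*\otimes Y$, and then uses the Drinfeld isomorphism $X\cong X^{**}$ together with commutativity of $\Gr(\Cc)$ to deduce $[X^*]^m[X]^m\neq 0$ and hence $[X]^m\neq 0$; you instead prove the cleaner auxiliary statement that $A\otimes B\neq 0$ whenever $A,B\neq 0$ and induct on $m$ directly, which is arguably tidier. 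One side remark of yours is incorrect, though it does not damage the proof: the claim ``$A,B\neq 0\Rightarrow A\otimes B\neq 0$'' does \emph{not} require a braiding. Indeed, if $A\neq 0$ then the image of $\ev_A$ is a nonzero subobject of the simple unit $\one$, so $\ev_A$ is epi, and exactness of $\otimes$ makes $\ev_A\otimes\id_B$ exhibit $B$ as a quotient of $(A^*\otimes A)\otimes B\cong A^*\otimes(A\otimes B)$; thus $A\otimes B=0$ forces $B=0$ in any locally finite tensor category with simple unit. The braiding in the hypothesis is what makes $\Gr(\Cc)$ commutative, which is the property the paper's version of the iteration actually exploits; your use of it to reorder $A\otimes A\otimes A^*$ into $A\otimes A^*\otimes A$ is legitimate but avoidable.
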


\begin{proof}
Since by assumption all objects have finite length, we have
$\Gr(\Cc) = \bigoplus_{U \in \Irr(\Cc)} \Zb [U]$. Thus for $X \in \Cc$ it follows that  $X=0$ iff $[X]=0$. 

The map $\ev_X : X^* \otimes X \to \one$ is non-zero, hence $X^* \otimes X \neq 0$, and so $[X^*][X] \neq 0$. Iterating this argument shows $(X^* \otimes X)^* \otimes (X^* \otimes X) \neq 0$,
 i.e.\ $([X^*][X])^2 \neq 0$, etc. 
 We used here Drinfeld's canonical isomorphism $u_X: X \to X^{**}$ given by 
\be
u_X ~=
  \raisebox{-0.5\height}{\setlength{\unitlength}{.75pt}
  \begin{picture}(84,130)
   \put(0,0){\scalebox{.75}{\includegraphics{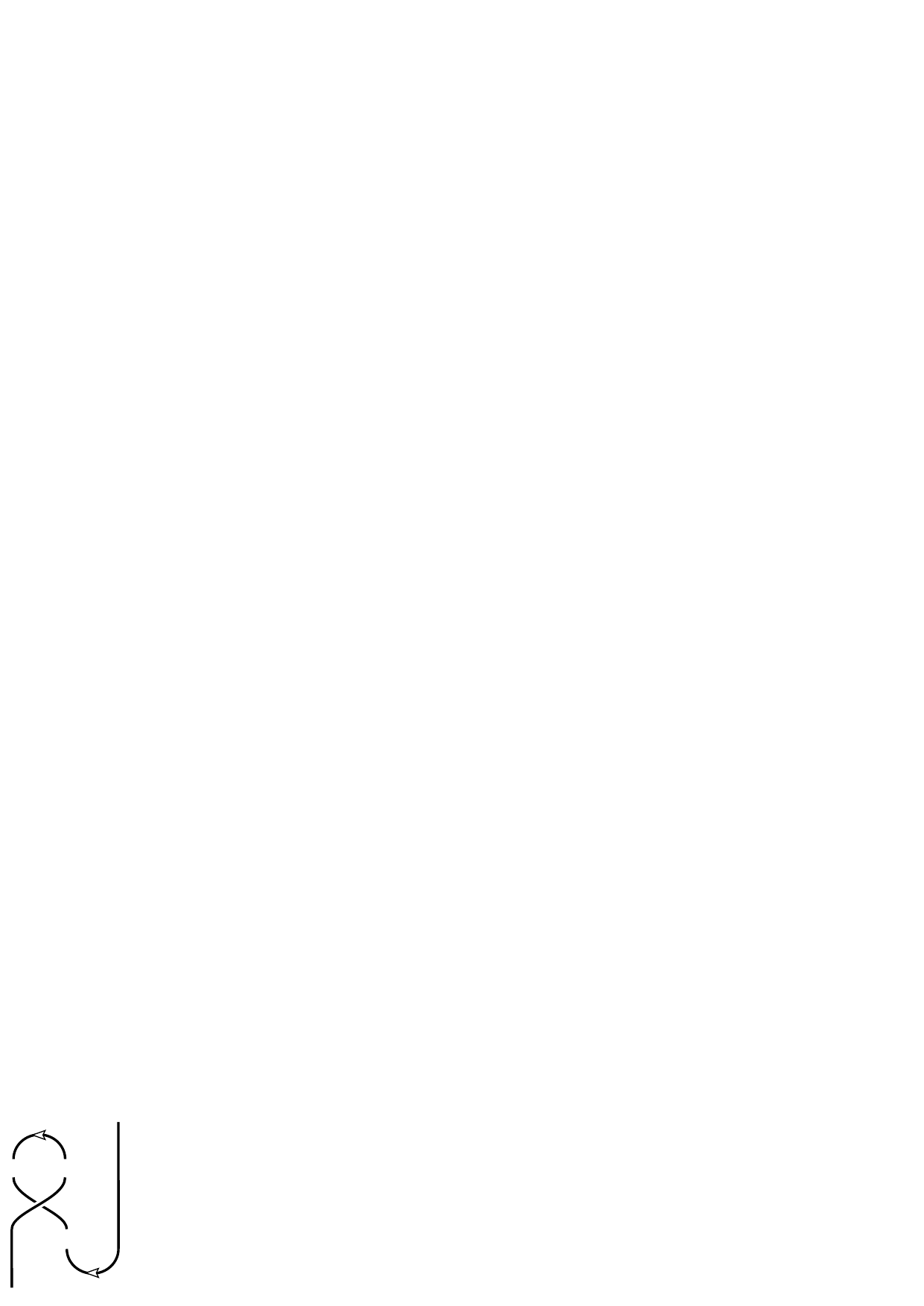}}}
     \put(0,0){
     \put (2,0) {\scriptsize$ X $}
     \put (72,122) {\scriptsize$ X^{**} $}
     \put (2,85) {\scriptsize$ X^* $}
     \put (39,39) {\scriptsize$ X^* $}
     \put (37,85) {\scriptsize$ X $}
     }\setlength{\unitlength}{1pt}
  \end{picture}}
  \quad .
\ee
Using commutativity of $\Gr(\Cc)$, after $m$ steps we find $[X^*]^m [X]^m \neq 0$.

As a finite
$\Cc$ has enough projectives, it contains a non-zero projective object $P$. Since in characteristic zero, the canonical ring homomorphism $\Gr(\Cc) \to \Gr_k(\Cc)$ is injective, $[P]^m \neq 0$ for all $m>0$ also holds in $\Gr_k(\Cc)$.
\end{proof}

\begin{remark}\label{rem:condP-fin-char}
In positive characteristic, Condition~P may or may not be satisfied. For example, suppose that $k$ is of characteristic $p$ and let $G$ be a finite group. If $p$ does not divide $|G|$, the category $k[G]$-mod of finite-dimensional $k[G]$-modules is semisimple and hence satisfies Condition~P by Lemma \ref{lem:ssi-sat-condP}.

On the other hand, if $G$ is a $p$-group the trivial $k[G]$-module is up to isomorphism the unique simple $k[G]$-module (see e.g.\ the corollary to Proposition 26 in Section 8.3 of \cite{Serre-book}). 
The projective cover of the trivial $k[G]$-module is $P := k[G]$ (see \cite[Sec.\,15.6]{Serre-book}).
The image of $P$ in $\Gr_k$ is $[P] = |G| \cdot [k] = 0$, as $p$ divides $|G|$. By the previous observations, every projective $k[G]$-module is isomorphic to a direct sum of $P$'s, and so $k[G]$-mod does not satisfy Condition~P.
\end{remark}

We can now state our first main result:

\begin{theorem}\label{thm:main-simpleproj}
Let $\Cc$ be a factorisable and pivotal finite tensor category over $k$.
If $\Cc$ satisfies Condition~P, it contains a simple projective object.
\end{theorem}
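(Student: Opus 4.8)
The plan is to exploit the injectivity of $\smap : \Gr_k(\Cc) \to \End(\Id_\Cc)$ (Corollary~\ref{cor:inj-alg-hom}) together with the representation theory of the finite-dimensional commutative $k$-algebra $\Gr_k(\Cc)$. First I would fix a projective object $P$ with $[P]$ not nilpotent in $\Gr_k(\Cc)$, which exists by Condition~P. Since $k$ is algebraically closed and $\Gr_k(\Cc)$ is finite-dimensional and commutative, it decomposes as a direct sum of local rings, and the non-nilpotence of $[P]$ means $[P]$ has non-zero image in at least one of the residue fields $\Gr_k(\Cc)/\mathfrak{m} \cong k$; equivalently there is a character (algebra homomorphism) $f : \Gr_k(\Cc) \to k$ with $f([P]) \neq 0$. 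The strategy is to extract from $f$ a simple object $U$ which turns out to be projective.

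Next I would bring in internal characters. By Theorem~\ref{thm:chiinj}(1), $\chi : \Gr_k(\Cc) \to \Cc(\one,\coend)$ is an injective algebra map, and by Lemma~\ref{lem:Omega-rho-properties} together with factorisability the maps $\psi$, $\Omega$ are algebra isomorphisms while $\rho$ is a linear isomorphism; combining with \eqref{eq:smap-via-SC-phiM}, $\smap([-]) = \psi^{-1}\circ\Omega\circ\chi$ is an injective algebra map. The character $f$ of $\Gr_k(\Cc)$ corresponds, under this identification, to a simultaneous eigenvector: there should be a non-zero element (a natural transformation, or equivalently after applying $\psi$ a morphism $\coend \to \one$, or a primitive idempotent in the image) on which every $\smap([V])$ acts by the scalar $f([V])$. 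The key computational input is the standard fact that the scalars by which $\smap([V])$ can act are governed by the action on simple objects; concretely, for $U$ simple, $\smap([V])_U = s_{V,U}\,\id_U$ for a scalar $s_{V,U}$ (an $S$-matrix-type entry), and these give all the characters of $\Gr_k(\Cc)$ when $\Cc$ is semisimple — but in the non-semisimple case one needs to argue that a character $f$ with $f([P])\neq 0$ for a \emph{projective} $P$ must already be realised on a simple object, i.e.\ $f = f_U$ where $f_U([V]) = s_{V,U}$ for some $U \in \Irr(\Cc)$.

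The heart of the argument, and the step I expect to be the main obstacle, is showing that this particular $U$ is projective. The idea is a dimension/rank count: $[P]$ acts on the span of internal characters (or on $\End(\Id_\Cc)$) and its action restricted to the relevant generalised eigenspaces is controlled by the multiplicities $[P : U]$ of simples in $P$ and by whether $U$ is projective. One expects a relation of the shape: the scalar $s_{P,U}$ is non-zero \emph{only if} $U$ is projective — this is essentially the statement that non-projective simples are ``killed'' by projective objects in the appropriate pairing, a manifestation of the fact that the categorical trace vanishes on $\Proj(\Cc)$ and that $\smap([P])$ factors through the ideal of projectives. So from $f([P]) = s_{P,U} \neq 0$ one concludes $U$ is projective. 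Making ``$s_{P,U}\neq 0 \Rightarrow U$ projective'' precise will likely use the cointegral/integral structure on $\coend$, Corollary~\ref{cor:dinat-jac-0} (the vanishing of dinatural transformations on morphisms that die into all simples), and the non-degeneracy of $\Omega$ from factorisability.

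Finally I would assemble the pieces: Condition~P gives a character $f$ of $\Gr_k(\Cc)$ non-vanishing on a projective class; the injectivity and eigenvalue analysis of $\smap$ (via $\chi$, $\Omega$, $\rho$, $\psi$) identifies $f$ with $f_U$ for some simple $U$; and the obstruction-vanishing argument forces that $U$ to be projective. Hence $U$ is a simple projective object, proving the theorem. I would also remark that in characteristic zero Condition~P is automatic by Lemma~\ref{lem:powers-nonzero-in-Gr}, recovering Theorem~\ref{thm:intro2}.
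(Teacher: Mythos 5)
Your route is viable and shares the two essential ingredients with the paper's proof --- the injectivity of $\smap : \Gr_k(\Cc) \to \End(\Id_\Cc)$ from Corollary~\ref{cor:inj-alg-hom}, and the vanishing of $\smap([P])$ on non-projective simples --- but you package them differently, and your estimate of where the difficulty sits is inverted. The step you call ``the heart of the argument'', namely $s_{P,U}\neq 0 \Rightarrow U$ projective, is exactly Lemma~\ref{lem:sigma-proj-zero-simple} and needs none of the cointegral machinery or Corollary~\ref{cor:dinat-jac-0}: by \eqref{eq:eq:sigma-Gr-EndId-def_pic}, $\smap([P])_U$ factors through the projective object $P^*\otimes(U\otimes P)$, hence through the projective cover $P_U$, and any composite $U\to P_U\to U$ vanishes when $P_U\ncong U$. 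Conversely, the step you pass over quickly --- that a character $f$ of $\Gr_k(\Cc)$ with $f([P])\neq 0$ is necessarily realised on a simple object, $f=f_U$ with $f_U([V])=\langle\smap([V])_U\rangle$ --- is the genuine extra cost of your approach. It is true, but one must argue it: transport everything to $Z(E)$ for $E=\End(G)$, $G$ a projective generator, use that left multiplication by the subalgebra $\smap(\Gr_k(\Cc))$ on $Z(E)$ is faithful, and observe that the generalised eigenvalues of a central element on each block of $E$ are exactly its scalars on the simples of that block (this is essentially Proposition~\ref{prop:phiU-End-diagonal}, which the paper only establishes later). The paper sidesteps this entirely by running the argument as a contradiction: if no simple is projective, then $\smap([P])$ vanishes on \emph{all} simples, hence is nilpotent by the elementary Lemma~\ref{lem:nat-endo-id_nilpot}/Corollary~\ref{cor:nat-endo-id_nilpot} (a short-exact-sequence induction on length), contradicting $\smap([P]^m)\neq 0$. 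What your version buys in exchange for the extra block-decomposition work is a more constructive statement: it exhibits the simple projective as the simple realising a character on which $[P]$ does not vanish, rather than merely proving existence by contradiction.
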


The proof relies on a series of lemmas and will be given at the end of this section. The idea is very simple: we show that in the absence of simple projectives, the injective algebra homomorphism $\smap$ from Corollary~\ref{cor:inj-alg-hom} would map the class $[P]$ of a projective object to a nilpotent natural endomorphism, 
	which is a contradiction to Condition~P.

\begin{remark}
~\\[-1.5em]
\begin{enumerate}\setlength{\leftskip}{-1em}
\item
Theorem~\ref{thm:main-simpleproj} is a generalisation of a result by Cohen and Westreich \cite[Cor.\,3.6]{Cohen:2008}. There, the authors show that a factorisable ribbon Hopf algebra 
	$H$
over an algebraically closed field of characteristic zero has an irreducible projective module. The proof strategy in \cite{Cohen:2008} is different from ours: in $\cite{Cohen:2008}$ it is observed that Hopf algebras with the above properties are also symmetric algebras (see Section~\ref{sec:traces} for the definition) and that fact is used to show existence of a simple projective
module.
 Our approach is in some sense opposite: we first show that there exists a simple projective module 
and then in Section~\ref{sec:traces} use the theory of
 modified traces of~\cite{Geer:2010}
 to deduce the existence of certain symmetric algebras.
In this sense, applying Theorem~\ref{thm:main-simpleproj} to $\Cc = \Rep(H)$ gives an alternative proof of the result in \cite{Cohen:2008}
	(in addition to having the benefit of not being restricted to characteristic zero).
\item
In characteristic $p$, a category $\Cc$ as in Theorem~\ref{thm:main-simpleproj} may or may not satisfy Condition~P.
To see this, we use the observations in Remark~\ref{rem:condP-fin-char}.
Let $A$ be a finite abelian group.
The Drinfeld double $D(k[A])$ is a factorisable Hopf algebra, and hence $D(k[A])$-mod is a factorisable and pivotal finite tensor category over $k$.
But as algebras, 
$D(k[A]) \cong k(A) \otimes_k k[A]$, and $k(A) = k^{|A|}$ is semisimple.

Suppose that $\mathrm{char}(k)=p$ does not divide $|A|$. Then $D(k[A])$-mod is semisimple and hence satisfies Condition~P (Lemma~\ref{lem:ssi-sat-condP}).

On the other hand, if $A$ is a $p$-group, the indecomposable projective modules in $D(k[A])$-mod are isomorphic to a simple $k(A)$-module tensored with $k[A]$. As in Remark~\ref{rem:condP-fin-char}, each of these have zero image in $\Gr_k(\Cc)$.

\item
The example in part 2 (combined with Remark~\ref{rem:condP-fin-char}) also shows that for $\mathrm{char}(k)=p$ and $A$ a finite abelian $p$-group, $D(k[A])$-mod does not contain a simple projective object. Thus we cannot drop Condition~P from Theorem~\ref{thm:main-simpleproj}.
\end{enumerate}
\end{remark}

We now turn to the proof of Theorem~\ref{thm:main-simpleproj}. We first gather the tools to see that $\sigma([P])$ is nilpotent for projective $P$ unless there is a simple projective object.

\begin{lemma}\label{lem:nat-endo-id_nilpot}
Let $\Ac$ be an abelian category and let $\eta$ be a natural 
	endomorphism
of the identity functor. If $\eta$ is zero on all simple objects, then it is nilpotent on all objects of finite length.
\end{lemma}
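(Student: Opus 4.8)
The plan is to use the finite length of an object $X$ to reduce to simple objects, on which $\eta$ vanishes by assumption. Since $X$ has finite length, there is a composition series
\[
0 = X_0 \subset X_1 \subset \cdots \subset X_n = X
\]
with each quotient $X_i/X_{i-1}$ simple. I would proceed by induction on the length $n$. For $n \le 1$ the object is either zero or simple, and $\eta_X = 0$ is immediate (so in fact already $\eta_X = 0$, not merely nilpotent; the nilpotency only becomes necessary once $n \ge 2$).

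For the inductive step, set $S = X_{n-1}$, a subobject of length $n-1$, with quotient $q : X \to U := X/S$ simple, and inclusion $j : S \hookrightarrow X$. Naturality of $\eta$ applied to $j$ and to $q$ gives a commuting ladder
\[
\xymatrix{
S \ar[r]^{j} \ar[d]_{\eta_S} & X \ar[r]^{q} \ar[d]_{\eta_X} & U \ar[d]^{\eta_U} \\
S \ar[r]^{j} & X \ar[r]^{q} & U
}
\]
By hypothesis $\eta_U = 0$, so $q \circ \eta_X = \eta_U \circ q = 0$, which means $\eta_X$ factors through $\ker q = \mathrm{im}\, j \cong S$, i.e.\ $\eta_X = j \circ h$ for some $h : X \to S$. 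Likewise $\eta_S$ is nilpotent on $S$ by the induction hypothesis, say $(\eta_S)^{n-1} = 0$ (length of $S$ bounds its nilpotency index). Now naturality of $\eta$ applied to the morphism $h : X \to S$ gives $h \circ \eta_X = \eta_S \circ h$, hence
\[
(\eta_X)^{k+1} = j \circ h \circ (\eta_X)^{k} = j \circ (\eta_S)^{k} \circ h
\]
by an immediate induction on $k$ (using $h \circ \eta_X = \eta_S \circ h$ repeatedly). Taking $k = n-1$ kills the middle factor, so $(\eta_X)^{n} = 0$.

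There is no serious obstacle here; the only point requiring a little care is the bookkeeping that lets one pass a power of $\eta_X$ through the factorisation $\eta_X = j \circ h$ and convert it into a power of $\eta_S$ — this rests entirely on naturality of $\eta$ with respect to $h$, which is legitimate since $\eta$ is a natural endomorphism of the \emph{identity} functor and $h$ is an honest morphism of $\Ac$. One should also note that the nilpotency index obtained is at most the length of $X$, which is the precise uniform statement one wants when later applying this to $\smap([P])$.
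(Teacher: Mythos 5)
Your proof is correct. It follows the same overall strategy as the paper — induction on the length, using the naturality ladder for a short exact sequence — but the inductive step is executed differently. The paper takes an arbitrary short exact sequence $0 \to A \to B \to C \to 0$ with both ends of smaller length, invokes nilpotency of $\eta_A$ and $\eta_C$ to get $g \circ (\eta_B)^m = 0 = (\eta_B)^m \circ f$, and concludes via $\mathrm{im}\,(\eta_B)^m \subset \mathrm{im}\, f \subset \ker\,(\eta_B)^m$ that $(\eta_B)^{2m} = 0$. You instead peel off a simple top quotient $U = X/S$, use the hypothesis $\eta_U = 0$ directly to factor $\eta_X = j \circ h$ through the maximal subobject $S$, and then transport powers of $\eta_X$ into powers of $\eta_S$ via naturality with respect to $h$. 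Your version buys a sharper, linear bound on the nilpotency index (at most the length of $X$, versus the roughly doubling bound implicit in the paper's argument), though for the application to $\smap([P])$ in Theorem~\ref{thm:main-simpleproj} no quantitative bound is actually needed — any nilpotency already contradicts $[P]^m \neq 0$ for all $m$. One tiny caveat: your factorisation step uses that $\ker q = \mathrm{im}\, j$ and that $j$ is monic so that $h$ with $\eta_X = j\circ h$ exists and is unique; this is exactly what exactness of $0 \to S \to X \to U \to 0$ in an abelian category provides, so the step is legitimate.
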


\begin{proof}
We use induction on the length of an object. Given an object $B$ of finite length $>1$, choose a short exact sequence $0 \to A \xrightarrow{f} B \xrightarrow{g} C \to 0$ such that $A,C$ have smaller length. Then
\be
	\xymatrix{
	A \ar[r]^{f} \ar[d]^{\eta_A} & B  \ar[r]^{g} \ar[d]^{\eta_B} & C  \ar[d]^{\eta_C}
	\\
	A  \ar[r]^{f} & B  \ar[r]^{g} & C
	}
\ee
commutes. By induction hypothesis, $\eta_A$ and $\eta_C$ are nilpotent. Hence there is $m>0$ such that
\be
	\xymatrix{
	A \ar[r]^{f} \ar[d]^{0} & B  \ar[r]^{g} \ar[d]^{(\eta_B)^m} & C  \ar[d]^{0}
	\\
	A  \ar[r]^{f} & B  \ar[r]^{g} & C
	}
\ee
commutes. Then $g \circ (\eta_B)^m = 0$ and $(\eta_B)^m \circ f = 0$, 
and since the image of $f$ is the kernel of $g$
we have the inclusions $\mathrm{im} (\eta_B)^{m}\subset\mathrm{im} f \subset \mathrm{ker} (\eta_B)^{m}$. This shows that
$(\eta_B)^{2m}=0$.
\end{proof}

\begin{corollary}\label{cor:nat-endo-id_nilpot}
Let $\Ac$ be a finite abelian category over some field and let $\eta \in \End(\Id_\Ac)$. If $\eta$ is zero on all simple objects, then $\eta$ is nilpotent, i.e.\ there is $m>0$ such that $\eta^m=0$.
\end{corollary}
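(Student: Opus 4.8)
The plan is to upgrade Lemma~\ref{lem:nat-endo-id_nilpot}, which only provides an object-dependent nilpotency exponent, to a uniform one, using that a finite abelian category has a single generator of finite length. Since $\Ac$ is finite, it is equivalent as a $k$-linear category to $\rmod{A}$ for a finite-dimensional $k$-algebra $A$; fix a generator $G$ of $\Ac$ which has finite length --- for instance $G = \bigoplus_{U \in \Irr(\Ac)} P_U$, the direct sum of the (finitely many) projective covers, or simply $G = A$ viewed as a module over itself. Because $G$ has finite length, Lemma~\ref{lem:nat-endo-id_nilpot} applies to it directly: there is $m > 0$ with $(\eta_G)^m = 0$.

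Next I would show that this single $m$ works for every object of $\Ac$. Given an arbitrary $X \in \Ac$, choose $n$ and an epimorphism $p : G^{\oplus n} \twoheadrightarrow X$, which exists since $G$ generates $\Ac$. Naturality of $\eta$ with respect to the coordinate inclusions and projections of the biproduct $G^{\oplus n}$ gives $\eta_{G^{\oplus n}} = (\eta_G)^{\oplus n}$, and hence $(\eta_{G^{\oplus n}})^m = \big((\eta_G)^m\big)^{\oplus n} = 0$. Naturality of $\eta$ with respect to $p$ gives the commuting square $\eta_X \circ p = p \circ \eta_{G^{\oplus n}}$; iterating it $m$ times yields $(\eta_X)^m \circ p = p \circ (\eta_{G^{\oplus n}})^m = 0$, and since $p$ is an epimorphism this forces $(\eta_X)^m = 0$. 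As $X$ was arbitrary, $\eta^m = 0$ in $\End(\Id_\Ac)$, as claimed.

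I do not expect a genuine obstacle here; the only point requiring attention is that finiteness of $\Ac$ is used in an essential way, via the existence of a generator of finite length, whereas Lemma~\ref{lem:nat-endo-id_nilpot} on its own gives a bound depending on the object. An equivalent way to package the second paragraph is to observe that the evaluation map $\End(\Id_\Ac) \to \End_\Ac(G)$, $\eta \mapsto \eta_G$, is an injective $k$-algebra homomorphism --- injectivity being exactly the epimorphism argument above --- so that nilpotency of $\eta_G$ in $\End_\Ac(G)$ transports back to nilpotency of $\eta$ in $\End(\Id_\Ac)$.
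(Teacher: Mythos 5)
Your proof is correct and follows essentially the same route as the paper: apply Lemma~\ref{lem:nat-endo-id_nilpot} to a projective generator $G$ of finite length to get $(\eta_G)^m=0$, then transport nilpotency to all of $\End(\Id_\Ac)$ via the fact that $\eta_G$ determines $\eta$. The paper compresses your second paragraph into the single remark ``since $\eta_G$ determines $\eta$, it follows that $\eta^m=0$''; your epimorphism argument is exactly the justification of that remark.
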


\begin{proof}
By finiteness, $\Ac$ contains a projective generator $G$ (of finite length). By Lemma~\ref{lem:nat-endo-id_nilpot}, $(\eta_G)^m=0$ for some $m$. Since $\eta_G$ determines $\eta$, it follows that $\eta^m=0$.
\end{proof}

\begin{lemma}\label{lem:sigma-proj-zero-simple}
Let $\Cc$ be a pivotal finite braided tensor category over some field and let $P,U \in \Cc$ with $P$ projective and $U$ non-projective and simple. Then $\smap([P])_U=0$.
\end{lemma}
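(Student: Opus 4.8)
The plan is to use the dinatural/Grothendieck-ring machinery developed in Section~\ref{sec:conventions}, specifically Corollary~\ref{cor:dinat-jac-0}, which is tailor-made for exactly this kind of statement. Recall that $\smap([P])_U$ is the "open Hopf link operator" pictured in \eqref{eq:eq:sigma-Gr-EndId-def_pic} with $V=P$ and $X=U$: it is built from $\widetilde\coev_P$, a double braiding acting on $P\ot U$, and $\ev_P$. The first step is to observe that, because $P$ is projective and $U$ is simple but \emph{not} projective, every morphism $P\to U$ vanishes: indeed, a non-zero morphism $P\to U$ with $U$ simple would be an epimorphism, and since $P$ is projective it would then split, exhibiting $U$ as a direct summand of the projective object $P$, hence $U$ projective --- a contradiction. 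So $\Cc(P,U)=0$.

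Next I would package $\smap([P])_U$ so that Corollary~\ref{cor:dinat-jac-0}\,(2) applies. That corollary says: if $A\in\Cc$ and $f\in\End(A)$ satisfy $u\circ f=0$ for all simple $U'$ and all $u:A\to U'$, then for any $B\in\Cc$ the "$f$-loop" diagram \eqref{eq:f-loop-zero} --- which is precisely $\smap$-type picture with $A$ threaded around $B$ and $f$ inserted on the $A$-strand --- vanishes. The idea is to take $B=U$ (the object we are evaluating on) and $A=P$, and to pick $f$ suitably. The natural candidate is $f=\id_P$, but then the hypothesis "$u\circ\id_P=0$ for all $u:P\to U'$ simple" requires $\Cc(P,U')=0$ for \emph{all} simple $U'$, which is false in general (it only holds for $U'=U$ and other non-projective simples). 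So a single insertion of $\id_P$ is too crude; instead I would project onto the $U$-isotypic part. Concretely, let $e\in\End(P)$ be an idempotent (or more precisely a suitable element of $\End(P)$) cutting out the sum of all composition factors of $P$ isomorphic to projective simples --- or, dually, I would instead run the argument with $A=P$ but first write $\smap([P])_U = \smap([Q])_U$ for a better representative $Q$ of $[P]$ in $\Gr(\Cc)$, using that $\smap$ factors through $\Gr(\Cc)$ (Lemma~\ref{lem:sigma_Gr_ringhom}).

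Here is the cleaner route I would actually pursue. Since $\smap([P])_U$ depends only on $[P]\in\Gr(\Cc)$ and on $U$, and since $\smap$ is additive, it suffices to prove the statement "$\smap([W])_U=0$" for a class of objects $W$ generating enough of $\Gr(\Cc)$ --- but that is circular. Better: apply Corollary~\ref{cor:dinat-jac-0}\,(1)/(2) directly with $A=P$ and with $f\in\End(P)$ chosen so that \emph{$f$ kills all maps from $P$ to projective simples too}. Take $f = \id_P - (\text{stuff})$... Let me instead use the hypothesis on $U$ more directly: set $B = U$ and note that in the loop \eqref{eq:f-loop-zero} with $A = P$, $f=\id_P$, the relevant "simple quotient" condition that Corollary~\ref{cor:dinat-jac-0} needs is about maps \emph{out of the object being looped}, i.e.\ out of $P$, landing in \emph{any} simple --- so as stated it is indeed too strong. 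The fix: decompose $\id_P$ according to a composition series of $P$; the only potentially surviving terms are those where the inserted endomorphism factors through a projective composition factor $U'$ of $P$, but then $U'$ is a projective simple and one argues separately. Actually the crispest argument is: if $\Cc$ had \emph{no} simple projective at all, then $\Cc(P,U')=0$ for every simple $U'$ (every simple is non-projective, so the splitting argument above applies uniformly), hence $\id_P$ satisfies the hypothesis of Corollary~\ref{cor:dinat-jac-0}\,(2), and \eqref{eq:f-loop-zero} gives $\smap([P])_{B}=0$ for \emph{all} $B$ --- that is the content needed for Theorem~\ref{thm:main-simpleproj}. For the present Lemma~\ref{lem:sigma-proj-zero-simple}, which fixes a single non-projective simple $U$, I would argue more locally: replace $P$ by the "$U$-free part" --- precisely, since $U$ is not projective, $P$ has no quotient isomorphic to $U$, so taking $A=P$, $B=U$, and $f=\id_P$, the condition needed is $u\circ\id_P=0$ for all $u:P\to U$; this holds since $\Cc(P,U)=0$, and Corollary~\ref{cor:dinat-jac-0}\,(2) with this $A,f,B$ --- wait, the corollary's hypothesis quantifies over all simples, not just $U$. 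So the honest statement is that one needs the diagram-level lemma in the form: the loop with $P$ around $U$ equals $\ev\circ(\text{double braiding})\circ\widetilde\coev$, and one can insert $\id_P$; the obstruction classes all lie in $\bigoplus_{U'}\Cc(P,U')$ but only the "geometry at $U$" matters because $B=U$. I would make this precise by following the proof of Corollary~\ref{cor:dinat-jac-0} in Appendix~\ref{app:dinat-exact} and checking that, when $B=U$ is simple, the only relevant composition factor is the top of...

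\emph{The main obstacle} is exactly this mismatch: Corollary~\ref{cor:dinat-jac-0}\,(2) as stated needs $f$ to kill maps into \emph{all} simples, whereas here we only know $\Cc(P,U)=0$ for the one simple $U$. I expect the resolution is one of: (i) reduce to the "all simples" case by a dévissage --- pass to the largest subobject/quotient $P'$ of $P$ all of whose composition factors are non-projective simples, show $\smap([P])_U$ only sees $[P']$ because the projective-simple factors of $P$ contribute terms that, when looped around the \emph{simple} $U$, vanish by a Schur-lemma/character argument; or (ii) observe that $\smap([P])_U$, being natural and $U$ simple, is a scalar $\lambda\cdot\id_U$, and compute that scalar via a partial trace which, by the projectivity of $P$ and the modified-trace formalism (or just by $\Cc(P,U)=0$ and $\Cc(U,P)=0$), must be zero. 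Route (ii) is probably the quickest to write once the relevant naturality and Schur arguments are in place; route (i) is closest to the dinaturality toolkit already set up in the paper. I would present route (i), citing Corollary~\ref{cor:dinat-jac-0} after the dévissage reduces us to a $P$ with no projective composition factors, for which $\Cc(P,U')=0$ for every simple $U'$ and the corollary applies verbatim.
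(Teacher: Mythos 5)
There is a genuine error at the very start, and the rest of the proposal never closes the gap it honestly identifies. Your claim that $\Cc(P,U)=0$ is false: an epimorphism $e:P\to U$ splits when the \emph{target} $U$ is projective (lift $\id_U$ through $e$), not when the source is. Projectivity of $P$ gives lifts of maps \emph{out of} $P$ against epimorphisms, which is irrelevant to splitting $e$ itself. The canonical surjection $\pi_U : P_U \to U$ is a non-zero map from a projective object to a non-projective simple, so $\Cc(P,U)\neq 0$ in general. The same confusion invalidates the later assertion that, in the absence of simple projectives, $\Cc(P,U')=0$ for every simple $U'$ --- any non-zero $P$ has a simple quotient. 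After that, you correctly diagnose that Corollary~\ref{cor:dinat-jac-0}\,(2) quantifies over all simples and so does not apply with $f=\id_P$, but neither of your proposed repairs (the d\'evissage, or the ``compute the scalar'' route) is carried out, so the argument remains incomplete.

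The intended proof is much more direct and uses projectivity on the other side of the composition. From \eqref{eq:eq:sigma-Gr-EndId-def_pic}, $\smap([P])_U$ is a composite $U \to P^*\otimes(U\otimes P) \to U$. Since $\Proj(\Cc)$ is a tensor ideal, the middle object is projective, so the second map (like any map from a projective object onto/into $U$) factors through the projective cover $\pi_U : P_U \to U$. Hence $\smap([P])_U$ factors as $U \to P_U \to U$. If this composite were non-zero it would be invertible ($U$ simple), exhibiting $U$ as a direct summand of the indecomposable $P_U$ and forcing $P_U\cong U$, i.e.\ $U$ projective --- contradicting the hypothesis. No dinaturality, Grothendieck-ring, or trace input is needed for this lemma.
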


\begin{proof}
By \eqref{eq:eq:sigma-Gr-EndId-def_pic},
$\smap([P])_U$ is a composition of maps $U \rightarrow P^* \otimes (U \otimes P) \rightarrow U$. 
Since $P^* \otimes (U \otimes P)$ is projective,  
the map $P^* \otimes (U \otimes P) \rightarrow U$ factors as
$P^* \otimes (U \otimes P) \rightarrow P_U \rightarrow U$, for $P_U$ the projective cover of $U$. Thus $\smap([P])_U$ can be written as a composition $U \rightarrow P_U \rightarrow U$. By assumption $P_U \ncong U$, and so this composition is zero.
\end{proof}

Combining the above results, the proof of Theorem~\ref{thm:main-simpleproj} is now a straightforward consequence of Corollary~\ref{cor:inj-alg-hom}:

\begin{proof}[Proof of Theorem~\ref{thm:main-simpleproj}]\footnote{
	We thank Victor Ostrik for suggesting a simplification of our first version of the proof.}
Suppose that $\Cc$ does not contain a simple projective object. 
By Condition~P, we can find a projective object $P \in \Cc$ such that $[P]^m \neq 0$ in $\Gr_k(\Cc)$ for all $m>0$.

By Corollary~\ref{cor:inj-alg-hom}, $\smap : \Gr_k(\Cc) \to \End(\Id_\Cc)$ is an injective algebra homomorphism. 
In particular, since $[P]^m \neq 0$ we find that for each $m>0$,
$\smap([P])^m = \smap([P]^m) \neq 0$.
But by Lemma~\ref{lem:sigma-proj-zero-simple}, the natural transformation $\smap([P])$ is zero on all non-projective simples. By assumption, all simples in $\Cc$ are non-projective, and so Corollary~\ref{cor:nat-endo-id_nilpot} implies that $\smap([P])$ is nilpotent, which is a contradiction. 

Hence $\Cc$ must contain a simple projective object.
\end{proof}

\section{Non-degenerate trace on the projective ideal}\label{sec:traces}

In this section we will review some aspects of the theory of modified traces from \cite{Geer:2010,Geer:2011a,Geer:2011b}. In particular, we will see that the existence of a simple projective object
 established in Theorem~\ref{thm:main-simpleproj} guarantees the existence of a unique-up-to-scalars non-zero modified trace on the projective ideal, and that this trace induces non-degenerate pairings on Hom-spaces. 

\medskip

Let $\Cc$ be a pivotal finite tensor category over
$k$.
The full subcategory $\Proj(\Cc)$ of all projective objects in $\Cc$ is a tensor ideal. Following \cite{Geer:2011a}, a {\em modified right trace} on $\Proj(\Cc)$ is a family of $k$-linear functions
$\{ t_P : \End(P) \to k \,|\, P \in \Proj(\Cc) \}$ such that 
\begin{enumerate}
\item  {\em (partial trace)} For all $P \in Proj(\Cc)$, $X \in \Cc$ and $f \in \End(P \ot X)$ we have
\be\label{eq:mod-tr_partial-trace}
	t_{P \ot X}(f)
	~=~ t_P(\, tr_X^r(f) \,)
\ee
where $tr_X^r$ denotes the partial right trace 
\begin{align}
	tr_X^r(f) &~:=~ 
	\big[\,
	P 
	\xrightarrow{\sim} P \one
	\xrightarrow{\id \ot \coev_X} P (X X^*)
	\xrightarrow{\sim} (PX)X^*
	\xrightarrow{f \ot \id} (P X) X^*
	\nonumber\\
	&\hspace{4em} \xrightarrow{\sim} P(XX^*)
	\xrightarrow{\id \ot \widetilde\ev_X} P\one
	\xrightarrow{\sim} P
	\,\big] 
\nonumber\\
  & ~=~ \raisebox{-0.5\height}{\setlength{\unitlength}{.75pt}
  \begin{picture}(73,135)
   \put(0,0){\scalebox{.75}{\includegraphics{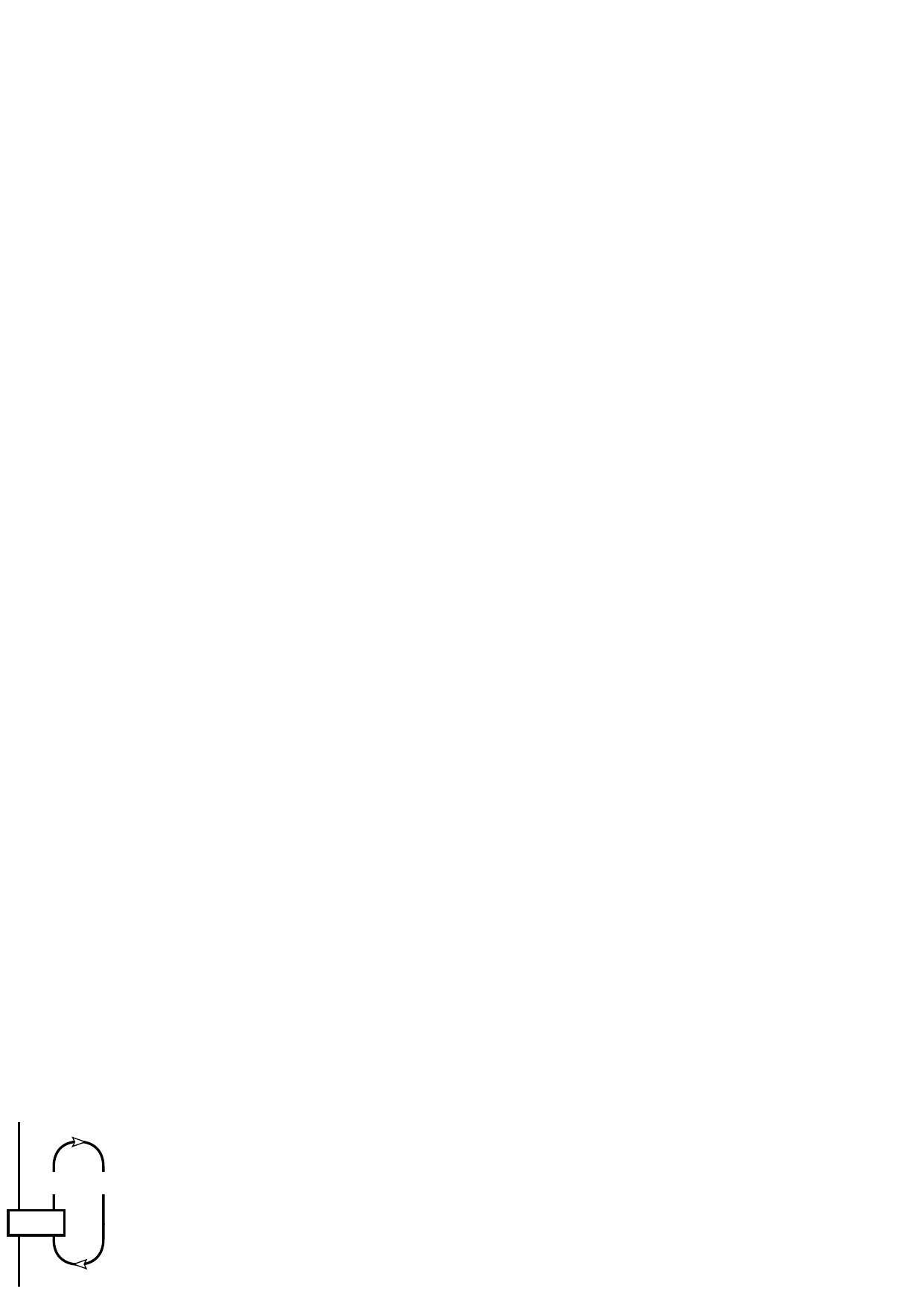}}}
     \put(0,0){
     \put (30,76) {\scriptsize$ X $}
     \put (62,76) {\scriptsize$ X^* $}
     \put (9,2) {\scriptsize$ P $}
     \put (9,123) {\scriptsize$ P $}
     \put (20,50) {\scriptsize$ f $}
     }
     \setlength{\unitlength}{1pt}
  \end{picture}}
  \qquad .
  \label{eq:partial-trace-def}
\end{align}
\item {\em (cyclicity)} For all $P,P' \in \Proj(\Cc)$, $f : P \to P'$, $g:  P' \to P$ we have
\be\label{eq:mod-tr_cyclic}
	t_{P'}(f \circ g) = t_P(g \circ f) \ .
\ee
\end{enumerate}

Given a modified right trace, one can define a family of pairings by setting, for all $M \in \Cc$, $P \in Proj(\Cc)$,
\be\label{eq:pairingMP}
	\Cc(M,P) \times \Cc(P,M) \to k
	\quad , \quad
	(f,g) \mapsto t_P(f \circ g) \ .
\ee
Note that the cyclicity property \eqref{eq:mod-tr_cyclic} only applies when also $M$ is projective.

\begin{remark}
There are also natural notions of left and left-right (or two-sided) modified traces for a pivotal category,
see~\cite{Geer:2011a} for details.
For $\Cc$ in addition ribbon, each modified right trace
on $\Proj(\Cc)$ is also a two-sided modified trace~\cite[Thm.\,3.3.1]{Geer:2010}.
In this case, one can construct  an isotopy invariant of
ribbon graphs in $\Rb^3$, where at least one strand is coloured by a projective object $P$ of $\Cc$, by cutting  the strand and computing $t_P$ of the corresponding endomorphism of $P$,
 see~\cite[Thm.\,3]{Geer:2009}. (There, the modified trace was used implicitly as the modified dimension for simple projectives.)
\end{remark}

In this paper we will only use modified right traces, and we will refer to these just as ``modified traces''.

\begin{proposition}
\label{prop:mtrace}
Let $\Cc$ be a unimodular pivotal finite tensor category over 
	$k$.
Suppose that $\Cc$ contains a simple projective object.
\begin{enumerate}
\item There exists a nonzero modified trace on $\Proj(\Cc)$, and this modified trace is unique up to scalar multiples. 
\item For any choice of non-zero modified trace $t$ on $\Proj(\Cc)$ and for all $M \in \Cc$, $P \in Proj(\Cc)$, the pairing \eqref{eq:pairingMP} is non-degenerate.
\end{enumerate}
\end{proposition}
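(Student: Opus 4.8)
\textbf{Proof proposal for Proposition~\ref{prop:mtrace}.}

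The plan is to obtain both statements from the classical representation theory of symmetric (Frobenius) algebras, after transporting the categorical situation to a module category. Since $\Cc$ is finite, fix a projective generator $G$ and set $E = \End(G)$, so that $\Cc \simeq \rmod E$ as $k$-linear categories, and under this equivalence $\Proj(\Cc)$ corresponds to the full subcategory of projective right $E$-modules, which as an additive category is generated by $E$ itself. A modified trace on $\Proj(\Cc)$ is then the same data as a family of linear functionals on $\End_E(Q)$ for $Q$ projective, satisfying cyclicity and the partial-trace property; because every projective is a summand of $G^{\oplus n}$ and the trace is determined by additivity and cyclicity from its value on $G$, a modified trace is equivalent to a single linear map $t_G : E \to k$ whose associated bilinear form $(x,y)\mapsto t_G(xy)$ is symmetric (from cyclicity) and whose value is compatible with the partial-trace axiom — and the content of the theory of \cite{Geer:2010} is precisely that, for $\Cc$ unimodular, the partial-trace axiom forces $(x,y)\mapsto t_G(xy)$ to be a \emph{nondegenerate} symmetric form, i.e. $E$ to be a symmetric Frobenius algebra, with $t_G$ unique up to scalar once a simple projective exists. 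So the first step is to invoke the results of \cite{Geer:2010,Geer:2011a,Geer:2011b}: unimodularity of $\Cc$ guarantees that any modified trace is nondegenerate in the above sense, and the existence of a simple projective object $S$ pins down uniqueness, because $t$ restricted to $\End(S)=k$ must be a nonzero scalar (nondegeneracy on $S$), and scaling $t$ so that $t_S(\id_S)=1$ rigidifies the choice; existence of a nonzero such $t$ is then built by extending from $S$ along the generator using the partial-trace and cyclicity axioms. This gives part~1.

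For part~2, I would argue entirely on the algebra side. Having part~1, $E$ is a symmetric Frobenius algebra via $\nu := t_G$: the pairing $E\times E\to k$, $(x,y)\mapsto \nu(xy)$, is nondegenerate and symmetric. Now let $M\in\Cc$ and $P\in\Proj(\Cc)$ correspond to $E$-modules, with $P$ projective. I want to show the pairing $\Cc(M,P)\times\Cc(P,M)\to k$, $(f,g)\mapsto t_P(f\circ g)$, is nondegenerate. First reduce to the case $P=E$ (i.e. $P=G$): a general projective $P$ is a direct summand of $G^{\oplus n}$, and since the modified trace is compatible with direct sums (additivity, which follows from cyclicity applied to the idempotents cutting out the summand) and with the partial trace, nondegeneracy of the pairing for $G^{\oplus n}$ together with the summand structure yields it for $P$. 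For $P=G$, i.e. $P=E$ as a right $E$-module, we have $\Cc(M,E)=\Hom_E(M,E)$ and $\Cc(E,M)=\Hom_E(E,M)\cong M$, and the pairing becomes, up to the identification, the evaluation $\Hom_E(M,E)\times M\to k$ composed with $\nu$: explicitly $(f,m)\mapsto \nu(f(m))$ where I think of $t_E$ as $\nu$ under $\End_E(E)\cong E$. The key point is then the standard fact for symmetric (more generally Frobenius) algebras that $E$ is an injective cogenerator of $\rmod E$, equivalently that $D(E)=\Hom_k(E,k)\cong E$ as a bimodule; hence the functor $\Hom_E(-,E)$ is exact and faithful, and the natural map $M\to \Hom_E(\Hom_E(M,E),E)$ is an isomorphism for all finite-dimensional $M$. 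Nondegeneracy of our pairing is exactly the injectivity of $M\to \Hom_k(\Hom_E(M,E),k)$ obtained by composing the double-dual iso with $\nu$-duality, so it follows. I would phrase this as: the pairing identifies $\Cc(M,P)$ with the $k$-linear dual of $\Cc(P,M)$ and vice versa, using that $P$ is projective and that $t_P$ is nondegenerate on $\End(P)$.

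The main obstacle, and where I would spend the most care, is the reduction of the partial-trace axiom to the purely algebraic statement "$E$ is a symmetric Frobenius algebra", i.e. showing that unimodularity of $\Cc$ is exactly what makes the categorical partial trace on $\Proj(\Cc)$ translate into a symmetric \emph{nondegenerate} form on $E$ rather than a possibly degenerate one. This is the heart of \cite{Geer:2010}'s results and I would cite it rather than reprove it, but I would need to check that their hypotheses (existence of a simple projective, unimodularity) match ours and that their "ambidextrous trace" normalisation corresponds to our $t_S(\id_S)$-normalisation. Everything after that — additivity over direct summands, the identification of Hom-pairings with $k$-duals via the Frobenius form, and nondegeneracy — is routine linear algebra over the Frobenius algebra $E$. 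A secondary point to handle cleanly is that cyclicity of $t$ only holds when \emph{both} objects are projective, so in part~2 with $M$ not projective one cannot symmetrise the pairing naively; the argument via $\Hom_E(-,E)$-duality avoids this because it does not use cyclicity across the non-projective object, only the nondegeneracy of $t_P$ on $\End(P)$ with $P$ projective.
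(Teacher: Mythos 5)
Your part~2 has a genuine circularity. You start it with ``Having part~1, $E$ is a symmetric Frobenius algebra via $\nu := t_G$: the pairing $(x,y)\mapsto\nu(xy)$ is nondegenerate''. But part~1 only asserts existence and uniqueness of a non-zero modified trace; it does \emph{not} assert that the induced form on $\End(G)$ is non-degenerate. That non-degeneracy is precisely the special case $M=P=G$ of part~2, the statement you are trying to prove. Nor can you offload it to the references: \cite{Geer:2011b} (via the ambidexterity of a simple projective object) delivers existence and uniqueness, while the non-degeneracy statement is attributed in the paper to \cite[Prop.\,6.6]{Costantino:2014sma}, proved there only for $\overline U^H_q s\ell(2)$, and the paper supplies the general argument itself. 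The paper's proof of part~2 works directly with a simple projective $Q$: given $0\neq f:M\to P$, one splits $P$ off $Q\otimes X$, transposes $i\circ f$ to a non-zero map $u:M\otimes X^*\to Q$, uses simplicity of $Q$ to get surjectivity of $u$ and projectivity of $Q$ to split it, and assembles a $g:P\to M$ with $t_P(f\circ g)=t_Q(\id_Q)\neq 0$. Until you prove the base case $M=P=G$ by some such direct argument, your self-injectivity/duality machinery has nothing to stand on.

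That said, the rest of your part~2 is a legitimately different route from the paper's, and once the base case is secured it works: knowing that $(x,y)\mapsto t_G(xy)$ is non-degenerate makes $E$ a symmetric, hence self-injective, algebra, so $\Hom_E(-,E)$ is a duality on finite-dimensional modules and the evaluation map $M\to\Hom_E(\Hom_E(M,E),E)$ is an isomorphism; composing with the $\nu$-duality $E\cong\Hom_k(E,k)$ then gives non-degeneracy of \eqref{eq:pairingMP} for $P=G$, and the reduction to a summand of $G^{\oplus n}$ (which only uses cyclicity between projectives, as you correctly note) handles general $P$. This buys a structural explanation (self-injectivity) in place of the paper's explicit construction of the dual morphism $g$, but it does not shorten the proof, since the hard step is the same in both. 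For part~1 you essentially cite the literature; that is what the paper does for attribution, but note it also flags that \cite[Sec.\,3.2]{Geer:2011b} assumes absolute indecomposability of all objects, and one must check (as the paper does) that the argument only needs this on $\Proj(\Cc)$, where it holds automatically over an algebraically closed field. Your phrasing that ``unimodularity forces non-degeneracy'' misstates the logic: unimodularity enters through $(P_\one)^*\cong P_\one$ in the ambidexterity lemma for the simple projective, and the existence of that simple projective is used throughout, not just for normalisation.
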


The first part of the proposition is proved in \cite[Cor.\,3.2.1]{Geer:2011b}\footnote{
In \cite[Sect.\,3.2]{Geer:2011b} it is assumed that every indecomposable object of $\Cc$ is absolutely indecomposable (see \cite{Geer:2011b} for definitions). This condition does typically not hold (e.g.\ in the example in Section~\ref{sec:sf-example}). However, the arguments in \cite[Sect.\,3.2]{Geer:2011b} actually only require this condition on $\Proj(\Cc)$, where it holds for any finite tensor category over an algebraically closed field.}.
We will review the argument below as we hope it to be helpful to have the more general construction of \cite{Geer:2011a,Geer:2011b} specialised to the present setting, and since
we will need the explicit construction of the
modified trace below anyway. The second part of the proposition is proved in \cite[Prop.\,6.6]{Costantino:2014sma} for the Hopf algebra $\overline U^H_q sl(2)$, but the same proof works in general as we also review below. 

Before turning to the proof of Proposition~\ref{prop:mtrace}, we need some preparation. For the rest of this section, we fix:
\begin{itemize}
\item $\Cc$: a unimodular pivotal finite tensor category over $k$,
\item $Q$: a simple projective object in $\Cc$
	(in particular we assume that such a $Q$ exists).
\end{itemize}

\begin{lemma}[{\cite[Thm.\,3.1.3\,\&\,Sec.\,3.2]{Geer:2011b}}]\label{lem:simple-proj-ambi}
For all $g \in \End(Q \otimes Q^*)$ we have
\be\label{eq:simple-proj-ambi}
  \raisebox{-0.5\height}{\setlength{\unitlength}{.75pt}
  \begin{picture}(76,157)
   \put(0,0){\scalebox{.75}{\includegraphics{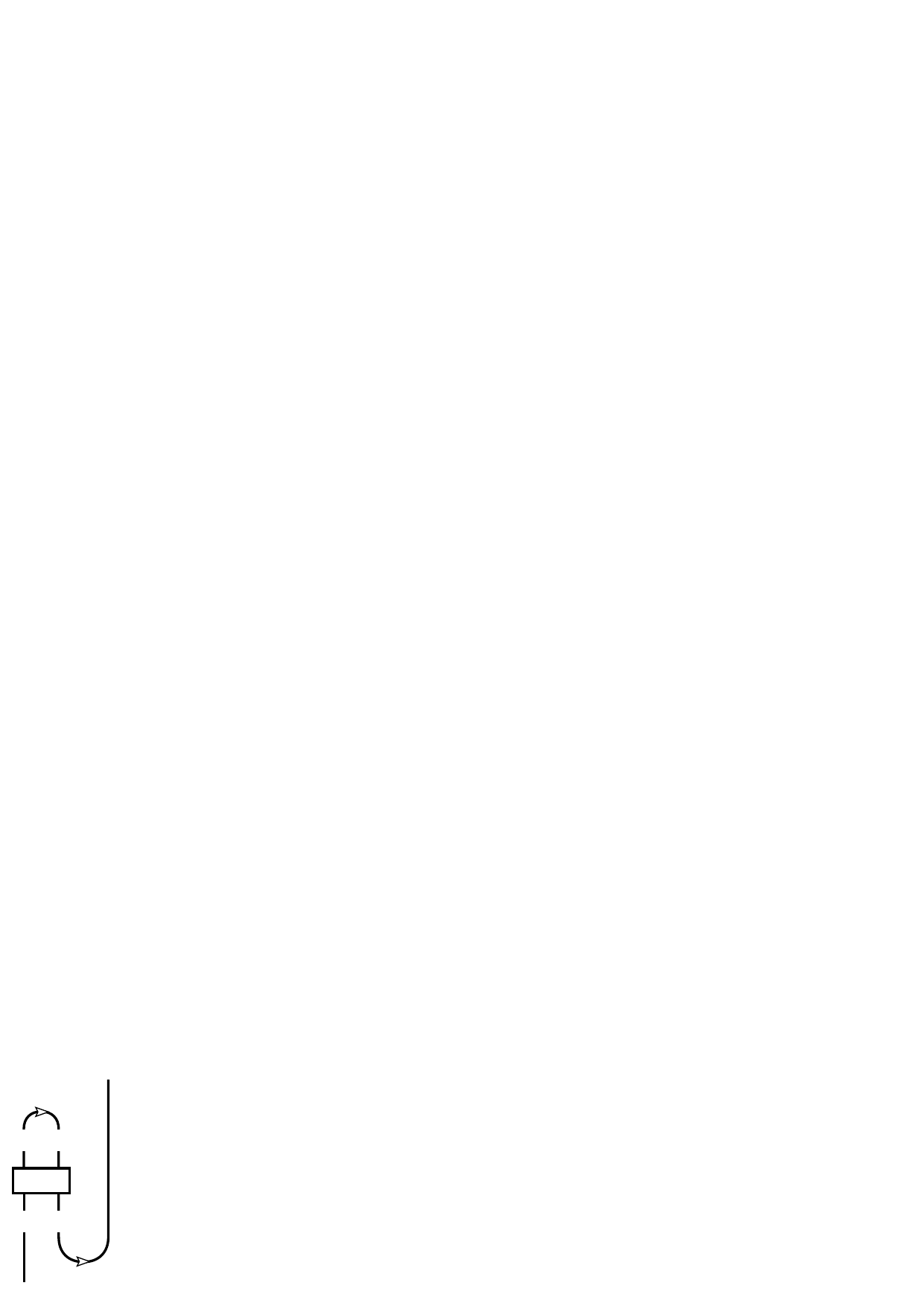}}}
     \put(0,0){
     	\put(-24,0){
     		\put (34,52) {\scriptsize$ Q $}
     		\put (57,52) {\scriptsize$ Q^* $}
     		\put (34,103.5) {\scriptsize$ Q $}
     		\put (57,103.5) {\scriptsize$ Q^* $}
     		\put (48,78) {\scriptsize$ g $}
     		}
     \put (65,150) {\scriptsize$ Q $}
     \put (11,5) {\scriptsize$ Q $}
     }\setlength{\unitlength}{1pt}
  \end{picture}}
	~~=
  \raisebox{-0.5\height}{\setlength{\unitlength}{.75pt}
  \begin{picture}(76,157)
   \put(0,0){\scalebox{.75}{\includegraphics{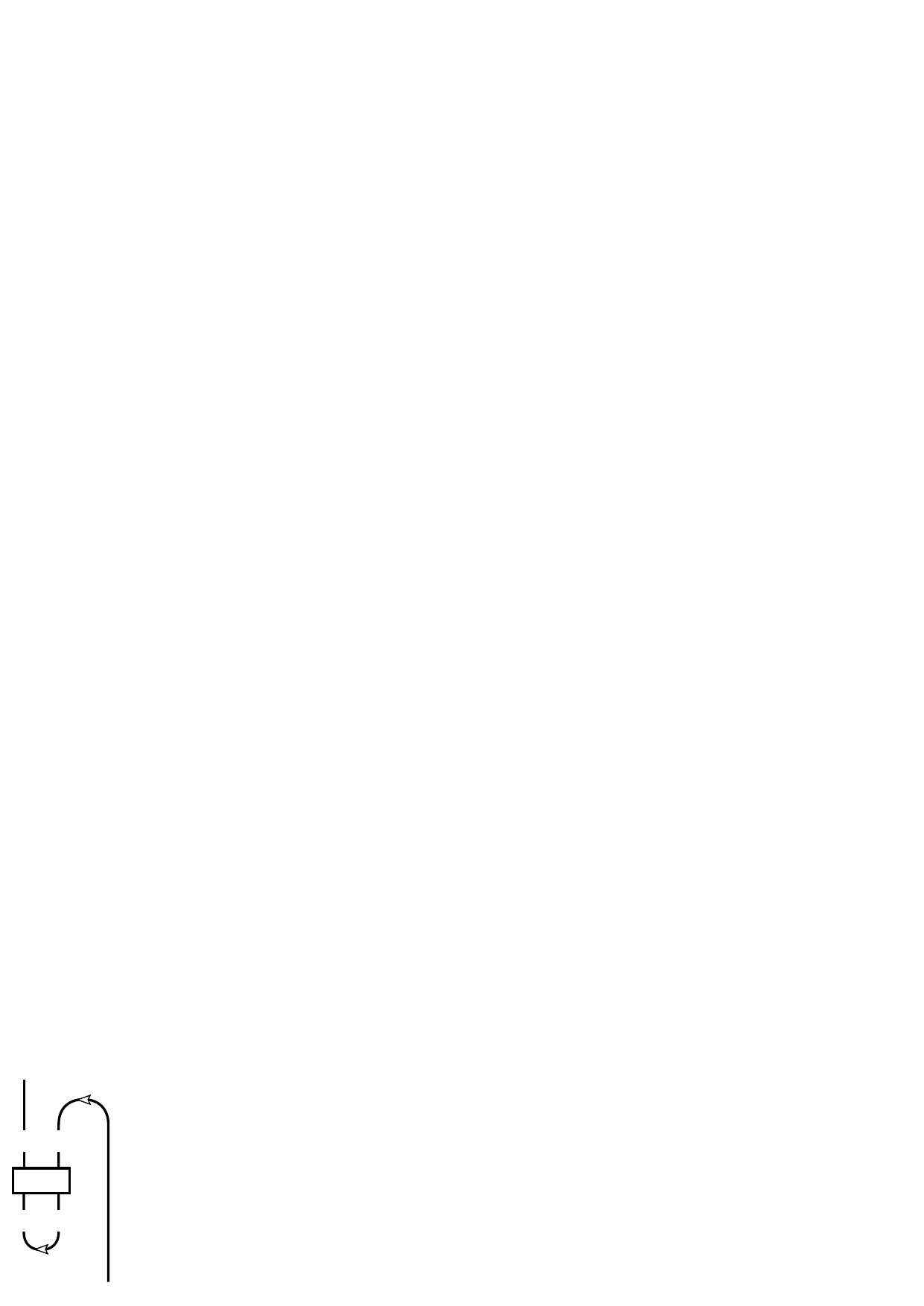}}}
     \put(0,0){
     	\put(-24,0){
     		\put (34,52) {\scriptsize$ Q $}
     		\put (57,52) {\scriptsize$ Q^* $}
     		\put (34,103.5) {\scriptsize$ Q $}
     		\put (57,103.5) {\scriptsize$ Q^* $}
     		\put (48,78) {\scriptsize$ g $}
     		}
     \put (65,5) {\scriptsize$ Q $}
     \put (11,150) {\scriptsize$ Q $}
     }\setlength{\unitlength}{1pt}
  \end{picture}}
  \qquad .
\ee
\end{lemma}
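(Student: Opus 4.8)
The plan is to exploit that $Q$ is simple and projective, so that $Q\otimes Q^*$ is projective and its endomorphism algebra is controlled by maps to and from the tensor unit. Concretely, the left-hand side of \eqref{eq:simple-proj-ambi} is a morphism $Q\to Q$ built by pre-composing $g\in\End(Q\otimes Q^*)$ with $\mathrm{coev}_Q$-type data on the bottom and post-composing with $\widetilde{\mathrm{ev}}$-type data on the top, while the right-hand side is the analogous morphism with the over/under-braidings switched (the two pictures differ by a ``rotation'' of the $Q$-strand around the $g$-box). Since $Q$ is simple, $\End(Q)=k\cdot\id_Q$ by Schur (using algebraic closedness of $k$), so both sides are scalars times $\id_Q$, and it suffices to identify those two scalars — or, better, to realise both sides as the image of a single universal construction that is manifestly independent of the chosen crossings.

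The key idea is to apply Corollary~\ref{cor:dinat-jac-0}\,(2), which is exactly the statement designed for this: the ``$f$-loop'' picture \eqref{eq:f-loop-zero} vanishes whenever $f\in\End(A)$ kills all maps $A\to U$ to simples. The strategy is: first, reduce the difference of the two sides of \eqref{eq:simple-proj-ambi} to such an $f$-loop expression with $A=Q\otimes Q^*$ (or with $A=Q$), by using naturality of the braiding to slide the $g$-box and express (LHS $-$ RHS) as a loop around $Q$ decorated by an endomorphism $f$ of $Q\otimes Q^*$ of the form $f = g - (\text{some conjugate of }g\text{ by double-braiding})$, equivalently $f = g\circ\theta^{-1}_{Q\otimes Q^*}\circ(\dots)$ if one is in the ribbon case; second, verify the hypothesis of Corollary~\ref{cor:dinat-jac-0}\,(2), namely that $u\circ f=0$ for every $u:Q\otimes Q^*\to U$ with $U$ simple. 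For the second point I would use that $Q$ is simple projective: $\Cc(Q\otimes Q^*,U)\cong\Cc(Q,U\otimes Q)$ and the double-braiding acts on $Q$ through a scalar on each ``isotypic piece,'' but more cleanly, one observes that projectivity of $Q\otimes Q^*$ together with simplicity of $Q$ forces the relevant maps to factor through the block of $\one$, where the double braiding $c_{\one,-}c_{-,\one}$ is the identity, so $f$ indeed annihilates all such $u$.

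Alternatively — and this is probably the cleanest route, matching the cited Geer–Patureau-Mirand–Turaev source — I would sidestep Corollary~\ref{cor:dinat-jac-0} and instead use the ``ambidexterity'' argument directly: for $Q$ simple, the two maps $\End(Q\otimes Q^*)\to\End(Q)=k$ given by the two sides of \eqref{eq:simple-proj-ambi} are both nonzero $k$-linear functionals (nonzero because plugging in $g=\id$ gives $\widetilde{\mathrm{ev}}_Q\circ\mathrm{coev}_Q$-type closed loops which compute $\dim_\Cc(Q)$-like quantities along the two sides and these are related by naturality), hence they agree up to a scalar; then one pins the scalar to $1$ by evaluating on a single convenient $g$, e.g.\ $g=\mathrm{coev}_{Q^*}\circ\mathrm{ev}_{Q^*}$ or a rank-one idempotent, where both sides reduce to the same composite by the zig-zag identities and pivotality \eqref{eq:ev-coev-tilde}. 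The main obstacle I anticipate is bookkeeping in the string diagrams: correctly tracking which crossings appear on which side and showing the difference is genuinely of the $f$-loop shape \eqref{eq:f-loop-zero} (as opposed to something that only looks similar), and handling pivotality carefully when converting between $\widetilde{\mathrm{ev}}$ and $\mathrm{ev}_{Q^*}$. Once the difference is in the form of Corollary~\ref{cor:dinat-jac-0}\,(2) with the hypothesis verified, the vanishing — and hence the claimed equality — is immediate.
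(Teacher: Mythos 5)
There is a genuine gap, and it starts with a misreading of the statement's setting. Lemma~\ref{lem:simple-proj-ambi} lives in Section~\ref{sec:traces}, where $\Cc$ is only assumed to be a \emph{unimodular pivotal} finite tensor category --- no braiding is available. The two diagrams in \eqref{eq:simple-proj-ambi} do not differ by switching over/under crossings; they are the left- versus right-closures of $g$ by duality morphisms only (this is the ``ambidexterity'' of $Q$). Consequently your main route via Corollary~\ref{cor:dinat-jac-0}\,(2) is not applicable: that corollary requires a braiding, and even granting one, the difference of the two sides is not of the $f$-loop shape \eqref{eq:f-loop-zero} for an $f$ obtained by conjugating $g$ with a double braiding. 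Moreover the hypothesis you would need --- $u\circ f=0$ for every $u:Q\otimes Q^*\to U$ with $U$ simple --- is neither verified nor true for the $f$ you sketch: $\Cc(Q\otimes Q^*,U)\cong\Cc(Q,U\otimes Q)$ is typically nonzero for many simples $U$, not just in the block of $\one$. Your alternative route contains a separate logical error: two nonzero $k$-linear functionals on $\End(Q\otimes Q^*)$ need not be proportional unless that space is one-dimensional, which it is not in general, so ``hence they agree up to a scalar'' does not follow.

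The paper's proof is quite different and hinges on unimodularity, which your proposal never invokes. One decomposes $Q\otimes Q^*=\bigoplus_U P_U^{\oplus m_U}$, observes $m_\one=1$ because $\Cc(Q\otimes Q^*,\one)\cong\End(Q)=k$, and uses unimodularity (i.e.\ $(P_\one)^*\cong P_\one$) to conclude that $\Cc(P_U,\one)$ and $\Cc(\one,P_U)$ vanish for $U\ncong\one$ and are one-dimensional for $U=\one$; hence only the $P_\one$-block of $g$ contributes to either closure. Writing $p_\one\circ g\circ i_\one=\alpha\,\id_{P_\one}+n$ with $n$ nilpotent (Fitting's lemma plus algebraic closedness), one checks that $n$ annihilates the one-dimensional spaces $\Cc(\one,P_\one)$ and $\Cc(P_\one,\one)$, so both sides of \eqref{eq:simple-proj-ambi} equal $\alpha\,\id_Q$. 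To repair your argument you would need to replace the braiding-based reduction by this block analysis, and in particular explain where unimodularity enters --- without it the lemma fails.
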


\begin{proof}
Write $Q \otimes Q^* = \bigoplus_{U \in \Irr(\Cc)} P_U^{\oplus m_U}$ and let $i_U : P_U^{\oplus m_U} \to Q \otimes Q^*$ and $p_U : Q \otimes Q^* \to P_U^{\oplus m_U}$ be the corresponding embedding and projection maps. The resulting idempotent will be denoted by $e_U = i_U \circ p_U \in \End(Q \otimes Q^*)$.
Since $\Cc(Q \otimes Q^*,\one)\cong \Cc(Q,Q)$
is one-dimensional ($Q$ being simple and 
$k$ algebraically closed), 
we have $m_\one = 1$.

Since $P_\one$ is indecomposable, there is an $\alpha \in k$ and a nilpotent element $n \in \End(P_\one)$ such that (using again algebraically closedness of $k$)\footnote{
	Since $P_\one$ is indecomposable, by Fitting's Lemma an element $x \in \End(P_\one)$ is either nilpotent or invertible. As $k$ is algebraically closed, 
$x - \alpha \cdot \id$  is not invertible for some  $\alpha \in k$  (as the $k$-linear map $x \circ (-)$ on $\End(P_\one)$ has an eigenvector), hence it is nilpotent, and therefore $x = \alpha \cdot \id + n$, as we state.}
\be\label{eq:simple-proj-ambi-aux1}
	p_\one \circ g \circ i_\one
	~=~ \alpha \cdot \id_{P_\one} + n \ .
\ee

For all $u : \one \to P_\one$ and $v : P_\one \to \one$ we have 
\be\label{eq:simple-proj-ambi-aux2}
n \circ u = 0 \qquad  \text{and} \qquad v \circ n=0 \ .
\ee
To see $n\circ u=0$, first note that by unimodularity (i.e.\ since $(P_\one)^* \cong P_\one$) $\Cc(\one,P_\one)$ is one-dimensional. Suppose $u \neq 0$ and 
hence $u$ forms a basis of $\Cc(\one,P_\one)$. Then there is $\lambda \in k$ such that 
$n \circ u = \lambda \cdot u$.
 But $n$ is nilpotent, say $n^m=0$,
and so applying $n$ on both sides of the latter equality
	$m$
times we get $\lambda^{m}=0$ and therefore 
 $\lambda=0$.  For $v$ the argument is analogous. 

Next insert the identity $g = \sum_{U,V} e_U \circ g \circ e_V$ into \eqref{eq:simple-proj-ambi}. Since for $U \ncong \one$, $\Cc(P_U,\one)=0$ and $\Cc(\one,P_U)=0$ (using unimodularity),
 only the term with $U=V=\one$
will contribute on either side of \eqref{eq:simple-proj-ambi}.

The claim of the lemma now follows from the observation that
taking $u=p_\one\circ \coev_Q$ and $v=\widetilde\ev_Q \circ i_\one$ we have
 by \eqref{eq:simple-proj-ambi-aux1} and \eqref{eq:simple-proj-ambi-aux2}
\be
	\widetilde\ev_Q \circ e_\one \circ g \circ e_\one
	= \alpha \cdot \widetilde\ev_Q
	\quad , \quad
	e_\one \circ g \circ e_\one \circ \coev_Q
	= \alpha \cdot \coev_Q \ ,
\ee
where we used that $\widetilde\ev_Q \circ e_\one = \widetilde\ev_Q$ and $e_\one \circ \coev_Q = \coev_Q$ (because $(P_\one)^*\cong P_\one$). 
	Both  the left and right hand sides of~\eqref{eq:simple-proj-ambi} are therefore equal to $\alpha\,\id_Q$.
\end{proof}

The above lemma will be used in the following form: for all $f \in \End(Q^* \otimes Q)$,
\be\label{eq:simple-proj-leftright}
  \raisebox{-0.5\height}{\setlength{\unitlength}{.75pt}
  \begin{picture}(77,159)
   \put(0,0){\scalebox{.75}{\includegraphics{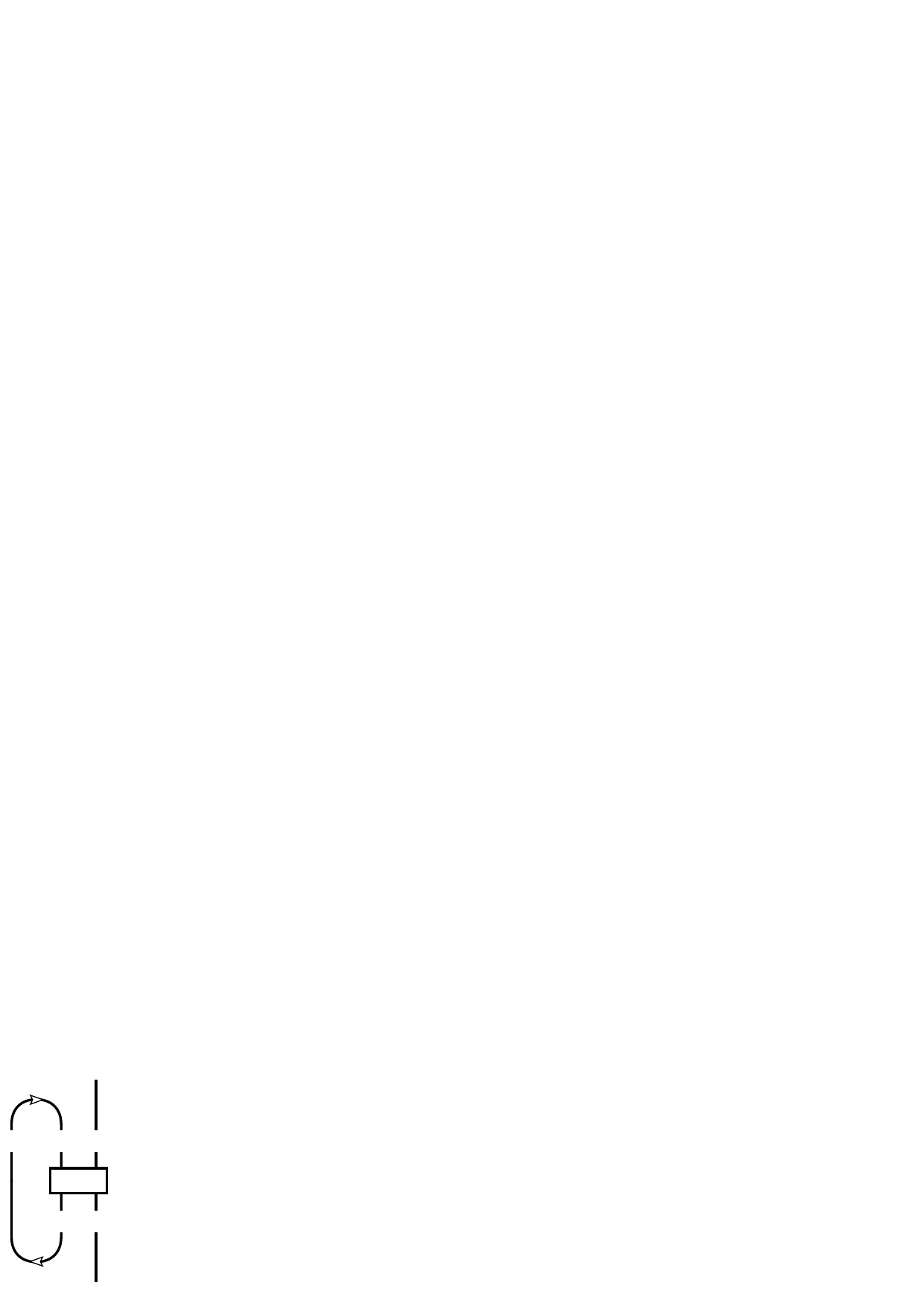}}}
     \put(0,0){
     	\put(0,0){
     		\put (34,51) {\scriptsize$ Q^* $}
     		\put (57,51) {\scriptsize$ Q $}
     		\put (34,104) {\scriptsize$ Q^* $}
     		\put (57,104) {\scriptsize$ Q $}
     		\put (48,78) {\scriptsize$ f $}
     		}
   	 \put (3,104) {\scriptsize$ Q $}
     \put (58,5) {\scriptsize$ Q $}
     \put (58,150) {\scriptsize$ Q $}
     }\setlength{\unitlength}{1pt}
  \end{picture}}
	~~=~
  \raisebox{-0.5\height}{\setlength{\unitlength}{.75pt}
  \begin{picture}(120,178)
   \put(0,0){\scalebox{.75}{\includegraphics{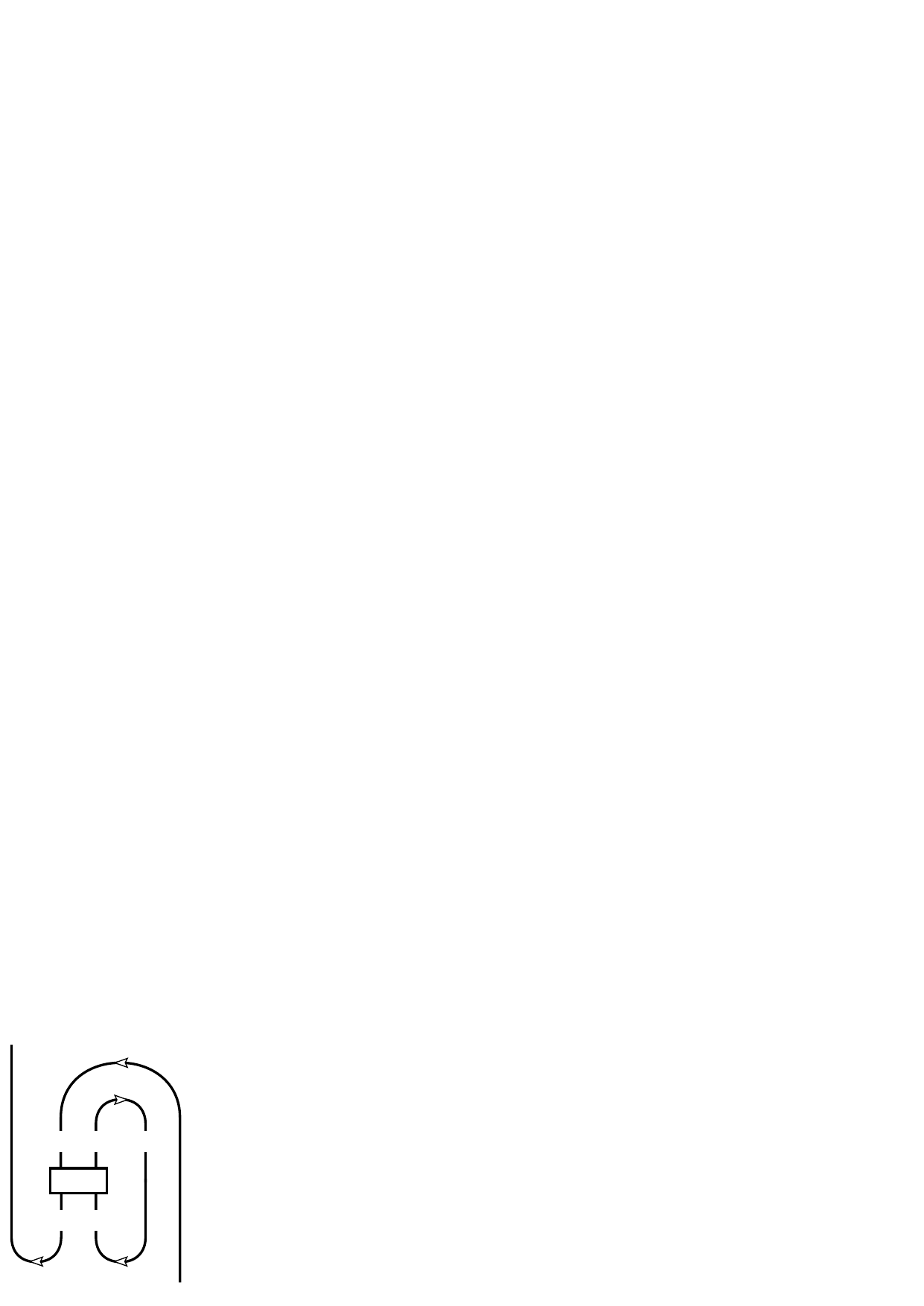}}}
     \put(0,0){
     	\put(0,0){
     		\put (34,52) {\scriptsize$ Q^* $}
     		\put (57,52) {\scriptsize$ Q $}
     		\put (34,104) {\scriptsize$ Q^* $}
     		\put (57,104) {\scriptsize$ Q $}
     		\put (48,78) {\scriptsize$ f $}
     		}
     \put (111,5) {\scriptsize$ Q $}
     \put (3,172) {\scriptsize$ Q $}
     \put (88,104) {\scriptsize$ Q^* $}
     }\setlength{\unitlength}{1pt}
  \end{picture}}
  \qquad .
\ee
This is a consequence of \eqref{eq:simple-proj-ambi} upon substituting
\be
	g ~= 
	  \raisebox{-0.5\height}{\setlength{\unitlength}{.75pt}
  \begin{picture}(99,154)
   \put(0,0){\scalebox{.75}{\includegraphics{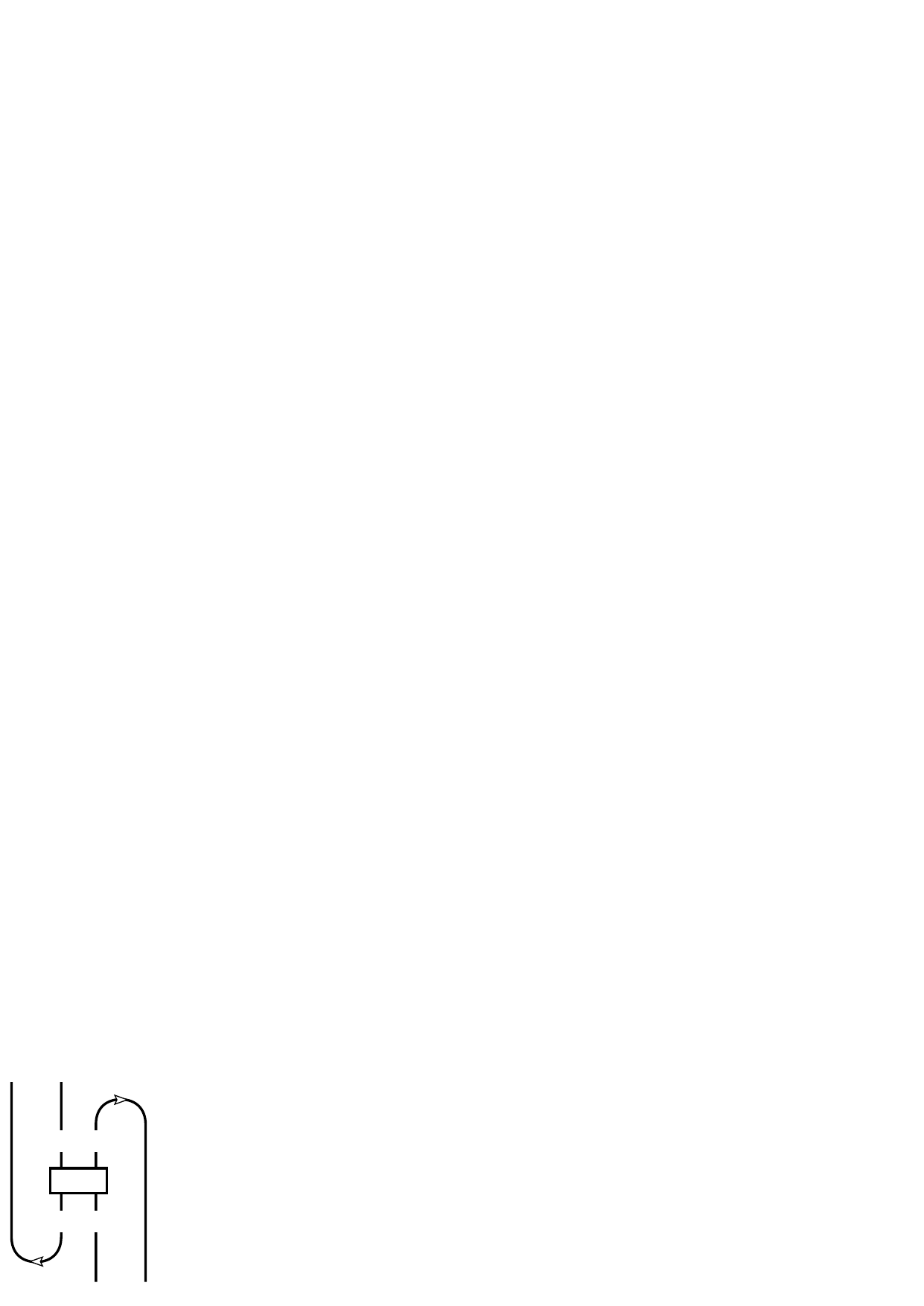}}}
     \put(0,0){
     	\put(0,0){
     		\put (34,52) {\scriptsize$ Q^* $}
     		\put (57,52) {\scriptsize$ Q $}
     		\put (34,103.5) {\scriptsize$ Q^* $}
     		\put (57,103.5) {\scriptsize$ Q $}
     		\put (48,78) {\scriptsize$ f $}
     		}
     \put (57,5) {\scriptsize$ Q $}

     \put (89,5) {\scriptsize$ Q^* $}
     \put (3,149) {\scriptsize$ Q $}
     \put (35,149) {\scriptsize$ Q^* $}
     }\setlength{\unitlength}{1pt}
  \end{picture}}
	\qquad .
\ee

Let us fix a non-zero linear function
for the given simple projective $Q$,
\be\label{eq:tQ-choice}
	t_Q ~:~ \End(Q) \to k \ .
\ee
Since $\End(Q) = k \, \id_Q$, such a function is unique up to a constant, and it is uniquely determined by its value $t_Q( \id_Q ) \in k^\times$.

Let $P \in \Proj(\Cc)$ and choose an object $X \in \Cc$ such that there is a surjection $p : Q \otimes X \to P$. Such an $X$ always exists, for example $X = Q^* \otimes P$. As $P$ is projective, there is a morphism $i : P \to Q \otimes X$ such that $p \circ i = \id_P$. That is, $i,p$ realise $P$ as a direct summand of $Q \otimes X$. Consider the function
\be\label{eq:tP-defn}
	t_P : \End(P) \to k
	\quad , \quad
	h \mapsto t_Q\big(\, tr^r_X(i \circ h \circ p) \,\big) \ .
\ee
The next lemma shows 
	in particular
that this notation is indeed consistent with \eqref{eq:tQ-choice} when setting $P=Q$ (in the lemma, take $X=\one$ and $i,p$ (inverse) unit isomorphisms).

\begin{lemma}\label{lem:tr-indep-choice}
The function $t_P$ in \eqref{eq:tP-defn} is independent of the choice of $X$ and of $p : Q \otimes X \to P$, $i : P \to Q \otimes X$.
\end{lemma}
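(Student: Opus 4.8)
The statement asserts that $t_P$ as defined in \eqref{eq:tP-defn} does not depend on the auxiliary data $(X,i,p)$ realising $P$ as a direct summand of $Q\otimes X$. The plan is to proceed in two stages. First, I would show independence of the splitting maps $(i,p)$ for a \emph{fixed} $X$: given two pairs $(i,p)$ and $(i',p')$ with $p\circ i=\id_P=p'\circ i'$, set $u=p'\circ i:P\to P$ and note $u$ need not be the identity, but $p=p\circ i'\circ p'$ only up to correcting idempotents, so the cleaner route is to use cyclicity of the partial trace $tr^r_X$ together with cyclicity of $t_Q$ (property \eqref{eq:mod-tr_cyclic} for the simple projective object $Q$, which is automatic since $\End(Q)$ is $1$-dimensional). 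Concretely, $t_Q(tr^r_X(i\circ h\circ p))=t_Q(tr^r_X(i\circ h\circ p\circ i'\circ p'\circ i'))$... — actually the efficient formulation is: for morphisms $a:Q\otimes X\to Q\otimes Y$ and $b:Q\otimes Y\to Q\otimes X$ one has $t_Q(tr^r_X(b\circ a))=t_Q(tr^r_Y(a\circ b))$, a ``generalised cyclicity'' that follows by writing out $tr^r$ as a string diagram, sliding the $X$- and $Y$-loops around using the pivotal structure, and invoking that $t_Q$ is cyclic on $\End(Q)$. Applying this with $a,b$ built from $i,p,i',p'$ and $h$ collapses the two candidate values of $t_P(h)$ to the same thing.

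Second, I would prove independence of $X$ itself. Given surjections $p:Q\otimes X\to P$ and $p':Q\otimes X'\to P$ with sections $i,i'$, the composite $P\xrightarrow{i} Q\otimes X$ and $P\xrightarrow{i'} Q\otimes X'$ exhibit $P$ as a summand of both; I can then compare both to a common refinement, e.g.\ $Q\otimes(X\oplus X')=(Q\otimes X)\oplus(Q\otimes X')$, into which $P$ embeds via $(i,0)$ (equivalently $(0,i')$), and use additivity of $tr^r$ and of $t_Q$ in the object $X$ (i.e.\ $tr^r_{X\oplus X'}$ restricted to block-diagonal endomorphisms splits as $tr^r_X\oplus tr^r_{X'}$, which is immediate from \eqref{eq:partial-trace-def}). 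This reduces the comparison of the $X$-value and the $X'$-value to two instances of the fixed-$X$ independence already established in stage one, now carried out inside $Q\otimes(X\oplus X')$.

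The genuinely load-bearing step — and the main obstacle — is the ``generalised cyclicity'' identity $t_Q(tr^r_X(b\circ a))=t_Q(tr^r_Y(a\circ b))$ for $a:Q\otimes X\to Q\otimes Y$, $b:Q\otimes Y\to Q\otimes X$. This is where Lemma~\ref{lem:simple-proj-ambi} (in its form \eqref{eq:simple-proj-leftright}) must be used: one writes the left-hand side as a string diagram with a $Q$-strand threading a closed loop involving $X$ and $Y$ strands and the boxes $a$, $b$; since only $\End(Q^*\otimes Q)$-type data sits in the relevant part and $Q$ is simple projective, \eqref{eq:simple-proj-leftright} lets one pass the $Q$-strand from one side of the loop to the other, which is exactly the topological move needed to turn $tr^r_X(b\circ a)$ into $tr^r_Y(a\circ b)$ after closing up with $t_Q$. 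All the remaining manipulations — sliding duality morphisms, using the pivotal structure to identify $X^{**}$ with $X$, using biexactness/additivity of $\otimes$ — are routine diagram chases. I would present stage one via an explicit diagram applying \eqref{eq:simple-proj-leftright}, and then deduce stages on $(i,p)$-independence and $X$-independence as corollaries.
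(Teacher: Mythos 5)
Your proposal is correct and follows essentially the same route as the paper: the paper's proof also reduces everything to a single ``generalised cyclicity'' identity $tr^r_{X'}(a\circ b)=tr^r_{X}(b\circ a)$ for $a:Q\otimes X\to Q\otimes X'$, $b:Q\otimes X'\to Q\otimes X$, obtained by inserting an explicit endomorphism of $Q^*\otimes Q$ built from the splitting data into \eqref{eq:simple-proj-leftright}, exactly the load-bearing step you identify. Your second stage (the direct-sum refinement) is redundant: since the identity you state already allows $X\neq Y$, taking $a=i'\circ p$ and $b=i\circ h\circ p'$ compares the two candidate values of $t_P(h)$ for different $(X,i,p)$ and $(X',i',p')$ in one step, which is precisely how the paper concludes.
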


\begin{proof}
This proof is taken from {\cite[Sec.\,4.5]{Geer:2011a}}. The argument is more general than needed for the statement of the lemma, but we can reuse it later to show cyclicity.

Let $P' \in \Proj(\Cc)$ and pick $X'$, $p' : Q \otimes X' \to P'$, 
$i' : P' \to Q \otimes X'$. For $u : P \to P'$ and $v : P' \to P$
	arbitrary,
consider the morphism $f \in \End(Q^* \otimes Q)$ given by
\be
	f 
	~=~ 
  \raisebox{-0.5\height}{\setlength{\unitlength}{.75pt}
  \begin{picture}(102,383)
   \put(0,0){\scalebox{.75}{\includegraphics{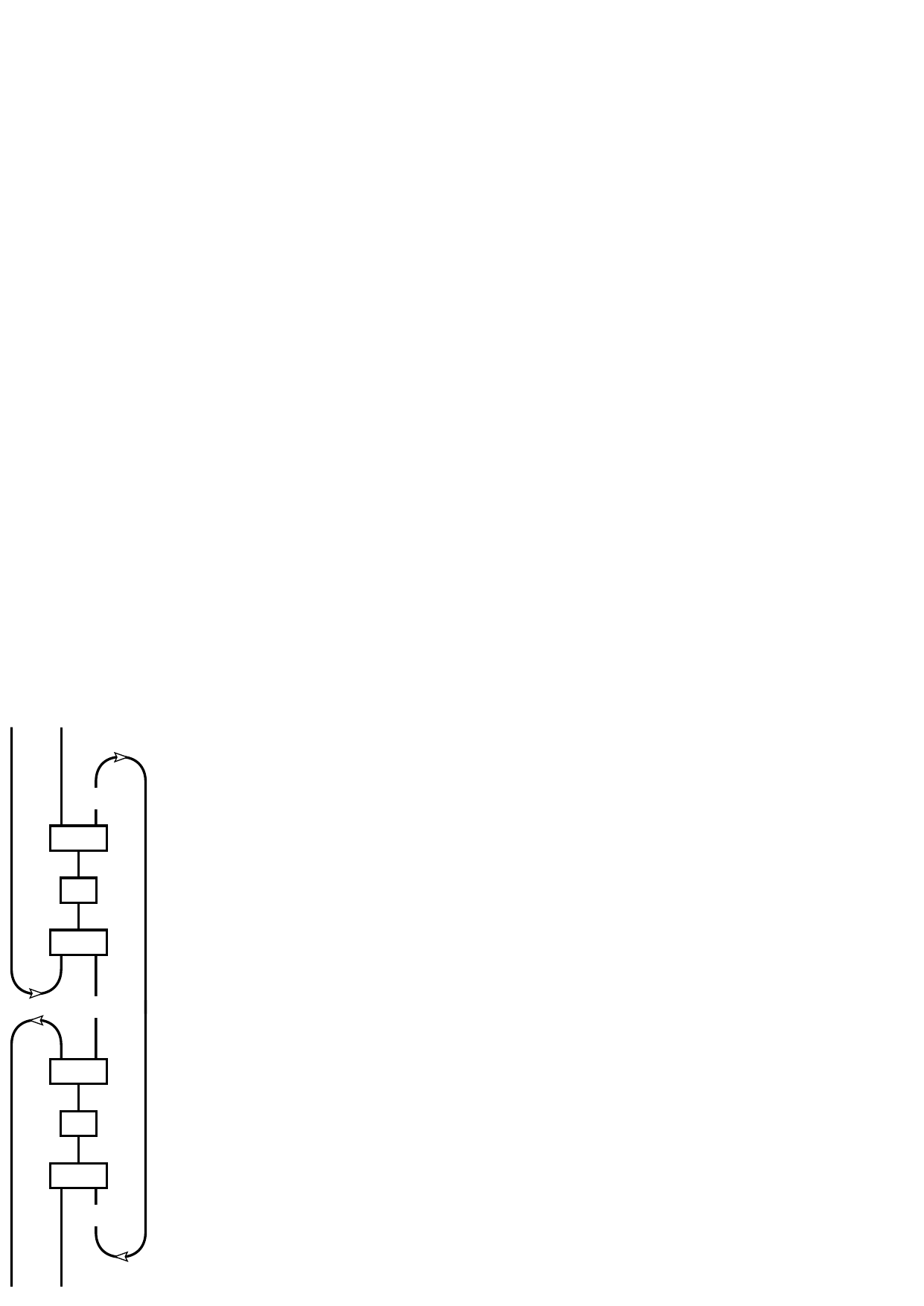}}}
     \put(0,0){
     \put (3,2) {\scriptsize$ Q^* $}
     \put (35,2) {\scriptsize$ Q $}
     \put (3,378) {\scriptsize$ Q^* $}
     \put (35,378) {\scriptsize$ Q $}
     \put (57,55) {\scriptsize$ X' $}
     \put (57,190) {\scriptsize$ X $}
     \put (57,324) {\scriptsize$ X' $}
     \put (52.5,97) {\scriptsize$ P' $}
     \put (52.5,130) {\scriptsize$ P $}
     \put (52.5,248) {\scriptsize$ P $}
     \put (52.5,281) {\scriptsize$ P' $}
     \put (48,81) {\scriptsize$ p' $}
     \put (48,114) {\scriptsize$ v $}
     \put (48,148) {\scriptsize$ i $}
     \put (48,232) {\scriptsize$ p $}
     \put (48,265) {\scriptsize$ u $}
     \put (48,298) {\scriptsize$ i' $}
     }\setlength{\unitlength}{1pt}
  \end{picture}}
	\qquad .
\ee
It is easy to check that inserting $f$ into \eqref{eq:simple-proj-leftright} results in the identity
\be\label{eq:tr-tr}
	tr^r_{X'}(i'\circ u \circ v \circ p')
	=
	tr^r_{X}(i\circ v \circ u \circ p) \ ,
\ee
where for RHS we used 
cyclicity of the categorical trace, the zig-zag axiom and that $p'\circ i'= \id_{P'}$.
For $P=P'$ and 	$u=h$, $v=\id_P$, 
this shows the statement of the lemma.
\end{proof}

\begin{proof}[Proof of Proposition~\ref{prop:mtrace}]~ 

\smallskip

\noindent
{\em Part 1.} (following \cite[Sec.\,4.5]{Geer:2011a})

\smallskip

\noindent
{\em Existence:} By Lemma~\ref{lem:tr-indep-choice} we have a family of functions $\big(t_P : \End(P) \to k\big)_{P \in \Proj(\Cc)}$. 
This family is not identically zero since by the definition in \eqref{eq:tQ-choice}, $t_Q$ is not zero.
	By \eqref{eq:tr-tr} in the proof of Lemma~\ref{lem:tr-indep-choice},
the $t_P$ satisfy the cyclicity requirement~\eqref{eq:mod-tr_cyclic}.

For the partial trace property \eqref{eq:mod-tr_partial-trace} choose $Y \in \Cc$ such that there are $p : Q \otimes Y \to P$, $i : P \to Q \otimes Y$, $p \circ i =\id_P$, in order to compute $t_P$ from the definition in \eqref{eq:tP-defn}
	with $X$ replaced by $Y$. 
Then fix 
\be
	p' = \big[ \, Q  (Y  X) \xrightarrow{\sim} (Q  Y)  X \xrightarrow{ p \otimes \id_X} P X \, \big] \quad , \quad
	i' = \big[ \,  P X \xrightarrow{ i \otimes \id_X} (Q  Y)  X \xrightarrow{\sim} Q  (Y  X)  \, \big]
\ee
to compute $t_{P \otimes X}$ from \eqref{eq:tP-defn}. A short calculation shows  \eqref{eq:mod-tr_partial-trace}.

\smallskip

\noindent
{\em Uniqueness:} First note that for the modified trace $t_P$ in \eqref{eq:tP-defn}
	(or in fact for any modified trace)
 we have, for $f \in \End(Q \otimes X)$,
\be\label{eq:mtrace-aux1}
	tr^r_X(f) = \frac{t_{Q \otimes X}(f)}{t_Q(\id_Q)} \cdot \id_Q \ .
\ee
To see this, apply $t_Q$ to both sides.
Let now $\big(t_P' : \End(P) \to k\big)_{P \in \Proj(\Cc)}$ be a modified trace. 
	Let $P \in \Proj(\Cc)$ and $h \in \End(P)$ be given. Pick $X$, $p : Q \otimes X \to P$, $i : P \to Q \otimes X$ such that $p \circ i=\id_P$ and compute
\begin{align}
t'_P(h)
&~=~
t'_P(h \circ p \circ i)
\overset{\text{cycl.}}=
t'_{Q \otimes X}(i \circ h \circ p)
\overset{\text{part.tr.}}=
t'_{Q}( tr^r_X(i \circ h \circ p))
\nonumber\\ &
\overset{\text{\eqref{eq:mtrace-aux1}}}=
\frac{t_{Q \otimes X}(i \circ h \circ p)}{t_Q(\id_Q)} \cdot t'_{Q}(\id_Q)
\overset{\text{cycl.}}=
\frac{t'_{Q}(\id_Q)}{t_Q(\id_Q)} \cdot t_P(h) \ .
\end{align}

\noindent
{\em Part 2.} 
(following the proof of \cite[Prop.\,6.6]{Costantino:2014sma})

\smallskip

\noindent
	We will show non-degeneracy in the first argument of the pairing \eqref{eq:pairingMP}. Non-degeneracy in the second argument can be seen analogously. 
	
Let $M \in \Cc$, $P \in \Proj(\Cc)$ and $f \in \Cc(M,P)$, $f \neq 0$, 
be given. We need to show that there is a $g \in \Cc(P,M)$ such that $t_P(f \circ g) \neq 0$.
Pick $X$, $p : Q \otimes X \to P$, $i : P \to Q \otimes X$ such that $p \circ i=\id_P$. 
Then also $i \circ f \neq 0$. Define 
\be
	u ~:=~ \big[ \, 
	MX^* 
	\xrightarrow{(i \circ f) \otimes \id} (QX)X^* \xrightarrow{\sim} Q(XX^*)
	\xrightarrow{\id \otimes \widetilde\ev_X}  Q\one \xrightarrow{\sim}  Q 
	\,\big]  \ .
\ee
Using the zig-zag identity,
we can recover $i \circ f$ from $u$ as
\be
	i \circ f ~=~ \big[ \, M \xrightarrow{\sim} M \one
	\xrightarrow{\id \otimes \widetilde\coev_X} M(X^*X) \xrightarrow{\sim} 
	(M X^*)X
	\xrightarrow{u\otimes \id_X}  QX  
	\,\big]  \ .
\ee
In particular, since $i \circ f \neq 0$, we must have $u \neq 0$.
Since $Q$ is simple, $u$ is surjective. Since $Q$ is projective, there is $v : Q \to M \otimes X^*$ such that $u \circ v = \id_Q$. Define
\be
	g ~:=~ \big[ \, P 
	\xrightarrow{i} QX
	\xrightarrow{v \otimes \id} (MX^*)X
	\xrightarrow{\sim} M(X^*X)
	\xrightarrow{\id \otimes \ev_X}  M\one \xrightarrow{\sim} M 
	\,\big] \ .
\ee
Then
\begin{align}
t_P(f \circ g) 
&\overset{\text{part.tr.}}= 
t_{P \otimes X^*}\!\!\left(~
  \raisebox{-0.5\height}{\setlength{\unitlength}{.75pt}
  \begin{picture}(85,174)
   \put(0,0){\scalebox{.75}{\includegraphics{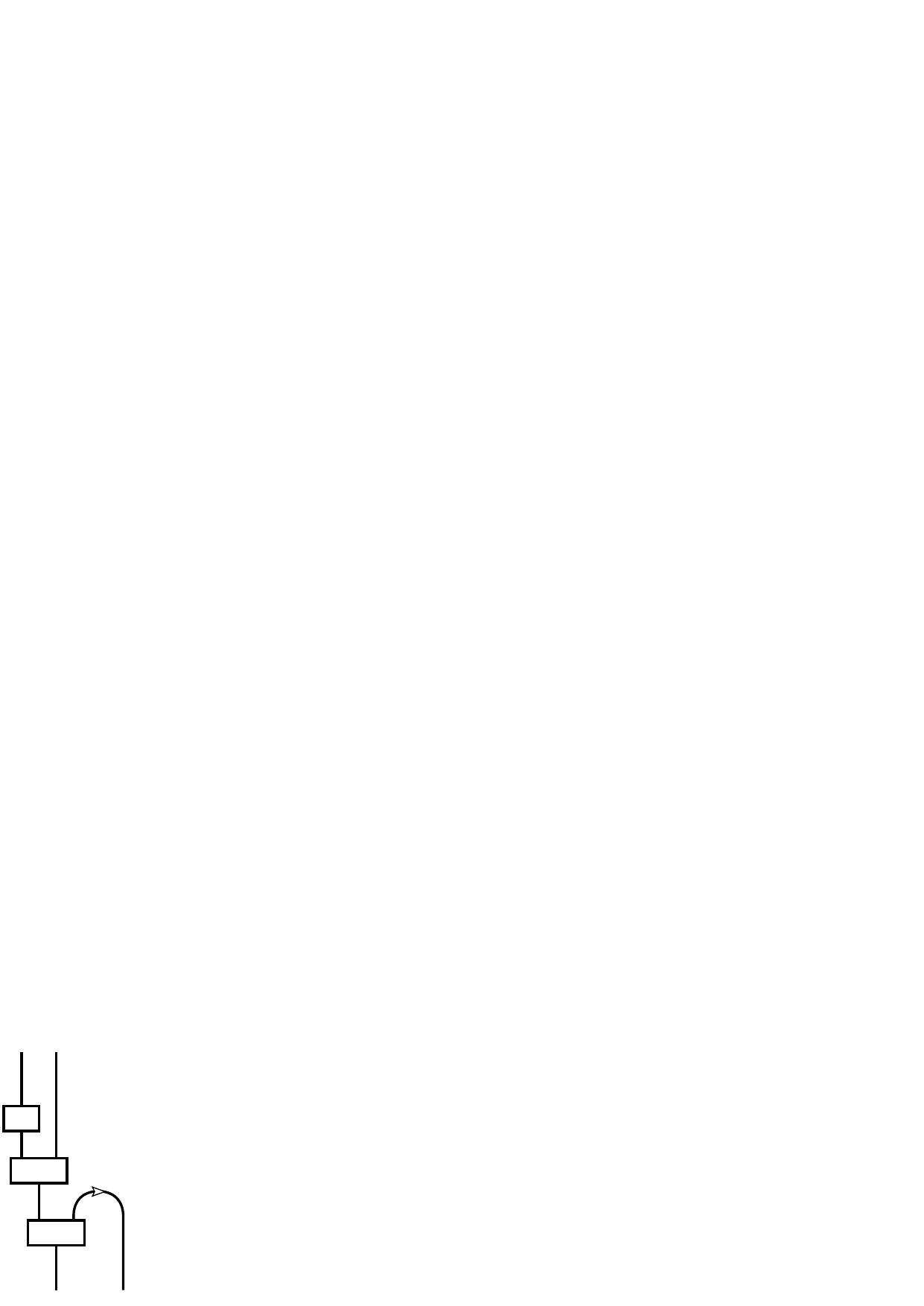}}}
     \put(0,0){
     \put (32,-1) {\scriptsize$ P $}
     \put (74,-1) {\scriptsize$ X^* $}
     \put (9,168) {\scriptsize$ P $}
     \put (32,168) {\scriptsize$ X^* $}
     \put (14,63) {\scriptsize$ Q $}
     \put (0,100) {\scriptsize$ M $}
     \put (34,44) {\scriptsize$ i $}
     \put (22,84) {\scriptsize$ v $}
     \put (10,118) {\scriptsize$ f $}
     }\setlength{\unitlength}{1pt}
  \end{picture}}
 ~~\right)
~~\overset{\text{cycl}}= ~
t_{Q}\!\!\left(~
  \raisebox{-0.5\height}{\setlength{\unitlength}{.75pt}
  \begin{picture}(71,174)
   \put(0,0){\scalebox{.75}{\includegraphics{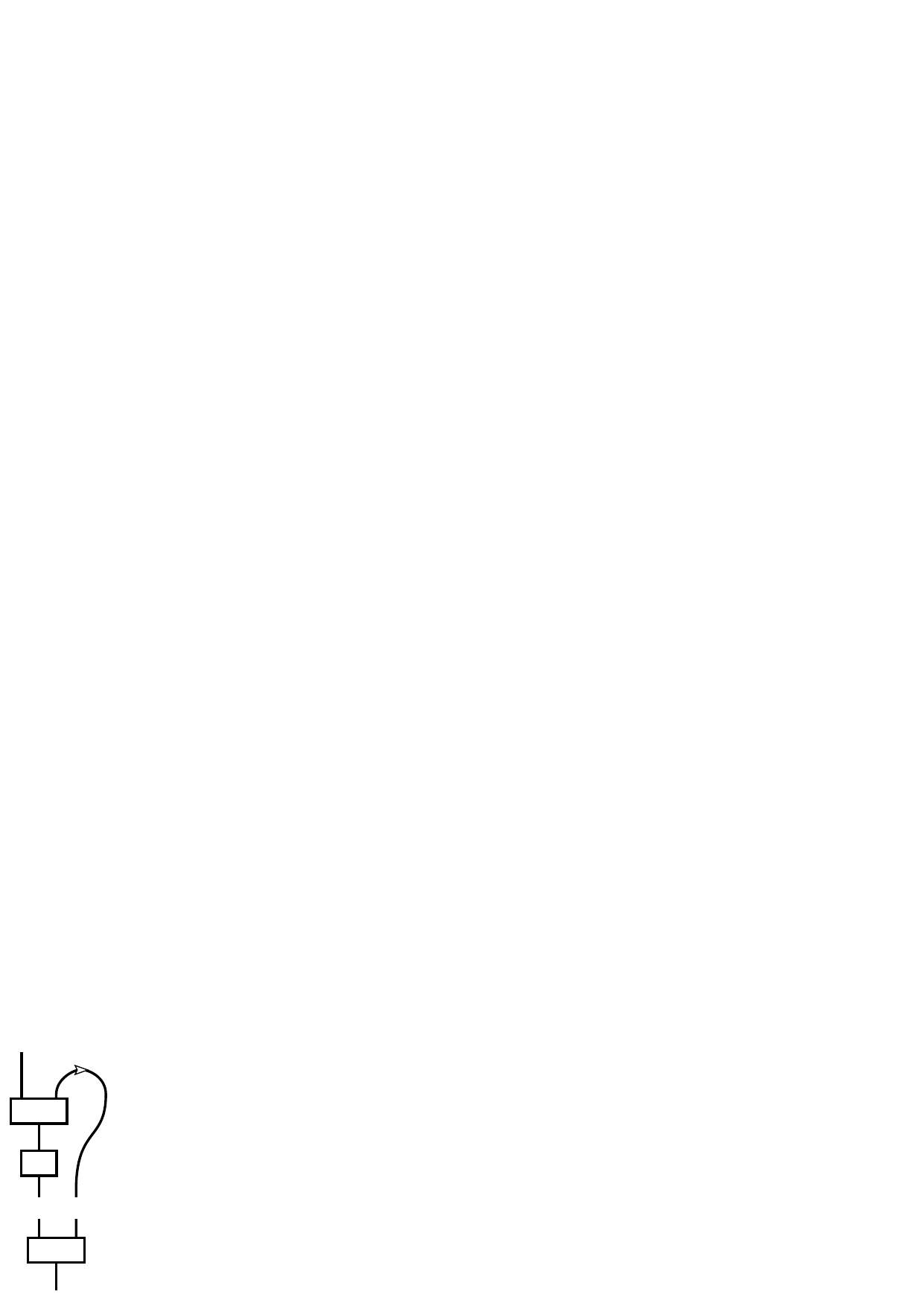}}}
     \put(0,0){
     \put (32,-1) {\scriptsize$ Q $}
     \put (9,169) {\scriptsize$ Q $}
     \put (19,60) {\scriptsize$ M $}
     \put (44,60) {\scriptsize$ X^* $}
     \put (27,106) {\scriptsize$ P $}
     \put (34,33) {\scriptsize$ v $}
     \put (22,89) {\scriptsize$ f $}
     \put (22,122) {\scriptsize$ i $}
     }\setlength{\unitlength}{1pt}
  \end{picture}}
 ~~\right)
\nonumber \\ 
&\overset{\phantom{\text{part.tr.}}}= 
t_Q(u \circ v) ~=~ 
t_Q(\id_Q) ~\neq~ 0 \ ,
\end{align}
where for the first equality we used the partial trace property~\eqref{eq:mod-tr_partial-trace} on RHS and then the pivotal structure in~\eqref{eq:ev-coev-tilde}.
\end{proof}

\begin{lemma}\label{lem:exchange-partial-trace}
Suppose $\Cc$ is in addition braided. Let $t$ be a modified trace on $\Cc$ and let 
$R,S \in \Proj(\Cc)$ and $f \in \End(R)$, $g \in \End(S)$. Then
\be\label{eq:tPtQ}
	t_R\!\left(~
  \raisebox{-0.5\height}{\setlength{\unitlength}{.75pt}
  \begin{picture}(60,170)
   \put(0,0){\scalebox{.75}{\includegraphics{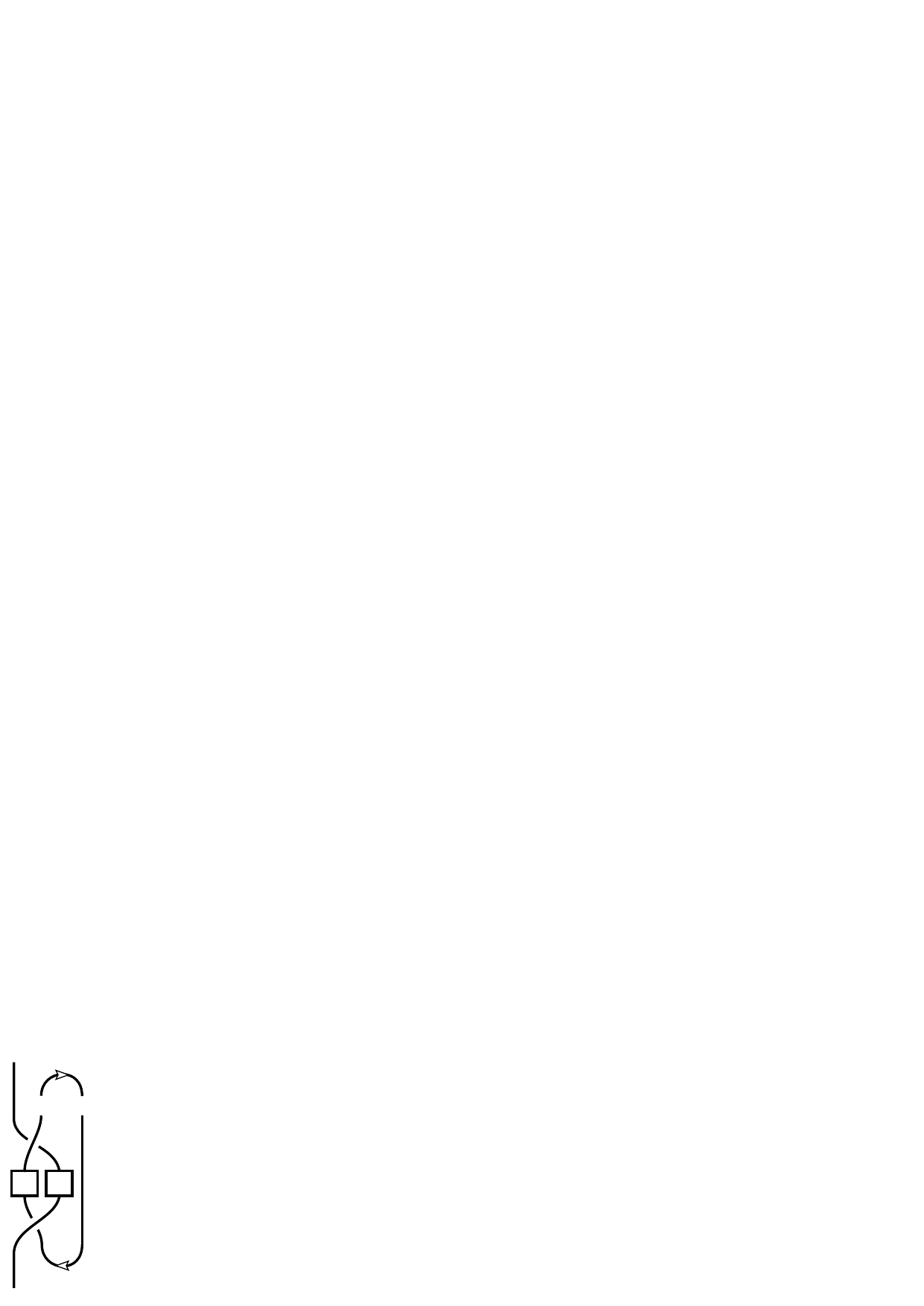}}}
   \put(0,0){
     \put(0,0){
     \put (5,0) {\scriptsize$ R $}
     \put (5,161) {\scriptsize$ R $}
     \put (22,125) {\scriptsize$ S $}
     \put (47,125) {\scriptsize$ S^* $}
     \put (12,76) {\scriptsize$ g $}
     \put (36,76) {\scriptsize$ f $}
     }\setlength{\unitlength}{1pt}}
  \end{picture}}
	~\right) 
	~~=~~
	t_S\!\left(~
  \raisebox{-0.5\height}{\setlength{\unitlength}{.75pt}
  \begin{picture}(60,170)
   \put(0,0){\scalebox{.75}{\includegraphics{pics/pic04.pdf}}}
   \put(0,0){
     \put(0,0){
     \put (5,0) {\scriptsize$ S $}
     \put (5,161) {\scriptsize$ S $}
     \put (22,125) {\scriptsize$ R $}
     \put (47,125) {\scriptsize$ R^* $}
     \put (12,76) {\scriptsize$ f $}
     \put (36,76) {\scriptsize$ g $}
     }\setlength{\unitlength}{1pt}}
  \end{picture}}
	~\right) 
	\qquad .
\ee
\end{lemma}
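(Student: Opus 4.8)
The plan is to turn both sides of \eqref{eq:tPtQ} into a modified trace on $R\ot S$, respectively on $S\ot R$, by means of the partial trace property, and then to observe that the two resulting endomorphisms are conjugate via a braiding isomorphism, so that cyclicity of the modified trace closes the argument.

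First I would record that $R\ot S$ and $S\ot R$ both lie in $\Proj(\Cc)$, since $\Proj(\Cc)$ is a tensor ideal, so that the cyclicity axiom \eqref{eq:mod-tr_cyclic} is available for these objects. Reading the left-hand diagram in \eqref{eq:tPtQ} as the endomorphism of $R$ obtained by wrapping a closed $S$-loop (carrying $g$) around the $R$-strand (carrying $f$), and recalling that such an encirclement is precisely the partial right trace over $S$ of the double braiding, I would set
\be
  \beta_{R,S} ~:=~ c_{S,R}\circ c_{R,S} ~\in~ \End(R\ot S)
  \qquad\text{and}\qquad
  \beta_{S,R} ~:=~ c_{R,S}\circ c_{S,R} ~\in~ \End(S\ot R)
\ee
(or their inverses, according to the handedness of the crossings in the picture). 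Since, by naturality of the braiding, $f\ot\id_S$ and $\id_R\ot g$ commute with $\beta_{R,S}$, the height at which $f$ sits on the $R$-strand and $g$ on the $S$-loop is irrelevant, and the left-hand side of \eqref{eq:tPtQ} equals $t_R\big(tr^r_S((f\ot g)\circ\beta_{R,S})\big)$. By the partial trace property \eqref{eq:mod-tr_partial-trace} this is $t_{R\ot S}\big((f\ot g)\circ\beta_{R,S}\big)$; symmetrically, the right-hand side of \eqref{eq:tPtQ} equals $t_{S\ot R}\big((g\ot f)\circ\beta_{S,R}\big)$.

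It then remains to compare these two numbers. Naturality of the braiding gives $c_{R,S}\circ(f\ot g)=(g\ot f)\circ c_{R,S}$ and $c_{R,S}\circ\beta_{R,S}=c_{R,S}\circ c_{S,R}\circ c_{R,S}=\beta_{S,R}\circ c_{R,S}$, hence
\be
  c_{R,S}\circ\big((f\ot g)\circ\beta_{R,S}\big) ~=~ \big((g\ot f)\circ\beta_{S,R}\big)\circ c_{R,S} \ .
\ee
Applying cyclicity \eqref{eq:mod-tr_cyclic} with $a:=c_{R,S}\circ(f\ot g)\circ\beta_{R,S}:R\ot S\to S\ot R$ and $b:=c_{R,S}^{-1}:S\ot R\to R\ot S$ then yields
\be
  t_{S\ot R}\big((g\ot f)\circ\beta_{S,R}\big) ~=~ t_{S\ot R}(a\circ b) ~=~ t_{R\ot S}(b\circ a) ~=~ t_{R\ot S}\big((f\ot g)\circ\beta_{R,S}\big) \ ,
\ee
which, combined with the previous paragraph, is the assertion. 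If the diagram uses the opposite crossings, the same computation goes through with $c$ replaced by $c^{-1}$ everywhere.

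The only genuinely delicate step is the first one: faithfully translating the string diagram in \eqref{eq:tPtQ} into the expression $tr^r_S((f\ot g)\circ\beta_{R,S})$, i.e.\ getting the handedness of the two crossings right and checking that closing the $S$-strand on the right reproduces exactly the drawn encirclement. Once this dictionary is fixed, the rest is a routine combination of the partial trace and cyclicity axioms with naturality of the braiding.
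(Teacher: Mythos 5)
Your proof is correct, and it organises the standard ingredients (partial trace property, cyclicity, naturality of the braiding) in a genuinely different and arguably cleaner way than the paper. The paper works asymmetrically: it rewrites the right-hand side as $t_{R\otimes(S\otimes R^*)}(h)$ for a rather involved morphism $h\in\End(R\otimes S\otimes R^*)$, applies the partial trace property over $S\otimes R^*$, and then needs pivotality identities (the compatibility \eqref{eq:ev-coev-tilde} and $\delta_R^*=(\delta_{R^*})^{-1}$) to recognise the closed double string as the left-hand diagram. You instead lift \emph{each} side to a two-fold tensor product, $t_{R\otimes S}\bigl((f\ot g)\circ\beta_{R,S}\bigr)$ and $t_{S\otimes R}\bigl((g\ot f)\circ\beta_{S,R}\bigr)$, and identify the two endomorphisms as conjugate under the single braiding $c_{R,S}$; cyclicity (legitimate since $\Proj(\Cc)$ is a tensor ideal, so $R\ot S$ and $S\ot R$ are projective) then finishes the argument. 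This buys you a completely symmetric proof that sidesteps the three-fold tensor product and the explicit duality/pivotality manipulations, at the cost of having to carry out the diagram-to-formula translation for both sides rather than one; as you note, that translation (including the handedness of the crossings) is the only step requiring care, and your observation that the argument is insensitive to replacing $c$ by $c^{-1}$ disposes of the ambiguity.
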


\begin{proof}
One computes
\be
\text{RHS of \eqref{eq:tPtQ}}
\overset{\text{(*)}}{=}
t_{R(SR^*)} (h)
	\overset{\text{part tr.}}{=}
	t_R(tr_{SR^*}^r(h))
	\overset{\text{(**)}}{=}
\text{LHS of \eqref{eq:tPtQ}} \ ,
\ee
where
\be
h ~=~
  \raisebox{-0.5\height}{\setlength{\unitlength}{.75pt}
  \begin{picture}(68,209)
   \put(0,0){\scalebox{.75}{\includegraphics{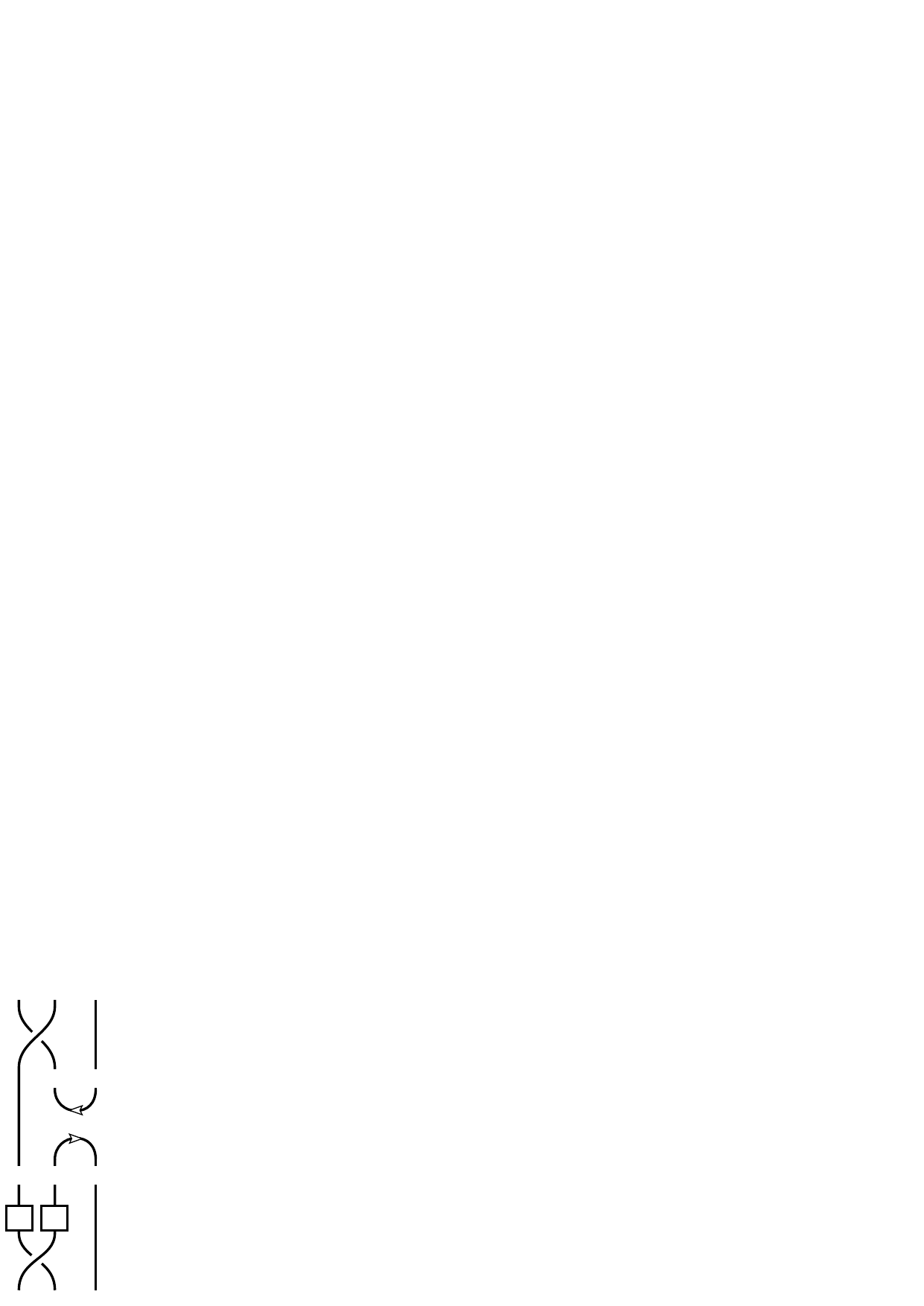}}}
     \put(0,0){
     \put (8,-1) {\scriptsize$ R $}
     \put (32,-1) {\scriptsize$ S $}
     \put (57,-1) {\scriptsize$ R^* $}
     \put (8,201) {\scriptsize$ R $}
     \put (32,201) {\scriptsize$ S $}
     \put (57,201) {\scriptsize$ R^* $}
     \put (8.5,54) {\scriptsize$ g $}
     \put (32.5,54) {\scriptsize$ f $}
     \put (8,80.5) {\scriptsize$ S $}
     \put (32,80.5) {\scriptsize$ R $}
     \put (57,80.5) {\scriptsize$ R^* $}
     \put (32,143) {\scriptsize$ R $}
     \put (57,143) {\scriptsize$ R^* $}
     }
     \setlength{\unitlength}{1pt}
  \end{picture}} \qquad .
\ee
Step (*) follows from cyclicity of the modified trace, as well as naturality of the braiding. In step (**)
 we  used that the partial trace over $S\otimes R^*$ amounts to closing the double string, see \eqref{eq:partial-trace-def}. 
 In the last equality,
 to straighten the line coloured by $R$ we also used~\eqref{eq:ev-coev-tilde} and the property of the pivotal structure that $\delta^*_R = (\delta_{R^*})^{-1}$.
\end{proof}

\begin{remark}\label{rem:modified-trace-consequences}
~\\[-1.5em]
\begin{enumerate}\setlength{\leftskip}{-1em}
\item
The result of Proposition~\ref{prop:mtrace} is remarkable, because the categorical trace defined on all of $\Cc$ via the pivotal structure vanishes on $\Proj(\Cc)$ unless $\Cc$ is semisimple. Indeed, 
$\Cc$ is semisimple iff  $P_\one = \one$,
 which in turn is equivalent to the existence of $f,g$ such that $\one \xrightarrow{f} P_\one \xrightarrow{g} \one$ is non-zero. But for any projective object $P$, the categorical trace
\be
\one \xrightarrow{\coev_P} P \ot P^* \xrightarrow{f \ot \id} P \ot P^* \xrightarrow{\widetilde\ev_P} \one \ ,
\ee 
factors through $P_\one$ 
	in this way. 
\item
For each $P \in \Proj(\Cc)$, $t_P$ turns the $k$-algebra $\End(P)$ into a symmetric Frobenius algebra, or a symmetric algebra for short. We will review some aspects of symmetric algebras and their categories of modules in Section~\ref{sec:sym-alg-ideal}.
\end{enumerate}
\end{remark}

Since a factorisable finite tensor category is automatically unimodular \cite[Prop.\,4.5]{Etingof:2004}, 
	we state the following consequence of Theorem~\ref{thm:main-simpleproj} and Proposition~\ref{prop:mtrace} for later use:

\begin{corollary}\label{cor:factorisable-implies-trace}
A factorisable and pivotal finite tensor category $\Cc$ 
over $k$ which satisfies Condition~P
admits an up-to-scalar unique modified trace on $\Proj(\Cc)$. For each non-zero modified trace, the pairings \eqref{eq:pairingMP} are non-degenerate.
\end{corollary}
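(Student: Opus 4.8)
The plan is simply to assemble the results already established in the excerpt, so the proof will be short. First I would invoke the theorem of Etingof--Ostrik quoted above (\cite[Prop.\,4.5]{Etingof:2004}): a factorisable finite tensor category is unimodular. This disposes of the unimodularity hypothesis, which is the only assumption of Proposition~\ref{prop:mtrace} not explicitly present in the corollary's statement.

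Next I would apply Theorem~\ref{thm:main-simpleproj}. Its hypotheses are exactly those of the corollary --- $\Cc$ factorisable, pivotal, a finite tensor category over $k$, satisfying Condition~P --- so the theorem yields a simple projective object $Q \in \Cc$. Combining this with the previous paragraph, $\Cc$ is now a \emph{unimodular} pivotal finite tensor category over $k$ that contains a simple projective object, i.e.\ precisely the standing assumptions under which Proposition~\ref{prop:mtrace} is formulated. Part~1 of that proposition then gives the existence of a non-zero modified trace on $\Proj(\Cc)$ together with uniqueness up to a scalar, and part~2 gives the non-degeneracy of the pairings \eqref{eq:pairingMP} for any non-zero modified trace. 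This is exactly the content of the corollary.

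There is essentially no obstacle here: the statement is a formal corollary obtained by chaining Theorem~\ref{thm:main-simpleproj}, the unimodularity of factorisable categories, and Proposition~\ref{prop:mtrace}. The only thing to be careful about is verifying that the hypotheses line up at each step --- in particular that ``factorisable'' supplies both Condition~P's ambient setting and unimodularity --- which, as noted, is immediate.
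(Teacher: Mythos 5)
Your proposal is correct and matches the paper's own reasoning exactly: the corollary is stated immediately after noting that factorisable implies unimodular via \cite[Prop.\,4.5]{Etingof:2004}, and is obtained by combining Theorem~\ref{thm:main-simpleproj} (existence of a simple projective under Condition~P) with Proposition~\ref{prop:mtrace}. Nothing further is needed.
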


This result generalises the existence of a modified trace for factorisable Hopf algebras (over an algebraically closed field of characteristic zero) proven in~\cite[Thm.\,4.3.1]{Geer:2011b}.

\begin{remark}
The converse of Corollary~\ref{cor:factorisable-implies-trace} does not hold, i.e.\ it is not true that a pivotal finite tensor category $\Cc$ over $k$ which satisfies Condition~P and admits a non-zero modified trace on $\Proj(\Cc)$ can be equipped with a braiding that makes it factorisable. The simplest example is the category of super-vector spaces, which only admits symmetric
braidings. A non-semisimple example is provided by the restricted quantum group $\overline{U}_q sl(2)$.  
This is a unimodular Hopf algebra which has a simple projective module but whose category of  modules does not admit a braiding~\cite{[FGST],KS,Gainutdinov:2015lja}.
\end{remark}

\section{Symmetric algebras and ideals in $\boldsymbol{\End(\Id_\Cc)}$}\label{sec:sym-alg-ideal}

In this section we will collect some facts about symmetric algebras following \cite{Broue:2009,Cohen:2008} and translate them into categorical statements. In the end we give implications for categories with modified trace as in the previous section.

\medskip

For a $k$-algebra $A$, denote by
\be
	C(A) ~=~ \big\{ \,\varphi : A \to k \,\big|\, \varphi(ab) = \varphi(ba) \text{ for all }a,b \in A \,\big\} 
\ee
the space of {\em central forms} on $A$. 
A {\em symmetric algebra} $A$ over $k$ (aka a {\em symmetric Frobenius algebra}) is a finite-dimensional $k$-algebra together with a central form $\eps$ such that the induced pairing $(a,b) \mapsto \eps(ab)$ is non-degenerate. In this case the map
\be\label{eq:zeta:ZA-CA}
	\zeta : Z(A) \to C(A) \quad , \quad z \mapsto \eps(z \cdot (-))
\ee
is an isomorphism, and the invertible elements of $Z(A)$ are precisely mapped to those elements of $C(A)$ that induce a non-degenerate pairing (see e.g.\ \cite[Lem.\,2.5]{Broue:2009}).

\newcommand{\cop}{\gamma}
Let $(A,\eps)$ be a symmetric algebra and denote by 
$\cop \in A \otimes A$, $\cop = \sum_{(\cop)} \cop' \otimes \cop''$, 
the copairing, that is, for all $a \in A$ we have
 $a = \sum_{(\cop)} \eps(a \cop')\cop''
 = \sum_{(\cop)}  \cop'\eps(\cop''a)$.
	We recall that the copairing is unique and that, since $\eps$ is central, the copairing is symmetric.
Define the map
\be\label{eq:tau-map-A-ZA-def}
	\tau : A \to Z(A)
	\quad , \quad
	a \mapsto \sum_{(\cop)} \cop' a \cop'' \ .
\ee
It is easy to check that the image of $\tau$ does indeed lie in the centre of $A$ (see e.g.\ \cite[Sec.\,3.A]{Broue:2009}). 
The argument uses that the copairing $\cop$ of a non-degenerate 
invariant 
pairing satisfies $\sum_{(\cop)}(a\cop')\otimes \cop'' = \sum_{(\cop)}\cop'\otimes (\cop'' a)$ for all $a\in A$.

We will need the following chains of inclusions in $C(A)$ and $Z(A)$, compatible with the map $\zeta$ from \eqref{eq:zeta:ZA-CA} \cite{Cohen:2008}:
\be\label{eq:ZA-CA-subsets}
\xymatrix{
Z(A)\ar[d]_\zeta & \supset & \Rey(A) \ar[d]_\zeta & \supset& \Hig(A) \ar[d]_\zeta 
\\
C(A) & \supset & R(A) & \supset& I(A)
}
\ee
The individual subsets are defined as follows:
\begin{itemize}
\item
$\Rey(A) = \mathrm{ann}_{Z(A)}(\Jac(A))$ is the {\em Reynolds ideal}. (Symmetry of $A$ is not required for this definition.) Here, $\Jac(A)$ is the Jacobson radical of $A$. $\Rey(A)$ is an ideal in~$Z(A)$.
\item
$\Hig(A) = \mathrm{im}(\tau)$
 is the {\em Higman ideal} or {\em projective centre}\footnote{
The name ``projective centre'' can be motivated from the observation that $\Hig(A)$ can be described as all elements in $Z(A)$ such that the corresponding endomorphism of the $A$-$A$-bimodule $A$ factors through a projective 
$A$-$A$-bimodule
\cite[Prop.\,2.3\,\&\,2.4]{Liu:2012}. 
}.
It is an ideal in $Z(A)$ as $z\tau(a) = \tau(za)$ for $z\in Z(A)$, and by \cite[Lem.\,4.1]{Hethelyi:2005}\footnote{
In \cite[Sec.\,4]{Hethelyi:2005} the underlying field is assumed to be algebraically closed, but this is not used in the proof of Lemma 4.1
in \cite{Hethelyi:2005}.}
it is contained in $\Rey(A)$.
Note that even though $\tau$ depends on the choice of
	a
non-degenerate central form $\varepsilon$, $\mathrm{im}(\tau)$ does not. Indeed, any two $\varepsilon$, $\varepsilon'$ would be related by an invertible $z \in Z(A)$ as $\varepsilon'(a) = \varepsilon(za)$, and one checks that $\tau'(a) = \tau(z^{-1}a)$.
\item
$R(A) = \mathrm{span}_k\{ \,\chi_M \,|\, M \text{ right $A$-module} \}$, where $\chi_M(a) = \mathrm{tr}_M(a)$, the trace in $M$ over the linear map given by acting with $a\in A$. That $\zeta(\Rey(A))=R(A)$ is shown in \cite[Thm.\,1.6]{Lorenz:1997} (for finite-dimensional not necessarily symmetric algebras).
\item
$I(A) = \mathrm{span}_k\{\, \chi_P \,|\, P \text{ projective right $A$-module} \}$. In \cite[Prop.\,2.1]{Cohen:2008}
it is shown that $\zeta(\Hig(A))=I(A)$.
\end{itemize}

\begin{lemma}\label{lem:ZA=ReyA}
For an algebra $A$ over some field, $Z(A)=\Rey(A)$ if and only if $A$ is semisimple.
\end{lemma}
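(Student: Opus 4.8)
The plan is to prove both directions of the equivalence $Z(A)=\Rey(A) \iff A$ semisimple, using the definition $\Rey(A)=\mathrm{ann}_{Z(A)}(\Jac(A))$ together with standard Wedderburn--Artin theory.

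\textbf{The easy direction.} If $A$ is semisimple, then $\Jac(A)=0$, so $\mathrm{ann}_{Z(A)}(\Jac(A))=Z(A)$ trivially. Hence $\Rey(A)=Z(A)$.

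\textbf{The substantive direction.} Suppose $Z(A)=\Rey(A)$, i.e.\ $z\cdot n = 0$ for all $z\in Z(A)$ and all $n\in\Jac(A)$. Applying this with $z=1$ gives $n=0$ for every $n\in\Jac(A)$, so $\Jac(A)=0$. A finite-dimensional algebra over a field with vanishing Jacobson radical is semisimple (by Wedderburn--Artin / the structure theory of Artinian rings), so $A$ is semisimple. Note that one must be a little careful about what ``algebra over some field'' is taken to mean here: the statement is implicitly about finite-dimensional algebras (all the algebras in this section, e.g.\ $E=\End(G)$ and $\End(P)$, are finite-dimensional), and $1\in\Jac(A)$ cannot happen since $\Jac(A)$ is a proper ideal; so the argument that $1\cdot n=n$ forces $\Jac(A)=0$ is valid.

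\textbf{Main obstacle.} There is essentially no obstacle: the proof is a one-line observation in each direction, the only subtlety being to make explicit that $1\in Z(A)$ so that $Z(A)\subseteq\mathrm{ann}_{Z(A)}(\Jac(A))$ immediately forces $\Jac(A)=0$, and to invoke finite-dimensionality (which holds throughout this section) so that $\Jac(A)=0$ implies semisimplicity. If one wanted to avoid even the appeal to Wedderburn--Artin, one could instead argue that $\Rey(A)=Z(A)$ means the radical is annihilated by the unit, hence is zero, and then cite that for finite-dimensional algebras ``$\Jac(A)=0$'' is the definition (or an immediate characterisation) of semisimplicity.
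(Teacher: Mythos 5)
Your proof is correct and follows essentially the same route as the paper's: the forward direction is immediate from $\Jac(A)=0$, and the converse uses that $1\in Z(A)=\Rey(A)$ forces $\Jac(A)$ to be annihilated by the unit, hence zero. Your added remarks on finite-dimensionality and $\Jac(A)$ being a proper ideal are reasonable clarifications but do not change the argument.
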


\begin{proof}
If $A$ is semisimple, $\Jac(A)=0$ and hence $Z(A) = \Rey(A)$. 
Conversely, if
$Z(A) = \Rey(A)$ then $1 \in \Rey(A)$ and since $\Jac(A)$ annihilates $\Rey(A)$ we must have $\Jac(A)=0$.
\end{proof}

The {\em Cartan matrix} of a finite-dimensional algebra $A$ over some field 
is the 
$\Irr(\rmod{A}) \times \Irr(\rmod{A})$ 
 matrix $\CM(A)$  with entries
\be
	\CM(A)_{U,V} ~=~ [P_U:V] \ ,
\ee
that is, the multiplicity of the simple $A$-module $V$ in the composition series of the projective cover $P_U$ of $U$.

Let $e_U$, $U \in \Irr(\rmod{A})$ be a choice of primitive idempotents such that $e_UA \cong P_U$ as right $A$-modules. If
  the underlying field is given by $k$, which we assumed to be algebraically closed in the outset of the paper,
the endomorphism spaces of simple $A$-modules are one-dimensional and we can write
\be
	\CM(A)_{U,V} ~=~ \dim_k \Hom_A(P_V,P_U) ~=~ \dim_k( \, e_U A e_V \, )  \ .
\ee
The last equality follows by noting that every right $A$-module map $f : e_VA \to e_UA$ is given by left multiplication with an element $e_U a e_V$, $a \in A$.
	Namely,
in one direction we set $f\mapsto f(e_V)e_V\in e_UA e_V$, 
while  in the other direction  $e_U a e_V \mapsto e_U a e_V \cdot (-)$ which is  in $\Hom_A(e_VA,e_UA)$, and one checks that both the maps are inverses of each other.

\begin{proposition}\label{prop:cartan-sym+rank}
Let $A$ be a symmetric algebra over 
	$k$.
Then:
\begin{enumerate}
\item The Cartan matrix of $A$ is symmetric, 
$\CM(A)_{U,V} = \CM(A)_{V,U}$.
\item Let $n = |\Irr(\rmod{A})|$ and 
let 
$\widehat\CM \in \mathrm{Mat}_n(k)$ 
be the image of $\CM(A)$ under the canonical homomorphism $\mathrm{Mat}_n(\Zb) \to \mathrm{Mat}_n(k)$. We have
\be
	\mathrm{rank}\big(\widehat\CM \big) 
	~=~ \dim_k \Hig(A) \ .
\ee
\end{enumerate}
\end{proposition}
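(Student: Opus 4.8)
For part~1, the plan is to use the description $\CM(A)_{U,V} = \dim_k(e_U A e_V)$ established just before the statement, together with the symmetrising form $\eps$ of the symmetric algebra $A$. The key point is that the non-degenerate pairing $(a,b)\mapsto \eps(ab)$ on $A$ restricts, for primitive idempotents $e_U, e_V$, to a perfect pairing between $e_U A e_V$ and $e_V A e_U$: given $0\neq a \in e_U A e_V$ there is some $x\in A$ with $\eps(ax)\neq 0$, and then $e_V x e_U \in e_V A e_U$ still pairs non-trivially with $a$ since $\eps(a e_V x e_U) = \eps(e_U a e_V x e_U) = \eps(e_U \cdot e_U a e_V x) = \eps(a e_V x) \cdot$ (using $e_U^2 = e_U$, centrality/cyclicity of $\eps$, and $a = e_U a e_V$). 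Hence $\dim_k(e_U A e_V) = \dim_k(e_V A e_U)$, which is exactly $\CM(A)_{U,V} = \CM(A)_{V,U}$. I would write this out carefully since juggling the idempotents inside $\eps$ is the only place an error can creep in.

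For part~2, the strategy is to compute the rank of $\widehat\CM$ via the trace pairing between the Grothendieck-type data and $\Hig(A) = \mathrm{im}(\tau)$. Recall $\zeta(\Hig(A)) = I(A) = \mathrm{span}_k\{\chi_P \mid P \text{ projective}\} = \mathrm{span}_k\{\chi_{P_U} \mid U \in \Irr(\rmod A)\}$, and $\zeta$ is an isomorphism, so $\dim_k \Hig(A) = \dim_k \mathrm{span}_k\{\chi_{P_U}\}$. Now I would pair these characters against the simple modules: for a simple $V$, pick a primitive idempotent $e_V$ with $e_V A \cong P_V$, and observe that $\chi_{P_U}(e_V) = [P_U : V] = \CM(A)_{U,V}$ (the trace of the action of the idempotent $e_V$ on $P_U$ counts, over the composition series, the multiplicity of $V$, using that $e_V$ acts as a rank-one idempotent on the simple $V$ and nilpotently-or-zero on the others — here algebraic closedness and $\End(V) = k$ matter). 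Therefore the matrix of values $\big(\chi_{P_U}(e_V)\big)_{U,V}$ over $k$ is exactly $\widehat\CM$, and the dimension of the span of the rows $\chi_{P_U}$, viewed as functionals on the span of the $e_V$, equals $\mathrm{rank}(\widehat\CM)$.

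The one gap to close is that the span of $\{\chi_{P_U}\}$ in $C(A)$ could a priori be smaller than $\mathrm{rank}(\widehat\CM)$ if the $\chi_{P_U}$ had linear relations not detected by evaluation on the $e_V$; so I need that evaluation on $\mathrm{span}_k\{e_V : V \in \Irr\}$ is faithful enough, i.e.\ that no nontrivial $k$-combination $\sum_U c_U \chi_{P_U}$ vanishes on all of $A$ while $(c_U)$ is not in the (left) kernel of $\widehat\CM$ — but that is automatic, since if $\sum_U c_U \chi_{P_U} = 0$ on $A$ then in particular it vanishes on the $e_V$, giving $\sum_U c_U \widehat\CM_{U,V} = 0$ for all $V$; conversely any such $(c_U)$ in the kernel of $\widehat\CM^{\,T}$ need not make the functional vanish, but that only makes $\dim\mathrm{span}\{\chi_{P_U}\} \geq \mathrm{rank}(\widehat\CM)$, and the reverse inequality $\dim\mathrm{span}\{\chi_{P_U}\} \le \mathrm{rank}(\widehat\CM)$ follows because the $\chi_{P_U}$ are class functions and every class function is determined by its values on $\{e_V\}$ — this last fact (that $\mathrm{span}_k\{e_V\}$ together with $[A,A]$ spans $A$, or equivalently that a central form vanishing on all $e_V$ and on commutators vanishes) is the main thing to pin down, and I expect it to be the main obstacle; it should follow from $A/[A,A]$ having dimension $|\Irr(\rmod A)|$ with the images of the $e_V$ as a basis, a standard fact for finite-dimensional algebras over an algebraically closed field which I would cite or prove via the Wedderburn structure of $A/\Jac(A)$.
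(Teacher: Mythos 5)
Part~1 of your proposal is exactly the paper's argument: restrict the non-degenerate form $\eps$ to a perfect pairing between $e_UAe_V$ and $e_VAe_U$ by sandwiching with idempotents and using cyclicity. For part~2 the paper does not give an argument at all --- it cites \cite[Cor.\,2.7]{Liu:2012} --- so your direct character-theoretic route (using $\zeta(\Hig(A))=I(A)$, which the paper also takes from \cite{Cohen:2008}, and computing $\chi_{P_U}(e_V)=\widehat\CM_{U,V}$) is a genuinely different and essentially self-contained approach; the computation $\chi_{P_U}(e_V)=\widehat\CM_{U,V}$ and the inequality $\dim_k I(A)\ge \mathrm{rank}(\widehat\CM)$ are correct as you state them.

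However, the way you propose to close the remaining gap (the inequality $\dim_k I(A)\le \mathrm{rank}(\widehat\CM)$) rests on a false fact: it is \emph{not} true that $A/[A,A]$ has dimension $|\Irr(\rmod{A})|$ with the images of the $e_V$ as a basis. The paper's own footnote counterexample $A=k[X]/\langle X^2\rangle$ kills this: here $[A,A]=0$, so $A/[A,A]$ is $2$-dimensional while there is a single simple module, and the central form $\eps$ itself vanishes on the unique idempotent $e=1$ without being zero. What is true is $A=[A,A]+\Jac(A)+\mathrm{span}_k\{e_V\}$ (Wedderburn applied to $A/\Jac(A)$, using $\mathrm{Mat}_n(k)=[\mathrm{Mat}_n(k),\mathrm{Mat}_n(k)]+kE_{11}$), and characters of modules vanish on $\Jac(A)$ as well as on $[A,A]$; with $\Jac(A)$ added, your injectivity argument goes through. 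A cleaner repair avoids this altogether: since $\chi_{P_U}=\sum_{W}\widehat\CM_{U,W}\,\chi_W$ as functionals on $A$ (additivity of the trace over composition series), any $\sum_U c_U\chi_{P_U}$ with $(c_U)$ in the left kernel of $\widehat\CM$ is identically zero, so the kernel of $(c_U)\mapsto\sum_U c_U\chi_{P_U}$ is exactly the left kernel of $\widehat\CM$ (the reverse inclusion being your evaluation at the $e_V$), whence $\dim_k I(A)=\mathrm{rank}(\widehat\CM)$. This is essentially the argument the paper uses later in the proof of Proposition~\ref{prop:cy-ssi-inclusion}.
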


	\begin{proof}
For part 1 one checks that the non-degenerate pairing on $A$ descends to a non-degenerate pairing between $e_UAe_V$ and $e_VAe_U$.
	Indeed, for every $a\in A$ such that $e_Uae_V \neq 0$ there exists $b\in A$ such that $(e_Uae_V, b) \neq 0$. But then also $(e_Uae_V, e_Vbe_U) \neq 0$ since $\eps(e_U ae_V  b ) = \eps(e_U e_Uae_V e_V b ) = \eps(e_Uae_V e_V b e_U)$. 
Hence, $\dim_k(e_U A e_V)=\dim_k(e_V A e_U)$.
The more surprising part 2 is proved in \cite[Cor.\,2.7]{Liu:2012}. 
	\end{proof}
	
Note that if $k$ has 
characteristic~$0$, then $\mathrm{rank}(\tilde \CM)$ is just the
  rank of $\CM(A)$ over~$\mathbb{Q}$.

\medskip

We now give a categorical formulation of the inclusions $Z(A) \supset \Rey(A) \supset \Hig(A)$ in \eqref{eq:ZA-CA-subsets}, which will be applicable in particular in the presence of a modified trace as in Section~\ref{sec:traces}. For the rest of this section, let us fix
\begin{itemize}
\item $\Ac$: a finite abelian category over 
	$k$,
\item $G$: a projective generator of $\Ac$,
\item $E = \End(G)$ the $k$-algebra of endomorphisms of $G$.
\end{itemize}

Recall the Hom-tensor adjoint equivalence 
\be\label{eq:C-modE-equiv}
\xymatrix@C=60pt@W=5pt@M=5pt{
\Ac \ar@/^/[r]^{\Ac(G,-)}
&\rmod{E} \ar@/^/[l]^{- \otimes_E G}
}
\ee
of $k$-linear
categories
between $\Ac$ and the category $\rmod{E}$ of finite-dimensional right $E$-modules  (see e.g.\ \cite[Sect.\,1.8]{EGNO-book}).

It is known that being symmetric is a Morita-invariant property of an algebra, as is the chain of ideals $Z(E) \supset \Rey(E) \supset \Hig(E)$ (see e.g.\ \cite{Hethelyi:2005,Broue:2009,Liu:2012} for related results). We can therefore use the equivalence in \eqref{eq:C-modE-equiv} and the ideals in \eqref{eq:ZA-CA-subsets}
for the choice $A=E$ and for a given projective generator $G$
 to define a chain of ideals in $\End(\Id_\Ac)$ as:
\be\label{eq:End-Rey-Hig}
\xymatrix@C=10pt@R=20pt@W=4pt@M=4pt{
\End(\Id_\Ac)\ar^{\cong}[d] & \supset & \Rey(\Ac) \ar^{\cong}[d] & \supset& \Hig(\Ac) \ar^{\cong}[d] 
\\
Z(E) &  & \Rey(E) & & \Hig(E)
}
\ee
Rather than showing directly that the above definition is independent of the choice of $G$, we will now give a different definition of $\Rey(\Ac)$ and $\Hig(\Ac)$ which does not require a choice of a projective generator. We show in Propositions~\ref{prop:Rey(Amod)=Rey(A)} and \ref{prop:HigA-HigAmod} that this definition agrees with the one above.

\subsubsection*{$\boldsymbol{\Rey(\Ac)}$} 

For each $P \in \Proj(\Ac)$ define the subspace $J_P \subset \End(P)$ as follows:
\be\label{eq:JP-def}
	J_P ~=~ \big\{ \, f : P \to P \,\big|\,
	\forall \, U \in \Ac \text{ simple},\, u \in \Ac(P,U) :
	u \circ f = 0 \, \big\} \ .
\ee
By definition, the Jacobson radical $\Jac(R)$ of a ring $R$ consists of all $r \in R$ which act as zero on all simple $R$-modules. 
Since  $G \in \Ac$ is a projective generator, $\Ac(G,U)$, where $U$ runs over all simple $U \in \Ac$, gives all simple right $E$-modules up to isomorphism (by the equivalence \eqref{eq:C-modE-equiv}). {}From~\eqref{eq:JP-def} we therefore get
\be\label{eq:JG-Jac}
	J_G ~=~ \Jac(E) \ .
\ee
In the case $\Ac = \rmod{A}$ we can take $G = A_A$, so that $\End(G)=A$ and hence $J_A = \Jac(A)$.

Using the subspaces \eqref{eq:JP-def}, we define
\be
	\Rey(\Ac) ~=~ \big\{ \, \eta \in \End(\Id_\Ac)  \,\big|\,
	\forall\, P \in \Proj(\Ac), f \in J_P : 
	\eta_P \circ f = 0 \, \big\} \ .
\ee
{}From this definition it is immediate that $\Rey(\Ac)$ is an ideal.

Recall that for a projective generator $G \in \Ac$,
the map
\be\label{eq:EndId-ZE-iso}
	\xi : \End(\Id_\Ac) \to Z(E) \quad , \quad \eta \mapsto \eta_G
\ee
is an algebra isomorphism
(that $\eta_G$ is central in $E$ follows from the naturality of $\eta$;	that \eqref{eq:EndId-ZE-iso} is a bijection is clear in the case $\Ac = \rmod{E}$,
	for general $\Ac$ use the equivalence~\eqref{eq:C-modE-equiv}). We can use this to relate $\Rey(\Ac)$ to the previous definition in terms of algebras:

\begin{proposition}\label{prop:Rey(Amod)=Rey(A)}
Let $A$ be a finite-dimensional $k$-algebra. Then $\xi$ in \eqref{eq:EndId-ZE-iso} restricts to an isomorphism of algebras $\xi : \Rey(\rmod{A}) \to \Rey(A)$.
\end{proposition}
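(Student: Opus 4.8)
The plan is to exploit that, by the discussion around \eqref{eq:EndId-ZE-iso}, the map $\xi$ is already an isomorphism of $k$-algebras $\End(\Id_\Ac)\xrightarrow{\ \sim\ }Z(E)$ for the choice $\Ac=\rmod{A}$ with projective generator $G=A_A$ and $E=\End(A_A)\cong A$. So the only thing left to prove is that $\xi$ carries the subspace $\Rey(\rmod{A})$ onto $\Rey(A)=\mathrm{ann}_{Z(A)}(\Jac(A))$. Throughout I identify $E$ with $A$, I use \eqref{eq:JG-Jac} (i.e.\ $J_G=\Jac(A)$), and I use the standard fact that under $\xi^{-1}$ a central element $z\in Z(A)$ corresponds to the natural transformation $\eta^{(z)}$ whose component on a right $A$-module $M$ is $m\mapsto mz$.

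For the inclusion $\xi\big(\Rey(\rmod{A})\big)\subseteq\Rey(A)$, let $\eta\in\Rey(\rmod{A})$ and specialise the defining condition of $\Rey(\rmod{A})$ to the projective object $P=G$: this gives $\eta_G\circ f=0$ for every $f\in J_G=\Jac(A)$. Translating composition in $\End(G)$ into multiplication in $A$, this says exactly that the central element $\xi(\eta)=\eta_G$ annihilates $\Jac(A)$, hence $\xi(\eta)\in\Rey(A)$.

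For the reverse inclusion, let $z\in\Rey(A)$ and set $\eta:=\xi^{-1}(z)$, so that $\eta_M$ is right multiplication by $z$ on every right $A$-module $M$. Fix a projective object $P$ and some $f\in J_P$; I must show $\eta_P\circ f=0$. The one step carrying content is that $f\in J_P$ forces $\mathrm{im}(f)\subseteq P\cdot\Jac(A)$: if this failed, the composite of $f$ with the quotient map $P\to P/P\Jac(A)$ would be non-zero, and since $P/P\Jac(A)$ is a module over the semisimple algebra $A/\Jac(A)$ it admits a simple quotient on which this composite is still non-zero, producing a morphism $u:P\to U$ with $U$ simple and $u\circ f\neq 0$ --- contradicting $f\in J_P$. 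Granting this, write $f(p)=\sum_i p_i j_i$ with $p_i\in P$ and $j_i\in\Jac(A)$; then $(\eta_P\circ f)(p)=f(p)\,z=\sum_i p_i(j_i z)=\sum_i p_i(z j_i)=0$, using that $z$ is central and $z\,\Jac(A)=0$. Hence $\eta_P\circ f=0$ for all projective $P$ and all $f\in J_P$, i.e.\ $\eta\in\Rey(\rmod{A})$.

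Combining the two inclusions, $\xi$ restricts to a bijection $\Rey(\rmod{A})\to\Rey(A)$; since $\xi$ is an algebra homomorphism and both sides are (two-sided) ideals, hence subalgebras, this restriction is an isomorphism of algebras. I expect the only mildly delicate point to be the book-keeping of left versus right actions in $\End(A_A)\cong A$ together with the identification of $\xi^{-1}(z)$ with right multiplication by $z$; the radical argument in the third paragraph, although it is the crux of the reverse inclusion, is entirely standard.
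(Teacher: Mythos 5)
Your proof is correct, and the forward inclusion coincides with the paper's (specialise the defining condition of $\Rey(\rmod{A})$ to $P=G=A_A$ and use $J_G=\Jac(A)$). For the reverse inclusion you take a genuinely different route. The paper picks a split surjection $p:G^{\oplus n}\to P$ with right inverse $i$, observes that $i\circ f\circ p\in J_{G^{\oplus n}}=\Jac(\End(G^{\oplus n}))$, uses that $r^{\oplus n}$ lies in $\Rey(\End(G^{\oplus n}))$ to conclude $r^{\oplus n}\circ i\circ f\circ p=0$, and then cancels $i$ and $p$; this argument stays entirely at the level of morphisms. You instead use the explicit description of $\xi^{-1}(z)$ as right multiplication by $z$ together with the module-theoretic fact that $f\in J_P$ forces $\mathrm{im}(f)\subseteq P\,\Jac(A)$ (your derivation of this via semisimplicity of $P/P\Jac(A)$ is sound), and then kill everything with $z\,\Jac(A)=0$. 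Your version is more elementary and makes the mechanism transparent; the paper's version is the one whose pattern carries over to settings where one cannot argue with elements of objects, which is in the spirit of the surrounding categorical definitions, though for the proposition as stated (which concerns only $\rmod{A}$) that generality is not needed. Both are complete proofs.
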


\begin{proof}
We set $\Ac=\rmod{A}$ and choose the projective generator $G=A_A$ and thus $E=A$.
We have to show that
$\xi(\Rey(\Ac)) = \Rey(E)$.

\smallskip

\noindent
``$\subset$'':
Let $\eta \in \Rey(\Ac)$. Then $\xi(\eta) = \eta_G$. By definition of $\Rey(\Ac)$ we have in particular that for all $f \in J_G
=\Jac(E)$ (recall~\eqref{eq:JG-Jac}), 
$\eta_G \circ f=0$.
 Thus $\eta_G$ annihilates $\Jac(E)$,
  i.e.\ $\eta_G \in \Rey(E)$.

\smallskip

\noindent
``$\supset$'': Conversely, let $r \in \Rey(E)$ and let $\eta = \xi^{-1}(r) \in \End(\Id_\Cc)$. 
Let $P \in \Proj(\Cc)$, and $f \in J_P$. We need to show $\eta_P \circ f = 0$.
Pick a surjection $p : G^{\oplus n} \to P$ and a right-inverse $i : P \to G^{\oplus n}$. 
	We have $i \circ f \circ p \in J_{G^{\oplus n}}$ since for all $u : G^{\oplus n} \to U$, where $U$ is simple,
 $u \circ i \circ f \circ p = 0$ since $u \circ i : P \to U$ and $f \in J_P$.
Note then that $r^{\oplus n}: G^{\oplus n} \to G^{\oplus n}$ and $r^{\oplus n}\in \Rey(\End(G^{\oplus n}))$, therefore
$r^{\oplus n} \circ i \circ f \circ p = 0$. Since $r^{\oplus n} = \eta_{G^{\oplus n}}$ we get
\be
	0 =  \eta_{G^{\oplus n}} \circ i \circ f \circ p
	  = i \circ \eta_P \circ f \circ p \ .
\ee
But $i$ is injective and $p$ surjective, and so $\eta_P \circ f = 0$, as required.
\end{proof}

\subsubsection*{$\boldsymbol{\Hig(\Ac)}$} 

The condition replacing that the algebra is symmetric will be that $\Proj(\Ac)$ is Calabi-Yau (see e.g.\ \cite{Hesse:2016} for a recent treatment of the relation between symmetric Frobenius algebras and Calabi-Yau categories in the semisimple case):

\newcommand{\tCY}{\mathsf{t}}
\begin{definition}\label{def:CY-cat}
A $k$-linear additive category $\mathcal{K}$ with finite-dimensional Hom-spaces is called {\em Calabi-Yau} if it is equipped with a family of $k$-linear maps (the {\em trace maps})
\be
	\big(\,\tCY_M : \End(M) \to k \,\big)_{M \in \mathcal{K}}
\ee
such that for all $M,N \in \mathcal{K}$,
\begin{enumerate}
\item {(\em cyclicity)} for all $f: M\to N$, $g:N \to M$ we have $\tCY_M(g \circ f) = \tCY_N(f \circ g)$.
\item {(\em non-degeneracy)} the pairing $(-,-) : 
\mathcal{K}(M,N) \times \mathcal{K}(N,M) 
\to k$, $(f,g) = \tCY_N(f \circ g)$ is non-degenerate.
\end{enumerate}
We denote by $\mathrm{CY}(\mathcal{K})$ the set of all families of trace maps $(\tCY_M)_{M \in \mathcal{K}}$ which turn $\mathcal{K}$ into a Calabi-Yau category.
\end{definition}

A Calabi-Yau category with one object is the same as a symmetric Frobenius algebra. The  generalisation of the isomorphism \eqref{eq:zeta:ZA-CA} to the present setting is:

\begin{lemma}\label{lem:CY-traces_torsor_EndId*}
Let $\mathcal{K}$ be as in Definition~\ref{def:CY-cat} 
and let $\End(\Id_{\mathcal{K}})^\times$ be the subset of invertible endo-transformations. Then for each $(\tCY_M)_{M \in \mathcal{K}} \in \mathrm{CY}(\mathcal{K})$ 
the map
\be
	\End(\Id_{\mathcal{K}})^\times
	\longrightarrow \mathrm{CY}(\mathcal{K})
	\quad , \quad
	\eta  ~\longmapsto~ \big( f \mapsto \tCY_M(\eta_M \circ f) \big) 
\ee
is a bijection.
\end{lemma}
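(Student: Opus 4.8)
\textbf{Proof plan for Lemma~\ref{lem:CY-traces_torsor_EndId*}.}
The plan is to construct an explicit two-sided inverse to the map in question, mimicking the single-object case of \eqref{eq:zeta:ZA-CA}. Fix once and for all the reference family $(\tCY_M)_{M\in\mathcal{K}}$. First I would check that the map is well-defined, i.e.\ that for an invertible $\eta\in\End(\Id_{\mathcal{K}})$ the family $\tCY^\eta_M := \tCY_M(\eta_M\circ-)$ again lies in $\mathrm{CY}(\mathcal{K})$. Cyclicity of $\tCY^\eta$ follows from naturality of $\eta$ together with cyclicity of $\tCY$: for $f:M\to N$, $g:N\to M$ one has $\eta_M\circ g\circ f = g\circ \eta_N\circ f$ by naturality, hence $\tCY^\eta_M(g\circ f)=\tCY_M(g\circ(\eta_N\circ f))=\tCY_N((\eta_N\circ f)\circ g)=\tCY^\eta_N(f\circ g)$. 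Non-degeneracy of the pairing induced by $\tCY^\eta$ follows because $(f,g)\mapsto \tCY^\eta_N(f\circ g)=\tCY_N((\eta_N\circ f)\circ g)$ is the composite of the original non-degenerate pairing with the isomorphism $\eta_N\circ(-)$ on $\mathcal{K}(M,N)$; precomposing one argument of a non-degenerate pairing with an isomorphism keeps it non-degenerate.

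Next I would construct the inverse. Given an arbitrary family $(\tCY'_M)_{M\in\mathcal{K}}\in\mathrm{CY}(\mathcal{K})$, I want to produce a natural transformation $\eta$ with $\tCY'_M=\tCY_M(\eta_M\circ-)$. For each object $M$, both $\tCY_M$ and $\tCY'_M$ are linear functionals on $\End(M)$, and the non-degeneracy of the pairing attached to $\tCY_M$ (applied with $N=M$) means that $x\mapsto \tCY_M(x\circ-)$ identifies $\End(M)$ with its dual $\End(M)^*$; hence there is a unique $\eta_M\in\End(M)$ with $\tCY'_M(f)=\tCY_M(\eta_M\circ f)$ for all $f\in\End(M)$. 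The remaining points are: (i) $\eta=(\eta_M)_M$ is natural, i.e.\ $\eta_N\circ h = h\circ \eta_M$ for every $h:M\to N$; (ii) $\eta$ is invertible; (iii) the two assignments are mutually inverse, which is immediate from uniqueness in (i) of the definition of $\eta_M$ and from the fact that $\tCY_M(\eta_M\circ-)$ determines $\eta_M$.

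For naturality (i): fix $h:M\to N$ and test the endomorphisms $\eta_N\circ h$ and $h\circ\eta_M$ of $\dots$ — more precisely, for any $g:N\to M$ compute, using cyclicity of $\tCY$ and of $\tCY'$ and the defining property of $\eta$,
\be
  \tCY_M\big((\eta_N\circ h)\circ g\big)
  \overset{\text{cycl.}}{=} \tCY_N\big(g\circ\eta_N\circ h\big)
  \overset{?}{=} \tCY'_N(g\circ h)
  \overset{\text{cycl.}}{=} \tCY'_M(h\circ g)
  = \tCY_M\big(\eta_M\circ h\circ g\big)
  = \tCY_M\big((h\circ\eta_M)\circ g\big),
\ee
where the middle step uses $\tCY_N(\eta_N\circ x)=\tCY'_N(x)$ with $x=g\circ h$ rewritten via cyclicity as $\tCY_N(g\circ\eta_N\circ h)=\tCY_N(\eta_N\circ h\circ g)=\tCY'_N(h\circ g)=\tCY'_N(g\circ h)$ — so in fact one should phrase it as: for all $g:N\to M$, $\tCY_M((\eta_N\circ h)\circ g)=\tCY_M((h\circ\eta_M)\circ g)$. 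Since this holds for all such $g$ and the pairing $\mathcal{K}(M,N)\times\mathcal{K}(N,M)\to k$, $(a,b)\mapsto\tCY_M(a\circ b)$ attached to $\tCY_M$ is non-degenerate, we conclude $\eta_N\circ h=h\circ\eta_M$. For invertibility (ii): by the same argument applied with the roles of $\tCY$ and $\tCY'$ exchanged one obtains $\eta'\in\End(\Id_{\mathcal{K}})$ with $\tCY_M(f)=\tCY'_M(\eta'_M\circ f)$; then $\tCY_M(f)=\tCY'_M(\eta'_M\circ f)=\tCY_M(\eta_M\circ\eta'_M\circ f)$ for all $f$, and the same non-degeneracy forces $\eta_M\circ\eta'_M=\id_M$, and symmetrically $\eta'_M\circ\eta_M=\id_M$, so $\eta$ is invertible. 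Finally, the composite $\eta\mapsto\tCY_M(\eta_M\circ-)\mapsto(\text{the unique }\tilde\eta_M\text{ with }\dots)$ returns $\eta$ by the uniqueness in the construction, and the other composite is an identity for the same reason; hence the map is a bijection.

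The only mildly delicate point — and the one I'd flag as the main obstacle — is verifying naturality of $\eta$, since one has to juggle cyclicity on both families together with the non-degeneracy of the bilinear pairings (not just non-degeneracy of $\tCY_M$ as a functional on $\End(M)$); everything else is a direct transcription of the classical statement that for a symmetric Frobenius algebra the map $z\mapsto\varepsilon(z\cdot-)$ is an isomorphism, now performed objectwise and glued by naturality.
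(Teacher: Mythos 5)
Your proof is correct and follows essentially the same route as the paper: define $\eta_M$ objectwise via non-degeneracy of $\tCY_M$ on $\End(M)$, deduce naturality from cyclicity of both families together with non-degeneracy of the Hom-pairings, and obtain invertibility and bijectivity from uniqueness. The only blemish is a recurring subscript slip --- $(\eta_N\circ h)\circ g$ for $g:N\to M$, $h:M\to N$ is an endomorphism of $N$, so the trace applied to it should be $\tCY_N$, in line with the convention $(f,g)=\tCY_N(f\circ g)$ of Definition~\ref{def:CY-cat} --- which does not affect the validity of the argument.
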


\begin{proof}
Clearly, $(\tCY_M(\eta_M \circ -))_{M \in \mathcal{K}}$ defines a family of cyclic and non-degenerate traces 
(for non-degeneracy, one notes that $\eta_M \circ g$ is non-zero for any non-zero $g$ because $\eta_M$ is invertible). 
Conversely, given 
	$(\tilde\tCY_M)_{M \in \mathcal{K}} \in \mathrm{CY}(\mathcal{K})$, 
by non-degeneracy for each $M \in \mathcal{K}$ there exists a unique $\eta_M \in \End(M)^\times$ such that
 $\tilde\tCY_M(-) = \tCY_M(\eta_M \circ -)$. 
Rewriting the cyclicity condition $\tilde\tCY_M(g \circ f) = \tilde\tCY_N(f \circ g)$ for all $f : M\to N$ and $g:N\to M$ in terms of 
the family $\tCY_M$
and using its cyclicity property shows $\tCY_M\bigl(g\circ (\eta_N \circ f - f \circ \eta_M)\bigr) = 0$ for any $g$,
and thus by non-degeneracy of $\tCY_M$ we have  $\eta_N \circ f = f \circ \eta_M$ for all $M$, $N$, and $f$.
\end{proof}

Suppose now that the finite abelian category $\Ac$ is such that $\Proj(\Ac)$ is Calabi-Yau with trace maps $(\tCY_P)_{P \in \Proj(\Ac)}$. For the pairings of morphisms between the projective objects $P,R \in \Proj(\Ac)$ we write
\be\label{eq:paring-projective}
	(-,-)_{PR} : \Ac(P,R) \times \Ac(R,P) \to k
	\quad , \quad
	(f,g)_{PR} = \tCY_{R}(f \circ g) \ .
\ee
Denote by $\gamma_{PR} \in \Ac(R,P) \otimes \Ac(P,R)$ the corresponding 
(unique) copairing, that is $\gamma_{PR} = \sum_{(\gamma_{PR})} \gamma_{PR}' \otimes \gamma_{PR}''$ and for all $x : P \to R$, $y : R \to P$, 
\be\label{eq:coparing-gamma-properties}
	\sum_{(\gamma_{PR})} \big(\,x\,,\,\gamma_{PR}'\,\big)_{\!PR} \,\, \gamma_{PR}'' ~=~ x
	\qquad , \qquad
	\sum_{(\gamma_{PR})} \gamma_{PR}' \,\, \big(\,\gamma_{PR}''\,,\,y\,\big)_{\!PR} ~=~ y
	\ .
\ee
We need the following lemma, whose proof is given in Appendix~\ref{app:ReyHigideal}.

\begin{lemma}\label{lem:tau-R-welldef}
For each $R \in \Proj(\Ac)$ and $y \in \End(R)$ there exists a unique $\eta \in \End(\Id_\Ac)$ such that $\eta_{P} = \sum_{(\gamma_{PR})} \gamma_{PR}' \circ y \circ \gamma_{PR}''$ for all $P \in \Proj(\Ac)$.
\end{lemma}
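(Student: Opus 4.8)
The plan is to first establish the \emph{uniqueness} part, which is immediate: since $G$ is a projective generator, the value of any $\eta\in\End(\Id_\Ac)$ on $G$ (equivalently, on any single projective object through which $G$ factors) determines $\eta$ on all of $\Ac$ by naturality — this is the same fact underlying the isomorphism $\xi$ in \eqref{eq:EndId-ZE-iso}. So at most one $\eta$ can satisfy the stated formula. For \emph{existence}, the strategy is to define $\eta_X$ for \emph{every} $X\in\Ac$ (not just projectives) by the analogous formula and then check naturality; the subtlety is that for non-projective $X$ there is no direct analogue of the copairing $\gamma_{XR}$, so one instead uses a projective presentation of $X$ and the formula on projectives, then shows the result is independent of the presentation. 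Alternatively — and this is probably cleaner — one defines $\eta$ only on projectives via the displayed formula, checks it is natural with respect to morphisms \emph{between projectives}, and then invokes the general fact that a natural transformation of $\Id$ restricted to $\Proj(\Ac)$ extends uniquely to all of $\Ac$ (since $\End(\Id_\Ac)\xrightarrow{\sim}\End(\Id_{\Proj(\Ac)})$, again because $G$ is a projective generator and every object has a projective presentation).

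The key computational step is naturality on $\Proj(\Ac)$: given $\phi:P\to P'$ with $P,P'$ projective, one must show $\phi\circ\eta_P=\eta_{P'}\circ\phi$, i.e.
\be
  \phi\circ \sum_{(\gamma_{PR})} \gamma_{PR}' \circ y \circ \gamma_{PR}''
  ~=~
  \sum_{(\gamma_{P'R})} \gamma_{P'R}' \circ y \circ \gamma_{P'R}'' \circ \phi \ .
\ee
This is exactly the categorical analogue of the statement that $\tau(a)=\sum_{(\gamma)}\gamma' a\gamma''$ lands in the centre, which for a symmetric algebra follows from the bimodule property $\sum_{(\gamma)}(a\gamma')\otimes\gamma''=\sum_{(\gamma)}\gamma'\otimes(\gamma''a)$ of the copairing. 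Here the replacement is the "sliding" identity for the copairings $\gamma_{PR}$ associated to the Calabi-Yau pairings: for $x:R\to P$ one has
\be\label{eq:gamma-slide}
  \sum_{(\gamma_{PR})} (x\circ\gamma_{PR}')\otimes \gamma_{PR}''
  ~=~
  \sum_{(\gamma_{P'R})} \gamma_{P'R}' \otimes (\gamma_{P'R}''\circ x')
\ee
for the appropriate adjoint $x'$, which is proved directly from the defining properties \eqref{eq:coparing-gamma-properties} and cyclicity of the $\tCY$-maps by pairing both sides against an arbitrary test morphism and using non-degeneracy. Chaining two such slides (on the $\gamma'$-leg and on the $\gamma''$-leg) through $\phi$ yields the desired naturality; the bookkeeping is routine once \eqref{eq:gamma-slide} is in hand.

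The main obstacle I anticipate is purely notational: keeping track of which Hom-space each leg of each copairing lives in as $P$ varies, and verifying \eqref{eq:gamma-slide} cleanly — in the algebra case this is a one-line manipulation $\sum \eps(a\gamma'b)\gamma''=\sum\eps(\gamma'(ba))\gamma''$ but in the many-object setting it becomes a short diagram chase that must be done carefully. I would organise the write-up by (i) proving uniqueness in one line, (ii) isolating the sliding identity \eqref{eq:gamma-slide} as a sublemma, (iii) deducing naturality on $\Proj(\Ac)$, and (iv) extending uniquely to $\End(\Id_\Ac)$ using that $G$ is a projective generator. Since the lemma statement only concerns projectives, step (iv) may even be unnecessary depending on how $\End(\Id_\Ac)$ is being used downstream — but including it makes the statement self-contained.
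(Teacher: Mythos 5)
Your plan coincides with the paper's own proof: define $\eta$ on projectives by the displayed formula, establish naturality on $\Proj(\Ac)$ via exactly the sliding identity you isolate (proved by pairing both sides against an arbitrary test morphism and invoking non-degeneracy and cyclicity of the Calabi--Yau traces), and then extend uniquely to $\End(\Id_\Ac)$ using that restriction to $\Proj(\Ac)$ is an isomorphism $\End(\Id_\Ac)\to\End(\Id_{\Proj(\Ac)})$, which the paper records as Lemma~\ref{lem:nat-end-via-proj}. The only corrections needed are the bookkeeping you already flag: in the sliding identity the same morphism $f\colon P\to P'$ appears on both sides (no adjoint $x'$ is involved, and your $x\colon R\to P$ does not typecheck against $x\circ\gamma_{PR}'$), and a single slide, not two chained ones, suffices for naturality.
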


This lemma allows us to define, for each $R \in \Proj(\Ac)$,  a $k$-linear map $\tau_R : \End(R) \to \End(\Id_\Ac)$ by
\be\label{eq:tau_R-defn}
	\tau_R(y) = \eta
	\quad , \quad
	\text{where for all $P \in \Proj(\Ac)$} \quad
	\eta_P = \sum_{(\gamma_{PR})} \gamma_{PR}' \circ y \circ \gamma_{PR}''
	\ .
\ee
These maps are the analogue of the map $\tau$ from \eqref{eq:tau-map-A-ZA-def}. Indeed, $\Hig(\Ac)$ is now defined as the joint image of all the maps $\tau_R$:
\begin{align}\label{eq:HA-def-2}
	\Hig(\Ac) = \big\{\, \eta \in \End(\Id_\Ac) \,\big|\,
	&\exists\, R \in \Proj(\Ac)\,,\, y \in \End(R) \,:\, \eta = \tau_R(y) \,\big\} \ .
\end{align}
The point of the above definition is to avoid the choice of a projective generator. 
To actually compute $\Hig(\Ac)$, choosing a projective generator $G$ may nonetheless be useful.
For example, we show in Corollary~\ref{cor:HigA-from-projgen} that $\Hig(\Ac) = \mathrm{im}(\tau_G)$.

As was the case for the Higman ideal of a symmetric algebra, also $\Hig(\Ac)$ is independent of the choice of traces:

\begin{proposition}\label{prop:HigA-indep-tr}
Let $\Ac$ be a finite abelian category over $k$ such that $\Proj(\Ac)$ is Calabi-Yau. Then $\Hig(\Ac)$ is independent of the choice of the family of traces $(\tCY_P)_{P \in \Proj(\Ac)}$.
\end{proposition}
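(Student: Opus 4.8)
The plan is to reduce the statement to the fact that two Calabi--Yau structures on $\Proj(\Ac)$ differ by an invertible natural endotransformation (Lemma~\ref{lem:CY-traces_torsor_EndId*}), and then to track how rescaling the traces affects the maps $\tau_R$ of \eqref{eq:tau_R-defn}. The outcome will be that each $\tau_R$ changes only by precomposition with an invertible $k$-linear automorphism of $\End(R)$, so its image, and hence $\Hig(\Ac)$, is left unchanged.

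Concretely, let $(\tCY_P)_{P\in\Proj(\Ac)}$ and $(\tilde\tCY_P)_{P\in\Proj(\Ac)}$ be two families of trace maps making $\Proj(\Ac)$ Calabi--Yau. Applying Lemma~\ref{lem:CY-traces_torsor_EndId*} with $\mathcal{K}=\Proj(\Ac)$ produces an invertible $\eta^{0}\in\End(\Id_{\Proj(\Ac)})^{\times}$ with $\tilde\tCY_P(-)=\tCY_P(\eta^{0}_P\circ-)$ for every projective $P$. Put $z_P:=\eta^{0}_P\in\End(P)$; this is invertible, and by naturality of $\eta^{0}$ it is central in $\End(P)$ (a fact I will use only in the last step).

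The key computation is the behaviour of the copairings. Let $(-,-)_{PR}$ be the pairing \eqref{eq:paring-projective} associated to $(\tCY_P)_P$, with copairing $\gamma_{PR}$. For $x\colon P\to R$ one has, by associativity and the definition \eqref{eq:paring-projective},
\[
\tilde\tCY_R(x\circ\gamma_{PR}')~=~\tCY_R\big(z_R\circ x\circ\gamma_{PR}'\big)~=~\big((z_R\circ x),\gamma_{PR}'\big)_{PR}\ ,
\]
where $z_R\circ x$ is read as a morphism $P\to R$. Since left composition with the fixed morphism $z_R^{-1}$ is $k$-linear it can be pulled out of the finite sum over $(\gamma_{PR})$, and applying the first identity in \eqref{eq:coparing-gamma-properties} to the morphism $z_R\circ x$ gives
\[
\sum_{(\gamma_{PR})}\tilde\tCY_R(x\circ\gamma_{PR}')\,\big(z_R^{-1}\circ\gamma_{PR}''\big)
~=~z_R^{-1}\circ\!\Big(\sum_{(\gamma_{PR})}\big((z_R\circ x),\gamma_{PR}'\big)_{PR}\,\gamma_{PR}''\Big)
~=~z_R^{-1}\circ z_R\circ x~=~x
\]
for all $x\colon P\to R$. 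By uniqueness of the copairing this identifies the copairing of the pairing associated to $(\tilde\tCY_P)_P$ as $\sum_{(\gamma_{PR})}\gamma_{PR}'\otimes\big(z_R^{-1}\circ\gamma_{PR}''\big)$.

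It remains to feed this into \eqref{eq:tau_R-defn}. Writing $\tilde\tau_R\colon\End(R)\to\End(\Id_\Ac)$ for the map built from the $\tilde\tCY$-copairings, its component at a projective $P$ is, for $y\in\End(R)$,
\[
\tilde\tau_R(y)_P~=~\sum_{(\gamma_{PR})}\gamma_{PR}'\circ y\circ z_R^{-1}\circ\gamma_{PR}''~=~\sum_{(\gamma_{PR})}\gamma_{PR}'\circ\big(z_R^{-1}\circ y\big)\circ\gamma_{PR}''~=~\tau_R\big(z_R^{-1}\circ y\big)_P\ ,
\]
the middle equality using centrality of $z_R$ in $\End(R)$. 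By the uniqueness clause of Lemma~\ref{lem:tau-R-welldef} this forces $\tilde\tau_R=\tau_R\circ(z_R^{-1}\circ-)$ as maps $\End(R)\to\End(\Id_\Ac)$. As $z_R$ is invertible, $y\mapsto z_R^{-1}\circ y$ is a bijection of $\End(R)$, so $\mathrm{im}(\tilde\tau_R)=\mathrm{im}(\tau_R)$ for every $R\in\Proj(\Ac)$; taking the union over all $R$ and using \eqref{eq:HA-def-2} shows that $\Hig(\Ac)$ does not depend on the chosen family of traces. The only delicate point is the copairing computation: one must place $z_R^{-1}$ on the side from which $k$-linearity lets it escape the sum, so that \eqref{eq:coparing-gamma-properties} becomes applicable, and then move it past $y$ using centrality of $\eta^{0}_R$; the remaining steps are formal consequences of Lemmas~\ref{lem:CY-traces_torsor_EndId*} and \ref{lem:tau-R-welldef}.
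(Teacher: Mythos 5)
Your proof is correct and follows essentially the same route as the paper: apply Lemma~\ref{lem:CY-traces_torsor_EndId*} to compare the two trace families, identify the new copairing as $\sum_{(\gamma_{PR})}\gamma_{PR}'\otimes(\eta_R^{-1}\circ\gamma_{PR}'')$, and conclude $\tilde\tau_R(y)=\tau_R(\eta_R^{-1}\circ y)$ so that the images agree. The only cosmetic difference is that the paper also records the equivalent form $\sum_{(\gamma_{PR})}(\gamma_{PR}'\circ\eta_R^{-1})\otimes\gamma_{PR}''$ of the copairing, which makes the last step immediate without invoking centrality of $\eta_R$ in $\End(R)$, whereas you instead use that centrality (a valid consequence of naturality).
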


\begin{proof}
Let $\tCY, \tilde\tCY \in \mathrm{CY}(\Proj(\Ac))$. By Lemma~\ref{lem:CY-traces_torsor_EndId*} there is $\eta \in \End(\Id_{\Proj(\Ac)})^\times$ such that $\tilde\tCY_P(f) = \tCY_P(\eta_P \circ f)$ for all $P \in \Proj(\Ac)$ and $f \in \End(P)$.
Let $\gamma_{PR}$ and $\tilde\gamma_{PR}$ be the copairings arising from $\tCY$ and $\tilde\tCY$ as in \eqref{eq:coparing-gamma-properties}. If we write $\gamma_{PR} = \sum_{(\gamma_{PR})} \gamma_{PR}' \otimes \gamma_{PR}''$, it is easy to see that
\be\label{eq:HigA-indep-tr_aux1}
	\tilde\gamma_{PR} 
	~=~ \sum_{(\gamma_{PR})} (\gamma_{PR}' \circ \eta_R^{-1})
	 \otimes \gamma_{PR}''
	~=~ \sum_{(\gamma_{PR})} \gamma_{PR}' \otimes (\eta_R^{-1} \circ \gamma_{PR}'') \ .
\ee
Write $\tilde\tau_R$ for the map \eqref{eq:tau_R-defn} computed from $\tilde\gamma$. It is immediate from the above equality that for $y \in \End(R)$,
\be
	\tilde\tau_R(y) = \tau_R(\eta_R^{-1} \circ y) \ .
\ee
Thus indeed the joint image of the $\tau_R$ is independent of the choice of traces $\tCY_P$.
\end{proof}

The next proposition makes the connection between the definition \eqref{eq:HA-def-2} of $\Hig(\Ac)$ 
and $\Hig(A)$ for a symmetric algebra $A$ when taking $\Ac = \rmod{A}$. The proof has been relegated to Appendix~\ref{app:ReyHigideal}.

\begin{proposition}\label{prop:HigA-HigAmod}
Let $A$ be a finite-dimensional $k$-algebra. 
\begin{enumerate}
\item $A$ admits a central form turning it into a symmetric algebra if and only if $\Proj(\rmod{A})$ admits a family of traces turning it into a Calabi-Yau category.
\item
If $A$ is symmetric, the map $\xi$ from \eqref{eq:EndId-ZE-iso} 
for $G=A_A$
restricts to an isomorphism of algebras $\xi : \Hig(\rmod{A}) \to \Hig(A)$.
\end{enumerate}
\end{proposition}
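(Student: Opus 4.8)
The plan is to prove the two parts of Proposition~\ref{prop:HigA-HigAmod} separately, using the equivalence \eqref{eq:C-modE-equiv} with $G = A_A$ and $E = \End(A_A) \cong A$ (acting by left multiplication), so that $\Ac = \rmod{A}$ and $\End(\Id_\Ac) \cong Z(A)$ via $\xi$.

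For part 1, I would first observe that a central form $\eps$ on $A$ making $A$ symmetric is the same datum as a trace map $\tCY_{A_A} : \End(A_A) \to k$ satisfying cyclicity and inducing a non-degenerate pairing: indeed $\End(A_A) \cong A$ and a central form is exactly a cyclic functional, while the non-degeneracy of $(a,b)\mapsto \eps(ab)$ is the non-degeneracy condition of Definition~\ref{def:CY-cat} restricted to the single object $A_A$. The real content is then to upgrade a single non-degenerate cyclic trace on $\End(A_A)$ to a full Calabi-Yau structure on all of $\Proj(\rmod{A})$, and conversely to restrict a Calabi-Yau structure to the object $A_A$. The ``restrict'' direction is immediate. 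For the ``extend'' direction, given $\tCY_{A}$, any $P \in \Proj(\rmod A)$ is a direct summand of $A^{\oplus n}$ for some $n$ via $i : P \to A^{\oplus n}$, $p : A^{\oplus n} \to P$ with $p \circ i = \id_P$; one extends $\tCY$ additively to $A^{\oplus n}$ (using the matrix-trace over the $n$ copies composed with $\tCY_A$ on each diagonal block) and sets $\tCY_P(h) := \tCY_{A^{\oplus n}}(i \circ h \circ p)$. This is exactly the construction of the modified trace in \eqref{eq:tP-defn} (with $Q$ replaced by $A_A$), and the proofs of Lemma~\ref{lem:tr-indep-choice} and Proposition~\ref{prop:mtrace}\,(1) go through verbatim to show this is well-defined, cyclic, and --- using the non-degeneracy input and the analogue of \eqref{eq:mtrace-aux1} --- non-degenerate. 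I would cite those arguments rather than repeat them. This part should be routine once the dictionary ``central form $\leftrightarrow$ trace on $A_A$'' is in place; the mild subtlety is that one must check the extended traces do not depend on the chosen embedding, which is precisely Lemma~\ref{lem:tr-indep-choice}.

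For part 2, assume $A$ is symmetric with central form $\eps$ and copairing $\gamma = \sum \gamma' \otimes \gamma'' \in A \otimes A$. Here the Calabi-Yau structure on $\Proj(\rmod A)$ is the one from part 1, so that $\tCY_{A_A}$ corresponds to $\eps$ and the copairing $\gamma_{A_A,A_A}$ of the pairing $(-,-)_{A_A,A_A}$ on $\End(A_A)\cong A$ is exactly $\gamma$ under this identification. I would show that $\xi(\Hig(\rmod A)) = \Hig(A)$ by comparing the map $\tau_{A_A} : \End(A_A) \to \End(\Id_{\rmod A})$ of \eqref{eq:tau_R-defn} with the algebra map $\tau : A \to Z(A)$ of \eqref{eq:tau-map-A-ZA-def}: tracing through the identifications, $\xi(\tau_{A_A}(y)) = \xi(\eta)$ where $\eta_{A_A} = \sum \gamma' \circ y \circ \gamma''$, which under $\End(A_A) \cong A$ is precisely $\sum \gamma' \, y \, \gamma'' = \tau(y)$. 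Hence $\xi(\mathrm{im}(\tau_{A_A})) = \mathrm{im}(\tau) = \Hig(A)$. It then remains to identify $\mathrm{im}(\tau_{A_A})$ with $\Hig(\rmod A) = \bigcup_{R \in \Proj} \mathrm{im}(\tau_R)$, i.e.\ to show that $A_A$ already realises the joint image; this is the statement of Corollary~\ref{cor:HigA-from-projgen} (that $\Hig(\Ac) = \mathrm{im}(\tau_G)$ for any projective generator $G$), which I may invoke. Finally, $\xi$ is an algebra isomorphism by \eqref{eq:EndId-ZE-iso}, and $\Hig(\rmod A)$, $\Hig(A)$ are ideals, so the restriction is an isomorphism of (non-unital) algebras.

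The main obstacle is keeping the identifications honest: one must check carefully that under $\End(A_A) \cong A$ and $\End(\Id_{\rmod A}) \cong Z(A)$, the abstract copairing $\gamma_{A_A,A_A}$ coming from the Calabi-Yau pairing $(f,g) \mapsto \tCY_{A_A}(f\circ g)$ really corresponds to the algebraic copairing $\gamma$ of $(a,b)\mapsto \eps(ab)$ --- there is a potential transpose/order-of-composition mismatch between ``$f \circ g$'' in $\End(A_A)$ and ``$ab$'' in $A$ (left versus right multiplication), which needs to be tracked so that $\sum \gamma' \circ y \circ \gamma''$ matches $\sum \gamma' y \gamma''$ and not, say, $\sum \gamma'' y \gamma'$; symmetry of the copairing (noted after \eqref{eq:tau-map-A-ZA-def}) makes this harmless but it should be remarked on. Beyond that bookkeeping, and the appeal to Corollary~\ref{cor:HigA-from-projgen} to reduce the joint image to $\mathrm{im}(\tau_{A_A})$, the argument is a matter of unwinding definitions.
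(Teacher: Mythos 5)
Your part 2 is essentially the paper's own argument: the paper likewise identifies $\tau_G(y)_G$ with $\tau(y)$ via the copairing (its equation \eqref{eq:HigA-HigAmod_aux1}) and then invokes Corollary~\ref{cor:HigA-from-projgen}; your remark that $\End(A_A)\cong A$ sends composition to multiplication in the same order (since $f=f(1)\cdot(-)$ gives $(f\circ g)(1)=f(1)g(1)$), together with the symmetry of the copairing, correctly disposes of the transpose worry. The one thing to make explicit is that you are free to compute $\Hig(\rmod A)$ with the particular Calabi--Yau structure built in part 1 because $\Hig$ is independent of that choice (Proposition~\ref{prop:HigA-indep-tr}); the paper says this explicitly.

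In part 1 there is a genuine justification gap, although your construction is the right one. You claim that the proofs of Lemma~\ref{lem:tr-indep-choice} and Proposition~\ref{prop:mtrace}\,(1) ``go through verbatim'' with $Q$ replaced by $A_A$. They do not: those proofs live in a unimodular pivotal finite \emph{tensor} category and realise $P$ as a summand of $Q\otimes X$, using partial traces, duality morphisms, and above all Lemma~\ref{lem:simple-proj-ambi} and \eqref{eq:simple-proj-leftright}, which depend on $Q$ being \emph{simple} (so that $m_\one=1$ and $\End(Q)=k\,\id_Q$) and on unimodularity. For a bare finite abelian $\rmod{A}$ there is no tensor product, no partial trace $tr^r_X$, and $\End(A_A)=A$ is far from one-dimensional, so in particular the analogue of \eqref{eq:mtrace-aux1} you invoke for non-degeneracy is meaningless. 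What saves your construction is that the direct-sum version needs none of this machinery: writing $\eps_n$ on $\End(A^{\oplus n})=\mathrm{Mat}_n(A)$ as $\sum_j\eps((-)_{jj})$, the matrix identity $\eps_n(FG)=\eps_{n'}(GF)$ follows from centrality of $\eps$ alone, and this gives well-definedness of $\tCY_P(h)=\eps_n(i\circ h\circ p)$, cyclicity, and (with non-degeneracy of $\eps$ applied entrywise) non-degeneracy of the pairings. You should supply this elementary argument rather than cite Section~\ref{sec:traces}. The paper itself takes a slightly different but equivalent route: it defines traces only on the indecomposable projectives $P_U$ via components of $G$ and then appeals to Lemma~\ref{lem:CY-structures-vs-setT}, which says that such data extends uniquely to a Calabi--Yau structure on all of $\Proj(\Ac)$; either organisation works once the verification is done with the correct (purely algebraic) tools.
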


Combining this proposition 
and Proposition~\ref{prop:Rey(Amod)=Rey(A)}
with the equivalence \eqref{eq:C-modE-equiv} and the inclusions \eqref{eq:ZA-CA-subsets} we get that we indeed have the inclusions \eqref{eq:End-Rey-Hig}.

\medskip

As for algebras, the Cartan matrix of $\Ac$ is the $\Irr(\Ac){\times}\Irr(\Ac)$-matrix $\CM(\Ac)$ with entries $\CM(\Ac)_{UV} = [P_U:V]$, i.e.\ in the Grothendieck group $\Gr(\Ac)$ we have
\be\label{eq:composition-series-proj-cover}
  [P_U] ~= \sum_{V \in \Irr(\Ac)} \!\! \CM(\Ac)_{UV} \, [V] \ .
\ee
	Since $k$ is algebraically closed, we can write
\be\label{eq:cartan-mat-def}
	\CM(\Ac)_{UV} ~=~ \dim_k \Ac(P_V,P_U) \ .
\ee
In view of Proposition~\ref{prop:HigA-HigAmod}
 and the equivalence \eqref{eq:C-modE-equiv},
  we have the following corollary to Proposition~\ref{prop:cartan-sym+rank}.

\begin{corollary}\label{cor:cartan-sym+rank_general}
Let $\Ac$ be a finite abelian category over
$k$ such that $\Proj(\Ac)$ is Calabi-Yau. 
Then
\begin{enumerate}
\item The Cartan matrix of $\Ac$ is symmetric, 
$\CM(\Ac)_{U,V} = \CM(\Ac)_{V,U}$.
\item Let $n = |\Irr(\Ac)|$ and 
let $\widehat\CM \in \mathrm{Mat}_n(k)$ be the image of $\CM(\Ac)$ under the canonical homomorphism $\mathrm{Mat}_n(\Zb) \to \mathrm{Mat}_n(k)$. We have
\be
	\mathrm{rank}\big(\widehat\CM \big) 
	~=~ \dim_k \Hig(\Ac) \ .
\ee
\end{enumerate}
\end{corollary}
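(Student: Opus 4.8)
The plan is to reduce Corollary~\ref{cor:cartan-sym+rank_general} entirely to Proposition~\ref{prop:cartan-sym+rank} by transporting everything through the Hom-tensor equivalence \eqref{eq:C-modE-equiv}. Fix a projective generator $G$ of $\Ac$ and set $E = \End(G)$, so that $\Ac \simeq \rmod{E}$ as $k$-linear categories. The first thing I would observe is that, since $\Proj(\Ac)$ is Calabi-Yau by hypothesis, part~1 of Proposition~\ref{prop:HigA-HigAmod} (applied to $A = E$, using that the equivalence \eqref{eq:C-modE-equiv} restricts to an equivalence $\Proj(\Ac) \simeq \Proj(\rmod{E})$) tells us that $E$ carries a central form making it a symmetric algebra. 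Hence Proposition~\ref{prop:cartan-sym+rank} applies to $E$.

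Next I would check that the Cartan matrix is an invariant of the $k$-linear equivalence class: the equivalence \eqref{eq:C-modE-equiv} induces a bijection $\Irr(\Ac) \to \Irr(\rmod{E})$, sending $U \mapsto \Ac(G,U)$, and it sends the projective cover $P_U$ of $U$ to the projective cover of $\Ac(G,U)$; since the equivalence is exact it preserves composition multiplicities, so $\CM(\Ac)_{UV} = \CM(E)_{\Ac(G,U),\Ac(G,V)}$. (Equivalently one can just invoke \eqref{eq:cartan-mat-def}: $\dim_k \Ac(P_V,P_U)$ is manifestly preserved by any $k$-linear equivalence.) Part~1 of the corollary is then immediate from part~1 of Proposition~\ref{prop:cartan-sym+rank}. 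For part~2, the matrices $\CM(\Ac)$ and $\CM(E)$ coincide up to simultaneous relabelling of rows and columns by the bijection on $\Irr$, so their images $\widehat\CM$ in $\mathrm{Mat}_n(k)$ are conjugate by a permutation matrix and in particular have the same rank; thus $\mathrm{rank}(\widehat{\CM(\Ac)}) = \mathrm{rank}(\widehat{\CM(E)}) = \dim_k \Hig(E)$ by Proposition~\ref{prop:cartan-sym+rank}.2. Finally, part~2 of Proposition~\ref{prop:HigA-HigAmod} gives the algebra isomorphism $\xi : \Hig(\rmod{E}) \to \Hig(E)$, while the isomorphism $\End(\Id_\Ac) \cong \End(\Id_{\rmod{E}})$ from the equivalence identifies $\Hig(\Ac)$ (as defined in \eqref{eq:HA-def-2}, which by Proposition~\ref{prop:HigA-indep-tr} is independent of the chosen traces) with $\Hig(\rmod{E})$. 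Chaining these, $\dim_k \Hig(\Ac) = \dim_k \Hig(E)$, which completes the proof.

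The only genuinely substantive inputs are Propositions~\ref{prop:cartan-sym+rank} and \ref{prop:HigA-HigAmod}, both already available; everything else is bookkeeping about invariance under equivalence. I anticipate the one point needing slight care is making the identification $\Hig(\Ac) \cong \Hig(\rmod{E})$ fully precise: one must note that the equivalence \eqref{eq:C-modE-equiv} carries the copairings $\gamma_{PR}$ defining $\tau_R$ in \eqref{eq:tau_R-defn} to the corresponding copairings on $\rmod{E}$, so that the joint image of the $\tau_R$ is matched on the two sides. This is routine given that the equivalence is $k$-linear and preserves the Calabi-Yau trace data up to the transport of structure, but it is the step where one should be explicit rather than wave hands. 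There are no real obstacles beyond this.
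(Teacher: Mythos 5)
Your proposal is correct and follows exactly the route the paper intends: the paper gives no separate proof of Corollary~\ref{cor:cartan-sym+rank_general} but derives it, as you do, from Proposition~\ref{prop:cartan-sym+rank} applied to $E=\End(G)$, together with Proposition~\ref{prop:HigA-HigAmod} and the equivalence \eqref{eq:C-modE-equiv}. The one point you flag as needing care --- identifying $\Hig(\Ac)$ with $\Hig(E)$ --- is in fact already supplied by the paper's proof of Proposition~\ref{prop:HigA-HigAmod} in Appendix~B, which is carried out directly for a general finite abelian $\Ac$ with projective generator $G$ and yields the isomorphism $\xi:\Hig(\Ac)\to\Hig(E)$ without the detour through $\rmod{E}$.
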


In the context of this paper we are particularly interested in the case of factorisable and pivotal finite tensor categories $\Cc$. 
In this case we will later give a more direct description of $\Rey(\Cc)$ and $\Hig(\Cc)$ in terms of the $\phi_M$ defined in \eqref{eq:tildechi-def}, see Proposition~\ref{prop:ReyHig=span} below.
The next proposition shows in particular that Corollary~\ref{cor:cartan-sym+rank_general} is applicable to such $\Cc$.

\begin{proposition}\label{prop:cy-ssi-inclusion}
Let $\Cc$ be a factorisable and pivotal finite tensor category over $k$
	which satisfies Condition~P.
\begin{enumerate}
\item $\Proj(\Cc)$ admits a family of traces turning it into a Calabi-Yau category.
\item $\Cc$ is semisimple if and only if any one\ of the inclusions $\End(\Id_\Cc) \supset \Rey(\Cc) \supset \Hig(\Cc)$ is an equality. 
\end{enumerate}
\end{proposition}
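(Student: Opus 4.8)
For part~1, the plan is simply to assemble the pieces already in place. Since $\Cc$ is factorisable, it is unimodular by \cite[Prop.\,4.5]{Etingof:2004}, and since it satisfies Condition~P, Theorem~\ref{thm:main-simpleproj} gives a simple projective object in $\Cc$. Then Proposition~\ref{prop:mtrace} produces a non-zero modified trace $t$ on $\Proj(\Cc)$ whose induced pairings \eqref{eq:pairingMP} are non-degenerate. By the cyclicity property \eqref{eq:mod-tr_cyclic} and this non-degeneracy, the family $(t_P)_{P \in \Proj(\Cc)}$ is exactly a family of trace maps in the sense of Definition~\ref{def:CY-cat}, so $\Proj(\Cc)$ is Calabi-Yau.

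For part~2, the plan is to translate the inclusions $\End(\Id_\Cc) \supset \Rey(\Cc) \supset \Hig(\Cc)$ into the algebraic inclusions $Z(E) \supset \Rey(E) \supset \Hig(E)$ via a projective generator $G$ and $E = \End(G)$, using Propositions~\ref{prop:Rey(Amod)=Rey(A)} and~\ref{prop:HigA-HigAmod} together with the equivalence \eqref{eq:C-modE-equiv}; note $E$ is a symmetric algebra because $\Proj(\Cc)$ is Calabi-Yau (part~1) and the restriction of $t$ to $\End(G)$ is a symmetrising form. If $\Cc$ is semisimple then so is $E$, hence $\Jac(E)=0$, so $Z(E)=\Rey(E)$; moreover in the semisimple case every module is projective, so $R(E) = I(E)$ in \eqref{eq:ZA-CA-subsets}, and applying $\zeta^{-1}$ gives $\Rey(E)=\Hig(E)$ — thus all three coincide and, in particular, any one of the two inclusions is an equality. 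For the converse, suppose one of the inclusions is an equality. If $\End(\Id_\Cc) = \Rey(\Cc)$, i.e.\ $Z(E) = \Rey(E)$, then Lemma~\ref{lem:ZA=ReyA} forces $E$ to be semisimple, hence $\Cc \simeq \rmod{E}$ is semisimple. If instead $\Rey(\Cc) = \Hig(\Cc)$, i.e.\ $\Rey(E) = \Hig(E)$, the plan is to pass to dimensions: on the one hand $\dim_k \Hig(E) = \mathrm{rank}(\widehat{\CM})$ by Proposition~\ref{prop:cartan-sym+rank}(2), and on the other hand $\dim_k \Rey(E) = \dim_k R(E)$, which — using $\zeta(\Rey(E)) = R(E)$ and that the characters $\chi_U$ of the simple modules are linearly independent (here algebraic closedness and Theorem~\ref{thm:chiinj}-type independence of simple characters enters) — equals $|\Irr(\rmod{E})| = n$. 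So the equality $\Rey(E) = \Hig(E)$ gives $\mathrm{rank}(\widehat{\CM}) = n$, i.e.\ $\widehat{\CM}$ is invertible over $k$. The concluding step is to argue that invertibility of the (mod-$\mathrm{char}\,k$ reduced) Cartan matrix forces semisimplicity: if $\Cc$ were not semisimple there would be a non-simple projective cover $P_U$, and one shows the corresponding row/column relation among the $[P_V]$ in the Grothendieck group (or the fact that $[P_U] - [U]$ is a non-zero effective class while $\widehat{\CM}$ invertible means the $[P_V]$ and $[V]$ spans agree over $k$) yields a contradiction; concretely, invertibility of $\widehat{\CM}$ means the classes $\{[P_V]\}$ and $\{[V]\}$ span the same $k$-vector space, so each $[V]$ is a $k$-combination of the $[P_V]$, which combined with \eqref{eq:composition-series-proj-cover} and positivity of composition multiplicities forces $\CM(\Cc) = \mathrm{Id}$ modulo $\mathrm{char}\,k$ — and in characteristic zero (or under Condition~P with the sharper input) this means $P_U = U$ for all $U$, i.e.\ $\Cc$ is semisimple.

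I expect the main obstacle to be the last step of the converse in the case $\Rey(\Cc) = \Hig(\Cc)$: extracting semisimplicity from invertibility of the reduced Cartan matrix $\widehat{\CM}$ over a field of possibly positive characteristic is delicate, since in positive characteristic an honestly non-semisimple algebra could a priori have invertible reduced Cartan matrix. The clean resolution is likely to invoke Condition~P (and the characteristic-zero case where $\widehat{\CM}$ invertible over $k$ is the same as $\CM$ invertible over $\Qb$, forcing $\CM = \mathrm{Id}$ since it is a symmetric integer matrix with the correct determinant constraints), or to reduce directly to Lemma~\ref{lem:ZA=ReyA} by noting $\Hig(E) \subset \Rey(E)$ with equality plus $1 \in Z(E)$: if additionally one can show $1 \in \Hig(E)$ fails unless $E$ is semisimple, the two-inclusion cases collapse to one. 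I would first try to make the dimension-count argument fully rigorous, and fall back on the characteristic-zero simplification (resp.\ the explicit Condition~P input) if the positive-characteristic case resists.
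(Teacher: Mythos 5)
Your part~1 and the first half of part~2 (semisimplicity $\Leftrightarrow$ $\End(\Id_\Cc)=\Rey(\Cc)$ via Lemma~\ref{lem:ZA=ReyA}) match the paper's proof. For the case $\Rey(\Cc)=\Hig(\Cc)$ you also arrive, as the paper does, at the statement that this equality is equivalent to non-degeneracy of the reduced Cartan matrix $\widehat\CM$ (your dimension count $\dim_k\Hig(E)=\mathrm{rank}(\widehat\CM)$ versus $\dim_k R(E)=n$ is essentially the paper's observation that $\widehat\CM$ maps the basis $\{\chi_{\tilde V}\}$ of $R(E)$ onto a spanning set of $I(E)$).

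The genuine gap is the final step: deducing semisimplicity of $\Cc$ from non-degeneracy of $\widehat\CM$. The paper closes this by citing \cite[Thm.\,6.6.1]{EGNO-book}, which states that a pivotal finite tensor category over an algebraically closed field (of any characteristic) is semisimple if and only if $\widehat\CM$ is non-degenerate; this uses the tensor structure, not just the algebra $E$. Your proposed substitutes do not work. The claim that invertibility of $\widehat\CM$ together with positivity of the composition multiplicities forces $\CM(\Cc)=\mathrm{Id}$ is false already for integer matrices: $\bigl(\begin{smallmatrix}2&1\\1&1\end{smallmatrix}\bigr)$ is symmetric, positive, and invertible over $\Qb$ with determinant $1$, yet is not the identity, and nothing in your argument rules out such a Cartan matrix for a non-semisimple symmetric algebra (indeed such algebras exist; it is only the tensor-categorical input of \cite[Thm.\,6.6.1]{EGNO-book} that excludes them here). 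Likewise, restricting to characteristic zero does not rescue the argument and in any case the proposition is asserted for general $k$ under Condition~P. You should replace the last paragraph of your converse by an appeal to \cite[Thm.\,6.6.1]{EGNO-book} (or an independent proof of that equivalence).
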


\begin{proof}
Part 1 is immediate from Corollary~\ref{cor:factorisable-implies-trace}
by setting $\tCY_P:= t_P$ for any $P\in\Proj(\Cc)$, where $t$ is the modified trace on $\Proj(\Cc)$.
 For Part 2, the equivalence of semisimplicity to $\End(\Id_\Cc) = \Rey(\Cc)$ follows from Proposition~\ref{prop:Rey(Amod)=Rey(A)} and Lemma~\ref{lem:ZA=ReyA}. 

To show that $\Rey(\Cc) = \Hig(\Cc)$ is equivalent to semisimplicity of $\Cc$, we pick a projective generator $G$ and write $E=\End(G)$. 
Via the equivalence \eqref{eq:C-modE-equiv} to $\rmod{E}$, and by Propositions~\ref{prop:Rey(Amod)=Rey(A)} and \ref{prop:HigA-HigAmod}, 
the equality $\Rey(\Cc) = \Hig(\Cc)$ is equivalent to $\Rey(E)=\Hig(E)$, and by the isomorphisms in \eqref{eq:ZA-CA-subsets} in turn to $R(E)=I(E)$.

Let $\widehat\CM$ be the image of $\CM(\Cc)$ in $\mathrm{Mat}_n(k)$ as in Corollary~\ref{cor:cartan-sym+rank_general}. We now show:

\medskip

\noindent
{\em Claim:} $R(E)=I(E)$ iff $\widehat\CM$ is non-degenerate.

\medskip

This claim completes the proof of part 2, since by \cite[Thm.\,6.6.1]{EGNO-book}, a pivotal finite tensor category over $k$
is semisimple if and only if $\widehat\CM$ is non-degenerate.

\medskip

\noindent
{\em Proof of claim:}
Recall from \eqref{eq:ZA-CA-subsets} that the characters $\chi_M$ of $E$-modules $M$ span $R(E)$ (over~$k$) and those of projective $E$-modules span $I(E)$.
Using again the equivalence~\eqref{eq:C-modE-equiv}, we have $\chi_{\tilde P_U} = \sum_{V \in \Irr(\Cc)} \widehat\CM_{UV} \, \chi_{\tilde V}$, where $\tilde X := \Cc(G,X)$ is the image of $X$ under the equivalence~\eqref{eq:C-modE-equiv}. In particular, the image of the $k$-linear map described by the matrix~$\widehat\CM$ in the basis $\{ \chi_{\tilde V} \}_{V \in \Irr(\Cc)}$ of $R(E)$, is precisely $I(E)$.
We conclude that $R(E)=I(E)$ is equivalent to non-degeneracy of $\widehat\CM$, proving the claim.
\end{proof}

Part 2 of the proposition also follows from \cite[Cor.\,4.2\,\&\,Thm.\,5.12]{Shimizu:2015} together with Proposition~\ref{prop:ReyHig=span} below.
\footnote{
In \cite[Cor.\,2.3]{Cohen:2008} it is stated (among other things) that for a symmetric algebra $A$ over an  algebraically closed field of characteristic zero, $R(A)=I(A)$ implies that $A$ is semisimple. This would also imply part 2 of Proposition~\ref{prop:cy-ssi-inclusion}. However this statement is not true, as illustrated by the following counterexample (due to Ehud Meir): Take $A = k[X]/\langle X^2 \rangle$ and $\eps(1)=0$, $\eps(X)=1$. Then $A$ is a symmetric algebra. Its unique simple module is $k$ and the projective cover  of $k$ is $P_k=A$. We have $\chi_A = 2 \cdot \chi_k$. Thus $I(A)=R(A)$, but $A$ is not semisimple.

In \cite{Cohen:2008}, Corollary~2.3 is only used in Theorem~2.9, and there only the remaining parts of Corollary~2.3 are relevant, so that the above counterexample does not affect the other results in \cite{Cohen:2008} (we thank Miriam Cohen and Sara Westreich for correspondence on this point).

On the other hand, it is shown in \cite[Thm.\,2]{Lorenz:1997} that if $A$ is in addition a Hopf algebra such that the square of the antipode is inner, $R(A)=I(A)$ does indeed imply that $A$ is semisimple.
This is a special case of  \cite[Thm.\,6.6.1]{EGNO-book}.
}

\section{Characters and the modified trace}\label{sec:ch-tr}

In this section we fix 
\begin{itemize}
\item $\Cc$ : a factorisable and pivotal finite tensor category over $k$
	which satisfies Condition~P
	given in the beginning of Section~\ref{sec:projsimpleex},
\item $t$ : a choice of non-zero modified trace on $\Proj(C)$ via Corollary~\ref{cor:factorisable-implies-trace},
\item $G$ : a projective generator for $\Cc$,
\item $E := \End(G)$, a symmetric $k$-algebra via $t_G$.
\end{itemize}

Recall from \eqref{eq:tildechi-def} the natural endomorphism $\phi_M \in \End(\Id_\Cc)$ assigned to each $M \in \Cc$. Note that $\phi_M$ is uniquely determined by $(\phi_M)_G \in E$, and that in fact $(\phi_M)_G \in Z(E)$.

For a given $M \in \Cc$ we can now define two central forms on $E$. Let $x \in E$. The first central form uses the Hom-tensor equivalence \eqref{eq:C-modE-equiv} and is given by $x \mapsto tr_{\Cc(G,M)}(x)$, the trace in the right $E$-module $\Cc(G,M)$ over the right action of $x$. The second form is 
$x \mapsto t_G((\phi_M)_G \circ x)$. The aim of this section is to prove our second main result:

\begin{theorem}\label{thm:trace-vs-tG}
The modified trace $t_{P_\one}((\phi_\one)_{P_\one})$ is non-zero,
 and
for all $M \in \Cc$ the following equality of central forms on $E$ holds:
\be\label{eq:trace-vs-tG}
	tr_{\Cc(G,M)}(-) ~=~ \frac{t_G((\phi_M)_G \circ - )}{t_{P_\one}((\phi_\one)_{P_\one})} \ .
\ee
\end{theorem}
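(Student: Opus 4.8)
The plan is to reduce the statement about central forms on $E$ to a statement about the modified trace, exploiting the fact that both sides of \eqref{eq:trace-vs-tG} are central forms and that the space $C(E)$ of central forms is spanned by characters of projective modules need not hold, so instead I will verify the equality directly on a spanning set of the space of central forms, or — more efficiently — pair both sides with a well-chosen family of elements. Concretely, since $E$ is symmetric via $t_G$, the map $\zeta \colon Z(E) \to C(E)$, $z \mapsto t_G(z \cdot -)$ from \eqref{eq:zeta:ZA-CA} is an isomorphism. Hence the right-hand side of \eqref{eq:trace-vs-tG} equals $\zeta\big((\phi_M)_G\big)$ up to the scalar $t_{P_\one}((\phi_\one)_{P_\one})$, and it suffices to identify the preimage under $\zeta$ of the character central form $x \mapsto tr_{\Cc(G,M)}(x)$. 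So the first step is: compute $\zeta^{-1}\big(tr_{\Cc(G,M)}(-)\big) \in Z(E)$ and show it equals $(\phi_M)_G \big/ t_{P_\one}((\phi_\one)_{P_\one})$.

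The natural tool for computing $\zeta^{-1}$ of a character is the map $\tau$ from \eqref{eq:tau-map-A-ZA-def}, or rather its categorical incarnation $\tau_G$ from \eqref{eq:tau_R-defn}: for a symmetric algebra, $\tau$ sends $a \mapsto \sum_{(\gamma)} \gamma' a \gamma''$, and it is a standard fact that $\tau$ applied to a primitive idempotent $e_U$ gives (up to normalisation) the central idempotent-like element whose associated central form is the character of $P_U$. More precisely, I expect $\zeta(\tau(e))(x) = t_G(\tau(e) x) = tr_{Ge}(x)$ for an idempotent $e$, because $\sum_{(\gamma)} t_G(\gamma' e \gamma'' x) = \sum_{(\gamma)} t_G(e \gamma'' x \gamma') = tr(\text{right mult.\ by } e x e \text{ on } \ldots)$ — this is exactly the computation that identifies $\zeta \circ \tau$ with the map sending $a$ to the character $x \mapsto tr_{\text{``}Ga\text{''}}(x)$ in the sense of \cite[Prop.\,2.1]{Cohen:2008}. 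Thus the second step is to establish $\zeta\big(\tau_G(y)_G\big) = tr_{\Cc(G,-)}(\ldots)$-type identities and, taking $y$ to run over a decomposition of $\id_G$ into the idempotents cutting out the projective covers $P_U$, conclude $\zeta\big((\tau_G(\eta))_G\big) = \chi_{\Cc(G,M)}$ when $\tau_G$ is applied to the appropriate endomorphism built from $M$.

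The remaining — and I expect main — step is to connect $\phi_M$ to this $\tau$-construction: that is, to prove $(\phi_M)_G = c_M \cdot \tau_G(\ldots)_G$ for the relevant input and then pin down the scalar. Here I would use the explicit string-diagram formula \eqref{eq:phi_M-explicit} for $(\phi_M)_X$ together with the defining property of the cointegral/integral and the modified trace; the key categorical input is that the modified trace $t_G$ restricted to $\Proj(\Cc)$ is (via Lemma~\ref{lem:CY-traces_torsor_EndId*} and Proposition~\ref{prop:mtrace}) the Calabi-Yau structure used to build the copairings $\gamma_{PR}$, so the abstract $\tau_G$ and the concrete coend/cointegral expression for $\phi_M$ must agree up to a universal constant independent of $M$. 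To fix that constant, specialise to $M = \one$: then $\Cc(G,\one)$ has dimension $\dim_k \Cc(G,\one) = [\,G : \one\,]$ composition factors, and comparing $tr_{\Cc(G,\one)}(\id_G) $ with $t_G((\phi_\one)_G)$ forces the normalising denominator to be $t_{P_\one}((\phi_\one)_{P_\one})$ — here one uses that $\phi_\one$ and $t$ are supported, in the relevant sense, on the block of the tensor unit, so evaluating on $P_\one$ suffices. Non-vanishing of $t_{P_\one}((\phi_\one)_{P_\one})$ then follows because $\phi_\one \neq 0$ (the $\phi_U$, $U \in \Irr(\Cc)$, are linearly independent by Theorem~\ref{thm:chiinj}, as recorded after \eqref{eq:phi_M-explicit}), $(\phi_\one)_{P_\one} \neq 0$ since $\phi_\one$ is nonzero on the projective generator and $P_\one$ is a summand of it with the unit in its socle/head, and the pairing $(f,g) \mapsto t_{P_\one}(f \circ g)$ on $\End(P_\one)$ is non-degenerate by Proposition~\ref{prop:mtrace}; the delicate point will be checking that $(\phi_\one)_{P_\one}$ is not in the kernel of this pairing, which I expect to follow from the fact that $(\phi_\one)_{P_\one}$ has non-zero component along $\id_{P_\one}$ — equivalently that the coefficient of $\id$ in $(\phi_\one)_{P_\one}$, which is computed by the cointegral paired against the integral, is non-zero by the normalisation $\coint_\coend \circ \Lambda_\coend = \id_\one$.
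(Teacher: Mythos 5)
Your opening reduction via $\zeta$ from \eqref{eq:zeta:ZA-CA} is fine, but the core of your strategy --- realising $(\phi_M)_G$ as $\tau_G(\dots)_G$, i.e.\ as an element of the Higman ideal --- cannot work for general $M$. The image of $\tau_G$ is exactly $\Hig(E)$ (Corollary~\ref{cor:HigA-from-projgen}), and $\zeta(\Hig(E))=I(E)$ is only the span of characters of \emph{projective} modules; for non-semisimple $\Cc$ this is a proper subspace of $R(E)$ (the Cartan matrix is degenerate), and correspondingly $\phi_M$ for general $M$ lies in $\Rey(\Cc)$ but not in $\Hig(\Cc)$ --- this is precisely the content of Proposition~\ref{prop:ReyHig=span} and of the strict inclusion $\Hig\subsetneq\Rey$. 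Concretely, in $\SF(\h,\beta)$ one has $\phi_{P_\one}=2^{2N-1}(\phi_\one+\phi_{\Pi\one})$, so $\phi_\one\notin\Hig$ and no identity $(\phi_\one)_G=c\cdot\tau_G(y)_G$ can hold. Since the theorem reduces by additivity to simple $M$, and simples are generally not $k$-linear combinations of projectives in $\Gr_k(\Cc)$, your route proves the statement at best for projective $M$. The paper instead compares the two central forms on a vector-space decomposition $E=\bigoplus_{U,\alpha,\beta}k\,j_{U,\alpha}\circ q_{U,\beta}\oplus\Jac(E)$, checking that both vanish on $\Jac(E)$ and agree on the matrix units; the computational engine is Proposition~\ref{prop:phi_U_PV}, $(\phi_U)_{P_V}=\delta_{U,V}\,\nu_U\circ\pi_U$, which rests on the evaluation of $\coint_\coend\circ\iota_X$ in Lemma~\ref{lem:lambdaLoiota_X} and Corollary~\ref{cor:lambda-iota} (using factorisability through the transparency argument of Lemma~\ref{lem:transp-morph}). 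Your proposal gestures at ``the defining property of the cointegral'' but does not identify this computation, which is where essentially all the work lies.

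Your non-vanishing argument for $t_{P_\one}((\phi_\one)_{P_\one})$ is also flawed as stated. In the non-semisimple case $(\phi_\one)_{P_\one}=\nu_\one\circ\pi_\one$ factors through $\one$, and since $\pi_\one\circ\nu_\one=0$ (otherwise $P_\one=\one$ and $\Cc$ is semisimple) this element is nilpotent, so its component along $\id_{P_\one}$ is \emph{zero}, contrary to your claim; moreover non-degeneracy of the pairing on $\End(P_\one)$ only supplies \emph{some} $f$ with $t_{P_\one}((\phi_\one)_{P_\one}\circ f)\neq0$, not $f=\id$. A correct short argument uses the mixed pairing \eqref{eq:pairingMP} with $M=\one$, $P=P_\one$: both $\Cc(\one,P_\one)$ and $\Cc(P_\one,\one)$ are one-dimensional by unimodularity, so non-degeneracy forces $t_{P_\one}(\nu_\one\circ\pi_\one)\neq0$ once one knows $\nu_\one\neq0$ (again Proposition~\ref{prop:phi_U_PV}). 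The paper argues slightly differently: having already established the proportionality $t_G((\phi_M)_G\circ-)=z\cdot tr_{\Cc(G,M)}(-)$, it deduces $z\neq0$ from $(\phi_M)_G\neq0$ and non-degeneracy of \eqref{eq:pairingMP} on $E$.
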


The proof relies on several intermediate results, which we present now.

\begin{lemma}\label{lem:transp-morph}
Let $f: A \to B$ and suppose that for all $X \in \Cc$ we have
\be\label{eq:transp-morph}
	(f \otimes \id_X) \circ c_{X,A} \circ c_{A,X} = f \otimes \id_X \ .
\ee
Then there is $m>0$ and $a : A \to \one^{\oplus m}$, $b:  \one^{\oplus m} \to B$ such that $f = b \circ a$.
\end{lemma}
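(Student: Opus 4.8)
The plan is to factor $f$ through its image and recognise that image as a transparent object, after which factorisability of $\Cc$ does the rest.

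First I would write $f = j\circ e$ where $e \colon A \twoheadrightarrow I$ is the epimorphism onto $I = \mathrm{im}(f)$ and $j \colon I \hookrightarrow B$ the inclusion. Since the tensor product of a finite tensor category is biexact, $e\otimes\id_X$ is again an epimorphism and $j\otimes\id_X$ a monomorphism for every $X\in\Cc$. Applying naturality of the braiding twice gives
\be
  (f\otimes\id_X)\circ c_{X,A}\circ c_{A,X}
  ~=~ (j\otimes\id_X)\circ c_{X,I}\circ c_{I,X}\circ(e\otimes\id_X)\ ,
\ee
so the hypothesis \eqref{eq:transp-morph}, together with $f\otimes\id_X = (j\otimes\id_X)\circ(e\otimes\id_X)$, becomes
$(j\otimes\id_X)\circ c_{X,I}\circ c_{I,X}\circ(e\otimes\id_X) = (j\otimes\id_X)\circ(e\otimes\id_X)$. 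Cancelling the epimorphism $e\otimes\id_X$ on the right and the monomorphism $j\otimes\id_X$ on the left yields $c_{X,I}\circ c_{I,X} = \id_{I\otimes X}$ for all $X\in\Cc$; that is, $I$ is a transparent object.

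Next I would invoke factorisability of $\Cc$: by condition~1 of Theorem~\ref{thm:factequiv}, $I\cong\one^{\oplus m}$ for some $m\geq 0$. If $m\geq 1$, fix an isomorphism $\vartheta\colon I\xrightarrow{\ \sim\ }\one^{\oplus m}$ and set $a = \vartheta\circ e\colon A\to\one^{\oplus m}$ and $b = j\circ\vartheta^{-1}\colon\one^{\oplus m}\to B$, so that $b\circ a = j\circ e = f$. The degenerate case $I\cong 0$, i.e.\ $f = 0$, is dealt with by taking $m=1$ and $a=0$, $b=0$; thus one can always arrange $m>0$ as claimed.

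I do not anticipate a genuine obstacle. The only steps needing care are the two cancellations (legitimate because $\otimes$ is biexact, which is automatic in a finite tensor category) and the bookkeeping of the $f=0$ edge case; all the real content sits in the implication ``$f$ satisfies \eqref{eq:transp-morph} $\Rightarrow$ $\mathrm{im}(f)$ is transparent'', after which Theorem~\ref{thm:factequiv} finishes the argument.
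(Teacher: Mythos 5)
Your proposal is correct and follows essentially the same route as the paper: epi--mono factorisation of $f$ through its image, cancellation of $e\otimes\id_X$ and $j\otimes\id_X$ using biexactness of the tensor product to deduce that the image is transparent, and then condition~1 of Theorem~\ref{thm:factequiv} to identify it with $\one^{\oplus m}$. The only (harmless) difference is that you also spell out the $f=0$ edge case, which the paper leaves implicit.
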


\begin{proof}
Write $f = \big[A \xrightarrow{a} M \xrightarrow{b} B \big]$, where $a$ is epi and $b$ is mono. Rewrite \eqref{eq:transp-morph} as 
\be
	(b \otimes \id_X) \circ c_{X,M} \circ c_{M,X} \circ (a \otimes \id_X) = (b \otimes \id_X) \circ (a \otimes \id_X) \ .
\ee
By exactness of $\otimes$, also $a \otimes \id_X$ is epi and $b \otimes \id_X$ is mono. 
We conclude that $c_{X,M} \circ c_{M,X} = \id_{M \otimes X}$ for all $M$. 
{}From characterisation 1 of factorisability in Theorem~\ref{thm:factequiv}
it follows that $M \cong \one^{\oplus m}$ for some $m$.
\end{proof}

Recall the coend $\coend$
with its family $\iota_X$ of dinatural transformations
and its 
	non-zero
cointegral~$\coint_\coend$ from Section~\ref{sec:conventions}.

\begin{lemma}\label{lem:lambdaLoiota_X}
For each $X \in \Cc$ there exist $m>0$, 
$a : X \to \one^{\oplus m}$, $b:  \one^{\oplus m} \to X$ (possibly zero) such that
\begin{align}
	\coint_\coend \circ \iota_X &~=~ \ev_X \circ \big(\id_{X^*} \otimes (b \circ a)\big)
	\nonumber \\
	&~=~ \sum_{\alpha=1}^m 
	\big[ X^*X \xrightarrow{(b_\alpha)^* \otimes a_\alpha} \one^*\one
	\xrightarrow{\ev_\one} \one \big]
\end{align}
where $a_\alpha : X \to \one$, $b_\alpha:  \one \to X$ are the components of $a,b$.
\end{lemma}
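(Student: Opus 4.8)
The plan is to realise $\coint_\coend\circ\iota_X$ as an evaluation twisted by an endomorphism of $X$, to show that this endomorphism is ``transparent'' in the sense of Lemma~\ref{lem:transp-morph}, and then to read off the two displayed formulas. First, since $\psi\colon\End(\Id_\Cc)\to\Cc(\coend,\one)$ is an isomorphism (Lemma~\ref{lem:Omega-rho-properties}), there is a unique $\alpha\in\End(\Id_\Cc)$ with $\psi(\alpha)=\coint_\coend$, and by the defining property of $\psi$ this means $\coint_\coend\circ\iota_X=\ev_X\circ(\id_{X^*}\otimes\alpha_X)$ for all $X\in\Cc$. (One can moreover identify $\alpha=\phi_\one$, using $\chi_\one=\eta_\coend$ and $\Rad^{-1}(\eta_\coend)=\coint_\coend$; correspondingly $\mathscr{S}_\Cc(\alpha)=\smap([\one])=\id_{\Id_\Cc}$ by \eqref{eq:smap-via-SC-phiM}. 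This is a useful consistency check but is not needed for the argument.)

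The main point is to show that $\alpha_X$ satisfies the hypothesis of Lemma~\ref{lem:transp-morph}, i.e.\ $(\alpha_X\otimes\id_Y)\circ c_{Y,X}\circ c_{X,Y}=\alpha_X\otimes\id_Y$ for all $Y\in\Cc$; this is where the two-sided cointegral property of $\coint_\coend$ enters. Concretely, one precomposes the left- and right-cointegral identities $\lambda_\coend\circ(\coint_\coend\otimes\id_\coend)\circ\Delta_\coend=\eta_\coend\circ\coint_\coend=\rho_\coend\circ(\id_\coend\otimes\coint_\coend)\circ\Delta_\coend$ with $\iota_X$, substitutes the standard formula $\Delta_\coend\circ\iota_X=(\iota_X\otimes\iota_X)\circ(\id_{X^*}\otimes\coev_X\otimes\id_X)$, and combines the resulting string-diagram identities with the description of the braiding through $\coend$ and its Hopf pairing $\omega$ in order to move both copies of the braiding past $\alpha_X$. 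This categorifies the Hopf-algebra fact that the two-sided integral $\mathbf{c}$ of a factorisable Hopf algebra is central and hence satisfies $(\mathbf{c}\otimes1)\,R_{21}R=\mathbf{c}\otimes1$, the vanishing counit contributions there corresponding to the normalisation $\coint_\coend\circ\Lambda_\coend=\id_\one$. I expect this step, and in particular the bookkeeping of associators, unitors and the pivotal isomorphisms in these diagrams, to be the main obstacle; conceptually the content is small, namely that $\alpha_X$ has ``trivial'' image.

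Granting transparency, Lemma~\ref{lem:transp-morph} produces $m>0$ and morphisms $a\colon X\to\one^{\oplus m}$, $b\colon\one^{\oplus m}\to X$ with $\alpha_X=b\circ a$, and inserting this into the relation of the first paragraph gives the first displayed equality. For the second, decompose $a=(a_\alpha)_{\alpha=1}^m$ and $b=(b_\alpha)_{\alpha=1}^m$ with $a_\alpha\colon X\to\one$ and $b_\alpha\colon\one\to X$, so that $b\circ a=\sum_{\alpha=1}^m b_\alpha\circ a_\alpha$, and for each summand use the elementary rigidity identity $\ev_X\circ(\id_{X^*}\otimes b_\alpha)=\ev_\one\circ((b_\alpha)^*\otimes\id_\one)$ together with the interchange law to rewrite $\ev_X\circ(\id_{X^*}\otimes(b_\alpha\circ a_\alpha))$ as $\ev_\one\circ((b_\alpha)^*\otimes a_\alpha)$. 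This last part is routine, so the whole proof reduces to the transparency computation of the second paragraph.
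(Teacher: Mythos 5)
Your proposal is correct and follows essentially the same route as the paper: there, too, one writes $\coint_\coend\circ\iota_X=\ev_X\circ(\id_{X^*}\otimes f)$ with $f=\psi^{-1}(\coint_\coend)_X$ (written out explicitly rather than via Lemma~\ref{lem:Omega-rho-properties}), derives transparency of $f$ by pairing the two-sided cointegral identity against $\omega$ and precomposing with $\iota_X\otimes\iota_Y$, and then applies Lemma~\ref{lem:transp-morph}. The transparency step you defer is exactly the ``short calculation'' in the paper; note it uses the Hopf-pairing identity $\omega_\coend\circ(\eta_\coend\otimes\id_\coend)\circ\lambda_\coend^{-1}=\eps_\coend$ and not the normalisation $\coint_\coend\circ\Lambda_\coend=\id_\one$.
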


\begin{proof}
By definition of the cointegral, we have 
\be
	\big[ \coend \xrightarrow{\Delta_\coend} \coend\coend \xrightarrow{\coint_\coend \otimes \id_\coend} \one \coend  \xrightarrow{\sim} \coend \big]
	~=~ \big[ \coend \xrightarrow{\coint_\coend} \one \xrightarrow{\eta_\coend} \coend \big] \ .
\ee
Combining this with the
pairing 
$\omega_\coend$ we obtain the following equality:
\be
	\big[ \coend\coend \xrightarrow{\Delta_\coend \otimes \id_\coend} (\coend\coend)\coend \xrightarrow{\sim} 
	\coend(\coend\coend) \xrightarrow{\coint_\coend \otimes \omega_\coend} \one \one \big]
	~=~ \big[ \coend\coend \xrightarrow{\coint_\coend \otimes \eps_\coend} \one\one \big] \ ,
\ee
where we used the property of the Hopf pairing that 
$\omega_\coend \circ (\eta_\coend\otimes \id_\coend) \circ \lambda_\coend^{-1} = \varepsilon_\coend$.
After pre-composing with $\iota_X \otimes \iota_Y$ and substituting the defining equations, a short calculation shows
\begin{align}
&
\big[ (X^*X)(Y^*Y) 
\xrightarrow{\sim} X^*((XY^*)Y)
\xrightarrow{\id \otimes (c_{Y^*,X} \circ c_{X,Y^*}) \otimes \id} X^*((XY^*)Y)
\nonumber\\
&
\hspace{15em} \xrightarrow{\sim} (X^*X)(Y^*Y) 
\xrightarrow{(\coint_\coend \circ \iota_X) \otimes \ev_Y} \one\one
\big]
\nonumber\\
&
=~
\big[ (X^*X)(Y^*Y) 
\xrightarrow{(\coint_\coend \circ \iota_X) \otimes \ev_Y} \one\one
\big]
\end{align}
Using the isomorphism $\Cc(X^*\otimes M,N) \xrightarrow{\sim}\Cc(M,X\otimes N)$  ,
\be
	h ~\longmapsto~
	\big[\, M \xrightarrow{\sim} \one M \xrightarrow{\coev_X \otimes \id} (XX^*)M \xrightarrow{\sim} X(X^*M) \xrightarrow{\id \otimes h} XN \, \big] \ ,
\ee
and similarly for $\Cc(M\otimes Y,N)\xrightarrow{\sim} \Cc(M,N\otimes Y^*)$,  the latter equality can be brought to the form 
 $(f \otimes \id_{Y^*}) \circ c_{Y^*,X} \circ c_{X,Y^*} = f \otimes \id_{Y^*}$ 
with
\be
	f ~=~
	\big[\, X \xrightarrow{\sim} \one X 
	\xrightarrow{\coev_X \otimes \id} (XX^*)X
	\xrightarrow{\sim}  X(X^*X)
	\xrightarrow{\id \otimes (\coint_\coend \circ \iota_X)  } X \one 
	\xrightarrow{\sim} X \,\big] \ .
\ee
Lemma~\ref{lem:transp-morph} now implies the claim.
\end{proof}

\begin{corollary}\label{cor:lambda-iota}
For $U \in \Cc$ simple, $U \ncong \one$, we have $\coint_\coend \circ \iota_{P_U}=0$. Furthermore, there exists a unique
 $\nu_\one : \one \to P_\one$ 
such that
\be\label{eq:lambda-iota}
	\coint_\coend \circ \iota_{P_\one} = \ev_\one \circ (\nu_\one^* \otimes \pi_\one) \ ,
\ee
where $\pi_\one : P_\one \to \one$ is the canonical projection of the projective cover.
\end{corollary}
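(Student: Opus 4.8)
The plan is to read the statement off directly from Lemma~\ref{lem:lambdaLoiota_X}, which for any $X$ presents $\coint_\coend\circ\iota_X$ as $\ev_X\circ(\id_{X^*}\otimes(b\circ a))$ with $a:X\to\one^{\oplus m}$ and $b:\one^{\oplus m}\to X$. Thus the whole corollary is governed by the Hom-spaces $\Cc(P_U,\one)$ and $\Cc(\one,P_U)$, which are easy to control for projective covers, so I expect the proof to be short.

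First I would treat the case $U\ncong\one$ simple. Any nonzero morphism $P_U\to\one$ is an epimorphism (as $\one$ is simple) and hence exhibits $\one$ as a simple quotient of $P_U$; but the top of the projective cover $P_U$ is $U$, so its only simple quotient up to isomorphism is $U\ncong\one$, forcing $\Cc(P_U,\one)=0$. Applying Lemma~\ref{lem:lambdaLoiota_X} with $X=P_U$, every component $a_\alpha:P_U\to\one$ of $a$ vanishes, hence $b\circ a=0$ and $\coint_\coend\circ\iota_{P_U}=\ev_{P_U}\circ(\id\otimes 0)=0$, as claimed.

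Next, for $X=P_\one$: since $k$ is algebraically closed and $P_\one$ is a projective cover of the simple object $\one$, the space $\Cc(P_\one,\one)$ is one-dimensional, spanned by $\pi_\one$. Writing the data supplied by Lemma~\ref{lem:lambdaLoiota_X} as $a=(s_1\pi_\one,\dots,s_m\pi_\one)$ and $b=(b_1,\dots,b_m)$ with $s_\alpha\in k$ and $b_\alpha\in\Cc(\one,P_\one)$, and setting $\nu_\one:=\sum_\alpha s_\alpha b_\alpha$, one gets $b\circ a=\nu_\one\circ\pi_\one$ and hence $\coint_\coend\circ\iota_{P_\one}=\ev_{P_\one}\circ(\id_{P_\one^*}\otimes(\nu_\one\circ\pi_\one))$. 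To bring this into the form \eqref{eq:lambda-iota} I would invoke naturality of the evaluation: for $g:\one\to P_\one$ one has $\ev_{P_\one}\circ(\id_{P_\one^*}\otimes g)=\ev_\one\circ(g^*\otimes\id_\one)$ as morphisms $P_\one^*\otimes\one\to\one$; precomposing with $\id_{P_\one^*}\otimes\pi_\one$ and taking $g=\nu_\one$ yields exactly $\ev_{P_\one}\circ(\id_{P_\one^*}\otimes(\nu_\one\circ\pi_\one))=\ev_\one\circ(\nu_\one^*\otimes\pi_\one)$, which is $\coint_\coend\circ\iota_{P_\one}$.

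For uniqueness I would show that $\mu\mapsto\ev_\one\circ(\mu^*\otimes\pi_\one)$ is injective on $\Cc(\one,P_\one)$. By the same naturality computation this map equals $\mu\mapsto\ev_{P_\one}\circ(\id_{P_\one^*}\otimes(\mu\circ\pi_\one))$, hence it is a composite of $\mu\mapsto\mu\circ\pi_\one$, which is injective since $\pi_\one$ is an epimorphism, with $h\mapsto\ev_{P_\one}\circ(\id_{P_\one^*}\otimes h)$ on $\End(P_\one)$, which is injective because applying $(\id_{P_\one}\otimes-)\circ(\coev_{P_\one}\otimes\id_{P_\one})$ and using the zig-zag identity recovers $h$. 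Since $\mu\mapsto\mu^*$ is linear, any two $\nu,\nu'$ satisfying \eqref{eq:lambda-iota} give $\ev_\one\circ((\nu-\nu')^*\otimes\pi_\one)=0$, whence $\nu=\nu'$. There is no serious obstacle here; the only point requiring care is the bookkeeping with the unit and associativity isomorphisms when applying naturality of $\ev$ and the zig-zag identity, and checking that the components $a_\alpha$ genuinely lie in $\Cc(P_\one,\one)$ so that the passage to $\nu_\one$ is legitimate.
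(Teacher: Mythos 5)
Your proof is correct and takes essentially the same route as the paper, which likewise deduces the corollary from Lemma~\ref{lem:lambdaLoiota_X} together with the fact that $\Cc(P_U,\one)$ is zero-dimensional for $U\ncong\one$ and one-dimensional (spanned by $\pi_\one$) for $U\cong\one$. The only cosmetic differences are that you justify these Hom-space dimensions via the simple top of the projective cover rather than by citing unimodularity as the paper does, and that your naturality computation for $\ev$ is not strictly needed, since the second displayed form in Lemma~\ref{lem:lambdaLoiota_X} already expresses $\coint_\coend\circ\iota_X$ as $\sum_\alpha \ev_\one\circ\bigl((b_\alpha)^*\otimes a_\alpha\bigr)$.
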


\begin{proof}
This follows from Lemma~\ref{lem:lambdaLoiota_X} and the fact that 
$\Cc(P_U,\one)$ and $\Cc(\one,P_U)$ are both zero-dimensional for $U \ncong \one$ and one-dimensional for $U \cong \one$
(by unimodularity of~$\Cc$).
\end{proof}

Recall the natural endomorphism $\phi_M$ in~\eqref{eq:phi_M-explicit}.
The next proposition computes its values on projective objects
and also shows that $\nu_\one$ (and all $\nu_U$ defined there) are non-zero.

\begin{proposition}\label{prop:phi_U_PV}
Let $U,V \in \Irr(\Cc)$. Then  $(\phi_U)_{P_U} \neq 0$ and there exists a unique non-zero
$\nu_U : U \to P_U$ such that
\be\label{eq:phi_U_PV}
	(\phi_U)_{P_V}
	= \begin{cases} 0 &, \quad V \neq U \\
		\nu_U \circ \pi_U &, \quad  V = U 
		\end{cases}
\ee
where $\pi_U : P_U \to U$ is the canonical projection of the projective cover, and where $\nu_\one$ is the same as in Corollary~\ref{cor:lambda-iota}.
Moreover,
there exists a choice of $a: P_\one\to  P_U\otimes U^*$, $b: P_U\otimes U^* \to P_\one$ which realise $P_\one$ as a direct summand of $P_U\otimes U^*$ (i.e.\ $b\circ a  = \id_{P_\one}$), such that
\be\label{eq:phi_U_PV-diag}
	\pi_U 
	~= 
  \raisebox{-0.5\height}{\setlength{\unitlength}{.75pt}
  \begin{picture}(70,140)
   \put(0,0){\scalebox{.75}{\includegraphics{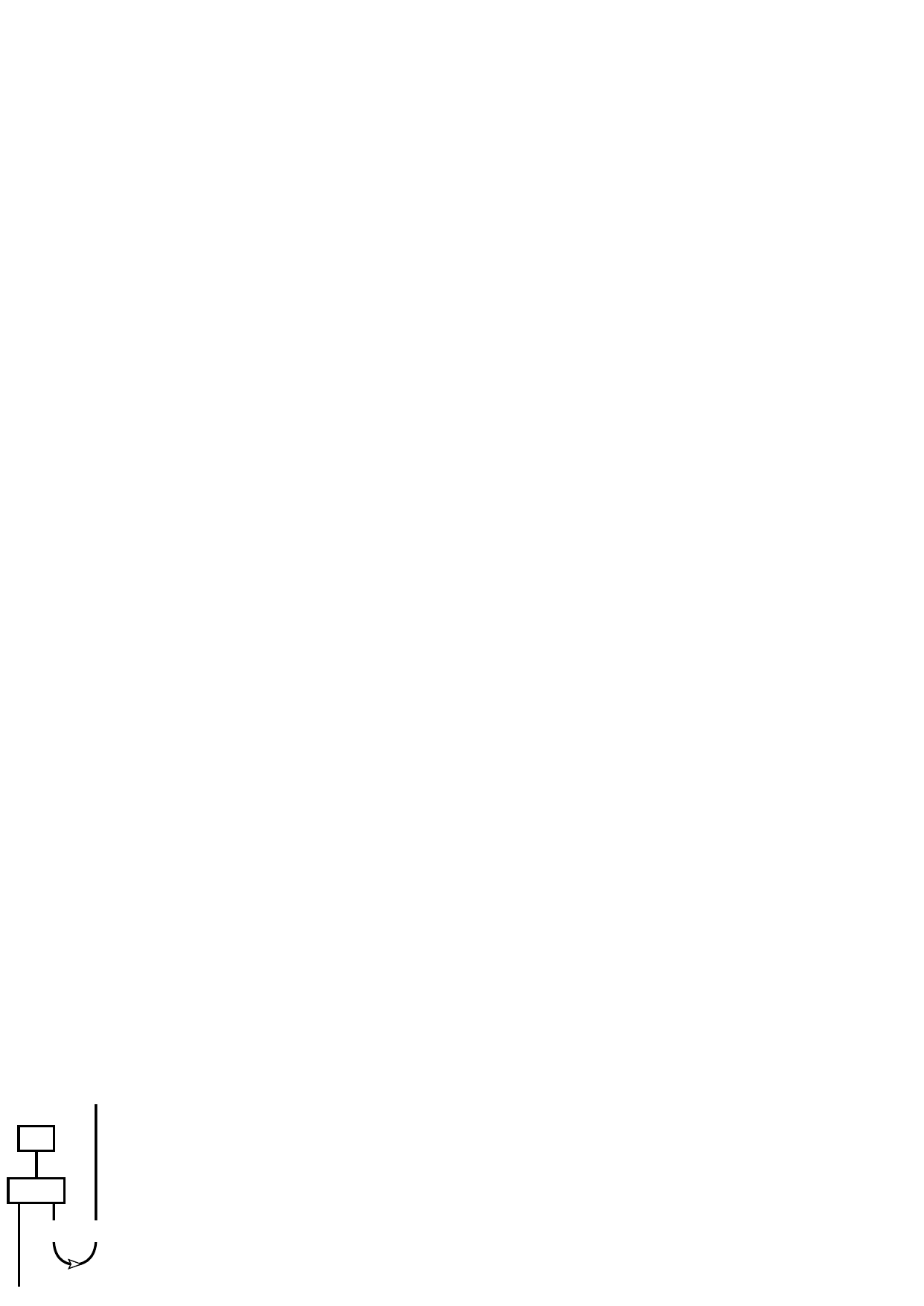}}}
   \put(0,0){
     \put(0,0){
     \put (8,0) {\scriptsize$ P_U $}
     \put (25,87) {\scriptsize$ P_\one $}
     \put (30,44) {\scriptsize$ U^* $}
     \put (58,44) {\scriptsize$ U $}
     \put (58,133) {\scriptsize$ U $}
     \put (20,71) {\scriptsize$ b $}
     \put (18,105) {\scriptsize$ \pi_\one $}
     }\setlength{\unitlength}{1pt}}
  \end{picture}}
  	\qquad,\qquad
	\nu_U 
	~= 
  \raisebox{-0.5\height}{\setlength{\unitlength}{.75pt}
  \begin{picture}(70,140)
   \put(0,0){\scalebox{.75}{\includegraphics{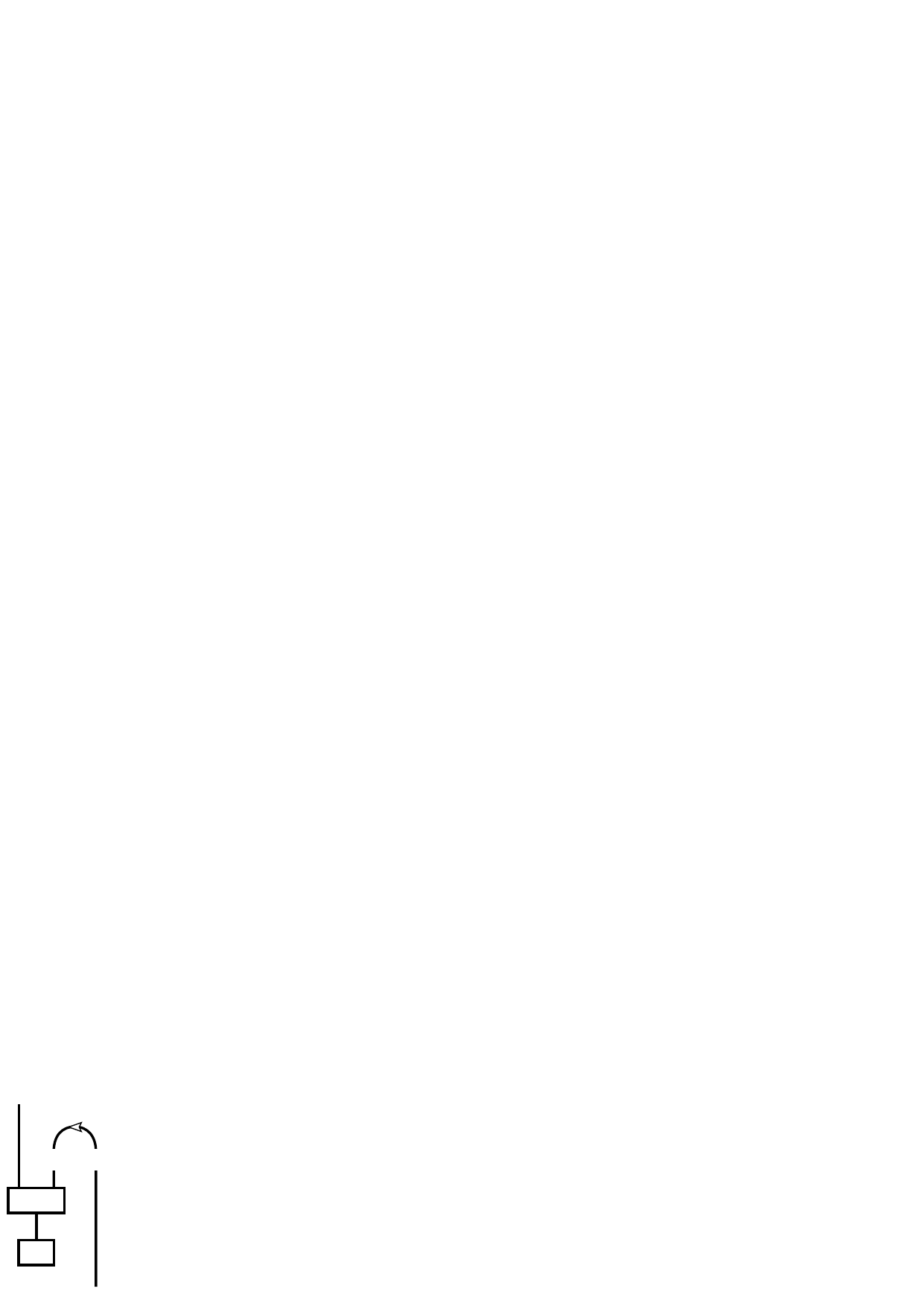}}}
   \put(0,0){
     \put(0,0){
     \put (8,133) {\scriptsize$ P_U $}
     \put (25,48) {\scriptsize$ P_\one $}
     \put (31,91) {\scriptsize$ U^* $}
     \put (59,91) {\scriptsize$ U $}
     \put (59,0) {\scriptsize$ U $}
     \put (20,65) {\scriptsize$ a $}
     \put (18,31) {\scriptsize$ \nu_\one $}
     }\setlength{\unitlength}{1pt}}
  \end{picture}}	\qquad .
\ee	
\end{proposition}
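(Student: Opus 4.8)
The plan is to evaluate the natural transformation $\phi_U$ on the indecomposable projectives $P_V$ by means of the explicit formula \eqref{eq:phi_M-explicit}, which writes $(\phi_U)_{P_V}$ as a string diagram built from the composite $\coint_\coend \circ \iota_{P_V \otimes U^*}$. Since $\Proj(\Cc)$ is a tensor ideal, $P_V \otimes U^*$ is projective, so Corollary~\ref{cor:lambda-iota} applies once we decompose it into indecomposable projectives. Concretely, if $P = \bigoplus_\alpha Q_\alpha$ with inclusions $s_\alpha : Q_\alpha \to P$ and projections $r_\alpha : P \to Q_\alpha$ (so $\sum_\alpha s_\alpha \circ r_\alpha = \id_P$), then dinaturality of $\iota$ gives $\iota_P = \sum_\alpha \iota_{Q_\alpha} \circ (s_\alpha^* \ot r_\alpha)$, hence $\coint_\coend \circ \iota_P = \sum_\alpha (\coint_\coend \circ \iota_{Q_\alpha}) \circ (s_\alpha^* \ot r_\alpha)$; by Corollary~\ref{cor:lambda-iota} only the summands with $Q_\alpha \cong P_\one$ survive.

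First I would settle the vanishing statement. The multiplicity of $P_\one$ as a direct summand of a projective object $Q$ equals $\dim_k \Cc(Q,\one)$, and for $Q = P_V \ot U^*$ this is $\dim_k \Cc(P_V, U) = \delta_{U,V}$ (using rigidity, pivotality, and that the top of $P_V$ is the simple object $V$). Thus for $V \neq U$ we get $\coint_\coend \circ \iota_{P_V \ot U^*} = 0$ and therefore $(\phi_U)_{P_V} = 0$. For $V = U$ there is exactly one $P_\one$-summand; fixing a split mono $a : P_\one \to P_U \ot U^*$ and a split epi $b : P_U \ot U^* \to P_\one$ with $b \circ a = \id_{P_\one}$, the decomposition above together with Corollary~\ref{cor:lambda-iota} yields
\[
	\coint_\coend \circ \iota_{P_U \ot U^*} ~=~ \ev_\one \circ \big((a \circ \nu_\one)^* \ot (\pi_\one \circ b)\big) .
\]

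Non-vanishing of $(\phi_U)_{P_U}$ is then automatic: the $\phi_W$, $W \in \Irr(\Cc)$, are linearly independent in $\End(\Id_\Cc)$ (Theorem~\ref{thm:chiinj} together with $\psi$ and $\Rad$ being isomorphisms), so $\phi_U \neq 0$; since a natural endomorphism of $\Id_\Cc$ is determined by its values on the projective generator $G \cong \bigoplus_W P_W^{\oplus m_W}$, and $(\phi_U)_{P_V}=0$ for all $V\neq U$, we must have $(\phi_U)_{P_U}\neq 0$, and consequently also $\nu_\one \neq 0$. Next I would substitute the displayed expression for $\coint_\coend \circ \iota_{P_U \ot U^*}$ into \eqref{eq:phi_M-explicit} and simplify the resulting string diagram using the zig-zag identities and the pivotal relations \eqref{eq:ev-coev-tilde}; this should collapse the $\coend$-loop and present $(\phi_U)_{P_U}$ as $\nu_U \circ \pi_U$ with $\nu_U$ and $\pi_U$ the morphisms drawn in \eqref{eq:phi_U_PV-diag}. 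Because $(\phi_U)_{P_U}\neq 0$, the morphism $\pi_U : P_U \to U$ obtained this way is non-zero, hence a non-zero scalar multiple of the canonical projection; rescaling the pair $(a,b)\mapsto(c^{-1}a,\,c\,b)$ (which preserves $b\circ a=\id$) one arranges $\pi_U$ to be exactly the canonical projection, after which $\nu_U$ is non-zero and, since $\pi_U$ is epi, uniquely determined by \eqref{eq:phi_U_PV}. Finally, for $U=\one$ one has $P_\one\ot\one^*\cong P_\one$, so $a,b$ may be taken to be the (inverse) unit isomorphisms, and a short coherence check identifies the $\nu_\one$ produced here with the one fixed in Corollary~\ref{cor:lambda-iota}.

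The main obstacle is this last graphical step: carrying out the string-diagram manipulation that turns \eqref{eq:phi_M-explicit}, after inserting $\coint_\coend \circ \iota_{P_U \ot U^*} = \ev_\one \circ \big((a\nu_\one)^* \ot (\pi_\one b)\big)$, into the factorised form \eqref{eq:phi_U_PV-diag}, while keeping track of the normalisation so that $\pi_U$ comes out literally as the canonical projection of the projective cover. Only zig-zag moves, naturality, and pivotality are involved, so this is routine in principle, but the bookkeeping is the real content; establishing the dinatural decomposition $\iota_P = \sum_\alpha \iota_{Q_\alpha}\circ(s_\alpha^*\ot r_\alpha)$ used throughout is an easy preliminary.
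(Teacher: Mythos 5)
Your proposal is correct and follows essentially the same route as the paper's proof: decompose $P_V\otimes U^*$ into indecomposable projectives, use dinaturality of $\iota$ together with Corollary~\ref{cor:lambda-iota} to reduce to the single $P_\one$-summand (whose multiplicity is $\dim\Cc(P_V,U)=\delta_{U,V}$), and then deduce non-vanishing from $\phi_U\neq 0$ and fix the normalisation of $(a,b)$ so that the resulting projection is the canonical $\pi_U$. The remaining string-diagram substitution you flag is exactly the routine step the paper carries out.
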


\begin{proof}
In the expression \eqref{eq:phi_M-explicit} for $(\phi_M)_X$  set $M=U$ and $X = P_V$. Write 
	$P_V \otimes  U^* \cong \bigoplus_{W \in \Irr(\Cc)} P_W^{\,\oplus n_W}$ 
and pick 
 $a_{W,\alpha} \in \Cc(P_W ,  P_V \otimes U^* )$ and $b_{W,\alpha} \in \Cc(P_V \otimes U^*, P_W)$,
  $\alpha = 1,\dots,n_W$, 
 which realise the direct sum decomposition:
\be
b_{W,\alpha} \circ a_{W',\alpha'} = \delta_{W,W'} \, \delta_{\alpha,\alpha'} \, \id_{P_W}
\quad,\quad
\sum_{W \in \Irr(\Cc)} \sum_{\alpha=1}^{n_W}  a_{W,\alpha} \circ b_{W,\alpha} =
 \id_{P_V \otimes U^*} \ .
\ee
For the dinatural transformation $\iota_{P_V \otimes U^*}$ in \eqref{eq:phi_M-explicit} consider the equalities
\begin{align}
\iota_{P_V \otimes U^*}
&=
\sum_{W \in \Irr(\Cc)} \sum_{\alpha=1}^{n_W} 
\iota_{P_V \otimes U^*} \circ (\id_{(P_V \otimes U^*)^*} \otimes (
a_{W,\alpha} \circ b_{W,\alpha}))
\nonumber \\
&=
\sum_{W \in \Irr(\Cc)} \sum_{\alpha=1}^{n_W} 
\iota_{P_W} \circ (a_{W,\alpha}^* \otimes b_{W,\alpha})
\label{eq:phi_U_PV-aux2}
\end{align}
{}From 
Corollary~\ref{cor:lambda-iota}, $\coint_\coend \circ \iota_{P_W} = 0$ for $W \ncong \one$, 
	and so
in the expression for $\coint_\coend  \circ \iota_{P_V \otimes U^*}$
  the sum over $W$ in \eqref{eq:phi_U_PV-aux2} reduces to $W=\one$. 
Since $\dim \Cc(P_W,R) = \delta_{W,R}$ for $W,R \in \Irr(\Cc)$, the multiplicities $n_W$ are given by $n_W = \dim \Cc(P_V \otimes U^*,W)$. In particular,
\be
n_\one = \dim \Cc(P_V \otimes U^*,\one) = \dim \Cc(P_V,U) = \delta_{U,V}\ .
\ee
 So in \eqref{eq:phi_U_PV-aux2} the sum over $\alpha$ is zero for $U \neq V$ and reduces to $\alpha=1$ for $U=V$. Together with \eqref{eq:lambda-iota} we obtain
\be
\coint_\coend \circ \iota_{P_V \otimes U^*}
~=~ \delta_{U,V}
\,
\ev_\one \circ 
\big(
 (a_{\one,1} \circ \nu_\one)^* \otimes (\pi_\one \circ b_{\one,1}) \big) \ .
\ee
Substituting this into \eqref{eq:phi_M-explicit} yields 
$(\phi_U)_{P_V}	= \delta_{U,V} \, \tilde\nu \circ \tilde\pi$
where
\be\label{eq:phi_U_PV-aux1}
	\tilde\nu 
	~= 
  \raisebox{-0.5\height}{\setlength{\unitlength}{.75pt}
  \begin{picture}(70,140)
   \put(0,0){\scalebox{.75}{\includegraphics{pics/pic02a.pdf}}}
   \put(0,0){
     \put(0,0){
     \put (8,133) {\scriptsize$ P_U $}
     \put (25,48) {\scriptsize$ P_\one $}
     \put (31,91) {\scriptsize$ U^* $}
     \put (59,91) {\scriptsize$ U $}
     \put (59,0) {\scriptsize$ U $}
     \put (14,65) {\scriptsize$ a_{\one,1} $}
     \put (18,31) {\scriptsize$ \nu_\one $}
     }\setlength{\unitlength}{1pt}}
  \end{picture}}
	\qquad,\qquad
	\tilde\pi 
	~= 
  \raisebox{-0.5\height}{\setlength{\unitlength}{.75pt}
  \begin{picture}(70,140)
   \put(0,0){\scalebox{.75}{\includegraphics{pics/pic02b.pdf}}}
   \put(0,0){
     \put(0,0){
     \put (8,0) {\scriptsize$ P_U $}
     \put (25,87) {\scriptsize$ P_\one $}
     \put (30,44) {\scriptsize$ U^* $}
     \put (58,44) {\scriptsize$ U $}
     \put (58,133) {\scriptsize$ U $}
     \put (15,71) {\scriptsize$ b_{\one,1} $}
     \put (18,105) {\scriptsize$ \pi_\one $}
     }\setlength{\unitlength}{1pt}}
  \end{picture}}
	\qquad .
\ee	

It remains to show that we can achieve $\tilde\pi = \pi_U$, the chosen projection $P_U \to U$, and that for $U=\one$, $\tilde\nu$ agrees with $\nu_\one$ in \eqref{eq:lambda-iota}.

By 	Theorem~\ref{thm:chiinj},
in particular $\chi_U \neq 0$.
Since $\psi$ and $\rho$ are isomorphisms, also $\phi_U \neq 0$, and thus $\tilde\nu, \tilde\pi \neq 0$
	(or otherwise $\phi_U$ would vanish on all indecomposable
	projectives and hence be zero).
Since $\Cc(P_U,U)$ is one-dimensional, 
by rescaling $b_{\one,1}$ if necessary (and thus rescaling  $a_{\one,1}$ accordingly by the inverse factor) one can achieve 
$\tilde\pi = \pi_U$. 
This finally gives~\eqref{eq:phi_U_PV-diag}.

For $U = V = \one$, one can alternatively compute $(\phi_\one)_{P_\one}$ directly from \eqref{eq:phi_M-explicit} and \eqref{eq:lambda-iota}. A short calculation confirms that $\nu_\one$ from \eqref{eq:lambda-iota} agrees with $\tilde\nu$.
\end{proof}

\begin{corollary}\label{cor:phi_U_V}
Let $U,V \in \Irr(\Cc)$, then $(\phi_U)_V  = 0$ unless $U=V$ and $U$ is projective.
\end{corollary}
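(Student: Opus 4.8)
The plan is to deduce the statement entirely from Proposition~\ref{prop:phi_U_PV}, which already pins down the values $(\phi_U)_{P_V}$ on the projective covers, by exploiting naturality of the transformation $\phi_U$ along the covering maps $\pi_V \colon P_V \to V$.

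First I would write down the naturality square for $\phi_U$ with respect to $\pi_V$, namely $\pi_V \circ (\phi_U)_{P_V} = (\phi_U)_V \circ \pi_V$. For $U \neq V$, Proposition~\ref{prop:phi_U_PV} gives $(\phi_U)_{P_V} = 0$, so the left-hand side vanishes; since $\pi_V$ is an epimorphism this immediately yields $(\phi_U)_V = 0$.

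The remaining case is $U = V$. Here Proposition~\ref{prop:phi_U_PV} gives $(\phi_U)_{P_U} = \nu_U \circ \pi_U$, and feeding this into the same naturality square (again cancelling the epimorphism $\pi_U$ on the right) produces $(\phi_U)_U = \pi_U \circ \nu_U \in \End(U)$. Since $U$ is simple and $k$ is algebraically closed, $\End(U) = k\,\id_U$, so $(\phi_U)_U = c\,\id_U$ for some scalar $c \in k$. The final point is that $c \neq 0$ would make $\nu_U$ a split monomorphism realising $U$ as a direct summand of the indecomposable object $P_U$, forcing $P_U \cong U$ and hence $U$ projective; contrapositively, if $U$ is not projective then $c = 0$ and $(\phi_U)_U = 0$.

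I do not expect a serious obstacle: the only thing to be slightly careful about is that nothing is asserted in the excluded case ($U = V$ and $U$ projective), so there is no need to evaluate $c$ there — it suffices to record the implication ``$c \neq 0 \Rightarrow U$ projective''. Everything else is a one-line application of naturality together with the indecomposability of projective covers.
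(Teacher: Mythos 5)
Your proof is correct and follows essentially the same route as the paper: both use the naturality square $\pi_V \circ (\phi_U)_{P_V} = (\phi_U)_V \circ \pi_V$ together with Proposition~\ref{prop:phi_U_PV} and the epimorphy of $\pi_V$. The only difference is that you spell out why $\pi_U \circ \nu_U = 0$ for non-projective $U$ (a nonzero scalar would split $\nu_U$ and force $P_U \cong U$), which the paper simply asserts.
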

\begin{proof}
 By naturality of $\phi_U$ we have $\pi_V\circ(\phi_U)_{P_V} = (\phi_U)_V\circ \pi_V$.  Applying~\eqref{eq:phi_U_PV} to LHS of the last equality and using $\pi_V \circ \nu_V = 0$ for $V$ non-projective, we get  the statement.
\end{proof}

\begin{proof}[Proof of Theorem~\ref{thm:trace-vs-tG}]
 We will show
  $t_G((\phi_M)_G \circ - ) = z \cdot tr_{\Cc(G,M)}(-)$
  and then determine $z \in k$ to be $t_{P_\one}((\phi_\one)_{P_\one})$.
Write $G = \bigoplus_{U \in \Irr(\Cc)} P_U^{\oplus n_U}$ and let $e_{U,\alpha}$, $U \in \Irr(\Cc)$, $\alpha = 1,\dots,n_U$ be the corresponding primitive orthogonal idempotents. 
Pick
$j_{U,\alpha} : P_U \to G$, $q_{U,\alpha} : G \to P_U$ such that 
\be\label{eq:qj-e}
q_{U,\alpha}  \circ j_{M,\beta} = \delta_{U,M}\,\delta_{\alpha,\beta} \, \id_{P_U} \qquad  \text{and} \qquad
j_{U,\alpha}  \circ q_{U,\alpha} = e_{U,\alpha}   \  .
\ee
As for any finite-dimensional associative algebra, the quotient $E/ \Jac(E)$ is a direct sum over 
	$U\in\Irr(\Cc)$ 
of $n_U{\times}n_U$-matrix algebras, and 
	the
$j_{U,\alpha} \circ q_{U,\beta}$ form a basis in these matrix algebras.
Therefore, the endomorphism algebra $E$ decomposes (as
a vector space) as
\be
	E ~=~ \bigoplus_{U,\alpha,\beta} k\,  j_{U,\alpha} \circ q_{U,\beta} ~\oplus~ \Jac(E) \ .
\ee

To show
 $t_G((\phi_M)_G \circ - ) = z \cdot \mathrm{tr}_{\Cc(G,M)}(-)$
  it is enough to consider $M \in \Irr(\Cc)$. 
In this case, $\tilde M := \Cc(G,M)$ is an irreducible $E$-module of dimension $n_M$. 
The trace $\mathrm{tr}_{\tilde M}(-)$ vanishes on $\Jac(E)$ while on semisimple part it gives
\be\label{eq:trace-vs-tG-aux1}
	\mathrm{tr}_{\tilde M}(j_{U,\alpha} \circ q_{U,\beta}) = \delta_{U,M} \,\delta_{\alpha,\beta} \ .
\ee

	It remains to check that evaluating $t_G((\phi_M)_G \circ - )$ on $j_{U,\alpha} \circ q_{U,\beta}$ leads to the same result, up to a factor of $t_{P_\one}((\phi_\one)_{P_\one})$. 
	We have, for all $U \in \Irr(\Cc)$,
\begin{align}
	(\phi_U)_G
	&\overset{\eqref{eq:qj-e}}=
	\sum_{V \in \Irr(\Cc)} \sum_{\alpha=1}^{n_V} 
	j_{V,\alpha} \circ q_{V,\alpha} \circ (\phi_U)_G 
	\overset{\text{$\phi_U$ nat.}}=
	\sum_{V \in \Irr(\Cc)} \sum_{\alpha=1}^{n_V} 
	j_{V,\alpha} \circ (\phi_U)_{P_V} \circ q_{V,\alpha}  
\nonumber\\
	&\overset{\text{Prop.~\ref{prop:phi_U_PV}}}=~
		\sum_{\alpha=1}^{n_U} 
	j_{U,\alpha} \circ \nu_U\circ \pi_U \circ q_{U,\alpha}   \ .
\label{eq:trace-vs-tG-aux2}
\end{align}
By~\eqref{eq:JG-Jac} the map $\pi_U\circ q_{U,\alpha}\colon G\to U$ annihilates $\Jac(E)$ and therefore
 $(\phi_U)_G \circ \Jac(E) = 0$.
Thus also $t_G((\phi_M)_G \circ - )$ vanishes on $\Jac(E)$. Furthermore,
\be\label{eq:trace-vs-tG-aux3}
t_G((\phi_M)_G \circ j_{U,\alpha} \circ q_{U,\beta} ) = \delta_{U,M}\,\delta_{\alpha,\beta} \, z
\quad ,  \quad \text{where} \quad 
z = t_{P_M}((\phi_M)_{P_M}) \ ,
\ee
and where we used  first cyclicity of $t$, then naturality of $\phi_M$ and then~\eqref{eq:qj-e} and~\eqref{eq:phi_U_PV}.
To arrive at $z = t_{P_\one}((\phi_\one)_{P_\one})$,
	we first observe that
\be\label{eq:proof-trace-vs-tG-aux}
 (\phi_M)_{P_M}  ~= 
  \raisebox{-0.5\height}{\setlength{\unitlength}{.75pt}
  \begin{picture}(70,251)
   \put(0,0){\scalebox{.75}{\includegraphics{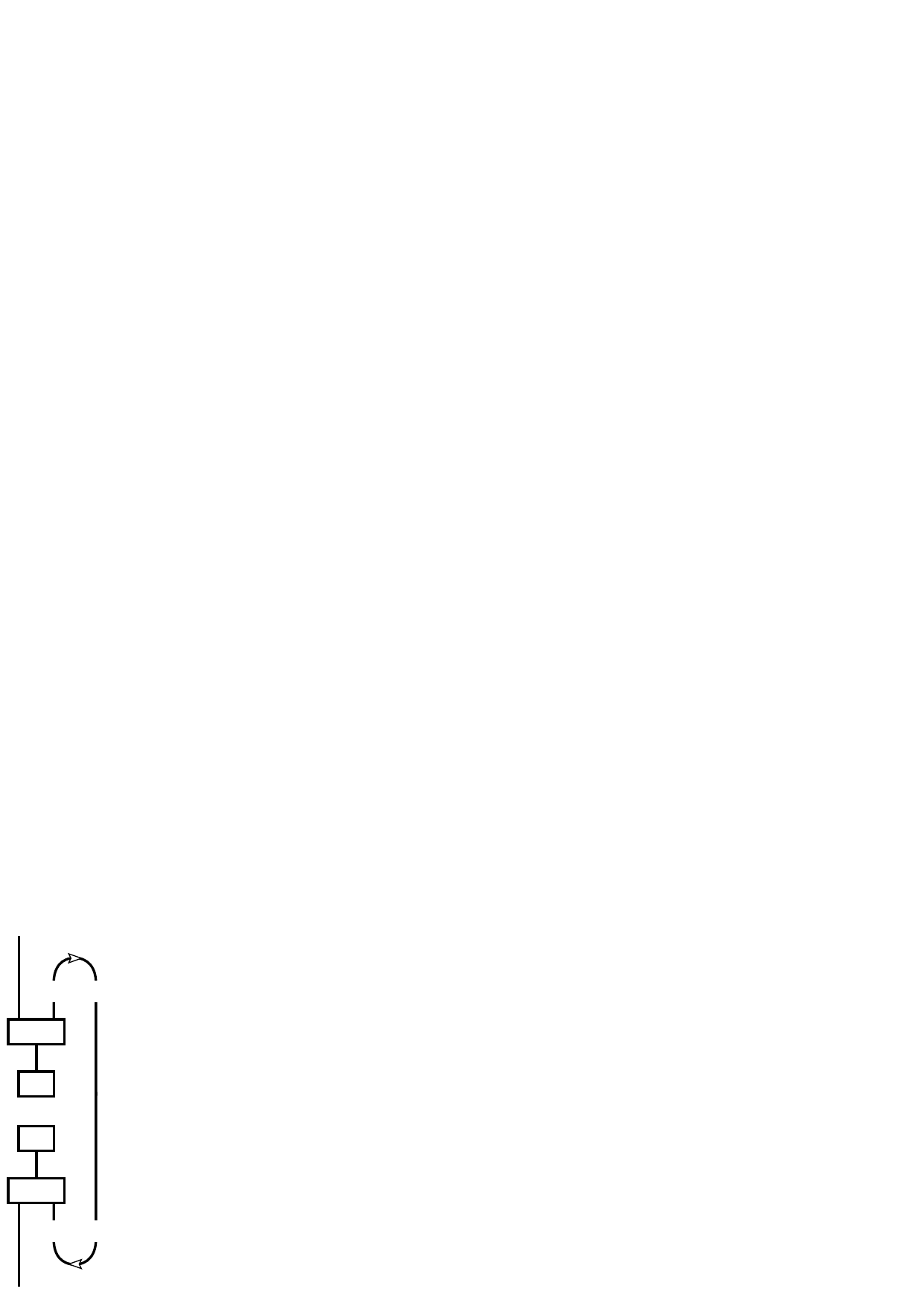}}}
   \put(0,0){
     \put(0,109){
     \put (8,135) {\scriptsize$ P_M $}
     \put (25,48) {\scriptsize$ P_\one $}
     \put (31,91) {\scriptsize$ M^* $}
     \put (59,91) {\scriptsize$ M^{**} $}
	     \put (20,65) {\scriptsize$ a $}
     \put (18,31) {\scriptsize$ \nu_\one $}
     }
     \put(0,0){
     \put (8,0) {\scriptsize$ P_M $}
     \put (25,87) {\scriptsize$ P_\one $}
     \put (30,44) {\scriptsize$ M^* $}
     \put (58,44) {\scriptsize$ M^{**} $}
	     \put (20,70) {\scriptsize$ b $}
     \put (18,105) {\scriptsize$ \pi_\one $}
     }
     \setlength{\unitlength}{1pt}}
  \end{picture}}
  \qquad .
\ee
	To see this, first use \eqref{eq:phi_U_PV} and substitute the explicit expressions for $\nu_M, \pi_M$ from \eqref{eq:phi_U_PV-diag}. 
Next use the pivotal structure to insert $\id_M = \big[ M \xrightarrow{\delta_M} M^{**} \xrightarrow{\delta_M^{-1}} M \big]$ 
and combine $\delta, \delta^{-1}$ with the evaluation and coevaluation maps as in~\eqref{eq:ev-coev-tilde}, thereby replacing $M$ by $M^{**}$ in the right-most strand of the above diagram. We can now compute
\begin{align}
	t_{P_M}\big( (\phi_M)_{P_M} \big)
	&	\overset{\text{(*)}}{=} 
	t_{P_M \otimes M^*}\big( a \circ \nu_\one \circ \pi_\one \circ b \big)
	\nonumber\\&
	\overset{\text{cycl.}}{=} 
	t_{P_\one}\big( \nu_\one \circ \pi_\one \circ b \circ a \big)
	\overset{\eqref{eq:phi_U_PV}}{=}
	t_{P_\one}((\phi_\one)_{P_\one}) \ ,
	\label{eq:tPMphiM=tP1phi1}
\end{align}
where in (*) we have rewritten \eqref{eq:proof-trace-vs-tG-aux} as a partial trace and used compatibility of the modified trace with partial traces
in~\eqref{eq:mod-tr_partial-trace}.

So far we proved 
$t_G((\phi_M)_G \circ - ) = z \cdot tr_{\tilde M}(-)$ with $z =t_{P_\one}((\phi_\one)_{P_\one})$. It remains to show that $z \neq 0$. 
By Theorem \ref{thm:chiinj} and \eqref{eq:tildechi-def}, $\phi_M \neq 0$ for $M \in \Irr(\Cc)$. Hence also $(\phi_M)_G \neq 0$. By non-degeneracy of the pairings \eqref{eq:pairingMP} (Corollary \ref{cor:factorisable-implies-trace}), there is an $f \in E$ such that $t_G((\phi_M)_G \circ f ) \neq 0$. In particular, $z \neq 0$.
\end{proof}

As a corollary to the proof of Theorem~\ref{thm:trace-vs-tG}, specifically 
	\eqref{eq:tPMphiM=tP1phi1} (using also Proposition~\ref{prop:phi_U_PV}),
 we obtain that, for $A,B \in \Irr(\Cc)$,
\be\label{eq:mod-trace-on-phiA}
	t_{P_A}\big(\, (\phi_B)_{P_A} \,\big) ~=~ \delta_{A,B}  \cdot t_{P_\one}((\phi_\one)_{P_\one}) \ . 
\ee

\section{Properties of the Reynolds and Higman ideals}

In this section $k$, $\Cc$, $t$, $G$ and $E$ have the same meaning as in the beginning of Section~\ref{sec:ch-tr}.
We give an
alternative description of the Reynolds and Higman ideals defined in Section~\ref{sec:sym-alg-ideal} in terms of the $\phi_U$. Then we describe the multiplication in these ideals.

\medskip

We start with the alternative  description of $\Rey(\Cc)$ and $\Hig(\Cc)$.

\begin{proposition}\label{prop:ReyHig=span}
We have
\be
\Rey(\Cc) = \mathrm{span}_k\big\{\,\phi_M \,\big|\, M\in\Cc \, \big\}
\quad , \quad
\Hig(\Cc) = \mathrm{span}_k\big\{\,\phi_P \,\big|\, P\in\Proj(\Cc) \, \big\} \ .
\ee
\end{proposition}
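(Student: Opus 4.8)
The plan is to transport the statement through a projective generator $G$ to the symmetric algebra $E=\End(G)$, where it becomes the classical fact that the Reynolds (resp.\ Higman) ideal of a symmetric algebra is spanned by the elements of the centre that are $\zeta$-dual to the characters of all (resp.\ all projective) modules. Concretely, $E$ is a symmetric algebra with central form $t_G$ (Corollary~\ref{cor:factorisable-implies-trace}, Remark~\ref{rem:modified-trace-consequences}); let $\zeta\colon Z(E)\to C(E)$, $z\mapsto t_G(z\circ-)$, be the associated isomorphism of \eqref{eq:zeta:ZA-CA}, so that $\zeta(\Rey(E))=R(E)$ and $\zeta(\Hig(E))=I(E)$ by the results of Lorenz and of Cohen--Westreich recalled in Section~\ref{sec:sym-alg-ideal}. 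Recall also the algebra isomorphism $\xi\colon\End(\Id_\Cc)\to Z(E)$, $\eta\mapsto\eta_G$, of \eqref{eq:EndId-ZE-iso}, which by Propositions~\ref{prop:Rey(Amod)=Rey(A)} and \ref{prop:HigA-HigAmod} (applied through the equivalence \eqref{eq:C-modE-equiv}) restricts to linear isomorphisms $\Rey(\Cc)\xrightarrow{\sim}\Rey(E)$ and $\Hig(\Cc)\xrightarrow{\sim}\Hig(E)$, and which sends $\phi_M\mapsto(\phi_M)_G$. Thus it suffices to identify $\Rey(E)$ and $\Hig(E)$ with the $k$-spans of the $(\phi_M)_G$ and of the $(\phi_P)_G$.

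The key input is Theorem~\ref{thm:trace-vs-tG}. Writing $\tilde M:=\Cc(G,M)$ for the right $E$-module corresponding to $M\in\Cc$ under \eqref{eq:C-modE-equiv}, that theorem states precisely that $\zeta\bigl((\phi_M)_G\bigr)=t_G\bigl((\phi_M)_G\circ-\bigr)=z\cdot\chi_{\tilde M}$ with $z=t_{P_\one}((\phi_\one)_{P_\one})$ a nonzero scalar; i.e.\ up to the fixed nonzero factor $z$, the central form $\zeta((\phi_M)_G)$ is the character of $\tilde M$. Since $\Cc(G,-)$ is an equivalence of categories, every finite-dimensional right $E$-module is isomorphic to some $\tilde M$, and $\tilde M$ is projective if and only if $M\in\Proj(\Cc)$, projectivity being preserved by a $k$-linear equivalence. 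Hence $R(E)=\mathrm{span}_k\{\chi_{\tilde M}\mid M\in\Cc\}=\mathrm{span}_k\{\zeta((\phi_M)_G)\mid M\in\Cc\}=\zeta\bigl(\mathrm{span}_k\{(\phi_M)_G\mid M\in\Cc\}\bigr)$, using that rescaling each generator by the fixed nonzero $z$ does not change the span and that $\zeta$ is linear; likewise $I(E)=\zeta\bigl(\mathrm{span}_k\{(\phi_P)_G\mid P\in\Proj(\Cc)\}\bigr)$. Applying $\zeta^{-1}$ and then $\xi^{-1}$, both linear isomorphisms carrying the respective ideals onto one another and the generators $(\phi_\bullet)_G$ back to $\phi_\bullet$, yields $\Rey(\Cc)=\mathrm{span}_k\{\phi_M\mid M\in\Cc\}$ and $\Hig(\Cc)=\mathrm{span}_k\{\phi_P\mid P\in\Proj(\Cc)\}$. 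In particular the a priori nonobvious inclusions $\phi_M\in\Rey(\Cc)$ and $\phi_P\in\Hig(\Cc)$ for $P$ projective drop out of the same chain of bijections.

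Given Theorem~\ref{thm:trace-vs-tG} and the bookkeeping of Section~\ref{sec:sym-alg-ideal}, there is no hard analytic or combinatorial step here: the argument is essentially a diagram chase through $\xi$ and $\zeta$. The only points needing care are formal: (i) keeping track that the spanning sets are carried to one another by the linear isomorphisms $\zeta$ and $\xi$, which is why it is cleanest to establish the equality directly rather than two separate inclusions; and (ii) the assertion that $\tilde M=\Cc(G,M)$ is projective exactly when $M$ is, i.e.\ that the equivalence \eqref{eq:C-modE-equiv} preserves projectives. All the substantive content --- the compatibility of the modified trace with the module-theoretic character of $\Cc(G,M)$ --- has already been absorbed into Theorem~\ref{thm:trace-vs-tG}.
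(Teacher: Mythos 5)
Your proposal is correct and follows essentially the same route as the paper's own proof: transport via the equivalence \eqref{eq:C-modE-equiv} and the isomorphisms $\xi$ and $\zeta$ to the symmetric algebra $E$, then invoke Theorem~\ref{thm:trace-vs-tG} to identify $\zeta((\phi_M)_G)$ with $z\cdot tr_{\Cc(G,M)}$ and conclude from $\zeta(\Rey(E))=R(E)$ and $\zeta(\Hig(E))=I(E)$. The only cosmetic difference is that the paper phrases the conclusion as "$\zeta(S)=R(E)$ implies $S=\Rey(E)$" rather than spelling out the chain of inverse isomorphisms, but the content is identical.
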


\begin{proof}
Abbreviate $S= \mathrm{span}_k\big\{\,(\phi_M)_G \,\big|\, M\in\Cc \, \big\}$ and $S'= \mathrm{span}_k\big\{\,(\phi_P)_G \,\big|\, P\in\Proj(\Cc) \, \big\}$.
By Propositions~\ref{prop:Rey(Amod)=Rey(A)} and \ref{prop:HigA-HigAmod} it is enough to show 
$\Rey(E) = S$
and
$\Hig(E) = S'$.

The image of $S$ under the map $\zeta$ from \eqref{eq:zeta:ZA-CA} 
(we take $\varepsilon=t_G$)
 is given by
$\zeta(S) = \mathrm{span}_k\big\{\,t_G
\big((\phi_M)_G\circ-\big) 
\,\big|\, M\in\Cc \, \big\}$. Theorem~\ref{thm:trace-vs-tG} allows us to rewrite this as
\be
\zeta(S) = \mathrm{span}_k\big\{\,tr_{\Cc(G,M)}(-) \,\big|\, M\in\Cc \, \big\} = R(E)\ ,
\ee
 where for the last equality we used the definition of $R(E)$ below \eqref{eq:ZA-CA-subsets} and that every $E$-module is isomorphic to $\Cc(G,M)$ for some $M$ (equivalence \eqref{eq:C-modE-equiv}). But by the isomorphisms in \eqref{eq:ZA-CA-subsets} the equality $\zeta(S)=R(E)$ implies $S=\Rey(E)$.

An analogous argument shows $S' = \Hig(E)$.
\end{proof}

The next proposition shows that the action of $\End(\Cc)$ on $\Rey(\Cc)$ is diagonal in the basis  
 $\{\, \phi_{U} \,|\, U \in \Irr(\Cc)\, \}$. 

\begin{proposition}\label{prop:phiU-End-diagonal}
For $\alpha \in \End(\Id_\Cc)$ and $U \in \Irr(\Cc)$ 
we have
\be\label{eq:phiU-End-diagonal}
	\alpha \circ \phi_{U}
	~=~
	\langle\alpha_U\rangle \cdot \phi_{U} \ ,
\ee 
where 
$\langle\alpha_U\rangle \in k$ is determined by $\alpha_U = \langle\alpha_U\rangle\,\id_U$. If $U,V \in \Irr(\Cc)$ lie in the same block of~$\Cc$, then $\langle\alpha_U\rangle = \langle\alpha_V\rangle$.
\end{proposition}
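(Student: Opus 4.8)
The plan is to establish the identity $\alpha\circ\phi_U=\langle\alpha_U\rangle\,\phi_U$ componentwise, evaluating both sides on an arbitrary object $X\in\Cc$; since a natural endomorphism is determined by its value on a projective generator, it in fact suffices to test on the indecomposable projectives $P_V$, $V\in\Irr(\Cc)$. First I would recall from Proposition~\ref{prop:phi_U_PV} that $(\phi_U)_{P_V}=0$ unless $V=U$, in which case $(\phi_U)_{P_U}=\nu_U\circ\pi_U$. On such a $P_V$ the left-hand side is $\alpha_{P_V}\circ(\phi_U)_{P_V}$, which is automatically zero unless $V=U$, so the only case to check is $V=U$: there I must show $\alpha_{P_U}\circ\nu_U\circ\pi_U=\langle\alpha_U\rangle\,\nu_U\circ\pi_U$. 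The key input is naturality of $\alpha$ applied to the morphism $\nu_U:U\to P_U$, giving $\alpha_{P_U}\circ\nu_U=\nu_U\circ\alpha_U=\nu_U\circ(\langle\alpha_U\rangle\,\id_U)=\langle\alpha_U\rangle\,\nu_U$, where the middle equality uses that $\End(U)=k\,\id_U$ since $U$ is simple and $k$ algebraically closed. Composing with $\pi_U$ on the right yields exactly the required equation, proving~\eqref{eq:phiU-End-diagonal}.

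For the second assertion — that $\langle\alpha_U\rangle=\langle\alpha_V\rangle$ whenever $U$ and $V$ lie in the same block — I would argue that $U$ and $V$ being in the same block means they are linked by a chain of simple objects, each consecutive pair $U_i,U_{i+1}$ appearing together as composition factors of some indecomposable object, equivalently (using that $\Cc$ is a finite tensor category) there is an indecomposable projective $P$ with both $U_i$ and $U_{i+1}$ among its composition factors, equivalently $\Cc(P_{U_i},P_{U_{i+1}})\neq 0$. So it is enough to treat the case where there is a non-zero morphism $g:P_U\to P_V$. Then naturality of $\alpha$ gives $\alpha_{P_V}\circ g=g\circ\alpha_{P_U}$. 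Now I would use that on an indecomposable projective $P_W$ the endomorphism $\alpha_{P_W}$ acts as a scalar modulo the radical of $\End(P_W)$, and that scalar is precisely $\langle\alpha_W\rangle$ (this follows by composing with $\pi_W$ and $\nu_W$, or more directly: $\pi_W\circ\alpha_{P_W}=\alpha_W\circ\pi_W=\langle\alpha_W\rangle\,\pi_W$, so $\alpha_{P_W}-\langle\alpha_W\rangle\,\id$ has image in $\ker\pi_W=\mathrm{rad}\,P_W$, hence lies in $\mathrm{rad}\,\End(P_W)$ since $P_W$ is projective and indecomposable). Writing $\alpha_{P_U}=\langle\alpha_U\rangle\,\id+r_U$ and $\alpha_{P_V}=\langle\alpha_V\rangle\,\id+r_V$ with $r_U,r_V$ radical, the naturality square becomes $(\langle\alpha_V\rangle-\langle\alpha_U\rangle)\,g=g\circ r_U-r_V\circ g$. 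The composite $g\circ r_U$ and $r_V\circ g$ both lie in $\mathrm{rad}$ of the relevant Hom-space (radical of the category), while a non-zero $g$ together with $\pi_V\circ g\circ\nu_U$ detects the "scalar part" of $g$; pairing the displayed equation appropriately with $\pi_V$ on the left and $\nu_U$ on the right kills the radical terms and forces $\langle\alpha_V\rangle=\langle\alpha_U\rangle$ provided $\pi_V\circ g\circ\nu_U\neq 0$. To guarantee a $g$ with $\pi_V\circ g\circ\nu_U\neq 0$ I would instead use the symmetry of the situation: the block relation is generated by pairs $U,V$ with $[P_U:V]\neq 0$, i.e. $V$ is a composition factor of $\mathrm{rad}\,P_U/\mathrm{rad}^2 P_U$ or higher; a cleaner route is to observe that $U,V$ in the same block iff the central idempotents agree, and $\langle\alpha_U\rangle$ is exactly the scalar by which the corresponding block component of $\alpha_G\in Z(E)$ acts — since a primitive central idempotent of the symmetric algebra $E$ acts by a single scalar through each simple in its block, all $\langle\alpha_W\rangle$ for $W$ in one block coincide.

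The main obstacle is the second part: making the block argument clean. The slick version goes through the algebra side — transport $\alpha$ to $\alpha_G=\xi(\alpha)\in Z(E)$ via~\eqref{eq:EndId-ZE-iso}, decompose $Z(E)=\prod_i Z(E)e_i$ over the blocks, note that on the simple module $\Cc(G,U)$ the central element $\alpha_G$ acts by the scalar $\langle\alpha_U\rangle$, and that this scalar depends only on the block of $U$ because a central element is constant on each block's worth of simples (each $Z(E)e_i$ maps to the base field in only one way compatible with the simples of that block, as $k$ is algebraically closed). This is essentially elementary representation theory of finite-dimensional algebras, so the real content is just organising it; I would present the algebra-side argument for the second claim and keep the short categorical computation above for~\eqref{eq:phiU-End-diagonal} itself.
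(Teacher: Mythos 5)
Your proof is correct and follows essentially the same route as the paper: for the second claim you end up with exactly the paper's argument (decompose $\alpha_G\in Z(E)$ over the primitive central idempotents modulo $\Jac(E)$, so that the scalar $\langle\alpha_U\rangle$ depends only on the block), and for the first claim your categorical computation $\alpha_{P_U}\circ\nu_U\circ\pi_U=\langle\alpha_U\rangle\,\nu_U\circ\pi_U$ via naturality of $\alpha$ at $\nu_U$ is just the object-by-object version of the paper's observation that $(\phi_U)_G$ factors through the semisimple module $\Cc(G,U)^{\oplus m}$. Your abandoned radical-of-the-category attempt for the block statement indeed had the gap you identified, but since you commit to the algebra-side argument instead, the proposal as submitted is sound.
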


\begin{proof}
We pick a projective generator $G$ and use the equivalence in \eqref{eq:C-modE-equiv}. 
Let $e_1,\dots,e_b \in Z(E)$ be the primitive central idempotents. The $e_j$ decompose $E$ as an algebra into indecomposable direct summands, or, equivalently, via \eqref{eq:C-modE-equiv} they decompose $\Cc$ into blocks.

Abbreviate $z := \alpha_G \in Z(E)$. We can write 
	$z = \sum_{j=1}^b y_j e_j + n$ for some $y_j \in k$, $n \in \Jac(E)$. 
Suppose $U$ is in the $j$'th block of $\Cc$, that is, $z$ acts on the simple $E$-module $\Cc(G,U)$ by $y_j \,\id$. Then $\langle\alpha_U\rangle = y_j$, proving the second statement.

By Proposition~\ref{prop:phi_U_PV}, $(\phi_U)_G$ factors through the semisimple $E$-module $\Cc(G,U)^{\oplus m}$ for some $m$. Hence $z \,(\phi_U)_G = y_j \, (\phi_U)_G$, proving the first statement.
\end{proof}

\begin{remark}\label{rem:also-PU-diag}
Together with Proposition~\ref{prop:ReyHig=span},
Proposition~\ref{prop:phiU-End-diagonal} states that $\End(\Id_\Cc)$ acts diagonalisably on 
$\Rey(\Cc)$, with eigenbasis given by $\{ \phi_U \,|\, U \in \Irr(\Cc) \}$. 
Since the value of 
$\alpha \in \End(\Id_\Cc)$ on $\phi_U$ only depends on the block of $U$, and since all composition factors of an indecomposable projective belong to the same block, we equally have
\be\label{eq:phiPU-End-diagonal}
	\alpha \circ \phi_{P_U}
	~=~
	\langle\alpha_U\rangle \cdot \phi_{P_U} \ ,
\ee 
with $\langle\alpha_U\rangle$ as in Proposition~\ref{prop:phiU-End-diagonal}.
\end{remark}

By Proposition~\ref{prop:ReyHig=span}, $\Hig(\Cc)$ is spanned by the $\phi_P$, $P \in \Proj(\Cc)$. From this formulation, we see that $\Hig(\Cc)$ contains a natural subspace, namely the span of all $\phi_Q$ for $Q$ is both simple and projective.\footnote{
  Via the equivalence \eqref{eq:C-modE-equiv} to symmetric algebras, this subspace corresponds to the 
	subspace 
of the centre denoted by $Z_0$ in \cite{Cohen:2008}.} 
For each simple projective object $Q$, by
	Proposition~\ref{prop:phi_U_PV}
and since $\phi_Q \neq 0$, there is a non-zero $b_Q \in k$ such that
\be\label{eq:phi_U_PV_simproj}
	(\phi_Q)_{P_V}
	= \begin{cases} 0 &
	,\qquad
	 P_V \ncong Q \\
		b_Q \cdot \id_{P_V} &
		,\qquad
		 P_V \cong Q 
		\end{cases}
	\qquad .
\ee
	This allows for another expression of the normalisation constant $t_{P_\one}((\phi_\one)_{P_\one})$ in \eqref{eq:trace-vs-tG},
namely, for $Q$ a simple projective object,
\be\label{eq:tP1phiP1-via-tQidQ}
	t_{P_\one}\big((\phi_\one)_{P_\one}\big) 
	\overset{\eqref{eq:tPMphiM=tP1phi1}}=
	t_Q\big((\phi_Q)_Q\big)
	~=~ 
	b_Q \cdot t_Q(\id_Q) \ .
\ee
For later use we define
\be\label{eq:IrrProj-def}
	\mathrm{IrrProj}(\Cc) ~=~ \{\, U \in \Irr(\Cc) \,|\,
	\text{$U$ is projective } \} \ .
\ee
By Theorem~\ref{thm:main-simpleproj}, this set is not empty.
We can use this notation to describe the product in $\Rey(\Cc)$ and $\Hig(\Cc)$:

\begin{proposition}\label{prop:prod-Rey-Hig}
We have, for $U,V \in \Irr(\Cc)$,
\be
\phi_U \circ \phi_V
~=~
	\delta_{U,V} \, 
	\delta_{V \in \mathrm{IrrProj}(\Cc)} \,
	b_V  \, \phi_{V} 
\quad , \quad
	\phi_{P_U} 
	\circ
	\phi_{P_V}
	~=~
	\delta_{U,V} \, 
	\delta_{V \in \mathrm{IrrProj}(\Cc)} \,
	b_V  \, \phi_{P_V} \ .
\ee
\end{proposition}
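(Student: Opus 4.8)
The plan is to read off both identities from the diagonalisation statements in Proposition~\ref{prop:phiU-End-diagonal} and in Remark~\ref{rem:also-PU-diag}, combined with the vanishing in Corollary~\ref{cor:phi_U_V}.

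For the first identity, apply Proposition~\ref{prop:phiU-End-diagonal} with $\alpha = \phi_U \in \End(\Id_\Cc)$: since $(\phi_U)_V \in \End(V) = k\,\id_V$, writing $(\phi_U)_V = \langle(\phi_U)_V\rangle\,\id_V$ we get $\phi_U \circ \phi_V = \langle(\phi_U)_V\rangle\,\phi_V$. By Corollary~\ref{cor:phi_U_V}, $(\phi_U)_V = 0$ unless $U = V$ and $V$ is projective, so the coefficient vanishes unless $U = V \in \mathrm{IrrProj}(\Cc)$. In that remaining case $P_V \cong V$, and \eqref{eq:phi_U_PV_simproj} (with $Q = V$, so that $P_V \cong Q$) gives $(\phi_V)_V = b_V\,\id_V$, i.e.\ $\langle(\phi_V)_V\rangle = b_V$. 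Assembling the cases yields $\phi_U \circ \phi_V = \delta_{U,V}\,\delta_{V\in\mathrm{IrrProj}(\Cc)}\,b_V\,\phi_V$.

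For the second identity, argue in the same way using \eqref{eq:phiPU-End-diagonal} with $\alpha = \phi_{P_U}$, which gives $\phi_{P_U}\circ\phi_{P_V} = \langle(\phi_{P_U})_V\rangle\,\phi_{P_V}$. To evaluate $\langle(\phi_{P_U})_V\rangle$, use that $[M]\mapsto\phi_M$ is $k$-linear on $\Gr_k(\Cc)$, so by \eqref{eq:composition-series-proj-cover} one has $\phi_{P_U} = \sum_{W\in\Irr(\Cc)}\CM(\Cc)_{UW}\,\phi_W$ and hence $(\phi_{P_U})_V = \sum_{W\in\Irr(\Cc)}\CM(\Cc)_{UW}\,(\phi_W)_V$. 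By Corollary~\ref{cor:phi_U_V} only the summand $W = V$ can be non-zero, and only when $V\in\mathrm{IrrProj}(\Cc)$, in which case $(\phi_{P_U})_V = \CM(\Cc)_{UV}\,(\phi_V)_V = \CM(\Cc)_{UV}\,b_V\,\id_V$ by \eqref{eq:phi_U_PV_simproj}. Finally, for $V$ simple projective one has $P_V\cong V$, so using symmetry of the Cartan matrix (Corollary~\ref{cor:cartan-sym+rank_general}, which applies since $\Proj(\Cc)$ is Calabi-Yau by Proposition~\ref{prop:cy-ssi-inclusion}) we get $\CM(\Cc)_{UV} = \CM(\Cc)_{VU} = [P_V:U] = [V:U] = \delta_{U,V}$. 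Therefore $\langle(\phi_{P_U})_V\rangle = \delta_{U,V}\,\delta_{V\in\mathrm{IrrProj}(\Cc)}\,b_V$, which is the claimed formula.

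Once the diagonalisation results are invoked, the argument is routine bookkeeping; the only step needing a little care is identifying the eigenvalue $\langle(\phi_{P_U})_V\rangle$ in the second part, where one has to expand $\phi_{P_U}$ in the eigenbasis $\{\phi_W\}_{W\in\Irr(\Cc)}$ of $\Rey(\Cc)$ and then collapse $\CM(\Cc)_{UV}$ to $\delta_{U,V}$ on simple projectives. (Alternatively, a simple projective object of a finite tensor category is also injective and hence forms a block on its own, which likewise forces $[P_U:V] = \delta_{U,V}$ for $V\in\mathrm{IrrProj}(\Cc)$.)
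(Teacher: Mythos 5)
Your argument is correct and follows essentially the same route as the paper: the first identity is obtained exactly as in the paper's proof from Proposition~\ref{prop:phiU-End-diagonal}, Corollary~\ref{cor:phi_U_V} and \eqref{eq:phi_U_PV_simproj}. For the second identity the paper expands both $\phi_{P_U}$ and $\phi_{P_V}$ in the $\phi_W$ and applies the first identity, reducing to the observation you make in your closing parenthesis (a simple projective is injective and hence a block on its own, so $[P_U:V]=\delta_{U,V}$ for $V\in\mathrm{IrrProj}(\Cc)$); your variant via \eqref{eq:phiPU-End-diagonal} and the symmetry of the Cartan matrix from Corollary~\ref{cor:cartan-sym+rank_general} is an equally valid, only cosmetically different, way to collapse $\CM(\Cc)_{UV}$ to $\delta_{U,V}$.
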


\begin{proof}
By Proposition~\ref{prop:phiU-End-diagonal}, $\phi_U \circ \phi_V = \langle (\phi_U)_V \rangle \cdot \phi_V$.
By Corollary~\ref{cor:phi_U_V}, $\langle (\phi_U)_V \rangle = 0$ unless $U=V$ and $U$ is projective.
The definition of $b_Q$ in \eqref{eq:phi_U_PV_simproj} now
	implies
the first equality. 
For the second equality,
expand $\phi_{P_U} = \sum_{W \in \Irr(\Cc)} [P_U:W] \phi_W$, and similar for $\phi_{P_V}$. Since $P_U$ can only contain a composition factor from $\mathrm{IrrProj}(\Cc)$ if already $U \in \mathrm{IrrProj}(\Cc)$, the second equality follows from the first.
\end{proof}

\section{$\boldsymbol{S}$-invariance of projective characters}\label{sec:Sinv-proj}

In this section $k$, $\Cc$, $t$, $G$ and $E$ have the same meaning as in the beginning of Section~\ref{sec:ch-tr}.
Here we prove our third main result, namely that the Higman ideal of $\Cc$ gets mapped to itself under the action of $\mathscr{S}_\Cc$.
As applications, we 
investigate the action of the Grothendieck ring on $\Rey(\Cc)$ and $\Hig(\Cc)$, then give a variant of the Verlinde formula for projective objects and we show how matrix coefficients of $\mathscr{S}_\Cc$ on $\Hig(\Cc)$ are related to
the  modified trace.

\medskip

Recall the definition of $\smap : \Gr(\Cc)_k \to \End(\Id_\Cc)$ from 
\eqref{eq:sigma-Gr-EndId-def}.

\begin{lemma}\label{lem:sigP-ann-Jac}
Let $P \in \Cc$ be projective and let $f \in \Jac(E)$. Then $\smap([P])_G \circ f = 0$.
\end{lemma}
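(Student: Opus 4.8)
The plan is to show that $\sigma([P])_G$ annihilates the Jacobson radical of $E$ by reducing the statement to the characterisation of $\sigma([P])$ established in Lemma~\ref{lem:sigma-proj-zero-simple}, namely that $\sigma([P])$ vanishes on all non-projective simple objects, combined with Proposition~\ref{prop:phi_U_PV} and Corollary~\ref{cor:phi_U_V} which control how elements of $\End(\Id_\Cc)$ behave on the semisimple quotient. The essential idea is that for $f \in \Jac(E)$, multiplication by $f$ factors ``through the radical'', so it suffices to understand $\sigma([P])$ on composition factors.

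First I would recall the general structure: writing $G = \bigoplus_{U \in \Irr(\Cc)} P_U^{\oplus n_U}$ with primitive idempotents and maps $j_{U,\alpha}, q_{U,\alpha}$ as in the proof of Theorem~\ref{thm:trace-vs-tG}, every element of $\Jac(E)$ is a sum of morphisms $G \to G$ which, when composed with any $q_{U,\alpha}$ and then $\pi_U : P_U \to U$, give zero (this is just the definition of $J_G = \Jac(E)$ from \eqref{eq:JG-Jac} and \eqref{eq:JP-def}). So it is enough to prove: for $P$ projective, $U \in \Irr(\Cc)$, and any $u : P_U \to U$ and $g : P_U \to P_U$ with $u \circ g = 0$ for all such $u$, we have $\sigma([P])_{P_U} \circ g$ composed appropriately vanishes. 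Actually the cleanest route is to invoke Corollary~\ref{cor:dinat-jac-0}, part 2, directly: by \eqref{eq:eq:sigma-Gr-EndId-def_pic}, $\sigma([P])_{P_U}$ is precisely the ``$f$-loop'' morphism of the form appearing in \eqref{eq:f-loop-zero} with the loop coloured by $P$ (which is projective, hence $\eta_A \circ (\id \otimes f)$-type expressions vanish when $f$ kills all maps to simples). Let me reconsider: Corollary~\ref{cor:dinat-jac-0}(2) says that for $A$ projective and $f \in \End(A)$ killing all maps to simples, the loop in \eqref{eq:f-loop-zero} is zero — but here the loop object is $P$ and we want $f$ on the $B = P_U$ strand, which is the opposite configuration.

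So instead the argument should run: take $f \in \Jac(E) = J_G$. Then $f : G \to G$ satisfies $u \circ f = 0$ for all simple quotients $u : G \to U$. The natural transformation $\sigma([P])$ applied at $G$ gives a morphism $\sigma([P])_G : G \to G$, and I claim $\sigma([P])_G \circ f$, read through the string-diagram formula \eqref{eq:eq:sigma-Gr-EndId-def_pic}, is a loop of the shape in Corollary~\ref{cor:dinat-jac-0}(1): namely $\sigma([P])_G \circ f = \xi^{P}_G \circ (f \otimes \id) \circ \coev_{\ldots}$-type expression where $\xi^P$ is the dinatural transformation from $(-)\otimes(-)^*$ to $\one$ built from $P$ (as in the proof of Lemma~\ref{lem:sigma_Gr_ringhom}, using the $\eta_V$ there). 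Then Corollary~\ref{cor:dinat-jac-0}(1) applies verbatim with $A = G$, giving $\sigma([P])_G \circ f = 0$. The main obstacle is matching the precise diagrammatic form of $\sigma([P])_G \circ f$ to the hypothesis of Corollary~\ref{cor:dinat-jac-0}: one must sandwich $f$ into the definition of $\sigma$ and recognise that the resulting expression is $\eta'_G \circ (\id \otimes f) \circ \widetilde\coev_G$ for the appropriate dinatural transformation $\eta'$ from ${}^*(-)\otimes(-)$ to the identity functor evaluated suitably (concretely, $\eta'_X = \widetilde\ev_X$ composed with the braiding-twisted loop around $P$). Once that identification is made, part 1 of Corollary~\ref{cor:dinat-jac-0} finishes it immediately, since $f \in \Jac(E)$ means exactly $u \circ f = 0$ for all $u : G \to U$, $U$ simple.
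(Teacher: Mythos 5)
There is a genuine gap at the final step. Corollary~\ref{cor:dinat-jac-0}\,(1) produces the vanishing of certain morphisms $\one \to X$ for a \emph{fixed} target object $X$. With your choice $\eta'_Y = \widetilde\ev_Y \circ (\text{braiding-twisted loop around } P)$ the target is $X=\one$, so $\eta'_G \circ (\id \otimes f)\circ \widetilde\coev_G$ is a scalar --- it is (essentially) the quantum trace of $\smap([P])_G \circ f$, not the morphism itself. Its vanishing does not imply $\smap([P])_G \circ f = 0$. To retain the full endomorphism you would have to aim the dinatural transformation at $G\otimes G^*$ (as in the proof of Lemma~\ref{lem:sigma_Gr_ringhom}, where the target is $X\otimes X^*$ and the dinaturality is in the \emph{loop} variable $V$), but then the target depends on the very variable $G$ in which you want dinaturality, and Lemma~\ref{lem:dinat-Gr} / Corollary~\ref{cor:dinat-jac-0} no longer apply. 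In short: you correctly diagnose that $f$ sits on the straight strand while Corollary~\ref{cor:dinat-jac-0}\,(2) requires the ``bad'' morphism to sit on the loop, but the proposed workaround via part~(1) does not repair this --- it only kills a trace of the morphism. (Your first idea, controlling $\smap([P])_G$ via its action on simples, also cannot suffice: it only determines $\smap([P])_G$ modulo $\Jac(E)$, and a product of two radical elements need not vanish.)

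The paper closes exactly this gap with two ingredients absent from your proposal. First, by non-degeneracy of the modified-trace pairing on $E=\End(G)$ (Corollary~\ref{cor:factorisable-implies-trace}), it suffices to show $t_G\big(\smap([P])_G \circ f \circ x\big)=0$ for all $x\in E$; this legitimately reduces an equality of endomorphisms of $G$ to a family of scalar equalities. Second, Lemma~\ref{lem:exchange-partial-trace} exchanges the roles of the loop and the straight strand \emph{inside the modified trace}, rewriting this scalar as $t_{P^*}$ of a diagram in which $f\circ x$ now sits on the loop coloured by $G$; since $u\circ f\circ x=0$ for all $u:G\to U$ with $U$ simple, Corollary~\ref{cor:dinat-jac-0}\,(2) then applies and gives zero. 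Without the modified trace (both for the reduction to scalars and for the loop/strand exchange) the argument does not go through.
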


\begin{proof}
Since $t_G$ induces a non-degenerate pairing on $E$ (Corollary~\ref{cor:factorisable-implies-trace}), it is enough to show that for all $x \in E$, 
$t_G\big(\smap([P])_G \circ f \circ x\big) = 0$.
Combining the explicit expression of $\smap([P])_G$ as a partial trace in~\eqref{eq:eq:sigma-Gr-EndId-def_pic}
and  Lemma~\ref{lem:exchange-partial-trace} 
we see
\be\label{eq:sigP-ann-Jac-aux1}
  t_G\big(\smap([P])_G \circ f \circ x\big) 
  = 
  t_{P^*}\Big( tr^r_G\big(c_{G,P^*} \circ ((f\circ x) \otimes \id_{P^*}) 
  \circ c_{P^*,G}\big) \Big) \ .
\ee
As $f \in \Jac(E) = J_G$ (recall \eqref{eq:JG-Jac}), for all $U \in \Cc$ simple and all $u : G \to U$, we have $u \circ f = 0$, and hence also $u \circ f \circ x = 0$.
Corollary~\ref{cor:dinat-jac-0} now shows that the right hand side of \eqref{eq:sigP-ann-Jac-aux1} is zero.
\end{proof}

\begin{corollary}\label{cor:sig(P)inReyC}
Let $P \in \Cc$ be projective. Then 
$\smap([P]) \in \Rey(\Cc)$.
\end{corollary}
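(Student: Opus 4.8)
The plan is to show that $\smap([P])$ lies in $\Rey(\Cc)$ by verifying the defining condition: for every $R \in \Proj(\Cc)$ and every $f \in J_R$, we must have $\smap([P])_R \circ f = 0$. Since $\smap([P]) \in \End(\Id_\Cc)$ is entirely determined by its component $\smap([P])_G$ on the projective generator $G$, and since $\Rey(\Cc)$ was characterised (via Proposition~\ref{prop:Rey(Amod)=Rey(A)} and the isomorphism $\xi$ of \eqref{eq:EndId-ZE-iso}) by the condition that $\smap([P])_G$ annihilate $\Jac(E)$, it suffices to invoke Lemma~\ref{lem:sigP-ann-Jac} directly. That lemma says exactly that $\smap([P])_G \circ f = 0$ for all $f \in \Jac(E)$, which is the condition $\smap([P])_G \in \Rey(E)$.

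Concretely, I would argue as follows. By definition, $\Rey(\Cc) = \{\eta \in \End(\Id_\Cc) \mid \forall P' \in \Proj(\Cc), f \in J_{P'} : \eta_{P'} \circ f = 0\}$. Recall from \eqref{eq:JG-Jac} that $J_G = \Jac(E)$. The isomorphism $\xi : \End(\Id_\Cc) \to Z(E)$, $\eta \mapsto \eta_G$, restricts to an isomorphism $\Rey(\Cc) \xrightarrow{\sim} \Rey(E)$ by Proposition~\ref{prop:Rey(Amod)=Rey(A)} (applied through the equivalence \eqref{eq:C-modE-equiv}), and $\Rey(E) = \mathrm{ann}_{Z(E)}(\Jac(E))$. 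So $\smap([P]) \in \Rey(\Cc)$ if and only if $\smap([P])_G \in Z(E)$ annihilates $\Jac(E)$. The first condition, $\smap([P])_G \in Z(E)$, holds because $\smap([P]) \in \End(\Id_\Cc)$ is a natural endomorphism, so its value on $G$ is central. The second condition is precisely the content of Lemma~\ref{lem:sigP-ann-Jac}.

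There is no real obstacle here: this corollary is an immediate packaging of Lemma~\ref{lem:sigP-ann-Jac} together with the identification of $\Rey(\Cc)$ in terms of the Jacobson radical. The only point requiring a sentence of care is making explicit that $\smap([P])$, being a natural transformation of the identity functor, is determined by $\smap([P])_G$ and that $\smap([P])_G$ is automatically in the centre $Z(E)$; both of these are standard and already used implicitly in Section~\ref{sec:sym-alg-ideal}.

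Here is the proof:

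\begin{proof}
Recall from Proposition~\ref{prop:Rey(Amod)=Rey(A)} and the equivalence \eqref{eq:C-modE-equiv} that the algebra isomorphism $\xi : \End(\Id_\Cc) \to Z(E)$, $\eta \mapsto \eta_G$, restricts to an isomorphism $\xi : \Rey(\Cc) \xrightarrow{\sim} \Rey(E) = \mathrm{ann}_{Z(E)}(\Jac(E))$. Since $\smap([P])$ is a natural endomorphism of $\Id_\Cc$, its component $\smap([P])_G$ lies in $Z(E)$. By \eqref{eq:JG-Jac} we have $J_G = \Jac(E)$, and Lemma~\ref{lem:sigP-ann-Jac} gives $\smap([P])_G \circ f = 0$ for all $f \in \Jac(E)$. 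Hence $\smap([P])_G \in \Rey(E)$, and therefore $\smap([P]) = \xi^{-1}(\smap([P])_G) \in \Rey(\Cc)$.
\end{proof}
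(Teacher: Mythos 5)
Your proof is correct and matches the paper's intent exactly: the paper states this corollary without proof, treating it as an immediate consequence of Lemma~\ref{lem:sigP-ann-Jac} together with the identification $\xi:\Rey(\Cc)\xrightarrow{\sim}\Rey(E)=\mathrm{ann}_{Z(E)}(\Jac(E))$ from Proposition~\ref{prop:Rey(Amod)=Rey(A)} and \eqref{eq:JG-Jac}. Your write-up simply makes that packaging explicit, and every step (centrality of $\smap([P])_G$, the annihilation condition, transport back via $\xi^{-1}$) is sound.
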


For each $U \in \Irr(\Cc)$ choose a primitive idempotent $e_U \in E$ corresponding to the projective cover 
of the irreducible representation $\Cc(G,U)$ of $E$. That is, 
	$e_UE \cong \Cc(G,P_U)$
as right $E$-modules. We remark here that by \eqref{eq:trace-vs-tG-aux3}
we have for $U,V \in \Irr(\Cc)$,
\be\label{eq:phiU-eV-ortho}
	t_G((\phi_U)_G  \circ e_V) ~=~ z\, \delta_{U,V} \ ,
\ee
where we set $z = t_{P_\one}((\phi_\one)_{P_\one})\ne 0$ as above.
{}From  this equality and 
Proposition~\ref{prop:ReyHig=span} it
follows that the pairing
\be\label{eq:Rey-idemp-pairing}
\Rey(\Cc) \times \mathrm{span}_k\big\{e_V \,\big|\, V \in \Irr(\Cc) \big\} \longrightarrow k
\quad , \quad
(r,u) \mapsto t_G(r \circ u)
\ee
is non-degenerate.

Let us abbreviate 
	the image of the Cartan matrix $\CM(\Cc)$ in $\mathrm{Mat}_n(k)$ by $\widehat\CM$, cf.\ Corollary~\ref{cor:cartan-sym+rank_general}. For $x \in \ker(\widehat\CM)$ write 
$\tilde x = \sum_{U \in \Irr(\Cc)} x_U e_U \in E$. 
We define
\be
	K^\perp := \big\{ \,r \in \Rey(\Cc) \,\big|\, t_G
	(r_G \circ \tilde x) =0 \text{ for all }
	x \in \ker(\widehat\CM) \big\} \ .
\ee

\begin{lemma}\label{lem:Hig-Kperp}
$\Hig(\Cc) = K^\perp$.
\end{lemma}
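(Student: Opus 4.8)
The plan is to identify both $\Hig(\Cc)$ and $K^\perp$ as subspaces of $\Rey(\Cc)$ described through the image, respectively kernel, of the Cartan matrix $\widehat\CM$ acting in the eigenbasis $\{\phi_U \mid U\in\Irr(\Cc)\}$ of $\Rey(\Cc)$, and then to use the non-degeneracy of the pairing \eqref{eq:Rey-idemp-pairing} together with \eqref{eq:phiU-eV-ortho} to conclude.

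First I would make the description of $\Hig(\Cc)$ explicit: by Proposition~\ref{prop:ReyHig=span}, $\Hig(\Cc) = \mathrm{span}_k\{\phi_P \mid P\in\Proj(\Cc)\}$, and since every projective is a direct sum of the $P_U$ and $\phi$ factors through $\Gr(\Cc)$, we get $\Hig(\Cc) = \mathrm{span}_k\{\phi_{P_U} \mid U\in\Irr(\Cc)\}$. Expanding $[P_U] = \sum_V \CM(\Cc)_{UV}[V]$ in the Grothendieck ring and using that $\phi$ factors through it gives $\phi_{P_U} = \sum_{V\in\Irr(\Cc)} \widehat\CM_{UV}\,\phi_V$. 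Hence, inside $\Rey(\Cc) = \mathrm{span}_k\{\phi_V\}$ with the $\phi_V$ linearly independent (as noted after \eqref{eq:phi_M-explicit}), $\Hig(\Cc)$ is precisely the image of the linear map given by $\widehat\CM^{T}$ (or $\widehat\CM$, which has the same image since $\widehat\CM$ is symmetric by Corollary~\ref{cor:cartan-sym+rank_general}) in the coordinates dual to the basis $\{\phi_V\}$. In particular $\dim_k\Hig(\Cc) = \mathrm{rank}(\widehat\CM)$, consistent with Corollary~\ref{cor:cartan-sym+rank_general}.

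Next I would compute the pairing $t_G\big((\phi_V)_G\circ \tilde x\big)$ for $x\in\ker(\widehat\CM)$ and $\tilde x = \sum_U x_U e_U$. By \eqref{eq:phiU-eV-ortho}, $t_G\big((\phi_V)_G \circ e_U\big) = z\,\delta_{U,V}$ with $z\ne 0$, so $t_G\big((\phi_V)_G\circ\tilde x\big) = z\,x_V$. Therefore, writing a general element $r = \sum_V c_V\phi_V \in \Rey(\Cc)$, the condition $r\in K^\perp$ reads $z\sum_V c_V x_V = 0$ for all $x\in\ker(\widehat\CM)$, i.e.\ the coordinate vector $(c_V)_V$ is orthogonal (under the standard bilinear form) to $\ker(\widehat\CM)$. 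Since $\widehat\CM$ is symmetric, the orthogonal complement of $\ker(\widehat\CM)$ equals $\mathrm{im}(\widehat\CM)$. Combined with the previous paragraph, this gives $K^\perp = \{\,r = \sum_V c_V\phi_V \mid (c_V)_V \in \mathrm{im}(\widehat\CM)\,\} = \Hig(\Cc)$.

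The only mild subtlety — and the place I would be most careful — is bookkeeping about which map ($\widehat\CM$ versus its transpose) has image equal to the coordinate set cutting out $\Hig(\Cc)$, and matching this against the coordinate set cutting out $K^\perp$; symmetry of $\widehat\CM$ (Corollary~\ref{cor:cartan-sym+rank_general}) is what makes the two coincide, so I would invoke it explicitly. I would also note that $K^\perp \subset \Rey(\Cc)$ by definition, so the containment $\Hig(\Cc)\subseteq\Rey(\Cc)$ from \eqref{eq:End-Rey-Hig} is what lets us do the whole computation inside $\Rey(\Cc)$ with the single basis $\{\phi_V\}$; no genuine analytic obstacle arises, the argument is essentially linear algebra once \eqref{eq:phiU-eV-ortho} and Proposition~\ref{prop:ReyHig=span} are in hand.
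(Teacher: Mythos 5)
Your argument is correct. The core computation is the same as the paper's — both proofs rest on Proposition~\ref{prop:ReyHig=span}, the expansion $\phi_{P_U}=\sum_V\widehat\CM_{UV}\,\phi_V$, the orthogonality relation \eqref{eq:phiU-eV-ortho}, and non-degeneracy of \eqref{eq:Rey-idemp-pairing} — but you finish differently. The paper proves only the inclusion $\Hig(\Cc)\subset K^\perp$ by the direct computation and then forces equality by a dimension count, $\dim_k K^\perp=\mathrm{rank}(\widehat\CM)=\dim_k\Hig(\Cc)$, where the second equality is Corollary~\ref{cor:cartan-sym+rank_general}\,(2), i.e.\ the Liu--Zhou--Zimmermann rank formula for symmetric algebras imported through Proposition~\ref{prop:cartan-sym+rank}. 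You instead identify both subspaces explicitly in the coordinates of the basis $\{\phi_V\}_{V\in\Irr(\Cc)}$ of $\Rey(\Cc)$: $\Hig(\Cc)$ is the row space of $\widehat\CM$ and $K^\perp$ is $\ker(\widehat\CM)^\perp=\mathrm{im}(\widehat\CM^{T})$, which coincide by elementary linear algebra (your appeal to symmetry of $\widehat\CM$ is not even needed here, since the row space of any matrix equals $\mathrm{im}$ of its transpose, which is $\ker(\widehat\CM)^\perp$ over any field). What your route buys is self-containedness: it bypasses Corollary~\ref{cor:cartan-sym+rank_general}\,(2) entirely and, as a by-product, reproves $\dim_k\Hig(\Cc)=\mathrm{rank}(\widehat\CM)$ in this setting from the linear independence of $\{\phi_U\}$ alone; the price is that the argument is specific to the factorisable pivotal situation where that linear independence and Proposition~\ref{prop:ReyHig=span} are available, whereas the paper's dimension formula holds for any Calabi--Yau $\Proj(\Ac)$.
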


\begin{proof}
We first show that $\Hig(\Cc) \subset K^\perp$. By Proposition~\ref{prop:ReyHig=span} we need to show that for all $P \in \Proj(\Cc)$
and $x \in \ker(\widehat\CM)$, 
	$t_G\big( (\phi_P)_G  \circ \tilde x\big)=0$. 
It is enough to consider the projective covers $P_U$, $U \in \Irr(\Cc)$. 
By \eqref{eq:composition-series-proj-cover} we have 
	$\phi_{P_U} = \sum_{V \in \Irr(\Cc)} \widehat\CM_{UV} \, \phi_V$. 
Using this, we compute
\begin{align}
t_G\big((\phi_{P_U})_G  \circ \tilde x\big)
&~=
\hspace{-.4em} \sum_{V,W \in \Irr(\Cc)} \hspace{-.8em} \widehat\CM_{UV} \, x_W \, t_G\big((\phi_V)_G  \circ e_W\big)
\nonumber \\ &
	\overset{\eqref{eq:phiU-eV-ortho}}=
\hspace{-.4em} \sum_{V,W \in \Irr(\Cc)} \hspace{-.8em} \widehat\CM_{UV}\,  x_W \, \delta_{V,W} \, z
\overset{x \in \ker(\widehat\CM)}= 
0 \ .
\end{align}
	By non-degeneracy of the pairing \eqref{eq:Rey-idemp-pairing},
we have $\dim_k K^\perp = \mathrm{rank}(\widehat\CM)$, the rank of the Cartan matrix,
	seen as a matrix over $k$. 
Hence by Corollary~\ref{cor:cartan-sym+rank_general}, $\dim_k K^\perp = \dim_k \Hig(\Cc)$.
\end{proof}

The following theorem generalises results of~\cite{Lachowska-center,Cohen:2008} as mentioned in the introduction.

\begin{theorem}\label{thm:projective-S-inv}
$\mathscr{S}_\Cc(\Hig(\Cc)) = \Hig(\Cc)$.
\end{theorem}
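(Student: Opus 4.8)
The plan is to combine the two key tools developed just before the theorem: the alternative description $\Hig(\Cc) = \mathrm{span}_k\{\phi_P \mid P \in \Proj(\Cc)\}$ from Proposition~\ref{prop:ReyHig=span}, and the identity $\smap([M]) = \mathscr{S}_\Cc(\phi_M)$ from \eqref{eq:smap-via-SC-phiM}. Since $\mathscr{S}_\Cc$ is linear and invertible (Remark~\ref{rem:SC-iso-iff-fact}), it suffices to show that it maps a spanning set of $\Hig(\Cc)$ into $\Hig(\Cc)$, and then that the images also span (equivalently, compare dimensions using invertibility).

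First I would apply $\mathscr{S}_\Cc$ to the spanning vectors $\phi_P$, $P \in \Proj(\Cc)$. By \eqref{eq:smap-via-SC-phiM} we have $\mathscr{S}_\Cc(\phi_P) = \smap([P])$. Now Corollary~\ref{cor:sig(P)inReyC} already tells us $\smap([P]) \in \Rey(\Cc)$; the point is to upgrade this to $\smap([P]) \in \Hig(\Cc)$. For this I would use the characterisation $\Hig(\Cc) = K^\perp$ from Lemma~\ref{lem:Hig-Kperp}: I need to check that $t_G\big(\smap([P])_G \circ \tilde x\big) = 0$ for every $x \in \ker(\widehat\CM)$, where $\tilde x = \sum_U x_U e_U$. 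Using \eqref{eq:eq:sigma-Gr-EndId-def_pic} to write $\smap([P])_G$ as a partial trace and Lemma~\ref{lem:exchange-partial-trace} to exchange the roles of $G$ and $P$ (as in the proof of Lemma~\ref{lem:sigP-ann-Jac}), one rewrites $t_G(\smap([P])_G \circ \tilde x)$ in a form where the relevant data is the action of $\smap$ on the class $[\tilde x \cdot G]$, i.e.\ a sum of projective covers weighted by $x_U$. Concretely, I expect $t_G(\smap([P])_G \circ e_V)$ to be computable in terms of $S$-matrix-type quantities and the Cartan matrix, so that the condition $x \in \ker(\widehat\CM)$ forces the combination to vanish. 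An alternative, possibly cleaner route: show directly that $\smap$ restricted to $\Proj(\Cc)$ factors through $\Hig(\Cc)$ by identifying $\smap([P])$ with $\mathscr{S}_\Cc$ applied to something manifestly in $\Hig(\Cc)$ and invoking that $\Hig(\Cc)$ is an ideal on which $\smap(\Gr_k(\Cc))$ acts (cf.\ Proposition~\ref{prop:ReyHig=span} plus $[M]\mapsto \mathscr{S}_\Cc(\phi_M)$ being an algebra map).

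This gives $\mathscr{S}_\Cc(\Hig(\Cc)) \subseteq \Hig(\Cc)$. Since $\mathscr{S}_\Cc$ is an invertible linear map on the finite-dimensional space $\End(\Id_\Cc)$, its restriction to the finite-dimensional subspace $\Hig(\Cc)$ is injective, hence $\dim_k \mathscr{S}_\Cc(\Hig(\Cc)) = \dim_k \Hig(\Cc)$, and therefore the inclusion is an equality.

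The main obstacle I anticipate is the upgrade from $\Rey(\Cc)$ to $\Hig(\Cc)$, i.e.\ verifying the vanishing $t_G(\smap([P])_G \circ \tilde x) = 0$ for $x \in \ker(\widehat\CM)$. This requires understanding how $\smap$ interacts with the idempotents $e_V$ and with the Cartan matrix; the likely key computation is a formula of the shape $t_G(\smap([P])_G \circ e_V) = \sum_W \widehat\CM_{VW}(\cdots)$ or its transpose, after which the kernel condition closes the argument. The symmetry of $\widehat\CM$ (Corollary~\ref{cor:cartan-sym+rank_general}) will presumably be used here. If a direct computation is awkward, the fallback is to exploit that $[M]\mapsto \mathscr{S}_\Cc(\phi_M)=\smap([M])$ is an algebra homomorphism whose image lands in $\Rey(\Cc)$, together with Proposition~\ref{prop:prod-Rey-Hig} describing multiplication by the $\phi_U$, to pin down that $\smap([P])$ lies in the span of the $\phi_Q$ with $Q$ simple projective — but I would first try the $K^\perp$ route since Lemma~\ref{lem:Hig-Kperp} is tailor-made for exactly this kind of orthogonality check.
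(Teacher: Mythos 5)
Your proposal is correct and follows essentially the same route as the paper: reduce via Proposition~\ref{prop:ReyHig=span}, \eqref{eq:smap-via-SC-phiM} and Lemma~\ref{lem:Hig-Kperp} to showing $t_G\big(\sigma([P])_G\circ\tilde x\big)=0$ for $x\in\ker(\widehat\CM)$, compute this by exchanging partial traces as in Lemma~\ref{lem:sigP-ann-Jac} so that the idempotent $e_U$ travelling around the loop produces $\sigma([P_{U^*}])$ and hence Cartan-matrix coefficients, and finish by invertibility of $\mathscr{S}_\Cc$ together with $\dim_k K^\perp=\dim_k\Hig(\Cc)$. The only detail you leave implicit is the appearance of $U^*$ rather than $U$ in the loop, which is exactly where the relation $\widehat\CM_{U,V}=\widehat\CM_{V^*,U^*}$ that you anticipate gets used.
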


\begin{proof}
We will show $\mathscr{S}_\Cc(\Hig(\Cc)) \subset K^\perp$. 
	Let $P \in \Proj(\Cc)$. 
By Corollary~\ref{cor:sig(P)inReyC}
	and \eqref{eq:smap-via-SC-phiM},
$\mathscr{S}_\Cc(\phi_P) \in \Rey(\Cc)$. It remains to check that $\mathscr{S}_\Cc(\phi_P) \in K^\perp$, i.e.\ that $t_G\big((\mathscr{S}_\Cc(\phi_P))_G \circ \tilde x\big)=0$ for all 
$x \in \ker(\widehat\CM)$. 
We compute
\begin{align}
& t_G\big((\mathscr{S}_\Cc(\phi_P))_G \circ \tilde x\big)
\overset{\text{as in \eqref{eq:sigP-ann-Jac-aux1}}}=
  t_{P^*}\big( tr_G^r(c_{G,P^*} \circ (\tilde x \otimes \id_{P^*}) 
  \circ c_{P^*,G}) \big)
\nonumber \\ &
\overset{\text{(*)}}=
\sum_{U \in \Irr(\Cc)} x_U \, t_{P^*}\big( \sigma([P_{U^*}])_{P^*} \big)
\overset{\eqref{eq:composition-series-proj-cover}}=
\sum_{U,V \in \Irr(\Cc)} x_U \, \widehat\CM_{U^*V}\, t_{P^*}\big( \sigma([V])_{P^*} \big)
\nonumber \\ &
\overset{\text{(**)}}=
\sum_{U,V \in \Irr(\Cc)} \widehat\CM_{V^*U}\, x_U \, t_{P^*}\big( \sigma([V])_{P^*} \big)
\overset{x \in \ker(\widehat\CM)}=
0 \ .
\end{align}
In (*) we inserted $\tilde x = \sum_{U} x_U \, e_U$ and used
that $e_U=a\circ b$, for $a\colon P_U\to G$, $b\colon G\to P_U$ realising the direct summand $P_U$ in $G$,  
and moving $b$ around the loop we get  $b\circ a = \id_{P_U}$.
The reason that $U^*$ appears instead of $U$ is the extra dual in \eqref{eq:eq:sigma-Gr-EndId-def_pic},
  together with $(P_U)^* \cong P_{U^*}$ (as follows from uni-modularity of $\Cc$)
\footnote{Applying  \eqref{eq:eq:sigma-Gr-EndId-def_pic} for $\sigma(V^*)_{X}$, then  inserting $\delta_V \circ \delta_V^{-1}\colon V^{**}\to V\to V^{**}$ and  moving $\delta_V^{-1}$ once around the loop, we get a loop coloured by $V$ instead of $V^{**}$.}.
	This isomorphism, combined with \eqref{eq:cartan-mat-def}, shows $\CM_{U,V} = \CM_{V^*,U^*}$
	(and hence $\widehat\CM_{U,V} = \widehat\CM_{V^*,U^*}$),
which explains (**).

The inclusion $\mathscr{S}_\Cc(\Hig(\Cc)) \subset K^\perp$ implies the theorem since $\mathscr{S}_\Cc$ is an isomorphism 
	(Remark~\ref{rem:SC-iso-iff-fact})
and so together with Lemma~\ref{lem:Hig-Kperp} we have $\dim_k(\mathscr{S}_\Cc(\Hig(\Cc))) = \dim_k \Hig(\Cc) = \dim_k K^\perp$. Thus $\mathscr{S}_\Cc(\Hig(\Cc)) = K^\perp = \Hig(\Cc)$, once more appealing to Lemma~\ref{lem:Hig-Kperp}.
\end{proof}

Recall the projective $SL(2,\Zb)$-action on $\End(\Id_\Cc)$ from Theorem~\ref{thm:proj-sl2Z-EndId} in case $\Cc$ is in addition ribbon.

\begin{corollary}\label{cor:Hig-SL2Z-sub}
Let $\Cc$ be in addition ribbon. Then $\Hig(\Cc)$ is a submodule for the projective $SL(2,\Zb)$-action on $\End(\Id_\Cc)$.
\end{corollary}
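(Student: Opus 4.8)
The final statement to prove is Corollary~\ref{cor:Hig-SL2Z-sub}: for a factorisable \emph{ribbon} category $\Cc$ satisfying Condition~P, $\Hig(\Cc)$ is a submodule for the projective $SL(2,\Zb)$-action on $\End(\Id_\Cc)$ described in Theorem~\ref{thm:proj-sl2Z-EndId}.

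The plan is to reduce the statement to the two generator actions. By Theorem~\ref{thm:proj-sl2Z-EndId}, the group $SL(2,\Zb)$ is generated (projectively) by the images of $\left(\begin{smallmatrix}0&-1\\1&0\end{smallmatrix}\right)$ and $\left(\begin{smallmatrix}1&1\\0&1\end{smallmatrix}\right)$, acting respectively as $\mathscr{S}_\Cc$ and as $\theta\circ(-)$, where $\theta\in\End(\Id_\Cc)$ is the ribbon twist. Since a subspace is invariant under a group represented projectively as soon as it is invariant under a generating set, it suffices to check that both $\mathscr{S}_\Cc$ and $\theta\circ(-)$ preserve $\Hig(\Cc)$. (Scalar ambiguities in a projective action do not affect invariance of a subspace, so this reduction is clean.)

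First I would invoke Theorem~\ref{thm:projective-S-inv}, which is exactly the statement $\mathscr{S}_\Cc(\Hig(\Cc))=\Hig(\Cc)$; this handles the $S$-generator with nothing further to do. Second, for the $T$-generator I would use that $\Hig(\Cc)$ is an \emph{ideal} in $\End(\Id_\Cc)$ — this is built into its definition in Section~\ref{sec:sym-alg-ideal} (it corresponds under $\xi$ to the Higman ideal $\Hig(E)$, which is an ideal in $Z(E)$, with $z\,\tau(a)=\tau(za)$). Since $\theta\in\End(\Id_\Cc)$, left multiplication by $\theta$ maps the ideal $\Hig(\Cc)$ into itself; in fact, using Proposition~\ref{prop:ReyHig=span} one can be completely explicit, as $\theta\circ\phi_P=\langle\theta_{P}\rangle$-type combinations stay in the span of the $\phi_{P'}$ by Remark~\ref{rem:also-PU-diag} (the twist is constant on blocks, and composition factors of an indecomposable projective lie in a single block, so $\theta\circ\phi_{P_U}=\langle\theta_U\rangle\,\phi_{P_U}\in\Hig(\Cc)$). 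Either way, $\theta\circ(-)$ preserves $\Hig(\Cc)$.

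Combining these two facts, $\Hig(\Cc)$ is stable under the images of both $SL(2,\Zb)$-generators under the projective representation of Theorem~\ref{thm:proj-sl2Z-EndId}, hence under the whole (projective) action, which is the claim. There is essentially no obstacle here: the real work is Theorem~\ref{thm:projective-S-inv}, already proved above; the remaining content is the trivial observation that an ideal is stable under multiplication by any fixed element, together with the standard fact that invariance under generators gives invariance under the group. If one wishes to spell out the reduction carefully, the only minor point to note is that the $T$-action is by $\theta\circ(-)$ rather than by an arbitrary automorphism, but this is precisely the ideal property, so no new argument is needed.
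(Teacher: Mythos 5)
Your proposal is correct and follows exactly the paper's own argument: invoke Theorem~\ref{thm:projective-S-inv} for stability under $\mathscr{S}_\Cc$, and use the ideal property of $\Hig(\Cc)$ for stability under $\theta\circ(-)$. The extra explicit verification via Remark~\ref{rem:also-PU-diag} is a harmless elaboration of the same reasoning.
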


\begin{proof}
By Theorem~\ref{thm:projective-S-inv}, $\Hig(\Cc)$ is stable under the action of the $S$-generator of $SL(2,\Zb)$.
The action of $T$ is given by $\theta \circ (-)$, and since $\Hig(\Cc)$ is an ideal, it is stable under the $T$-action as well.
\end{proof}

\medskip

We now give applications of Theorem~\ref{thm:projective-S-inv} to the
problem of diagonalisability of the
Grothendieck ring of $\Cc$ (Proposition \ref{prop:GrC-diagonalisability}), 
to the Verlinde formula (Proposition~\ref{prop:M-via-S}) 
and to the modified trace (Proposition~\ref{prop:mod-tr-S}). 
Namely, we ask how much can be learned about the tensor products of projective objects from knowing the restriction of $\mathscr{S}_\Cc$ to $\Hig(\Cc)$,
and give a formula for the modified trace of the ``open Hopf-link operator'' $\sigma([P])$ in terms of the $S$-matrix elements.
See e.g.\ \cite{Fuchs:2003yu,Fuchs:2006nx,Gainutdinov:2016qhz,Creutzig:2016fms} 
for discussions of the Verlinde formula in the finitely non-semisimple setting and for more references.

\medskip

We start with the application to the Grothendieck ring.
Recall from Corollary \ref{cor:inj-alg-hom} that $[M] \mapsto \smap([M])$ is an injective ring homomorphism 
from $\Gr(\Cc)$ to $\End(\Id_\Cc)$. For $M \in \Cc$ denote by 
\be
\rho^L([M])\colon\; \End(\Id_\Cc) \to \End(\Id_\Cc)
\ee
 the left multiplication by $\smap([M])$.
Then $\rho^L([A])  \rho^L([B]) 
= \rho^L([A]\cdot [B])$ and $\rho^L([\one]) = \id$, i.e.\ $\rho^L\colon \Gr(\Cc) \to \End_k(\End(\Id_\Cc))$ defines a 
	faithful
representation. Since $\Rey(\Cc)$ and $\Hig(\Cc)$ are ideals, they are submodules for the representation $\rho^L$.

We can now state the following result, part 1 of which is a generalisation of (part of) \cite[Thm.\,3.14]{Cohen:2008}.

\begin{proposition}\label{prop:GrC-diagonalisability}
Consider the action $\rho^L : \Gr(\Cc) \to \End_k(\End(\Id_\Cc))$.
\begin{enumerate}
\item 
The restriction of $\rho^L$ to the submodule $\Rey(\Cc)$, respectively $\Hig(\Cc)$, is diagonalisable
with action
\be
	\rho^L([M]) \, \phi_{V}
	\,=~
	\ell^M_V
	\, \phi_{V} \ ,
\qquad \text{resp.} \qquad
	\rho^L([M]) \, \phi_{P_V}
	\,=~
	\ell^M_V 
	\, \phi_{P_V} \ ,
\ee
where 
	$V \in \Irr(\Cc)$ and
	$\ell^M_V  := \big\langle \smap([M])_V \big\rangle \in k$ (see Proposition~\ref{prop:phiU-End-diagonal}).
\item If $\Cc$ is semisimple, $\rho^L$ can be diagonalised on $\End(\Id_\Cc)$.
\item If $\mathrm{char}(k)=0$ and $\rho^L$ can be diagonalised on $\End(\Id_\Cc)$, then $\Cc$ is semisimple.
\end{enumerate}
\end{proposition}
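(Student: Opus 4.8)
\textbf{Proof plan for Proposition~\ref{prop:GrC-diagonalisability}.}

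The plan is to prove the three parts in order, reducing everything to the structure theory already established. For part~1, I would start from the decomposition $\End(\Id_\Cc) = \Rey(\Cc)\oplus(\text{something})$ is \emph{not} available in general, so instead I work inside $\Rey(\Cc)$ and $\Hig(\Cc)$ directly. By Proposition~\ref{prop:ReyHig=span}, $\Rey(\Cc)$ is spanned by the $\phi_M$, $M\in\Cc$, and by Theorem~\ref{thm:chiinj} together with \eqref{eq:tildechi-def} the set $\{\phi_U\mid U\in\Irr(\Cc)\}$ is linearly independent; hence it is a basis of $\Rey(\Cc)$. Now $\rho^L([M])\,\phi_U = \smap([M])\circ\phi_U$, and since $\phi_U\in\Rey(\Cc)\subset\End(\Id_\Cc)$ and $\smap([M])\in\End(\Id_\Cc)$, Proposition~\ref{prop:phiU-End-diagonal} applies with $\alpha=\smap([M])$, giving $\smap([M])\circ\phi_U = \langle\smap([M])_U\rangle\,\phi_U = \ell^M_U\,\phi_U$. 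So $\{\phi_U\}$ is an eigenbasis and $\rho^L|_{\Rey(\Cc)}$ is diagonal. For $\Hig(\Cc)$, use that $\Hig(\Cc)$ is spanned by $\{\phi_{P_U}\mid U\in\Irr(\Cc)\}$ (expand $\phi_P$ over projective covers using \eqref{eq:composition-series-proj-cover}), these are linearly independent because the map $[M]\mapsto\phi_M$ followed by the composition-factor expansion is related to the Cartan matrix — more simply, $\dim_k\Hig(\Cc) = \mathrm{rank}(\widehat\CM)$ by Corollary~\ref{cor:cartan-sym+rank_general}, and the $\phi_{P_U}$ span it, so a spanning set of the right cardinality coming from the rank argument gives a basis — and then \eqref{eq:phiPU-End-diagonal} in Remark~\ref{rem:also-PU-diag} gives $\rho^L([M])\,\phi_{P_U} = \ell^M_U\,\phi_{P_U}$.

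For part~2, if $\Cc$ is semisimple then $\End(\Id_\Cc)=\Rey(\Cc)$ by Lemma~\ref{lem:ZA=ReyA} and Proposition~\ref{prop:Rey(Amod)=Rey(A)} (or directly: $\Jac(E)=0$), so diagonalisability of $\rho^L$ on all of $\End(\Id_\Cc)$ is exactly part~1 applied to $\Rey(\Cc)=\End(\Id_\Cc)$.

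Part~3 is the substantive direction and I expect it to be the main obstacle. Suppose $\mathrm{char}(k)=0$ and $\rho^L$ is diagonalisable on $\End(\Id_\Cc)$. I would argue as follows. Since $\smap\colon\Gr_k(\Cc)\to\End(\Id_\Cc)$ is an injective algebra homomorphism (Corollary~\ref{cor:inj-alg-hom}), the image $\smap(\Gr_k(\Cc))$ is a commutative subalgebra, and $\rho^L$ restricted to this image is the regular representation of $\Gr_k(\Cc)$ (up to the embedding). If $\rho^L$ is diagonalisable on $\End(\Id_\Cc)$, then in particular every element $\rho^L([M])$ is a diagonalisable operator, hence $\smap([M])$ is — as a left-multiplication operator on the algebra $\End(\Id_\Cc)$ — semisimple; restricting to the subalgebra $\smap(\Gr_k(\Cc))$, left multiplication by each element is diagonalisable, which forces $\Gr_k(\Cc)$ to be a semisimple (commutative) algebra, i.e.\ reduced. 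Now I invoke the chain: $\Rey(\Cc)$ is a $\rho^L$-submodule and $\End(\Id_\Cc)/\Rey(\Cc)$ carries an induced action; diagonalisability of $\rho^L$ on the whole space passes to the quotient. But the nilpotent elements: pick any $\alpha\in\Jac$-type part — more precisely, via the equivalence to $\rmod E$ and $\End(\Id_\Cc)\cong Z(E)$, if $\Cc$ is not semisimple then $\Jac(E)\cap Z(E)\neq 0$ (the centre of a non-semisimple symmetric algebra contains nonzero nilpotents, since $Z(E)\supsetneq\Rey(E)=\mathrm{ann}_{Z(E)}\Jac(E)$ by Lemma~\ref{lem:ZA=ReyA}, and any $z\in Z(E)\setminus\Rey(E)$ has a power landing in, or one produces a genuine nilpotent central element from the radical). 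Take such a nonzero nilpotent central $n$; then $\rho^L$ evaluated on $[\one]$ is the identity, which tells us nothing, so instead I use that $\smap([P])$ for a projective $P$ with $[P]\neq 0$ (exists by Condition~P in char zero, Lemma~\ref{lem:powers-nonzero-in-Gr}) lies in $\Rey(\Cc)$ by Corollary~\ref{cor:sig(P)inReyC}; if moreover $\Cc$ has no simple projective then $\smap([P])$ is nilpotent by the proof of Theorem~\ref{thm:main-simpleproj} — but $\Cc$ does have a simple projective, so this route needs care. The cleanest finish: diagonalisability of $\rho^L$ on $\End(\Id_\Cc)$ forces $\End(\Id_\Cc)$, viewed as a module over the commutative algebra $\End(\Id_\Cc)$ itself by left multiplication, to be semisimple; but a commutative algebra acting on itself by multiplication is semisimple as a module iff the algebra is semisimple as an algebra iff it has no nonzero nilpotents; thus $\End(\Id_\Cc)\cong Z(E)$ has no nonzero nilpotents, i.e.\ $Z(E)=\Rey(E)$, which by Lemma~\ref{lem:ZA=ReyA} forces $E$ semisimple, hence $\Cc$ semisimple. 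I would double-check that ``$\rho^L$ diagonalisable'' really yields ``$\End(\Id_\Cc)$ semisimple as a module over itself'': this holds because $\End(\Id_\Cc)$ is commutative (it is the centre $Z(E)$), $\rho^L$ contains left multiplication by $\smap([\one])=\id$ and more importantly the images $\rho^L([M])$ generate, inside $\End_k(\End(\Id_\Cc))$, a subalgebra all of whose elements are simultaneously diagonalisable, and one checks $\End(\Id_\Cc)$ itself sits inside the image of $\smap$ after tensoring appropriately — wait, it need not. The honest resolution: it suffices that \emph{some} family of commuting diagonalisable operators acts on $\End(\Id_\Cc)$ with the property that the only vectors killed by all of them is $0$ and the algebra they generate equals $\End(\Id_\Cc)$ acting by multiplication; the hypothesis that $\rho^L$ (left mult.\ by $\smap(\Gr_k(\Cc))$) is diagonalisable gives semisimplicity of $\Gr_k(\Cc)$, and then one separately argues, using Proposition~\ref{prop:ReyHig=span} and that the $\phi_U$ span $\Rey(\Cc)$ whereas $\End(\Id_\Cc)$ is strictly larger when $\Cc$ is non-semisimple (Proposition~\ref{prop:cy-ssi-inclusion}(2)), that a nilpotent $\eta\in\End(\Id_\Cc)\setminus\Rey(\Cc)$ cannot be killed — this is where I expect to spend the most effort, and I would model it closely on \cite[Thm.\,3.14]{Cohen:2008}. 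In short: the hard part is turning ``$\rho^L$ diagonalisable'' into ``no nilpotents in $\End(\Id_\Cc)$'' and I would route it through $\Gr_k(\Cc)$ reduced plus the strict inclusion $\Rey(\Cc)\subsetneq\End(\Id_\Cc)$ in the non-semisimple case from Proposition~\ref{prop:cy-ssi-inclusion}, deriving a contradiction with simultaneous diagonalisability.
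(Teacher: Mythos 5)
Parts 1 and 2 of your proposal are correct and essentially the paper's own argument: part 1 is immediate from Proposition~\ref{prop:phiU-End-diagonal} and Remark~\ref{rem:also-PU-diag}, and part 2 from Proposition~\ref{prop:cy-ssi-inclusion}. One small correction: the $\phi_{P_U}$, $U\in\Irr(\Cc)$, need \emph{not} be linearly independent (e.g.\ $[P_\one]=[P_{\Pi\one}]$ in the symplectic fermion example, so $\phi_{P_\one}=\phi_{P_{\Pi\one}}$); but a spanning set of eigenvectors is all that diagonalisability requires, so your hedged detour through the rank of the Cartan matrix is unnecessary.

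For part 3 there is a genuine gap. Your first reduction is sound: $\rho^L$ is faithful (since $\sigma$ is injective and left multiplication by a non-zero element of the unital algebra $\End(\Id_\Cc)$ is non-zero), so simultaneous diagonalisability of the $\rho^L([M])$ forces $\Gr_k(\Cc)$ to be reduced. What you never supply is the step you then need: that non-semisimplicity of $\Cc$ produces a non-zero \emph{nilpotent} of $\Gr_k(\Cc)$. Your candidate routes do not close this. Producing a nilpotent in $Z(E)\setminus\Rey(E)$ is both unjustified as stated and irrelevant, because $\rho^L$ is left multiplication only by elements of the subalgebra $\sigma(\Gr_k(\Cc))$, not by all of $\End(\Id_\Cc)$ --- as you yourself notice, $\End(\Id_\Cc)$ need not lie in the image of $\sigma$, so ``$\rho^L$ diagonalisable'' does not force $\End(\Id_\Cc)$ to be semisimple as a module over itself. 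The missing idea, which is the heart of the paper's proof, is the following. If $\Cc$ is not semisimple, pick a non-simple indecomposable projective $P$. Then $\phi_P\neq 0$ (using $\mathrm{char}(k)=0$ and Theorem~\ref{thm:chiinj}), $\phi_P\circ\phi_P=0$ by Proposition~\ref{prop:prod-Rey-Hig}, and --- crucially --- $\phi_P$ lies in $\sigma(\Gr_k(\Cc))$: indeed $\phi_P\in\Hig(\Cc)=\mathscr{S}_\Cc(\Hig(\Cc))$ by Theorem~\ref{thm:projective-S-inv}, and $\mathscr{S}_\Cc(\phi_{P_U})=\sigma([P_U])$ by \eqref{eq:smap-via-SC-phiM}, so $\phi_P=\sum_{U}x_U\,\sigma([P_U])$. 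Injectivity of $\sigma$ (Corollary~\ref{cor:inj-alg-hom}) then yields a non-zero square-zero element of $\Gr_k(\Cc)$, contradicting reducedness. Without invoking the $S$-invariance of the Higman ideal you have no way to place the nilpotent $\phi_P$ inside the image of $\sigma$, and this is exactly the point your proposal leaves open.
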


\begin{proof}
Part 1 is immediate from Proposition~\ref{prop:phiU-End-diagonal}
and Remark~\ref{rem:also-PU-diag}.
Part~2 follows from part~1 together with Proposition~\ref{prop:cy-ssi-inclusion}, which states that for $\Cc$ semisimple we have $\Hig(\Cc)=\End(\Id_\Cc)$.

Part 3 we prove by contradiction. Suppose that $\Cc$ is not semisimple, so that not every indecomposable projective object is simple. 
Let $P$ be such a non-simple indecomposable projective object. 
Since $\mathrm{char}(k)=0$, the map $\Gr(\Cc) \to \Gr_k(\Cc)$ is injective, and hence by Theorem \ref{thm:chiinj} (and the definition in \eqref{eq:tildechi-def}), $\phi_P \neq 0$.
Furthermore, by Proposition~\ref{prop:prod-Rey-Hig}, $\phi_P$ is nilpotent, in fact $\phi_P \circ \phi_P=0$. 

Since $\phi_P \in \Hig(\Cc)$ (Proposition~\ref{prop:ReyHig=span}), by Theorem~\ref{thm:projective-S-inv}, we can write $\phi_P = \mathscr{S}_\Cc^{-1}(\sigma([P]))  = \sum_{U \in \Irr(\Cc)} x_U \, \sigma([P_U])$ and this linear combination is nilpotent because $\phi_P \circ \phi_P = 0$. Then, since  $\sigma$ is an injective 
	algebra map
(Corollary~\ref{cor:inj-alg-hom}), $\Gr_k(\Cc)$ contains a non-zero nilpotent element, and so $\rho^L$ (which factors through $\Gr_k(\Cc)$) cannot be diagonalised.
\end{proof}
	
In particular, we have the following consequence for the linearised Grothendieck ring:

\begin{corollary}\label{cor:Grss-Css}
If $\mathrm{char}(k)=0$,
the linearised Grothendieck ring $\Gr_k(\Cc)$ is semisimple iff $\Cc$ is semisimple.
\end{corollary}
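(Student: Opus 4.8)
The plan is to assemble this from three facts already in hand: injectivity of the algebra map $\smap\colon\Gr_k(\Cc)\to\End(\Id_\Cc)$ (Corollary~\ref{cor:inj-alg-hom}, which uses $\mathrm{char}(k)=0$), the $S$-invariance of the Higman ideal (Theorem~\ref{thm:projective-S-inv}), and the diagonalisability statements of Proposition~\ref{prop:GrC-diagonalisability}. I would prove the two implications separately; the forward implication ``$\Cc$ semisimple $\Rightarrow\Gr_k(\Cc)$ semisimple'' is essentially formal, whereas the converse genuinely needs Theorem~\ref{thm:projective-S-inv}.

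For ``$\Cc$ semisimple $\Rightarrow\Gr_k(\Cc)$ semisimple'' I would first note that $\rho^L$ factors through $\Gr_k(\Cc)$ and that the resulting representation of $\Gr_k(\Cc)$ on $\End(\Id_\Cc)$ is faithful: since $\rho^L([M])$ is left composition with $\smap([M])$ and $\rho^L([M])(\id_{\Id_\Cc})=\smap([M])$, one has $\rho^L([M])=0$ only if $\smap([M])=0$, hence only if $[M]=0$ in $\Gr_k(\Cc)$ by Corollary~\ref{cor:inj-alg-hom}. By Proposition~\ref{prop:GrC-diagonalisability}(2), for semisimple $\Cc$ this faithful representation can be simultaneously diagonalised, so $\End(\Id_\Cc)$ is a \emph{semisimple} $\Gr_k(\Cc)$-module. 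A finite-dimensional algebra admitting a faithful semisimple module is semisimple (its Jacobson radical annihilates every simple module, hence the faithful semisimple module, hence is zero), so $\Gr_k(\Cc)$ is semisimple. (This half does not actually use $\mathrm{char}(k)=0$.)

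For the converse I would argue by contraposition, isolating the mechanism inside the proof of Proposition~\ref{prop:GrC-diagonalisability}(3). Assume $\Cc$ is not semisimple and pick an indecomposable projective $P$ that is not simple. Since $\mathrm{char}(k)=0$, the map $\Gr(\Cc)\to\Gr_k(\Cc)$ is injective, so by Theorem~\ref{thm:chiinj} and \eqref{eq:tildechi-def} we get $\phi_P\neq0$, while Proposition~\ref{prop:prod-Rey-Hig} gives $\phi_P\circ\phi_P=0$. Now $\phi_P\in\Hig(\Cc)$ by Proposition~\ref{prop:ReyHig=span}; by Theorem~\ref{thm:projective-S-inv} we have $\Hig(\Cc)=\mathscr{S}_\Cc(\Hig(\Cc))$, and since $\mathscr{S}_\Cc(\phi_{P_U})=\smap([P_U])$ by \eqref{eq:smap-via-SC-phiM}, the elements $\smap([P_U])$, $U\in\Irr(\Cc)$, span $\Hig(\Cc)$. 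Hence $\phi_P=\smap(\xi)$ for some $\xi\in\Gr_k(\Cc)$, with $\xi\neq0$ (because $\smap(\xi)=\phi_P\neq0$) and $\smap(\xi^2)=\smap(\xi)\circ\smap(\xi)=\phi_P\circ\phi_P=0$, so $\xi^2=0$ by injectivity of $\smap$. Thus $\Gr_k(\Cc)$ contains a non-zero nilpotent element and is not semisimple. The only delicate points, really just bookkeeping, are verifying faithfulness of the $\Gr_k(\Cc)$-action on $\End(\Id_\Cc)$ and reading ``simultaneously diagonalisable'' as ``semisimple module''; there is no substantive obstacle beyond invoking Theorem~\ref{thm:projective-S-inv} in the converse direction.
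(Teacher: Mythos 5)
Your proof is correct and follows essentially the same route as the paper: the corollary is stated there as a direct consequence of Proposition~\ref{prop:GrC-diagonalisability}, and your converse direction is precisely the nilpotent-element mechanism from the proof of part~3 of that proposition (via Theorem~\ref{thm:projective-S-inv}, Proposition~\ref{prop:prod-Rey-Hig} and injectivity of $\smap$), while your forward direction just makes explicit the standard ``faithful semisimple module $\Rightarrow$ semisimple algebra'' step that the paper leaves implicit when invoking part~2.
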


\medskip

Next we turn to an application of Theorem \ref{thm:projective-S-inv} to a Verlinde-type formula.

\medskip

Write $\{M\}$ for the isomorphism class of an object $M \in \Cc$ (rather than its class $[M]$ in $\Gr(\Cc)$). Define the ring $K_0(\Cc)$ as the free abelian group spanned by the isomorphism classes of indecomposable projective objects
	(see e.g.\ \cite[Sec.\,1.8]{EGNO-book}),
\be
K_0(\Cc) ~:= \bigoplus_{U \in \Irr(\Cc)} \Zb\, \{P_U\} \ .
\ee
The product of $\{P_U\}$ and $\{P_V\}$ is defined by the decomposition of $P_U \otimes P_V$ into indecomposable projectives.
	Thus $K_0(\Cc)$ encodes the tensor products of projective objects in $\Cc$ up to isomorphism. 
We denote the structure constants of $K_0(\Cc)$ by $M_{UV}^{~W}$, that is,
\be\label{eq:proj-M-def}
	\{P_U\} \{P_V\} ~= \sum_{W \in \Irr(\Cc)} M_{UV}^{~W} \, \{P_W\} \ ,
\ee
or, equivalently,
\be
	P_U \otimes P_V ~\cong \bigoplus_{W \in \Irr(\Cc)} (P_W)^{\oplus M_{UV}^{~W}} \ .
\ee
The map $K_0(\Cc) \to \Gr(\Cc)$, $\{P_U\} \mapsto [P_U]$ is a ring homomorphism. The matrix elements of this map are precisely the entries of the Cartan matrix of $\Cc$.

For the application to the Verlinde formula, 
we will need two bases of $\Hig(\Cc)$. For the first basis, choose a subset $J \subset \Irr(\Cc)$ such that
\be\label{eq:choice-Hig-basis1}
	\big\{\, \phi_{P_U} \,\big|\, U \in J \,\big\}
\ee
is a basis of $\Hig(\Cc)$. 
	Recall that $\widehat\CM$ denotes the image of $\CM(\Cc)$ in $\mathrm{Mat}_n(k)$.
By Corollary~\ref{cor:cartan-sym+rank_general} we have 
	$|J| = \mathrm{rank}(\widehat\CM)$. 
By Theorem~\ref{thm:projective-S-inv}, $\mathscr{S}_\Cc$ is an invertible endomorphism of $\Hig(\Cc)$. The second basis then is
\be\label{eq:choice-Hig-basis2}
	\big\{\, \mathscr{S}_\Cc(\phi_{P_U}) \,\big|\, U \in J \,\big\} \ .
\ee
Let $\tilde B$ be the $\Irr(\Cc) \times J$-matrix expressing $\phi_{P_U}$ in the first basis,
\be\label{eq:tildeB-expansion}
	\phi_{P_U}
	=
	\sum_{A \in J}
	\tilde B_{UA} \, \phi_{P_A} 
	\qquad , \quad \text{for} ~~U\in\Irr(\Cc) \ .
\ee
The change of basis map between \eqref{eq:choice-Hig-basis1} and \eqref{eq:choice-Hig-basis2} will be denoted by the $J \times J$-matrix~$\tilde S$, that is
\be\label{eq:tildeS-in-basis-def}
	\mathscr{S}_\Cc(\phi_{P_A})
	=
	\sum_{B \in J}
	\tilde S_{AB} \,\phi_{P_B} 
	\qquad , \quad \text{for} ~~A\in J \ .
\ee
Finally, let 
$\tilde C$ be the $J \times J$-matrix which describes the effect of taking duals,
\be\label{eq:tildeC-def}
	\phi_{(P_A)^*}
	=
	\sum_{B \in J}
	\tilde C_{AB}\,\phi_{P_B} 
	\qquad , \quad \text{for} ~~A\in J \ .
\ee
By construction, $\tilde S$ and $\tilde C$ are invertible, while $\tilde B$ is invertible if and only if $\Cc$ is semisimple  (Proposition~\ref{prop:cy-ssi-inclusion}).

Recall the definition of the constants $b_Q \in k^\times$ from \eqref{eq:phi_U_PV_simproj} and of the subset $\mathrm{IrrProj}(\Cc) \subset \Irr(\Cc)$ from \eqref{eq:IrrProj-def}. We remark that in order to get a basis in \eqref{eq:choice-Hig-basis1} one necessarily has $\mathrm{IrrProj}(\Cc) \subset J$.

\begin{proposition}\label{prop:M-via-S}
For all $U,V \in\Irr(\Cc)$ and $X \in J$
	we have the following identity in $k$,
\be\label{eq:M-via-S}
\sum_{W \in \Irr(\Cc)} \hspace{-.5em} M_{UV}^{~W} \, \tilde B_{WX}
~=
 \hspace{-.1em}
\sum_{Q \in \mathrm{IrrProj}(\Cc)} \hspace{-.9em}
b_Q\,  (\tilde B\cdot \tilde S)_{UQ} \,  (\tilde B\cdot \tilde S)_{VQ}  
\, (\tilde S \cdot \tilde C)_{QX}  \ .
\ee
\end{proposition}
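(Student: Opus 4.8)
The plan is to reduce the identity \eqref{eq:M-via-S} to the diagonalisation of the product on $\Hig(\Cc)$ worked out in Proposition~\ref{prop:prod-Rey-Hig}, combined with the behaviour of $\mathscr{S}_\Cc$ under the algebra homomorphism $\smap$. Recall from \eqref{eq:smap-via-SC-phiM} that $\smap([M]) = \mathscr{S}_\Cc(\phi_M)$, and that $\smap$ is an algebra map from $\Gr(\Cc)$ (Lemma~\ref{lem:sigma_Gr_ringhom}). The starting point is to compute $\mathscr{S}_\Cc(\phi_{P_U}) \circ \mathscr{S}_\Cc(\phi_{P_V})$ in two ways. On the one hand, since $\mathscr{S}_\Cc(\phi_{P_U}) = \smap([P_U])$, this product equals $\smap([P_U]\cdot[P_V]) = \smap([P_U \otimes P_V])$; decomposing $P_U \otimes P_V$ into indecomposable projectives via \eqref{eq:proj-M-def} gives $\sum_W M_{UV}^{~W}\,\smap([P_W]) = \sum_W M_{UV}^{~W}\,\mathscr{S}_\Cc(\phi_{P_W})$. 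Expanding $\phi_{P_W}$ in the basis $\{\phi_{P_A}\}_{A\in J}$ using \eqref{eq:tildeB-expansion} and then applying \eqref{eq:tildeS-in-basis-def} writes the left-hand side of \eqref{eq:M-via-S}, up to the change of basis $\tilde S$, as the coefficient of this product in the basis \eqref{eq:choice-Hig-basis1}.

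On the other hand, I would expand each factor $\mathscr{S}_\Cc(\phi_{P_U})$ directly in the basis $\{\phi_{P_A}\}_{A\in J}$ of $\Hig(\Cc)$. By \eqref{eq:tildeB-expansion} and \eqref{eq:tildeS-in-basis-def} we have $\mathscr{S}_\Cc(\phi_{P_U}) = \sum_{A\in J}(\tilde B\cdot\tilde S)_{UA}\,\phi_{P_A}$, and similarly for $V$. Now multiply these two expansions and invoke Proposition~\ref{prop:prod-Rey-Hig}: $\phi_{P_A}\circ\phi_{P_B} = \delta_{A,B}\,\delta_{B\in\mathrm{IrrProj}(\Cc)}\,b_B\,\phi_{P_B}$. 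Only the diagonal terms indexed by $Q\in\mathrm{IrrProj}(\Cc)$ survive (note $\mathrm{IrrProj}(\Cc)\subset J$, as remarked before the proposition), giving
\be
\mathscr{S}_\Cc(\phi_{P_U}) \circ \mathscr{S}_\Cc(\phi_{P_V})
~=
\sum_{Q \in \mathrm{IrrProj}(\Cc)} b_Q\,(\tilde B\cdot\tilde S)_{UQ}\,(\tilde B\cdot\tilde S)_{VQ}\,\phi_{P_Q} \ .
\ee
Equating the two expressions for this product, both written in the basis $\{\phi_{P_X}\}_{X\in J}$, yields an identity of coefficients.

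It remains to match the bookkeeping so that the coefficient of $\phi_{P_X}$ on both sides produces exactly \eqref{eq:M-via-S}. From the first computation, the coefficient of $\phi_{P_X}$ in $\sum_W M_{UV}^{~W}\mathscr{S}_\Cc(\phi_{P_W})$ is $\sum_W M_{UV}^{~W}(\tilde B\cdot\tilde S)_{WX}$, i.e.\ $\bigl((\sum_W M_{UV}^{~W}\tilde B_{W\bullet})\cdot\tilde S\bigr)_X$. From the second, the coefficient of $\phi_{P_X}$ is $\sum_{Q\in\mathrm{IrrProj}(\Cc)} b_Q (\tilde B\cdot\tilde S)_{UQ}(\tilde B\cdot\tilde S)_{VQ}\,\tilde B_{QX}$. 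Since $\tilde S$ is invertible, multiplying the resulting equality on the right by $\tilde S^{-1}$ turns the left-hand side into $\sum_W M_{UV}^{~W}\tilde B_{WX}$; on the right-hand side one uses $(\tilde B\cdot\tilde S)_{UQ} = $ the appropriate matrix entry and the fact that $\tilde B\cdot\tilde S\cdot\tilde S^{-1}$ does not simplify the $U,V$ legs but the $Q$-to-$X$ leg becomes $(\tilde B\cdot\tilde S\cdot\tilde S^{-1})$ — here I must be careful: rather I should write the second expansion as $\sum_{A,B\in J}(\tilde B\tilde S)_{UA}(\tilde B\tilde S)_{VB}\,\phi_{P_A}\circ\phi_{P_B}$ and read off that the surviving $Q$-leg contributes $\phi_{P_Q} = \sum_{X\in J}(\text{coeff})\phi_{P_X}$ only if $Q\in J$ already (true), so $\phi_{P_Q}$ is itself a basis vector and the coefficient of $\phi_{P_X}$ is $\delta_{Q,X}$. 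This forces a re-examination: the factor $(\tilde S\cdot\tilde C)_{QX}$ in \eqref{eq:M-via-S} must come from expressing things relative to the basis \eqref{eq:choice-Hig-basis2} rather than \eqref{eq:choice-Hig-basis1}, and the duals enter because one of the factors in the Verlinde formula naturally involves $(P_U)^*$ via the definition of $\smap$ in \eqref{eq:eq:sigma-Gr-EndId-def_pic} (the loop is coloured by the dual). So the honest plan is: expand everything in the basis \eqref{eq:choice-Hig-basis1}, use $\mathscr{S}_\Cc(\phi_{P_A}) = \sum_B \tilde S_{AB}\phi_{P_B}$ and $\phi_{(P_A)^*} = \sum_B \tilde C_{AB}\phi_{P_B}$ to reorganise, and track the dual that appears in $\smap([P_U])_X = \mathscr{S}_\Cc(\phi_{P_U})_X$, which produces the composite $\tilde S\cdot\tilde C$ acting on the $Q$-index. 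The main obstacle is precisely this index bookkeeping — keeping straight which basis each object is expanded in, where the dual enters (and hence the matrix $\tilde C$), and ensuring the invertibility of $\tilde S$ and $\tilde C$ is used correctly to isolate $\sum_W M_{UV}^{~W}\tilde B_{WX}$; the algebraic content is entirely supplied by Proposition~\ref{prop:prod-Rey-Hig} and the fact that $\smap$ is an algebra map intertwining $\otimes$ on projectives with composition in $\End(\Id_\Cc)$.
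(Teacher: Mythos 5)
Your approach is the same as the paper's, and the core of your argument is correct: you evaluate $\mathscr{S}_\Cc(\phi_{P_U})\circ\mathscr{S}_\Cc(\phi_{P_V})$ in two ways, once via the algebra-map property of $\smap$ together with \eqref{eq:proj-M-def} (giving $\sum_W M_{UV}^{~W}\,\mathscr{S}_\Cc(\phi_{P_W})$), and once by expanding each factor as $\sum_{A\in J}(\tilde B\tilde S)_{UA}\,\phi_{P_A}$ and applying Proposition~\ref{prop:prod-Rey-Hig}. This correctly yields the coefficient identity (for each $X\in J$)
\be
\sum_{W\in\Irr(\Cc)} M_{UV}^{~W}\,(\tilde B\cdot\tilde S)_{WX}
~=~ \delta_{X\in\mathrm{IrrProj}(\Cc)}\; b_X\,(\tilde B\cdot\tilde S)_{UX}\,(\tilde B\cdot\tilde S)_{VX}\ ,
\ee
which is the paper's equation \eqref{eq:M-via-S_aux1} read off in the basis $\{\phi_{P_X}\}_{X\in J}$.

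The gap is in the last step. To pass from this identity to \eqref{eq:M-via-S} one must multiply on the right by $\tilde S^{-1}$ and know that $\tilde S^{-1}=\tilde S\cdot\tilde C$; this is exactly where the factor $(\tilde S\cdot\tilde C)_{QX}$ comes from. You never establish this. Instead you speculate that $\tilde C$ enters ``because one of the factors in the Verlinde formula naturally involves $(P_U)^*$ via the definition of $\smap$'' and conclude that ``the main obstacle is precisely this index bookkeeping'' --- but the needed input is not bookkeeping, it is the identity $\mathscr{S}_\Cc(\mathscr{S}_\Cc(\phi_M))=\phi_{M^*}$ (the square of the modular $S$-transformation acts as the antipode of $\coend$, i.e.\ as taking duals; see \cite{Lyubashenko:1995} and \cite[Lem.\,4.19]{FGRprep1}). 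In the basis \eqref{eq:choice-Hig-basis1} this reads $\tilde S\cdot\tilde S=\tilde C$, and since $\tilde C^2$ is the identity matrix it follows that $\tilde S$ and $\tilde C$ commute and $\tilde S^{-1}=\tilde S\cdot\tilde C$. With this in hand, right-multiplication of the displayed identity by $\tilde S^{-1}=\tilde S\cdot\tilde C$ gives \eqref{eq:M-via-S} immediately. Without it, your derivation stops one step short of the claim and your proposed explanation for the appearance of $\tilde C$ does not lead to a proof.
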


\begin{proof}
{}From \eqref{eq:proj-M-def} we get the identity
\be
	\mathscr{S}_\Cc(\phi_{P_U}) \circ  \mathscr{S}_\Cc(\phi_{P_V})
	~= \sum_{W \in \Irr(\Cc)} M_{UV}^{~W} \, \mathscr{S}_\Cc(\phi_{P_W})
	\ .
\ee
Substituting \eqref{eq:tildeB-expansion} and \eqref{eq:tildeS-in-basis-def} gives
\begin{align}
\sum_{A,B,X,Y \in J} \hspace{-.8em}
\tilde B_{UA} \, \tilde B_{VB} 
\tilde S_{AX} \, \tilde S_{BY} \, \phi_{P_X} \circ \phi_{P_Y}
&~=
\sum_{W \in \Irr(\Cc)} \sum_{R,Z \in J} 
M_{UV}^{~W}  \, \tilde B_{WR} \, \tilde S_{RZ} \, \phi_{P_Z}
\nonumber\\
\overset{\text{Prop.\,\ref{prop:prod-Rey-Hig}}}=
\sum_{Z \in \mathrm{IrrProj}(\Cc)}
(\tilde B \cdot \tilde S)_{UZ}
(\tilde B \cdot \tilde S)_{VZ}
\,
b_Z
\,
\phi_{P_Z} \ .
&
\label{eq:M-via-S_aux1}
\end{align}
Next note that $\mathscr{S}_\Cc(\mathscr{S}_\Cc(\phi_M)) = \phi_{M^*}$ for all $M \in \Cc$ (as the square of $\mathscr{S}_\Cc$ results in the action of the antipode of  $\coend$,
 see \cite{Lyubashenko:1995} and 
 e.g.\ \cite[Lem.\,5.7]{FGRprep1}). 
 This results in the matrix identity $\tilde S \cdot \tilde S = \tilde C$.
 Hence $\tilde S$ and $\tilde C$ commute and $\tilde S^{-1} = \tilde S \cdot \tilde C$, as $\tilde C^2$ is the identity matrix.
Multiplying both sides of \eqref{eq:M-via-S_aux1} by $\tilde S^{-1}$ yields the claim.
\end{proof}

Note that \eqref{eq:M-via-S} does not determine $M_{UV}^{~W}$ uniquely unless $\Cc$ is semisimple 
	and $k$ is of characteristic zero.
Indeed, otherwise $\tilde B$ does not have a right inverse
	and $M_{UV}^{~W}$ is considered as an element of $k$, i.e.\ modulo $\mathrm{char}(k)$.

\begin{remark}\label{rem:VOA-S-trans-proj}
~\\[-1.5em]
\begin{enumerate}\setlength{\leftskip}{-1em}
\item
Recall from Remark~\ref{rem:motivation}\,(2) that one of the motivations for this paper comes from vertex operator algebras. Let $V$ be a vertex operator algebra as in  Remark~\ref{rem:motivation}\,(2). 
In \cite[Conj.\,5.10]{Gainutdinov:2016qhz}, a precise conjecture is made on how the behaviour of so-called pseudo-trace functions of $V$ under the modular 
$S$-transformation is related to $\mathscr{S}_\Cc$ for $\Cc = \Rep(V)$. In this relation, $\phi_M \in \End(\Id_\Cc)$ corresponds to the character of the $V$-module $M$ (and not to a pseudo-trace function). Characters and their modular properties are much more accessible than
	those of
pseudo-trace functions. That is, from a vertex operator algebra perspective, the following pieces of information are typically more accessible than the structure constants $M_{UV}^{~W}$ themselves:
\begin{itemize}
\item $\tilde B_{UA}$, $\tilde C_{AB}$ (find the projective modules of $V$ and analyse their composition series as well as those of their contragredient modules),
\item $\tilde S_{AB}$ (compute the modular $S$-transformation of characters of projective modules),
\item $b_Q$ (compute the modular $S$-transformation of the vacuum character and take the inverse of the coefficient of the character of $Q$).
\end{itemize}
Here the text in brackets indicates the computation one needs to do for $V$-modules, provided one assumes the conjectures in \cite{Gainutdinov:2016qhz}. 
For the last point note that
by \eqref{eq:eq:sigma-Gr-EndId-def_pic} and \eqref{eq:smap-via-SC-phiM} we have 
\be\label{eq:b-from-S1Q}
  \mathscr{S}_\Cc(\phi_\one) = \id = b_Q^{-1} \phi_Q + (\text{terms that vanish when evaluated on $Q$}) \ .
\ee
Thus one can expect all the data on the right hand side of \eqref{eq:M-via-S} to be relatively accessible for a vertex operator algebra $V$ (as compared to computing fusion of $V$-modules). We will see an example of this in Section~\ref{sec:sf-example}.
\item
If $\Cc$ is semisimple, then Proposition~\ref{prop:M-via-S} is precisely the categorical Verlinde formula:
Firstly, $K_0(\Cc) = \Gr(\Cc)$, so that $M_{UV}^{~W}$ are the structure constants of $\Gr(\Cc)$. 
Next, by \cite[Rem.\,3.10\,(3)]{Gainutdinov:2016qhz} we have
$\mathscr{S}_\Cc(\phi_U) = (\mathrm{Dim}\,\Cc)^{-\frac12}
 \sum_{X \in \Irr(\Cc)} 
s_{U^*,X}\, \phi_X$, where $\mathrm{Dim}\,\Cc = \sum_{X \in \Irr(\Cc)} \dim(X)^2$ and $s_{A,B} \,\id_\one = \mathrm{tr}(c_{B,A} \circ c_{A,B})$. Thus $\tilde S_{AB} = s_{A^*,B} / \sqrt{\mathrm{Dim}\,\Cc}$. Finally, $\tilde B_{AB} = \delta_{A,B}$, $\tilde C_{AB} = \delta_{A,B^*}$ and from \eqref{eq:b-from-S1Q} we see $b_Q= s_{\one,Q} / \sqrt{\mathrm{Dim}\,\Cc}$. Altogether,
\eqref{eq:M-via-S} becomes (using $s_{X,Y} = s_{Y,X} = s_{X^*,Y^*}$)
\be
M_{UV}^{~W} 
~=~ \frac{1}{\mathrm{Dim}\,\Cc}
\sum_{Q \in \mathrm{Irr}(\Cc)} 
\frac{s_{U^*,Q}\,  s_{V^*,Q} \,  
	s_{Q,W}
}{s_{\one,Q}}
~=~ \frac{1}{\mathrm{Dim}\,\Cc}
\sum_{Q \in \mathrm{Irr}(\Cc)} 
\frac{s_{U,Q}\,  s_{V,Q} \,  s_{W^*,Q}}{\dim(Q)} \ ,
\ee
	where $M_{UV}^{~W}$ is considered as an element of $k$. This is the semisimple categorical Verlinde formula, see \cite[Thm.\,4.5.2]{tur}.

\item A variant of the fusion algebra of projective characters was studied~\cite[Cor.\,4.4]{Lachowska-Verlinde} in the context of the small quantum groups for  simply laced simple Lie algebras. For  factorisable ribbon Hopf algebras, a Verlinde-like formula for the action of the Grothendieck ring on the Higman ideal is given in~\cite[Thm.\,3.14]{Cohen:2008}.
\end{enumerate}
\end{remark}

It is an interesting problem to relate the modified trace on ``open Hopf links'' and properties of the modular $S$-transformation \cite{Creutzig:2016fms}. The above results allow us to do this in the case the open Hopf link is coloured by
 projective objects:

\begin{proposition}\label{prop:mod-tr-S}
For all $A \in J$, $X \in \Irr(\Cc)$ and
 $Q \in \mathrm{IrrProj}(\Cc)$ we have
\be\label{eq:mod-tr-S}
\frac{t_{P_X}\bigl(\sigma([P_A])_{P_X}\bigr)}{t_Q(\id_Q)} ~=~ b_Q \sum_{B\in J} \tilde S_{AB} \, 
	\widehat\CM_{BX} \ .
\ee
\end{proposition}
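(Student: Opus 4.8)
The plan is to unfold the left-hand side using the identification $\sigma([P_A]) = \mathscr{S}_\Cc(\phi_{P_A})$ from \eqref{eq:smap-via-SC-phiM}, then expand $\mathscr{S}_\Cc(\phi_{P_A})$ in the basis $\{\phi_{P_B}\}_{B\in J}$ of $\Hig(\Cc)$ introduced in \eqref{eq:choice-Hig-basis1}, and finally evaluate the modified trace term by term using the orthogonality relation \eqref{eq:mod-trace-on-phiA}.

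First I would record that $\sigma([P_A]) \in \Hig(\Cc)$: indeed $\phi_{P_A} \in \Hig(\Cc)$ by Proposition~\ref{prop:ReyHig=span} since $P_A$ is projective, and $\sigma([P_A]) = \mathscr{S}_\Cc(\phi_{P_A}) \in \mathscr{S}_\Cc(\Hig(\Cc)) = \Hig(\Cc)$ by Theorem~\ref{thm:projective-S-inv}. Hence, by the definition of the matrix $\tilde S$ in \eqref{eq:tildeS-in-basis-def}, we may write $\sigma([P_A]) = \sum_{B\in J} \tilde S_{AB}\,\phi_{P_B}$ as an equality in $\End(\Id_\Cc)$. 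Evaluating both sides at the object $P_X$ and applying $t_{P_X}$ gives
\[
 t_{P_X}\bigl(\sigma([P_A])_{P_X}\bigr) ~=~ \sum_{B\in J} \tilde S_{AB}\, t_{P_X}\bigl((\phi_{P_B})_{P_X}\bigr) \ .
\]

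Next I would compute the coefficients $t_{P_X}\bigl((\phi_{P_B})_{P_X}\bigr)$. Since $\phi$ factors through $\Gr_k(\Cc)$, \eqref{eq:composition-series-proj-cover} yields $\phi_{P_B} = \sum_{W\in\Irr(\Cc)} \widehat\CM_{BW}\,\phi_W$, so that
\[
 t_{P_X}\bigl((\phi_{P_B})_{P_X}\bigr) ~=~ \sum_{W\in\Irr(\Cc)} \widehat\CM_{BW}\, t_{P_X}\bigl((\phi_W)_{P_X}\bigr) ~\overset{\eqref{eq:mod-trace-on-phiA}}{=}~ \widehat\CM_{BX}\, t_{P_\one}\bigl((\phi_\one)_{P_\one}\bigr) \ .
\]
Substituting this back gives $t_{P_X}\bigl(\sigma([P_A])_{P_X}\bigr) = t_{P_\one}\bigl((\phi_\one)_{P_\one}\bigr)\sum_{B\in J}\tilde S_{AB}\widehat\CM_{BX}$. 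To finish, I would rewrite the normalisation constant using \eqref{eq:tP1phiP1-via-tQidQ}, which for any simple projective $Q$ states $t_{P_\one}\bigl((\phi_\one)_{P_\one}\bigr) = b_Q\, t_Q(\id_Q)$; dividing by $t_Q(\id_Q)\neq 0$ produces exactly \eqref{eq:mod-tr-S}.

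I do not expect a genuine obstacle: the statement is essentially a bookkeeping assembly of Theorem~\ref{thm:projective-S-inv}, the basis expansion \eqref{eq:tildeS-in-basis-def}, and the trace identities \eqref{eq:mod-trace-on-phiA} and \eqref{eq:tP1phiP1-via-tQidQ}. The only points requiring mild care are to keep the natural transformations in $\End(\Id_\Cc)$ cleanly separated from their components at $P_X$ when interchanging sums and evaluations, and to use the Cartan entries $\widehat\CM_{BX}$ in $\mathrm{Mat}_n(k)$ (the image of $\CM(\Cc)$) so that all the linear-algebra manipulations are valid in $k$.
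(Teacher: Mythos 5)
Your proof is correct and follows exactly the route of the paper's own (very terse) proof: apply $t_{P_X}$ to \eqref{eq:tildeS-in-basis-def} (whose left-hand side is $\sigma([P_A])$ by \eqref{eq:smap-via-SC-phiM}), expand $\phi_{P_B}$ via the Cartan matrix, and conclude with \eqref{eq:mod-trace-on-phiA} and \eqref{eq:tP1phiP1-via-tQidQ}. The preliminary remark that $\sigma([P_A])\in\Hig(\Cc)$ is already implicit in the definition of $\tilde S$, so nothing further is needed.
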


\begin{proof}
Apply $t_{P_X}(\cdots)$ to both sides of \eqref{eq:tildeS-in-basis-def}, expand $\phi_{P_B}$ as $\phi_{P_B} = \sum_{W \in \Irr(\Cc)} [P_B:W] \phi_W$ and use \eqref{eq:mod-trace-on-phiA} and \eqref{eq:tP1phiP1-via-tQidQ}.
\end{proof}

In particular, for $X=Q\in \mathrm{IrrProj}(\Cc)$ we have
\be\label{eq:modtr-modS-simpleproj}
\frac{t_{X}(	\sigma([P_A])_{X}
)}{t_X(\sigma([\one])_X)} ~=~ \frac{\tilde S_{AX}}{ b_X^{-1}}   \ .
\ee
Moreover, by Lemma~\ref{lem:exchange-partial-trace} we have $t_{P_B}(\sigma([P_A])_{P_B}) = t_{P_A^*}(\sigma([P_B^*])_{P_A^*})$, so that \eqref{eq:mod-tr-S} implies that for $A,B \in J$,
\be
	\big(\tilde S \cdot \widehat\CM \big)_{AB}
	~=~
	\big(\tilde S \cdot \widehat\CM \big)_{B^*A^*} \ ,
\ee
where by abuse of notation we restrict 
	$\widehat\CM$ 
to be a $J \times J$-matrix.

\begin{remark}
Equation \eqref{eq:modtr-modS-simpleproj} is a special case of a relation observed in the example of $W_p$-models in \cite[Sec.\,3.3]{Creutzig:2016fms}. The notation used there is related to ours as $S^{\circ\!\!\circ;\mathcal{P}}_{M,P} = t_P(\sigma([M])_P)$, where $M \in \Cc$, $P \in \Proj(\Cc)$.
In \cite{Creutzig:2016fms}, the matrix of the modular $S$-transformation of pseudo-trace functions in some basis is denoted by $S^\chi$. 
Using these notations,
\eqref{eq:modtr-modS-simpleproj} reads $S^{\circ\!\!\circ;\mathcal{P}}_{P_A,X} / S^{\circ\!\!\circ;\mathcal{P}}_{\one,X} = S^\chi_{P_A,X} / S^\chi_{\one,X}$ for $A \in \Irr(\Cc)$, $X \in \mathrm{IrrProj}(\Cc)$ (assuming the conjectures in 
\cite[Sec.\,5]{Gainutdinov:2016qhz} 
so that we can express
$\tilde S_{AX}$ and $b_X^{-1}$ in terms of modular properties of pseudo-trace functions).
\end{remark}

\section{Example: symplectic fermions}\label{sec:sf-example}

In this section we consider a family of examples of 
factorisable finite ribbon categories, namely the so-called symplectic fermion 
categories $\SF(\h,\beta)$. The aim is to compute the modified trace for this class of examples
 and to illustrate the use of Proposition~\ref{prop:M-via-S}.

\medskip

Let $\h$ be a non-zero finite-dimensional symplectic vector space over $\Cb$ and let $\beta \in \Cb$ satisfy $\beta^4 = i^{\dim\h}$. The $\Cb$-linear finite ribbon category $\SF(\h,\beta)$ was introduced in \cite{Davydov:2012xg,Runkel:2012cf}. Conjecturally, it is ribbon-equivalent to the representations of the even part of the symplectic fermion vertex operator super-algebra constructed from $\h$ (hence the name), see \cite[Conj.\,7.3]{Davydov:2016euo} for the precise statement and further references.

One can show that $\SF(\h,\beta)$ is ribbon-equivalent to a certain finite-dimensional quasi-triangular quasi-Hopf algebra \cite{Gainutdinov:2015lja,FGRprep2}, but we will not make
use of this here.
Instead we sketch the construction in \cite{Davydov:2012xg}. We refer to \cite[Sec.\,5.2]{Davydov:2012xg} or to e.g.\ \cite{Davydov:2013ty,FGRprep2} 
for further details.

\medskip
\newcommand{\algS}{\mathbb{\Lambda}}

Denote the symmetric tensor category of finite-dimensional complex super-vector spaces  by $\sVect$. 
We define
\be
\algS = \Lambda(\h) \ ,
\ee 
the exterior algebra of $\h$, and consider it as an algebra in $\sVect$ by taking the $\Zb$-grading of $\Lambda(\h)$ modulo 2.
In particular, $\dim(\algS) = 2^{\dim\h}$ and $\mathrm{sdim}(\algS)=0$, 
where $\mathrm{sdim}(-)$ denotes the super-dimension. $\algS$ carries the structure of a 
	commutative and cocommutative
quasi-triangular Hopf algebra in $\sVect$.
 We denote the  multiplication, comultiplication, unit and counit by 
$\mu_\algS$, $\Delta_\algS$, $\eta_\algS$, and $\eps_\algS$, respectively
(see \cite[Eqn.\,(5.4)]{Davydov:2012xg} for the coalgebra structure).
 
We will make use of a specific cointegral $\coint_\algS : \algS \to \Cb$ for $\algS$
(see \cite[Eqn.\,(5.16)]{Davydov:2012xg}). 
To describe it, let $d = \dim\h$ and $a_1,\dots,a_d$ be a symplectic basis of $\h$ such that $(a_1,a_2)=1$, $(a_3,a_4)=1$, etc. Then $\coint_\algS$ is non-zero only on the top component of $\algS$, where it takes the value
\be
	\coint_\algS(a_1a_2\cdots a_d) ~=~ \beta^{-2} \ .
\ee

The category $\SF(\h,\beta)$ is the direct sum of two
full subcategories,
\be
	\SF(\h,\beta) ~=~
	\SF_0 ~\oplus~ \SF_1 
	\quad , \qquad
	\SF_0 = \Rep_{\sVect}(\algS)
	~~,~~
	\SF_1 = \sVect \ ,
\ee
where $\Rep_{\sVect}(\algS)$ denotes the category of left $\algS$-modules in $\sVect$. 
$\SF_0$ and $\SF_1$ each contains two simple objects (up to isomorphism), which we denote as
\be\label{eq:proj-covers-SF}
	\underbrace{\one = \Cb^{1|0}~,~
	\Pi\one = \Cb^{0|1}}_{\in\SF_0}
	\quad , \quad
	\underbrace{T = \Cb^{1|0}~,~
	\Pi T = \Cb^{0|1}}_{\in\SF_1} \ .
\ee
Here, $\Pi$ is the parity-flip functor. The $\algS$-action on $\one$ and $\Pi\one$ is trivial.

Since $\SF_1$ is semi-simple, $T$ and $\Pi T$ are also projective. The projective cover of $\one$ is $\algS$ itself, with $\pi_\one : \algS \to \one$ being the projection on the top component of $\algS$ (together with a choice of isomorphism to $\Cb^{1|0}$). Altogether, the projective covers are
\be\label{eq:SF-proj-cov}
	P_\one = \algS
	~~,\quad
	P_{\Pi\one} = \Pi \algS
	~~,\quad
	P_T = T
	~~,\quad
	P_{\Pi T} = \Pi T \ .
\ee
In particular, the Cartan matrix in \eqref{eq:composition-series-proj-cover} reads, in the ordering \eqref{eq:proj-covers-SF} of simple objects,
\be
	\CM(\SF(\h,\beta)) ~=~
	\begin{pmatrix}
	2^{2N-1} & 2^{2N-1} & 0 & 0  \\
	2^{2N-1} & 2^{2N-1} & 0 & 0  \\
	0 & 0 & 1 & 0 \\
	0 & 0 & 0 & 1
	\end{pmatrix}
	\qquad , \quad
	\text{where} \quad
	N = \tfrac12 \dim\h \ . 
\ee

We write 
$* : \SF(\h) \times \SF(\h) \to \SF(\h)$ 
for the tensor product functor and will reserve the notation $\otimes$ for the tensor product in $\sVect$ and for that of $\algS$-modules in $\sVect$. 
The tensor product $*$ is $\Zb_2$-graded, and depending on which sector $X,Y \in \SF(\h)$ are chosen from, it is defined to be:
\begin{equation}\label{eq:*-tensor}
X \ast Y ~=~
\left\{\rule{0pt}{2.8em}\right.
\hspace{-.5em}\raisebox{.7em}{
\begin{tabular}{ccll}
   $X$ & $Y$ & $X \ast Y$ &
\\
$0$ & $0$ & $X \otimes Y$ & $\in~\SF_0$
\\
 $0$ & $1$ & $X \otimes Y$ & $\in~\SF_1$
\\
 $1$ & $0$ & $X \otimes Y$ & $\in~\SF_1$
\\
$1$ & $1$ & $\algS \otimes X \otimes Y$ & $\in~\SF_0$
\end{tabular}}
\end{equation}
In sector $00$, the tensor product is that of $\Rep_{\sVect}(\algS)$, with $\algS$-action given by the coproduct (and using the symmetric braiding of $\sVect$).
In sectors $01$ and $10$,  the tensor product is that of the underlying super-vector spaces.
 In sector $11$, the $\algS$-action is given by the left regular action on $\algS$.

We refer to \cite{Davydov:2012xg,FGRprep2} for the associator and braiding isomorphisms in $\SF(\h,\beta)$, as well as for the ribbon twist and for the rigid and the pivotal structures. The $\beta$-dependence enters into these structure maps. We have:

\begin{proposition}[{\cite{Davydov:2012xg,FGRprep2}}]
$\SF(\h,\beta)$ is factorisable.
\end{proposition}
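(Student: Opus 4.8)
The plan is to verify condition~1 of Theorem~\ref{thm:factequiv}: every transparent object of $\SF(\h,\beta)$ is isomorphic to a direct sum of copies of the tensor unit $\one$. Since transparency passes to direct summands and every object is a finite direct sum of indecomposables, it suffices to check that the only transparent indecomposable object is $\one$. The indecomposables split into two families: the objects of $\SF_1=\sVect$, which are the two simples $T$ and $\Pi T$; and the objects of $\SF_0=\Rep_{\sVect}(\algS)$, which comprise $\one$, $\Pi\one$, the projective covers $P_\one=\algS$ and $P_{\Pi\one}=\Pi\algS$, and the remaining indecomposable $\algS$-modules. I would treat these cases separately using the explicit braiding, twist and pivotal structure of $\SF(\h,\beta)$ recalled in \cite{Davydov:2012xg,FGRprep2}.

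For the objects of $\SF_1$ I would invoke the balancing axiom $\theta_{X\ast X}=(c_{X,X}\circ c_{X,X})\circ(\theta_X\ast\theta_X)$ with $X=T$. From \eqref{eq:*-tensor}, $T\ast T=\algS\otimes T\otimes T\cong\algS=P_\one$, so $\theta_{T\ast T}$ is identified with the ribbon twist on $P_\one$, which is not a scalar multiple of the identity (its unipotent part is non-trivial), while $\theta_T\ast\theta_T$ is a scalar since $\End(T)=\Cb$; hence $c_{T,T}\circ c_{T,T}\neq\id$ and $T$ is not transparent, and likewise for $\Pi T$. For $\Pi\one$, which is transparent \emph{within} $\SF_0$ because $\sVect$ is symmetric, I would instead compute the double braiding across sectors: $c_{T,\Pi\one}\circ c_{\Pi\one,T}$ is an endomorphism of $\Pi\one\ast T=\Pi T$, and the parity signs governing the braiding between $\SF_0$ and $\SF_1$ make it equal to $-\id_{\Pi T}\neq\id$, so $\Pi\one$ is not transparent in the full category $\SF(\h,\beta)$.

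For the remaining indecomposables $X\in\SF_0$ — those on which the augmentation ideal of $\algS$ acts non-trivially — I would show that already the double braiding with a suitable object of $\SF_0$ is non-trivial. The braiding of $\SF_0$ is, up to the $\beta$-dependent normalisation, that of $\Rep_{\sVect}(\algS)$, and the quasi-triangular Hopf algebra $\algS=\Lambda(\h)$ in $\sVect$ is factorisable over $\sVect$: its monodromy $M=R_{21}R$, built from the non-degenerate symplectic form on $\h$, induces an isomorphism $\algS^\ast\to\algS$. Equivalently, the M\"uger centre of $\Rep_{\sVect}(\algS)$ is exactly $\sVect$, so any transparent object of $\SF_0$ lies in the image of $\sVect\hookrightarrow\SF_0$, i.e.\ is a direct sum of copies of $\one$ and $\Pi\one$. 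Combined with the previous paragraph this leaves $\one$ as the only transparent indecomposable, so every transparent object is isomorphic to $\one^{\oplus n}$ and $\SF(\h,\beta)$ is factorisable by Theorem~\ref{thm:factequiv}.

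The main obstacle I anticipate is the bookkeeping with the braiding and twist between the two sectors $\SF_0$ and $\SF_1$ — pinning down the sign in $c_{T,\Pi\one}\circ c_{\Pi\one,T}$ and the non-semisimple structure of $\theta_{P_\one}$ — together with verifying the factorisability of $\algS$ in $\sVect$ (equivalently, that the M\"uger centre of $\Rep_{\sVect}(\algS)$ is $\sVect$); once these structural inputs are established, the case analysis is routine. An alternative, equally viable route would be to use condition~3 of Theorem~\ref{thm:factequiv} directly, computing the coend $\coend$ of $\SF(\h,\beta)$ and checking that the Hopf pairing $\omega$ is non-degenerate, but this is more computational.
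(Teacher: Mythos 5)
The paper offers no proof of this proposition---it is imported from \cite{Davydov:2012xg,FGRprep2}---so there is no in-text argument to compare yours against line by line. Your route, checking condition~1 of Theorem~\ref{thm:factequiv} on indecomposables, is sound and is in the spirit of the computation of transparent objects in \cite{Davydov:2012xg}, whereas \cite{FGRprep2} works on the quasi-Hopf algebra side via non-degeneracy of the monodromy matrix. The logic assembling your cases is correct: transparency in the full category implies transparency in $\SF_0$; your step on $\Lambda(\h)$ then confines such an object to the image of $\sVect\hookrightarrow\SF_0$; and $\Pi\one$, $T$, $\Pi T$ are eliminated by the cross-sector double braiding and the balancing argument respectively. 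The three inputs you flag are precisely what must be extracted from the explicit structure maps of \cite{Davydov:2012xg}: (i) $\theta_{P_\one}$ is unipotent but not the identity, so the balancing axiom applied to $T*T\cong P_\one$ rules out $T$ and $\Pi T$; (ii) $c_{T,\Pi\one}\circ c_{\Pi\one,T}=-\id_{\Pi T}$, which is indeed the essential point, since $\Pi\one$ really is transparent inside $\SF_0$ (the monodromy of $\Lambda(\h)$ acts through the counit on modules with trivial action, and the double super-flip is the identity); (iii) the Drinfeld map of the quasi-triangular Hopf algebra $\Lambda(\h)$ in $\sVect$ is bijective, which follows from non-degeneracy of the symplectic form and does force trivial $\Lambda(\h)$-action on any object transparent within $\SF_0$. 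As written, (i)--(iii) are assertions rather than verifications, but they are true, and once checked against the explicit braiding and twist of \cite{Davydov:2012xg} your proof is complete; this is the same division of labour the paper itself adopts by citing the references.
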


The embedding $\sVect \hookrightarrow \SF_0$, $X \mapsto X$, where $X$ is endowed with the trivial $\algS$-action, is an embedding of ribbon categories. Every projective object in $\SF(\h,\beta)$ is isomorphic to $\algS * X \oplus T * Y$ for some $X,Y \in \sVect$, embedded in $\SF_0$. Note that by definition,
\be
	\End_{\SF}(\algS*X) = \End_{\sVect,\algS}(\algS \otimes X) \quad , \quad
	\End_{\SF}(T*X) = \End_{\sVect}(X) \ ,
\ee
where $\End_{\sVect,\algS}(\algS \otimes X)$ are all even endomorphisms of the super-vector space $\algS \otimes X$ which commute with the $\algS$-action and $\End_{\sVect}(X)$ are all even endomorphisms of the super-vector space $X$.
Write $\mathrm{str}(-)$ for the super-trace.

\begin{proposition}\label{prop:SF-modified-tr}
Let $X$ be a finite-dimensional super-vector space
and $f \in \End_{\SF}(\algS * X)$, $g \in \End_{\SF}(T * X)$. Then, for each $t_0 \in \Cb^\times$,
\be\label{eq:SF-modified-tr}
	t_{\algS*X}(f) = t_0 \, \mathrm{str}\big( (\coint_\algS \otimes \id) \circ f \circ (\eta_\algS \otimes \id)\,\big)
	\quad , \quad
	t_{T*X}(g) = t_0 \, \mathrm{str}(g)
\ee
defines a modified trace on $\Proj(\SF(\h,\beta))$.
\end{proposition}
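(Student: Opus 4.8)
The plan is to verify directly that the two formulas in \eqref{eq:SF-modified-tr} define a family of $\Cb$-linear functions on $\End(P)$, $P\in\Proj(\SF(\h,\beta))$, satisfying cyclicity and the partial-trace property; by Corollary~\ref{cor:factorisable-implies-trace} the modified trace on $\Proj(\SF(\h,\beta))$ exists and is unique up to a scalar, so it suffices to check that what we wrote is \emph{a} non-zero modified trace, and the freedom $t_0\in\Cb^\times$ just parametrises this scalar. First I would record that every projective object of $\SF(\h,\beta)$ is isomorphic to one of the form $\algS*X \oplus T*Y$ with $X,Y$ in the image of $\sVect\hookrightarrow\SF_0$, so that by additivity of a (modified) trace it is enough to define $t_P$ on $P=\algS*X$ and $P=T*X$ and to check the axioms on morphisms between such objects. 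Cyclicity $t_{P'}(uv)=t_P(vu)$ for $u:P\to P',\ v:P'\to P$ then reduces to two essentially independent computations: on the $\SF_1$-summands it is just cyclicity of the super-trace $\mathrm{str}$ on super-vector spaces; on the $\SF_0$-summands one uses that $\coint_\algS$ is a two-sided cointegral and that $\algS$ is commutative and cocommutative, together with cyclicity of $\mathrm{str}$, to move morphisms around the expression $\mathrm{str}\bigl((\coint_\algS\otimes\id)\circ(-)\circ(\eta_\algS\otimes\id)\bigr)$. Morphisms mixing the two sectors force one to understand $\Hom_{\SF}(\algS*X,\,T*Y)$ and vice versa; these are controlled by \eqref{eq:*-tensor} and one checks the mixed cyclicity identities case by case (they will typically both vanish, or both equal a super-trace of the composite, for parity reasons).

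The more substantial step is the partial-trace property \eqref{eq:mod-tr_partial-trace}: for $P\in\Proj$, $Z\in\SF(\h,\beta)$ and $f\in\End(P*Z)$ one must show $t_{P*Z}(f)=t_P(tr^r_Z(f))$, with $tr^r_Z$ the right partial trace built from the rigid and pivotal structure of $\SF(\h,\beta)$ as in \eqref{eq:partial-trace-def}. Here the $\beta$-dependence of the duality and pivotal maps enters, and one splits according to the $\Zb_2$-sectors of $P$ and $Z$ using \eqref{eq:*-tensor}. When $Z\in\SF_1$ and $P=\algS*X$, the tensor product $P*Z$ lands in $\SF_1$ and is just $\algS\otimes X\otimes Z$ as a super-vector space, so $t_{P*Z}$ is a super-trace; the partial trace over $Z$ should reproduce, via the explicit $\widetilde\ev$, $\coev$ in $\sVect$ (and the sign conventions of the super-braiding), the factor $(\coint_\algS\otimes\id)\circ(-)\circ(\eta_\algS\otimes\id)$ applied to the reduced endomorphism — the only thing to watch is that the cointegral $\coint_\algS$ is the one fixed in \eqref{eq:SF-proj-cov}'s normalisation and that $\coint_\algS\circ\eta_\algS$ plays the role of the categorical dimension data consistently. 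The sector-$11$ case, where $P*Z$ involves an extra tensor factor of $\algS$ and the $\algS$-action is the left regular one, is where the interplay between $\coint_\algS$, $\mu_\algS$, $\Delta_\algS$ and the braiding is most delicate: one uses that $\coint_\algS$ is a two-sided cointegral, i.e.\ $(\coint_\algS\otimes\id)\circ\Delta_\algS = \eta_\algS\circ\coint_\algS$ and its mirror, together with (co)commutativity, to collapse the double occurrence of $\algS$ to a single one and land back on the right-hand side of the asserted identity.

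I expect the main obstacle to be exactly this bookkeeping of signs and of the $\beta$-factors in the sector-$11$ computation of the partial trace: the rigid/pivotal structure of $\SF(\h,\beta)$ is built with nontrivial cocycle-type data depending on $\beta$, the super-braiding contributes Koszul signs, and the cointegral is normalised by $\beta^{-2}$, so one must see that all of these cancel to leave precisely $t_{P*Z}(f)=t_P(tr^r_Z(f))$ with the same $t_0$. A clean way to organise this is to reduce first to the generating case $P=\algS$ (resp.\ $P=T$) and $Z$ simple, since general $P,Z$ follow by additivity and by \eqref{eq:*-tensor}; for $P=\algS$ one may further use that $\algS$ is a direct summand of $\one*Z'$-type objects only in a controlled way, or simply quote the explicit structure maps from \cite{Davydov:2012xg,FGRprep2}. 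Finally, non-vanishing is immediate: $t_{T*\Cb^{1|0}}(\id)=t_0\neq 0$, so the family is a genuine modified trace, and by uniqueness (Corollary~\ref{cor:factorisable-implies-trace}) these are \emph{all} of them as $t_0$ ranges over $\Cb^\times$.
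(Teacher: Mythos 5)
Your route is genuinely different from the paper's, and while it is viable in principle, as written it stops short of the actual content. You propose to verify the cyclicity and partial-trace axioms directly for the candidate formulas and then invoke uniqueness (Corollary~\ref{cor:factorisable-implies-trace}) only at the end. The paper instead uses the existence machinery in the opposite direction: it takes the simple projective $Q=T$, sets $t_T(\id_T)=t_0$, and \emph{computes} the canonical modified trace $t_P(h)=t_Q(tr^r_X(i\circ h\circ p))$ from \eqref{eq:tP-defn}, which is already known to satisfy both axioms by Proposition~\ref{prop:mtrace}. Writing $\algS=T*T^*$ (with trivial associator $\alpha_{T,T^*,X}$), the entire proof then reduces to a single explicit evaluation of a partial trace over $T^**X$ in the $111$-sector, where the identities $\eps_\algS\circ\phi_\algS^{-1}=\coint_\algS$ and $\phi_\algS\circ\coint_\algS=\eta_\algS$ for the associator datum $\phi_\algS$ produce exactly the $(\coint_\algS\otimes\id)\circ(-)\circ(\eta_\algS\otimes\id)$ in \eqref{eq:SF-modified-tr}; the $T*X$ case is immediate from the partial-trace property. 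So no axiom-checking for the explicit formulas is needed at all.

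The gap in your proposal is that the step you yourself identify as the main obstacle --- the partial-trace property for arbitrary $Z\in\SF(\h,\beta)$, in all four sector combinations of \eqref{eq:*-tensor}, with the $\beta$-dependent associators, duality maps and Koszul signs --- is precisely where all the work lies, and it is deferred rather than carried out. In particular the $11$-sector check, where $P*Z$ acquires an extra regular $\algS$-factor, requires the explicit cocycle data from \cite{Davydov:2012xg}, and "one uses that $\coint_\algS$ is a two-sided cointegral together with (co)commutativity" is not yet an argument. Two smaller points: morphisms between the $\SF_0$- and $\SF_1$-summands vanish simply because $\SF=\SF_0\oplus\SF_1$ is a direct sum of categories (not for parity reasons), so the mixed cyclicity cases are vacuous; and if you do adopt the direct-verification route, you should note that it makes the appeal to Corollary~\ref{cor:factorisable-implies-trace} unnecessary for the statement itself --- whereas the paper's route uses that corollary (via Proposition~\ref{prop:mtrace}) essentially, to avoid your case analysis entirely. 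I would recommend restructuring along the paper's lines: it trades many verifications for one computation.
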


\begin{proof}
The proof makes use of the explicit associator and rigid structure of $\SF(\h,\beta)$. Since we only need them for this proof, we prefer to refer to 
	\cite{Davydov:2012xg,FGRprep2} 
for the explicit formulas, rather than repeat them here.
We  use the simple projective 
	object
$Q=T$ to start with as in~\eqref{eq:tQ-choice} and set 
$t_T(\id_T) = t_0$.

To compute $t_{\algS*X}(f)$, we note that by~\eqref{eq:*-tensor}, $\algS=T*T^*$ and that the associator $\alpha_{T,T^*,X}: T*(T^**X) \to (T*T^*)*X$ is the identity map for the trivial $\algS$-module $X$. 
Here, the dual $T^*$ is as in $\sVect$.
We define $\tilde f \in \End_{\SF}(T*(T^**X))$ as
\be
  \raisebox{-0.45\height}{\setlength{\unitlength}{.75pt}
  \begin{picture}(55,90)
   \put(0,10){\scalebox{0.95}{\includegraphics{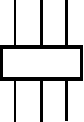}}}
     \put (5,88) {\scriptsize$ T $}    
     \put (21,88) {\scriptsize$ T^* $}     
      \put (37,88) {\scriptsize$ X $}  
     \put (20,42) {\scriptsize$ \tilde f $}
     \put (5,0) {\scriptsize$ T $}    
     \put (21,0) {\scriptsize$ T^* $}      
     \put (37,0) {\scriptsize$ X $}  
     \setlength{\unitlength}{1pt}
  \end{picture}
  \raisebox{60pt}{\framebox{$\SF$}}
  } 
  \quad := \qquad 
    \raisebox{-0.45\height}{\setlength{\unitlength}{.75pt}
  \begin{picture}(73,90)
   \put(0,10){\scalebox{0.95}{\includegraphics{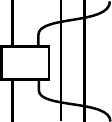}}}
     \put (5,88) {\scriptsize$ \algS $}     
     \put (35,88) {\scriptsize$ T $}    
     \put (48,88) {\scriptsize$ T^* $}      
     \put (63,88) {\scriptsize$ X $}  
     \put (12,43) {\scriptsize$ f $}
     \put (5,0) {\scriptsize$ \algS $}     
     \put (35,0) {\scriptsize$ T $}    
     \put (48,0) {\scriptsize$ T^* $}      
     \put (63,0) {\scriptsize$ X $}  
     \setlength{\unitlength}{1pt}
  \end{picture}
  \raisebox{60pt}{\framebox{$\sVect$}}
  }
  \quad .
\ee
Here, the boxes indicate in which category the string diagram is to be evaluated. In particular, the right hand side is given in $\sVect$ and  the crossings are the flips of super-vector spaces.
	We can now write
\be\label{eq:tSX-1}
	t_{\algS* X}(f) ~=~  
	t_{T*(T^* * X)}(\tilde f)
	~=~
	t_T(F)
	\qquad \text{where} \quad
	F~=
  \raisebox{-0.5\height}{\setlength{\unitlength}{.75pt}
  \begin{picture}(73,119)
   \put(0,0){\scalebox{.85}{\includegraphics{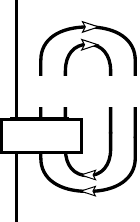}}}
     \put(0,0){
     \put (5,125) {\scriptsize$ T $}
     \put (18,67) {\scriptsize$ T^* $}      \put (33,67) {\scriptsize$ X $}    \put (55,67) {\scriptsize$ X^* $}     \put (73,67) {\scriptsize$ T^{**} $}
     \put (18,43) {\scriptsize$ \tilde f $}
          \put (7,-10) {\scriptsize$ T $}
     }
     \setlength{\unitlength}{1pt}
  \end{picture} 
  \raisebox{80pt}{\framebox{$\SF$}}
  }
\quad .
\ee
In the last equality we used the partial trace property~\eqref{eq:mod-tr_partial-trace},
	and we implicitly used that $\alpha_{T,T^*,X}$ is the identity map.
Next we write out $F$
in terms of the structure morphisms (recall that we omit the tensor product `\,$*$\,' between objects for better readability):
\begin{align}
F &~=~ \big[\,
T \xrightarrow{\sim} T\one
\xrightarrow{\id\otimes\coev_{T^*  X}}
T((T^*X)(T^* X)^*)
\xrightarrow{\alpha^{-1}\circ (\tilde f * \id) \circ \alpha }
T((T^*X)(T^* X)^*)
\nonumber\\
& \hspace{6em}
\xrightarrow{\id \otimes \widetilde\ev_{T^*X}}
T\one
\xrightarrow{\sim}
T
\,\big] \ ,
\label{eq:SF-modified-tr_aux1}
\end{align}
where $\alpha^{-1}\circ (\tilde f * \id)\circ \alpha$ stands for the composition $\alpha^{-1}_{T,T^*X,(T^* X)^*}\circ (\tilde f * \id)\circ \alpha_{T,T^*X,(T^* X)^*}$.
Finally, we compute $F$
using the explicit associator in the 111-sector (see \cite[Eqn.\,(2.27)]{Davydov:2012xg})
  and the duality maps from
  \cite[Eqns.\,(3.59), (4.77), (4.78)]{Davydov:2012xg}:
\be
F ~=
  \raisebox{-0.5\height}{\setlength{\unitlength}{.75pt}
  \begin{picture}(110,225)
   \put(0,10){\scalebox{.95}{\includegraphics{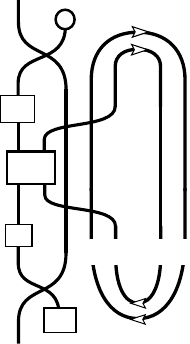}}}
     \put(0,10){
     \put (8,213) {\scriptsize$ T $}
        \put (22,200) {\scriptsize$ \varepsilon_\algS $}
                \put (1,140) {\scriptsize$ \phi^{-1}_\algS $}
     \put (15,105) {\scriptsize$  f $}
         \put (6,64) {\scriptsize$ \phi_\algS $}
     \put (51,53) {\scriptsize$ T^* $}      \put (66,53) {\scriptsize$ X $}    \put (88,53) {\scriptsize$ X^* $}     \put (106,53) {\scriptsize$ T^{**} $}
          \put (29,12) {\tiny$ \coint_\algS $}
          \put (8,-10) {\scriptsize$ T $}
     }
     \setlength{\unitlength}{1pt}
  \end{picture} 
  \raisebox{140pt}{\framebox{$\sVect$}}
  }
   ~~\overset{(*)}=~~   
     \raisebox{-0.5\height}{\setlength{\unitlength}{.75pt}
  \begin{picture}(75,130)
   \put(0,0){\scalebox{.95}{\includegraphics{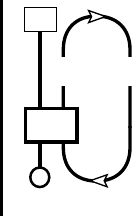}}}
     \put(0,0){
     \put (-3,135) {\scriptsize$ T $}
        \put (17,118) {\tiny$ \coint_\algS $}
     \put (28,52) {\scriptsize$  f $}
 \put (35,85) {\scriptsize$ X $}    \put (72,85) {\scriptsize$ X^* $} 
          \put (10,14) {\scriptsize$ \eta_\algS $}
          \put (-3,-10) {\scriptsize$ T $}
     }
     \setlength{\unitlength}{1pt}
  \end{picture}
  \;\raisebox{90pt}{\framebox{$\sVect$}}
  }
 \quad ,
\ee
where, $\phi_\algS\colon \algS \to \algS$ appears in the associator in the 111-sector. In the equality (*) we used the identities
$\eps_\algS \circ \phi_\algS^{-1} = \coint_\algS$ 
and 
$\phi_\algS \circ \coint_\algS = \eta_\algS$
(for the first identity, use \cite[Eqns.\,(3.45), (2.29)]{Davydov:2012xg} and the fact that in the present case the distinguished group-like element $g$ of \cite{Davydov:2012xg} is just $\eta_\algS$, for the second identity use \cite[Eqns.\,(4.84), (3.45)]{Davydov:2012xg}).
Altogether, $t_{\algS* X}(f) = t_T(F)$ gives
the expression in~\eqref{eq:SF-modified-tr}.

The computation of $t_{T*X}(g)$ also uses the partial trace property~\eqref{eq:mod-tr_partial-trace}: $t_{T*X}(g) = t_T(tr^r_X(g)) =  t_0 \, \mathrm{str}(g)$.
\end{proof}

Let us evaluate the modified trace from Proposition~\ref{prop:SF-modified-tr} for the four indecomposable projectives in \eqref{eq:proj-covers-SF}. Note that all (left) $\algS$-module maps $\algS \to \algS$ in $\sVect$ are given by right multiplication $R_a$ with an even element $a\in \algS$. The same holds for $\algS$-module maps $\Pi \algS \to \Pi \algS$.
\be
	t_{\algS}(R_a) = t_0\,\coint_\algS(a) 
	~~,\quad
	t_{\Pi \algS}(R_a) = -t_0\,\coint_\algS(a) 
	~~,\quad
	t_T(\id_T) = t_0
	~~,\quad
	t_{\Pi T}(\id_{\Pi T}) = -t_0
	\ . 
\ee

\medskip

After the computation of the modified trace, we now turn to an application of Proposition~\ref{prop:M-via-S}. Recall that $\Irr(\SF(\h,\beta)) = \{ \one, \Pi\one,T,\Pi T\}$. From \eqref{eq:SF-proj-cov} one sees that $[P_\one] = [P_{\Pi\one}]$, so that we can choose $J = \{ \one,T,\Pi T\}$. 
For the matrices $\tilde B$, $\tilde S$ and $\tilde C$ from \eqref{eq:tildeB-expansion}--\eqref{eq:tildeC-def} one finds, for the above order of basis vectors,
\be\label{eq:SF-proj-verlinde-data}
\tilde B
=
\begin{pmatrix}
1 & 0 & 0 \\
1 & 0 & 0 \\
0 & 1 & 0 \\
0 & 0 & 1 
\end{pmatrix}
\quad , \quad
\tilde S
=
\begin{pmatrix}
0 & 2^{N-1} & -2^{N-1} \\[.3em]
2^N & \tfrac12 & \tfrac12 \\[.3em]
-2^N & \tfrac12 & \tfrac12 
\end{pmatrix}
\quad , \quad
\tilde C
=
\begin{pmatrix}
1 & 0 & 0 \\
0 & 1 & 0 \\
0 & 0 & 1 
\end{pmatrix}
\quad , \quad
\ee
where $\tilde S$ has been computed in \cite{FGRprep2} (for $\beta=\mathrm{e}^{-\mathrm{i} N\pi/4}$). 
The constants $b_Q$ are (using again the $S$-action from \cite{FGRprep2} and \eqref{eq:b-from-S1Q})
\be
	b_T = 2^{-N-1} \quad,\quad
	b_{\Pi T} = -2^{-N-1}  \ .
\ee

\begin{remark}
~\\[-1.5em]
\begin{enumerate}\setlength{\leftskip}{-1em}
\item
The matrix $\tilde S$ can be read off directly from the modular 
$S$-transformation of characters for projective modules of the even part of the symplectic fermion vertex operator algebra (see \cite{Kausch:1995py,Gaberdiel:1996np,Abe:2005} and e.g.\ \cite[Sec.\,2.1]{Davydov:2016euo} for the modular properties of the characters of simple modules).
\item
Using Proposition~\ref{prop:mod-tr-S}, $\tilde S$ can be used to compute the modified trace of open Hopf link operators for projective objects. For example, \eqref{eq:mod-tr-S} with 
$A=Q=T$, $X=\one$ gives
\be
t_0^{-1} \, t_{P_\one}\big( \sigma([T])_{P_\one} \big)
~=~
b_T
\sum_{B \in J} \tilde S_{TB} \, \CM(\SF)_{B\one}
~=~ 2^{-N-1} \cdot 2^N \cdot 2^{2N-1} ~=~ 4^{N-1} \ .
\ee
\end{enumerate}
\end{remark}

Starting from the data in \eqref{eq:SF-proj-verlinde-data},
Equation \eqref{eq:M-via-S} allows one to compute, for all $U,V \in \Irr(\SF(\h,\beta))$,
\be
	M_{UV}^{~\one} + M_{UV}^{~\Pi\one}
	~~,\quad
	M_{UV}^{~T}
	~~,\quad
	M_{UV}^{~\Pi T} \ .
\ee 
Because $\tilde B$ is degenerate, we only obtain the sum $M_{UV}^{~\one} + M_{UV}^{~\Pi\one}$. However, in the present case we can do better. 
Namely, for finite tensor categories we have
\be
	M_{UV}^{~\one}
	= \dim_k\Cc(P_U \otimes P_V,\one)
	= \dim_k\Cc(P_U,P_V^*)
	= [P_V^*:U] \overset{\eqref{eq:cartan-mat-def}}= 
	\CM(\SF(\h,\beta))_{V^*U} \ .
\ee
So in the present example, knowledge of the composition series of the $P_U$ together with the data \eqref{eq:SF-proj-verlinde-data} actually fixes $M_{UV}^{~W}$ completely.

\appendix

\section{Proofs of Lemma~\ref{lem:dinat-Gr} and Corollary~\ref{cor:dinat-jac-0}}\label{app:dinat-exact}

\begin{proof}[Proof of Lemma~\ref{lem:dinat-Gr}]
Consider the product complex
\be
\xymatrix{
& 0 \ar[d] & 0 \ar[d] & 0 \ar[d] &
\\
0 \ar[r] & {}^*C \otimes A \ar[r]^{\id \otimes f} \ar[d]^{{}^*\!g \otimes \id} & {}^*C \otimes B \ar[r]^{\id \otimes g} \ar[d]^{{}^*\!g \otimes \id} & {}^*C \otimes C \ar[r] \ar[d]^{{}^*\!g \otimes \id} & 0
\\
0 \ar[r] & {}^*B \otimes A \ar[r]^{\id \otimes f} \ar[d]^{{}^*\!f \otimes \id} & {}^*B \otimes B \ar[r]^{\id \otimes g} \ar[d]^{{}^*\!f \otimes \id} & {}^*B \otimes C \ar[r] \ar[d]^{{}^*\!f \otimes \id} & 0
\\
0 \ar[r] & {}^*\!A \otimes A \ar[r]^{\id \otimes f} \ar[d] & {}^*\!A \otimes B \ar[r]^{\id \otimes g} \ar[d] & {}^*\!A \otimes C \ar[r] \ar[d] & 0
\\
& 0 & 0 & 0 &
 }
\ee
Since the tensor product functor is biexact by assumption, this complex has exact rows and columns.

Let $K \xrightarrow{\kappa} {}^*B \otimes B$ be the kernel of ${}^*\!f \otimes g$.
It is shown in the proof of \cite[Lem.\,2.5.1]{Geer:2010} that 
\begin{enumerate}
\item
the morphism 
$(\id \otimes f) \circ \pi_1 + ({}^*\!g \otimes \id) \circ \pi_2 : 
{}^*B \otimes A \,\oplus\, {}^*C \otimes B \to {}^*B \otimes B$, where $\pi_{1,2}$ are the projections on the first and second direct summand, factors through $K$ as $\kappa \circ u$ with $u : {}^*B \otimes A \,\oplus\, {}^*C \otimes B \to K$ surjective.
\item
there are (necessarily unique) morphisms
$\alpha : K \to {}^*\!A \otimes A$ and $\gamma : K \to {}^*C \otimes C$ such that
\be\label{eq:dinat-Gr_aux1}
	(\id \otimes f) \circ \alpha ~=~ ({}^*\!f \otimes \id) \circ \kappa
	\quad , \quad
	({}^*\!g \otimes \id) \circ \gamma ~=~ (\id \otimes g) \circ \kappa \ .	
\ee
\end{enumerate}
Write $\hat b := (\id_{{}^*\!B} \otimes b) \circ \widetilde\coev_B$. Note that
\be
	({}^*\!f \otimes g) \circ \hat b
	~=~ (\id \otimes (g \circ b \circ f)) \circ \widetilde\coev_A
	~=~ 0 \ ,
\ee
where in the last step we used the commuting diagram \eqref{eq:dinat-Gr_diag}. Thus $\hat b : \one \to {}^*B \otimes B$ factors through $K$, 
$\hat b = \big[ \one \xrightarrow{\tilde b} K \xrightarrow{\kappa} {}^*B \otimes B \big]$. We have
\begin{align}
	& (id \otimes f) \circ \alpha \circ \tilde b
	\overset{\eqref{eq:dinat-Gr_aux1}}= 
	({}^*\!f \otimes \id) \circ \kappa \circ \tilde b
	= 
	({}^*\!f \otimes \id) \circ (\id \otimes b) \circ \widetilde\coev_B
	\nonumber \\
	&=
	(\id \otimes (b\circ f)) \circ \widetilde\coev_A
	\overset{\eqref{eq:dinat-Gr_diag}}= 
	(id \otimes f) \circ (\id \otimes a) \circ \widetilde\coev_A \ .
\end{align}
Since $id \otimes f$ is injective, we get $\alpha \circ \tilde b = (\id \otimes a) \circ \widetilde\coev_A$. Along similar lines 
	-- using injectivity of ${}^*\!g \otimes \id$ --
one shows that $\gamma \circ \tilde b = (\id \otimes c) \circ \widetilde\coev_C$.
Define the morphism $\Gamma :\one \to X$ as
\be
	\Gamma ~:=~ \eta_A \circ \alpha \circ \tilde b ~+~ \eta_C \circ \gamma \circ \tilde b \ .
\ee
By the above calculation we see that $\Gamma$ is equal to the right hand side of \eqref{eq:dinat-Gr_rel}. To see that $\Gamma$ it also equal to the left hand side of \eqref{eq:dinat-Gr_rel}, we make use of the surjection $p : P \to \one$, where $P \in \Mc$ is projective. We will now show that $\Gamma \circ p = \eta_B \circ \hat b \circ p$, which completes the proof.

Since by point 1 above, $u: {}^*B \otimes A \,\oplus\, {}^*C \otimes B \to K$ is surjective (and since $P$ is projective), we can find $s$ such that the left square in the following commuting diagram commutes:
\be\label{eq:dinat-Gr_aux2}
\xymatrix{
P \ar[rr]^s \ar[d]^p && {}^*B \otimes A \,\oplus\, {}^*C \otimes B \ar[dl]^u \ar[d]^{(\id \otimes f) \circ \pi_1 + ({}^*\!g \otimes \id) \circ \pi_2}
\\
\one \ar[r]^{\tilde b} \ar@/_1pc/[rr]_{\hat b} & K \ar[r]^\kappa & {}^*B \otimes B
}
\ee
Now compute
\begin{align}
	(\id \otimes f) \circ \alpha \circ \tilde b \circ p
	&\underset{\eqref{eq:dinat-Gr_aux2}}{\overset{\eqref{eq:dinat-Gr_aux1}}=}
	({}^*\!f \otimes \id) \circ \kappa \circ u \circ s
	\overset{\text{(*)}}=
	({}^*\!f \otimes \id) \circ (id \otimes f) \circ \pi_1 \circ s
	\nonumber\\
	&=
	(id \otimes f) \circ ({}^*\!f \otimes \id) \circ \pi_1 \circ s \ ,
\end{align}
where in (*) we used the commuting diagram \eqref{eq:dinat-Gr_aux2} and the fact that ${}^*f \circ {}^*g = 0$. Since $id \otimes f$ is injective, we can conclude that $\alpha \circ \tilde b \circ p = ({}^*\!f \otimes \id) \circ \pi_1 \circ s$. Hence
\be\label{eq:dinat-Gr_aux3}
	\eta_A \circ \alpha \circ \tilde b \circ p
	~=~ \eta_A \circ ({}^*\!f \otimes \id) \circ \pi_1 \circ s
	\overset{\text{dinat.}}= \eta_B \circ (\id \otimes f) \circ \pi_1 \circ s \ .
\ee
Similarly one can show $\gamma \circ \tilde b \circ p = (\id \otimes g) \circ \pi_2 \circ s$ and
\be\label{eq:dinat-Gr_aux4}
	\eta_C \circ \gamma \circ \tilde b \circ p
	~=~ \eta_B \circ ({}^*g \otimes \id) \circ \pi_2 \circ s \ .
\ee
Combining \eqref{eq:dinat-Gr_aux3} and \eqref{eq:dinat-Gr_aux4} we finally find
\be
	\Gamma \circ p
	~=~
	\eta_B \circ (\id \otimes f) \circ \pi_1 \circ s
	+
	\eta_B \circ ({}^*g \otimes \id) \circ \pi_2 \circ s
	\overset{\eqref{eq:dinat-Gr_aux2}}=
	\eta_B \circ \hat b \circ p \ .
\ee
\end{proof}

\begin{proof}[Proof of Corollary~\ref{cor:dinat-jac-0}]
The proof of Part 1 follows that of \cite[Cor.\,2.5.2]{Geer:2010}.
	We give the details for the first equality, the second one is shown analogously using Remark~\ref{rem:all-endo-are-id}\,(1).

Let $A = \bigoplus_m A_m$ be a decomposition of $A$ into indecomposable objects $A_m \in \Cc$, and write 
	$j_m : A_m \to A$ 
and $p_m : A \to A_m$ for the corresponding embedding and projection maps. 
Writing $\id_A = \sum_m j_m \circ p_m$ and applying dinaturality of $\eta$ to $j_m$, we have
\begin{align}
\eta_A \circ (\id \otimes f) \circ \widetilde\coev_A
&~=~
\sum_m \eta_A \circ (\id \otimes (j_m \circ p_m \circ f)) \circ \widetilde\coev_A
\nonumber \\
&~=~
\sum_m \eta_{A_m} \circ (\id \otimes (p_m \circ f \circ j_m)) \circ \widetilde\coev_{A_m} \ .
\end{align}
Now for all simple $U \in \Cc$ and $u : A_m \to U$ we have $u \circ p_m \circ f \circ j_m = 0$ by assumption on $f$ (as $u \circ p_m : A \to U$). It is therefore enough to prove the corollary for indecomposable $A$. 

Let $A$ be indecomposable. By Fitting's Lemma, every element of $\End(A)$ is either invertible or nilpotent. By assumption, $f$ cannot be invertible, hence it is nilpotent, say $f^n = 0$. Let $K = \ker(f)$ and consider the commutative diagram
\be
\xymatrix{
 0 \ar[r] & K \ar[r]^\kappa \ar[d]^0 & A \ar[r]^\gamma \ar[d]^f & C \ar[r] \ar@{-->}[d]^{\exists!\,c} & 0
\\
 0 \ar[r] & K \ar[r]^\kappa & A \ar[r]^\gamma & C \ar[r] & 0
 }
\ee
with exact rows,
i.e.\ $(C,\gamma)$ 
is the cokernel of the kernel of $f$ aka.\ the image of $f$.
	The map~$c$ exists and is unique by the universal property of the cokernel.
By Lemma~\ref{lem:dinat-Gr} we have
\be\label{eq:induction-step-aux}
\eta_A \circ (\id \otimes f) \circ \widetilde\coev_A
~=~
0 + 	\eta_C \circ (\id \otimes c) \circ \widetilde\coev_C \ .
\ee

We proceed by induction on the degree $n$ of nilpotency. For $n=1$ the claim of part 1 of the corollary is clear. Suppose now the claim holds for all maps whose degree of nilpotency is less or equal to $n-1$.
Since $f \circ f^{n-1} = 0$, there is 
$h : A \to K$ such that $f^{n-1} = \kappa \circ h$. But then 
$c^{n-1} \circ \gamma = 
\gamma \circ f^{n-1} = \gamma \circ \kappa \circ h = 0$, and since $\gamma$ is surjective, $c^{n-1} = 0$. 
By our induction assumption, \eqref{eq:induction-step-aux} is zero.

\smallskip

For part 2, consider the dinatural transformation $\xi$ from  $(-) \otimes (-)^*$ to $B^* \otimes B$ with
\be
	\xi_A ~=~ 
	\raisebox{-0.5\height}{\setlength{\unitlength}{.75pt}
  \begin{picture}(76,135)
   \put(0,0){\scalebox{.75}{\includegraphics{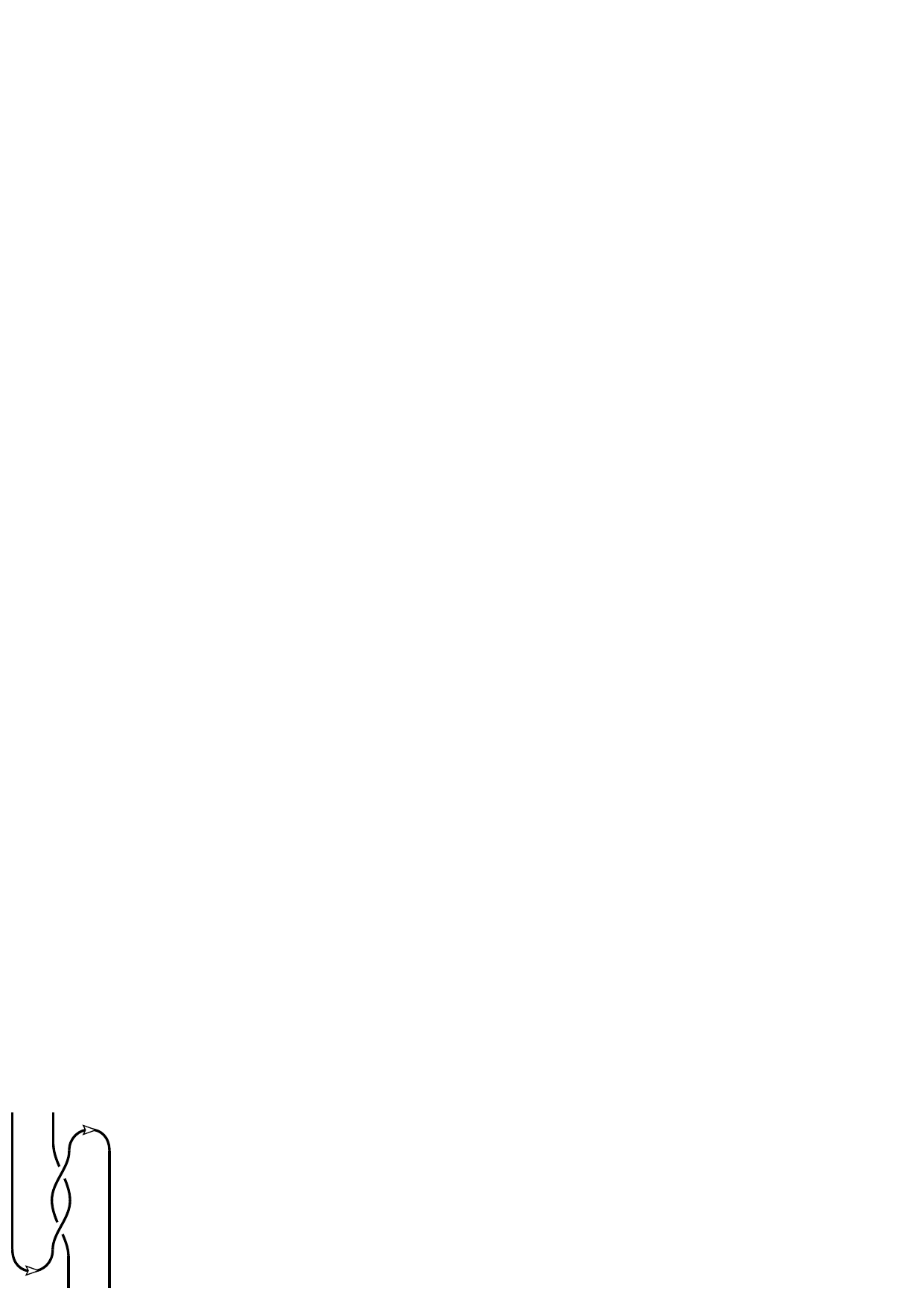}}}
   \put(0,0){
     \put(0,0){
     \put (40,0) {\scriptsize$ A $}
     \put (67,0) {\scriptsize$ A^* $}
     \put (4,128) {\scriptsize$ B^* $}
     \put (30,128) {\scriptsize$ B $}
     }\setlength{\unitlength}{1pt}}
  \end{picture}}
  \qquad .
\ee
To obtain \eqref{eq:f-loop-zero}, set $H := \xi_A \circ (f \otimes \id_{A^*}) \circ \coev_A$ and
use the second equality in part 1 to see $H=0$.
Then 
use $\widetilde\ev_B$ to turn $H : \one \to B^* \otimes B$ into an endomorphism of $B$, and after using the naturality of the braiding we get~\eqref{eq:f-loop-zero}.
\end{proof}

\section{Proofs of Lemma~\ref{lem:tau-R-welldef} and Proposition~\ref{prop:HigA-HigAmod}}\label{app:ReyHigideal}

For this appendix, we fix
\begin{itemize}
\item $\Ac$: a finite abelian category over	$k$,
\item $G$: a projective generator of $\Ac$,
\item $E = \End(G)$ the $k$-algebra of endomorphisms of $G$.
\end{itemize}

Write $\Id_{\Proj(\Ac)}$ for the identity functor on the projective ideal of $\Ac$, and $\End(\Id_{\Proj(\Ac)})$ for
the algebra of
 its natural endomorphisms.
The following lemma is standard.

\begin{lemma}\label{lem:nat-end-via-proj}
The map $\End(\Id_\Ac) \to \End(\Id_{\Proj(\Ac)})$, $\eta\mapsto\eta\raisebox{-.3em}{$|_{\Proj(\Ac)}$}$ 
is a $k$-algebra isomorphism.
\end{lemma}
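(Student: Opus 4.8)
The plan is to establish the isomorphism in two directions: restriction from $\Ac$ to $\Proj(\Ac)$, and extension back. First I would check that the restriction map $\eta \mapsto \eta|_{\Proj(\Ac)}$ is a well-defined $k$-algebra homomorphism; this is immediate, since restricting a natural transformation to a full subcategory is again natural, and composition and the $k$-linear structure are respected componentwise. The content is in constructing an inverse.

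The key step is the following: given $\theta \in \End(\Id_{\Proj(\Ac)})$, I want to produce a unique $\eta \in \End(\Id_\Ac)$ with $\eta|_{\Proj(\Ac)} = \theta$. For an arbitrary object $X \in \Ac$, choose a projective presentation $P_1 \xrightarrow{d} P_0 \xrightarrow{p} X \to 0$ (which exists because $\Ac$ is finite, hence has enough projectives). By naturality of $\theta$ applied to $d$, we have $\theta_{P_0} \circ d = d \circ \theta_{P_1}$, so $\theta_{P_0}$ descends along the cokernel $p$ to a unique morphism $\eta_X : X \to X$ with $\eta_X \circ p = p \circ \theta_{P_0}$. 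I would then verify that $\eta_X$ is independent of the chosen presentation (compare two presentations via a common refinement or via a morphism of presentations lifting $\id_X$, using naturality of $\theta$ on projectives) and that the resulting family $(\eta_X)_{X \in \Ac}$ is natural in $X$ (given $f : X \to Y$, lift it to a map of projective presentations and chase the diagram). Naturality together with the defining property shows that $\eta$ is a $k$-algebra homomorphism in the endomorphism algebra of $\Id_\Ac$, and that $\eta|_{\Proj(\Ac)} = \theta$ because on a projective $P$ one may take the presentation $0 \to P \xrightarrow{\id} P \to 0$.

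Finally I would argue the two constructions are mutually inverse: restricting the extended $\eta$ back to $\Proj(\Ac)$ gives $\theta$ by the last remark, and extending a restricted $\eta|_{\Proj(\Ac)}$ recovers $\eta$ because for any $X$ with presentation $P_0 \xrightarrow{p} X \to 0$ the original $\eta$ already satisfies $\eta_X \circ p = p \circ \eta_{P_0}$ by its own naturality, so it coincides with the uniquely determined extension.

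The main obstacle is the well-definedness and naturality bookkeeping for $\eta_X$: one must be careful that the descent along the cokernel is canonical and compatible with morphisms. This is routine diagram-chasing but is where all the actual verification lies; alternatively, one can shortcut the entire argument by invoking the equivalence $\Ac \simeq \rmod{E}$ of \eqref{eq:C-modE-equiv}, under which $\Proj(\Ac)$ corresponds to $\Proj(\rmod{E}) = \mathrm{add}(E_E)$, and then the statement reduces to the standard fact that $Z(E) \cong \End(\Id_{\rmod E}) \cong \End(\Id_{\mathrm{add}(E_E)})$, the last isomorphism holding because $E_E$ already generates $\mathrm{add}(E_E)$ and a natural endotransformation is determined by its value on a generator.
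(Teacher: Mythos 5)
Your proposal is correct and follows essentially the same route as the paper: the restriction map is obviously an algebra homomorphism, and the inverse is built by descending $\theta_{P_0}$ along the cokernel of a projective presentation (using naturality of $\theta$ on the map $P_1\to P_0$ to see that $p\circ\theta_{P_0}\circ d=0$), followed by the same well-definedness and naturality checks. The paper phrases the remaining direction as injectivity of restriction rather than as "extension of restriction recovers $\eta$", but these are the same observation; your alternative shortcut via $\Ac\simeq\rmod{E}$ is also fine but not what the paper does.
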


\begin{proof}
Since $\eta \in \End(\Id_\Ac)$ is zero iff its values on all projective objects are zero,
 the map is injective. For surjectivity, let $A \in \Ac$ and pick an exact sequence $Q \xrightarrow{x} P \xrightarrow{y} A \to 0$ with $P,Q \in \Proj(\Ac)$. Consider the diagram
\be
\xymatrix{
Q \ar[r]^x \ar[d]^{\eta_Q} & P \ar[r]^y \ar[d]^{\eta_P} & A \ar[r] \ar@{-->}[d]^{\exists!\eta_A} & 0
\\
Q \ar[r]^x  & P \ar[r]^y & A \ar[r] & 0
}
\ee 
The left square commutes by naturality of $\eta$ on projectives. 
The dashed morphism $\eta_A$ exists by the universal property of the cokernel $(A,y)$ of $x : Q \to P$. 
It is straightforward to verify that $\eta_A$ is independent of the choice of $P,Q,x,y$ and that the family $\{\eta_A\}_{A \in \Ac}$ is a 
 natural endomorphism of $\Id_\Ac$ extending $\eta$ from $\Proj(\Ac)$ to all of $\Ac$.
\end{proof}

\begin{proof}[Proof of Lemma~\ref{lem:tau-R-welldef}]
Let $y \in \End(R)$ and 
define the family $(\eta_P)_{P \in \Proj(\Ac)}$ as in \eqref{eq:tau_R-defn}:
\be
	\eta_P := \sum_{(\gamma_{PR})} \gamma_{PR}' \circ y \circ \gamma_{PR}'' \ .
\ee
We will show that $\eta \in \End(\Id_{\Proj(\Ac)})$. The statement of the lemma then follows from Lemma~\ref{lem:nat-end-via-proj}.

Let $P,Q \in \Proj(\Ac)$ and $f : P\to Q$ be given. 
We need the following auxiliary result. Let $X$ and $Y$ in $\Ac(R,Q) \otimes \Ac(P,R)$ be defined as
\be\label{eq:tau-R-welldef-aux1}
	X = \sum_{(\gamma_{QR})} \gamma_{QR}' \otimes (\gamma_{QR}'' \circ f)
	\quad , \quad
	Y = \sum_{(\gamma_{PR})} (f \circ \gamma_{PR}') \otimes \gamma_{PR}''  \ .
\ee
Write $X = \sum_{(X)} X' \otimes X''$ and dito for $Y$.
Then for all $z : Q \to R$ we have
\begin{align}
\sum_{(X)} (z,X')_{QR} \, X''
&=
\sum_{(\gamma_{QR})}  (z, \gamma_{QR}')_{QR} \, \gamma_{QR}'' \circ f
\overset{\eqref{eq:coparing-gamma-properties}}=
z \circ f \ ,
\nonumber\\
\sum_{(Y)} (z,Y')_{QR} \, Y''
&=
\sum_{(\gamma_{PR})} (z,f \circ \gamma_{PR}')_{QR} \, \gamma_{PR}''
\overset{\eqref{eq:paring-projective}}=
\sum_{(\gamma_{PR})} (z \circ f , \gamma_{PR}')_{PR} \, \gamma_{PR}''
\overset{\eqref{eq:coparing-gamma-properties}}=
z \circ f \ .
\end{align}
By non-degeneracy of the pairings we obtain $X=Y$.
Using this identity, we get
\be
f \circ \eta_P = \sum_{(Y)} Y' \circ y \circ Y''
= \sum_{(X)} X' \circ y \circ X'' = \eta_Q \circ f \ ,
\ee
in other words, the family $(\eta_P)_{P \in \Proj(\Ac)}$ is natural in $\Proj(\Ac)$.
\end{proof}

To prepare the proof of Proposition~\ref{prop:HigA-HigAmod}, we need some further properties of the maps $\tau_R$ in \eqref{eq:tau_R-defn}, and we need a  description of Calabi-Yau trace maps in terms of their restriction to indecomposable projectives.

\begin{lemma}\label{lem:tau-and-splittings}
Let $R,S \in \Proj(\Ac)$ and let $a : S \to R$, $b: R \to S$. 
Then $\tau_R(a \circ b) = \tau_S(b \circ a)$.
\end{lemma}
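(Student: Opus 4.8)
The plan is to prove $\tau_R(a\circ b)=\tau_S(b\circ a)$ as elements of $\End(\Id_\Ac)$ by establishing the equality object-wise on all projective objects; by Lemma~\ref{lem:nat-end-via-proj} a natural endomorphism of $\Id_\Ac$ is determined by its restriction to $\Proj(\Ac)$, so this suffices. Fix $P\in\Proj(\Ac)$. Unwinding the definition \eqref{eq:tau_R-defn}, what has to be shown is the identity in $\End(P)$
\[
\sum_{(\gamma_{PR})}\gamma_{PR}'\circ a\circ b\circ\gamma_{PR}''
~=~
\sum_{(\gamma_{PS})}\gamma_{PS}'\circ b\circ a\circ\gamma_{PS}'' \ ,
\]
where $\gamma_{PR}\in\Ac(R,P)\otimes\Ac(P,R)$ and $\gamma_{PS}\in\Ac(S,P)\otimes\Ac(P,S)$ are the copairings of the pairings \eqref{eq:coparing-gamma-properties}.

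First I would rewrite the left-hand side by resolving the morphism $b\circ\gamma_{PR}''\in\Ac(P,S)$ through the copairing $\gamma_{PS}$: by the first identity in \eqref{eq:coparing-gamma-properties},
$b\circ\gamma_{PR}''=\sum_{(\gamma_{PS})}\tCY_S(b\circ\gamma_{PR}''\circ\gamma_{PS}')\,\gamma_{PS}''$. Substituting this into the left-hand side and using cyclicity of the Calabi--Yau trace to turn $\tCY_S(b\circ\gamma_{PR}''\circ\gamma_{PS}')$ into $\tCY_R(\gamma_{PR}''\circ\gamma_{PS}'\circ b)$ produces a double sum over $(\gamma_{PR})$ and $(\gamma_{PS})$ whose summand is $\tCY_R(\gamma_{PR}''\circ\gamma_{PS}'\circ b)\cdot\gamma_{PR}'\circ a\circ\gamma_{PS}''$.

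Then I would collapse the sum over $(\gamma_{PR})$: the morphism $\gamma_{PS}'\circ b$ lies in $\Ac(R,P)$, so the second identity in \eqref{eq:coparing-gamma-properties} gives $\sum_{(\gamma_{PR})}\gamma_{PR}'\,\tCY_R(\gamma_{PR}''\circ\gamma_{PS}'\circ b)=\gamma_{PS}'\circ b$. What remains is precisely $\sum_{(\gamma_{PS})}\gamma_{PS}'\circ b\circ a\circ\gamma_{PS}''$, the right-hand side, completing the proof; no separate case analysis is needed since the roles of $R$ and $S$ are interchanged by swapping $a$ and $b$ (one could equally resolve $\gamma_{PR}'\circ a\in\Ac(S,P)$ through $\gamma_{PS}$ first). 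The only ingredients are the defining properties of the copairings and cyclicity of $\tCY$, so there is no genuine obstacle here; the one point requiring care — and the only place an error could creep in — is the bookkeeping of sources and targets, so that each application of the copairing relations and of cyclicity is to a morphism (resp.\ endomorphism) of the correct object.
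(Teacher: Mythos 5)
Your proof is correct and follows essentially the same route as the paper's: both reduce the claim to the exchange identity $\sum_{(\gamma_{PR})}\gamma_{PR}'\otimes(b\circ\gamma_{PR}'')=\sum_{(\gamma_{PS})}(\gamma_{PS}'\circ b)\otimes\gamma_{PS}''$, which you verify by resolving $b\circ\gamma_{PR}''$ through the copairing $\gamma_{PS}$, applying cyclicity, and re-collapsing the $(\gamma_{PR})$-sum, whereas the paper pairs both sides against a test morphism and invokes non-degeneracy. The source/target bookkeeping in your two applications of \eqref{eq:coparing-gamma-properties} and of cyclicity checks out, so there is no gap.
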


\begin{proof}
By an argument similar to that showing $X=Y$ in \eqref{eq:tau-R-welldef-aux1} above, one obtains that for all $P,R,S \in \Proj(\Ac)$ and 
$b : R \to S$,
\be\label{eq:tau-and-splittings-aux1}
	\sum_{(\gamma_{PR})} \gamma_{PR}' \otimes (	b
	 \circ \gamma_{PR}'')
	~=~
	\sum_{(\gamma_{PS})} (\gamma_{PS}' \circ 
	b) \otimes \gamma_{PS}''  \ .
\ee
Then
\be
	\tau_R(a \circ b)_P
	\overset{\eqref{eq:tau_R-defn}}=
	\sum_{(\gamma_{PR})} \gamma_{PR}' \circ a \circ (b \circ \gamma_{PR}'')
	\overset{\eqref{eq:tau-and-splittings-aux1}}=
	\sum_{(\gamma_{PS})}
	 (\gamma_{PS}' \circ b) \circ a \circ \gamma_{PS}''
	\overset{\eqref{eq:tau_R-defn}}=
	\tau_S(b \circ a)_P \ ,
\ee
where we used~\eqref{eq:tau-and-splittings-aux1} with both sides composed with $a\otimes \id$.
\end{proof}

\begin{corollary}\label{cor:HigA-from-projgen}
$\Hig(\Ac) = \mathrm{im}(\tau_G)$.
\end{corollary}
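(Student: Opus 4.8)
The plan is as follows. The inclusion $\mathrm{im}(\tau_G)\subseteq\Hig(\Ac)$ is immediate from the definition \eqref{eq:HA-def-2} of $\Hig(\Ac)$ as the joint image of the maps $\tau_R$ over all $R\in\Proj(\Ac)$, since $G$ itself is projective. So the real content is the reverse inclusion: every element of the form $\tau_R(y)$ with $R\in\Proj(\Ac)$ and $y\in\End(R)$ must be shown to lie in $\mathrm{im}(\tau_G)$.

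First I would reduce the object $R$ to a sum of copies of $G$. Since $G$ is a projective generator, $R$ is a direct summand of $G^{\oplus n}$ for some $n$, so there are morphisms $i:R\to G^{\oplus n}$ and $p:G^{\oplus n}\to R$ with $p\circ i=\id_R$. Writing $y=(p\circ i)\circ y=p\circ(i\circ y)$ and applying Lemma~\ref{lem:tau-and-splittings} with $a=p:G^{\oplus n}\to R$ and $b=i\circ y:R\to G^{\oplus n}$ gives $\tau_R(y)=\tau_{G^{\oplus n}}(i\circ y\circ p)$. Hence it suffices to prove that $\mathrm{im}(\tau_{G^{\oplus n}})\subseteq\mathrm{im}(\tau_G)$.

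Second, I would collapse $G^{\oplus n}$ to $G$ using its canonical biproduct structure. Let $\iota_j:G\to G^{\oplus n}$ and $\pi_j:G^{\oplus n}\to G$ be the canonical inclusions and projections, $j=1,\dots,n$, so that $\pi_j\circ\iota_l=\delta_{jl}\,\id_G$ and $\sum_{j=1}^n\iota_j\circ\pi_j=\id_{G^{\oplus n}}$. For $w\in\End(G^{\oplus n})$ set $w_{jl}:=\pi_j\circ w\circ\iota_l\in\End(G)$, so $w=\sum_{j,l}\iota_j\circ w_{jl}\circ\pi_l$. Using $k$-linearity of $\tau_{G^{\oplus n}}$ together with Lemma~\ref{lem:tau-and-splittings} applied with $a=\iota_j:G\to G^{\oplus n}$ and $b=w_{jl}\circ\pi_l:G^{\oplus n}\to G$ — for which $b\circ a=w_{jl}\circ\pi_l\circ\iota_j=\delta_{lj}\,w_{jl}$ — each summand collapses: $\tau_{G^{\oplus n}}(\iota_j\circ w_{jl}\circ\pi_l)=\tau_G(\delta_{lj}\,w_{jl})=\delta_{lj}\,\tau_G(w_{jj})$. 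Therefore $\tau_{G^{\oplus n}}(w)=\tau_G\big(\sum_{j=1}^n\pi_j\circ w\circ\iota_j\big)\in\mathrm{im}(\tau_G)$. Combining the two reductions, $\tau_R(y)=\tau_{G^{\oplus n}}(i\circ y\circ p)\in\mathrm{im}(\tau_G)$, which gives $\Hig(\Ac)\subseteq\mathrm{im}(\tau_G)$ and hence equality.

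The argument is essentially bookkeeping with splittings, so I do not expect a serious obstacle. The only point requiring care is that one cannot apply Lemma~\ref{lem:tau-and-splittings} directly to pass from $R$ to $G$, because $G$ need not be a direct summand of $R$; this is precisely why the detour through $G^{\oplus n}$, exploiting the additivity of $\tau_{G^{\oplus n}}$ and the diagonal collapse above, is needed.
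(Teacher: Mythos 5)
Your proposal is correct and follows essentially the same route as the paper: both arguments rest on the splitting $p\circ i=\id_R$ through $G^{\oplus n}$, $k$-linearity of $\tau$, and Lemma~\ref{lem:tau-and-splittings}. The paper merely compresses your two reductions into one by applying the lemma directly to the component maps $p_j=p\circ\iota_j:G\to R$ and $i_j=\pi_j\circ i:R\to G$, arriving at the same formula $\tau_R(y)=\tau_G\bigl(\sum_j i_j\circ y\circ p_j\bigr)$.
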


\begin{proof}
Let $R \in \Proj(\Ac)$ and $h \in \End(R)$ be given. Pick an $m>0$ such that there is a splitting $p : G^{\oplus m} \to R$, $i : R \to G^{\oplus m}$, $p \circ i = \id_R$. Let $p_j : G \to R$, $i_j : R \to G$ be the components maps, that is, $\id_R = \sum_{j=1}^m p_j \circ i_j$. Then
\be
	\tau_R(h)
	=
	\sum_{j=1}^m \tau_R(p_j \circ i_j \circ h)
	\overset{\text{Lem.\,\ref{lem:tau-and-splittings}}}=
	\sum_{j=1}^m \tau_G(i_j \circ h \circ p_j) \ .
\ee
Thus $\tau_R(h)$ equals the image of $\sum_{j=1}^m i_j \circ h \circ p_j \in \End(G)$ under $\tau_G$.
\end{proof}

For $T \in \prod_{U \in \Irr(\Ac)} (\End(P_U)\to k)$ denote by $T_{U}$ the component of $T$ in $\End(P_U)\to k$. Write
\begin{align}\label{eq:Tc}
\Tc ~=~
\Big\{ T \in \hspace{-.5em}\prod_{U \in \Irr(\Ac)} \hspace{-.5em} (\End(P_U)\to k) ~\Big|~
&\text{$T$ makes the full subcategory
 with}
\nonumber \\[-1.5em]
& \hspace{2em}
 \text{objects $P_U$, $U \in \Irr(\Ac)$, Calabi-Yau } \Big\} \ .
\end{align}
In other words, $T \in \Tc$ iff for all $V,W \in \Irr(\Ac)$, the pairings $(-,-)_{P_VP_W} : \Ac(P_V,P_W) \times \Ac(P_W,P_V) \to k$, $(f,g)_{P_VP_W} = T_W(f \circ g)$ are non-degenerate and symmetric in the sense that $(f,g)_{P_VP_W} = (g,f)_{P_WP_V}$.

	Recall the notation $\mathrm{CY}(\Proj(\Ac))$ for the set of trace maps $(\tCY_P)_{P \in \Proj(\Ac)}$ which turn $\Proj(\Ac)$ into a Calabi-Yau category (Definition~\ref{def:CY-cat}). 

\begin{lemma}\label{lem:CY-structures-vs-setT}
The map
\be\label{eq:CY-structures-vs-setT}
	\mathrm{CY}(\Proj(\Ac)) \to \Tc
	\quad , \quad
	(\tCY_P)_{P \in \Proj(\Ac)} \mapsto (\tCY_{P_U})_{U \in \Irr(\Ac)} 
\ee
is a $k$-linear isomorphism.
\end{lemma}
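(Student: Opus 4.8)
\textbf{Proof plan for Lemma~\ref{lem:CY-structures-vs-setT}.}
The plan is to exhibit an explicit two-sided inverse of the restriction map \eqref{eq:CY-structures-vs-setT}, namely the assignment that reconstructs a full family of trace maps $(\tCY_P)_{P \in \Proj(\Ac)}$ from its values on the indecomposable projectives $P_U$, $U \in \Irr(\Ac)$. First I would check that the map \eqref{eq:CY-structures-vs-setT} lands in $\Tc$: if $(\tCY_P)_{P\in\Proj(\Ac)}\in\mathrm{CY}(\Proj(\Ac))$, then cyclicity of $\tCY$ already gives the symmetry $(f,g)_{P_VP_W}=(g,f)_{P_WP_V}$, and the non-degeneracy requirement in Definition~\ref{def:CY-cat} applied to $M=P_V$, $N=P_W$ gives non-degeneracy of the pairings between the $P_U$'s; hence the restriction indeed defines an element of $\Tc$. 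Linearity is clear since the whole construction is $k$-linear in the trace maps.

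For the inverse, fix $T\in\Tc$. Any $P\in\Proj(\Ac)$ decomposes as $P\cong\bigoplus_{U\in\Irr(\Ac)}P_U^{\oplus m_U}$; choose splitting maps $\iota^{U}_a:P_U\to P$ and $\pi^{U}_a:P\to P_U$ with $\pi^{U}_a\circ\iota^{V}_b=\delta_{U,V}\delta_{a,b}\,\id_{P_U}$ and $\sum_{U,a}\iota^{U}_a\circ\pi^{U}_a=\id_P$, and define, for $h\in\End(P)$,
\be
	\tCY_P(h) ~:=~ \sum_{U\in\Irr(\Ac)}\sum_{a} T_U\bigl(\pi^{U}_a\circ h\circ\iota^{U}_a\bigr)\ .
\ee
The first step is to show this is independent of the chosen decomposition: given two systems of splittings, the change-of-basis isomorphism of $\bigoplus_U\Ac(P_U,P)$ combined with the cyclicity property $T_U(f\circ g)=T_V(g\circ f)$ for $f:P_V\to P_U$, $g:P_U\to P_V$ (which is exactly the symmetry condition packaged into $\Tc$, via the one-object pairings) shows the two sums agree; more conceptually, one can recognise $\tCY_P(h)$ as $\sum_{U}\tau$-type contraction and invoke an argument in the spirit of Lemma~\ref{lem:tr-indep-choice}. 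Next I would verify cyclicity of $(\tCY_P)_{P\in\Proj(\Ac)}$: for $f:P\to Q$, $g:Q\to P$, expand both $\id_P$ and $\id_Q$ in splittings, rewrite $\tCY_Q(f\circ g)$ and $\tCY_P(g\circ f)$ as double sums over the $P_U$-components, and reduce to the cyclicity of $T$ on the $P_U$'s; the bookkeeping is routine once the splittings are inserted. Then non-degeneracy of the pairings $(-,-):\Ac(P,Q)\times\Ac(Q,P)\to k$ follows because $\Ac(P,Q)\cong\bigoplus_{U,V}\Ac(P_U,P_V)^{\oplus(\dots)}$ decomposes compatibly with the pairing into the (non-degenerate, by $T\in\Tc$) pairings between the $\Ac(P_U,P_V)$, so a block-diagonal non-degenerate pairing is non-degenerate. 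Hence $(\tCY_P)_{P\in\Proj(\Ac)}\in\mathrm{CY}(\Proj(\Ac))$.

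Finally I would check the two composites are identities. Restricting the reconstructed family back to the $P_U$ (take $P=P_U$, trivial splitting) gives $\tCY_{P_U}=T_U$, so \eqref{eq:CY-structures-vs-setT} followed by the reconstruction is the identity on $\Tc$. Conversely, starting from $(\tCY_P)_{P\in\Proj(\Ac)}\in\mathrm{CY}(\Proj(\Ac))$, for any $P$ and splittings we have by cyclicity $\sum_{U,a}\tCY_{P_U}(\pi^{U}_a\circ h\circ\iota^{U}_a)=\sum_{U,a}\tCY_{P}(\iota^{U}_a\circ\pi^{U}_a\circ h)=\tCY_P\bigl(\sum_{U,a}\iota^{U}_a\circ\pi^{U}_a\circ h\bigr)=\tCY_P(h)$, so the reconstruction recovers the original family. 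This shows \eqref{eq:CY-structures-vs-setT} is a $k$-linear bijection. The only mildly delicate point—and the step I expect to take the most care—is the well-definedness (independence of splittings) of $\tCY_P$, since it is what forces the symmetry condition in the definition of $\Tc$ to be used rather than just non-degeneracy; everything else is a direct unwinding of the Calabi-Yau axioms across a direct-sum decomposition.
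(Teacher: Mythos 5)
Your proposal is correct and follows essentially the same route as the paper: both reduce a trace on an arbitrary projective $P$ to the formula $\tCY_P(h)=\sum_{U,\alpha}\tCY_{P_U}(p_{U\alpha}\circ h\circ j_{U\alpha})$ via a choice of splittings, which gives injectivity (your second composite) and, read in reverse, the construction of the inverse, with symmetry and non-degeneracy checked blockwise exactly as in the paper's surjectivity argument. The point you flag as delicate (independence of the splittings, forced by the symmetry condition in $\Tc$) is indeed the only nontrivial verification, and the paper likewise leaves it as a routine check.
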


\begin{proof}
	~\\
	{\em Injectivity:}
Let $P \in \Proj(\Ac)$ be given. Write $P = \bigoplus_{U \in \Irr(\Ac)} P_U^{\oplus n_U}$ and let $j_{U\alpha} : P_U \to P$, 
$p_{U\alpha} : P \to P_U$, 
$U \in \Irr(\Ac)$ and $\alpha = 1,\dots,n_U$, be the embedding and projection maps of the individual summands. By cyclicity
and linearity of $\tCY_{P_U}$, we have the decomposition
\be\label{eq:CY-structures-vs-setT_aux1}
	\tCY_P(f)
	~= \sum_{U \in \Irr(\Ac)} \sum_{\alpha=1}^{n_U}
	\tCY_{P_U}(p_{U\alpha} \circ f \circ j_{U\alpha}) \ ,
\ee
where we used $\sum_{U,\alpha} e_{U\alpha} = \id_P$ for the primitive idempotents $e_{U\alpha} =  j_{U\alpha} \circ p_{U\alpha}$.
Thus $(\tCY_P)_{P \in \Proj(\Ac)}$ is uniquely determined by the values $(\tCY_{P_U})_{U \in \Irr(\Ac)}$, showing that the map \eqref{eq:CY-structures-vs-setT} is injective.

\medskip
	
	\noindent
	{\em Surjectivity:}
Given $(\tCY_{P_U})_{U \in \Irr(\Ac)} \in \Tc$, we can define all $\tCY_P$ via \eqref{eq:CY-structures-vs-setT_aux1}. One checks that $\tCY_P$ is independent of the choice of embedding and projection maps. It remains to verify that $\tCY_P$ is non-degenerate and symmetric. 
Let $P,Q \in \Proj(\Ac)$ and let $j_{U\alpha}, p_{U\alpha}$ be a realisation of $P$ as a direct sum of indecomposable projectives
as above,
 and $j_{U\alpha}', p_{U\alpha}'$ one of $Q$.
\begin{itemize}\setlength{\leftskip}{-1em}
\item {\em (Symmetry)} Let $f : P \to Q$, $g: Q \to P$. 
We compute
\begin{align}
	\tCY_Q(f \circ g)
	&~=~
	\sum_{U,V,\alpha,\beta}
		\tCY_{P_U}
	(p_{U\alpha}' \circ f \circ j_{V\beta} \circ p_{V\beta} \circ g \circ j_{U\alpha}') 
	\nonumber \\
	&\overset{\text{sym.}}=
	\sum_{U,V,\alpha,\beta}
	\tCY_{P_V}( p_{V\beta} \circ g \circ j_{U\alpha}' \circ p_{U\alpha}' \circ f \circ j_{V\beta}) 
	~=~
	\tCY_P(g \circ f) \ .
\end{align}
\item
{\em (Non-degeneracy)}  Let $f : P \to Q$ be non-zero.
Then there exist $U,V,\alpha,\beta$ such that $p'_{V\beta} \circ f \circ j_{U\alpha} \neq 0$. By non-degeneracy of the $\tCY_{P_U}$ there is $\tilde g : P_V \to P_U$ such that
\be
0 \neq \tCY_{P_V}(p'_{V\beta} \circ f \circ j_{U\alpha} \circ \tilde g)
\overset{\text{sym.}}=
\tCY_{Q}(f \circ j_{U\alpha} \circ \tilde g \circ p'_{V\beta})
\ee
Thus for $g = j_{U\alpha} \circ \tilde g \circ p'_{V\beta}$ we get $\tCY_Q(f \circ g) \neq 0$.
\end{itemize}
This shows that the map  \eqref{eq:CY-structures-vs-setT} is surjective.
\end{proof}

\begin{proof}[Proof of Proposition~\ref{prop:HigA-HigAmod}]
Recall that $G$ denotes a choice of projective generator of $\Ac$ and $E = \End(G)$.
We will show the following statements, which are equivalent to points 1 and~2 in the proposition
	upon
setting $\Ac = \rmod{A}$ and $G=A_A$ as the right regular representation.
\begin{enumerate}
\item $E$ admits a central form turning it into a symmetric algebra if and only if $\Proj(\Ac)$ admits a Calabi-Yau structure.
\item
If $E$ is symmetric, the map $\xi$ from \eqref{eq:EndId-ZE-iso} restricts to an isomorphism of algebras $\xi : \Hig(\Ac) \to \Hig(E)$.
\end{enumerate}

\noindent
{\em Part 1:} Clearly, if $\tCY \in \mathrm{CY}(\Proj(\Ac))$, then $\tCY_G : E \to k$ turns $E$ into a symmetric algebra
with the central form $\varepsilon = \tCY_G$.

Suppose conversely that $E$ is a symmetric algebra with respect to the central form $\eps : E \to k$. Write $G = \bigoplus_{U \in \Irr(\Ac)} P_U^{\oplus n_U}$ and write $j_{U\alpha} : P_U \to G$, 
$p_{U\alpha} : G \to P_U$,
$\alpha = 1,\dots,n_U$, for the embedding and projection maps of the individual summands. For $U \in \Irr(\Ac)$ define
\be
	T_U : \End(P_U) \to k
	\quad , \quad
	f \mapsto \eps(j_{U1} \circ f \circ p_{U1}) \ ,
\ee
i.e.\ we only use the component with $\alpha=1$ for each summand.
We now claim that $(T_U)_{U \in \Irr(\Ac)} \in \Tc$
(recall the definition~\eqref{eq:Tc}). 
By Lemma~\ref{lem:CY-structures-vs-setT} this will prove part 1.

\smallskip

\noindent
{\em Non-degeneracy:} Let $f : P_U \to P_V$ be non-zero. We need to find $g: P_V \to P_U$ such that $T_V(f \circ g) \neq 0$. Since $f$ is non-zero, so is $j_{V1} \circ f \circ p_{U1}$. By non-degeneracy of $\eps$, there is $\tilde g \in E$ such that $\eps(j_{V1} \circ f \circ p_{U1} \circ \tilde g) \neq 0$. Set $g := p_{U1} \circ \tilde g \circ j_{V1}$ and compute
\begin{align}
	T_V(f \circ g)
	&~=~
	\eps(j_{V1} \circ f \circ p_{U1} \circ \tilde g \circ j_{V1} \circ p_{V1})
	\nonumber \\
	&\overset{\text{cycl.}}=
	\eps(j_{V1} \circ p_{V1} \circ j_{V1} \circ f \circ p_{U1} \circ \tilde g)
	\nonumber \\
	&~=~
	\eps(j_{V1} \circ f \circ p_{U1} \circ \tilde g)
	~\neq~ 0 \ .
\end{align}

\noindent
{\em Symmetry:} Let $f : P_U \to P_V$, $g: P_V \to P_U$. We compute
\begin{align}
	T_V(f \circ g)
	&~=~
	\eps(j_{V1} \circ f \circ g \circ p_{V1})
	~=~
	\eps(j_{V1} \circ f \circ p_{U1} \circ j_{U1} \circ g \circ p_{V1})
	\nonumber \\
	&\overset{\text{cycl.}}=
	\eps(j_{U1} \circ g \circ p_{V1} \circ j_{V1} \circ f \circ p_{U1})
	~=~
	\eps(j_{U1} \circ g \circ f \circ p_{U1})
	\nonumber \\
	&~=~ T_U(g \circ f) \ .
\end{align}

\smallskip

\noindent
{\em Part 2:}
By the assumption that $E$ is symmetric and by part 1, 
$\Proj(\Ac)$ 
is Calabi-Yau via some $\tCY\in \mathrm{CY}(\Proj(\Ac))$. By Proposition~\ref{prop:HigA-indep-tr} (and by a similar argument for $\Hig(E)$), neither $\Hig(\Ac)$ nor $\Hig(E)$ depend on the choice of traces. In particular, we may replace whatever central form $E$ was originally equipped with by $\tCY_G$.

In terms of the central form $\tCY_G$, the map $\tau : E \to Z(E)$ from \eqref{eq:tau-map-A-ZA-def} reads
\be\label{eq:HigA-HigAmod_aux1}
	\tau(f)
	=
	\sum_{(\gamma_{GG})} \gamma_{GG}' \circ f \circ \gamma_{GG}'' 
	\overset{\eqref{eq:tau_R-defn}}= 
	(\tau_G(f))_G\ .
\ee
Using this, we compute
\be
	\xi(\Hig(\Ac))
	\overset{\text{Cor.\,\ref{cor:HigA-from-projgen}}}=
	\xi(\tau_G(E))
	\overset{\eqref{eq:EndId-ZE-iso}}= 
	(\tau_G(E))_G
	\overset{\eqref{eq:HigA-HigAmod_aux1}}= 
	\tau(E)
	= \Hig(E) \ .
\ee
\end{proof}

\newcommand\arxiv[2]      {\href{http://arXiv.org/abs/#1}{#2}}
\newcommand\doi[2]        {\href{http://dx.doi.org/#1}{#2}}
\newcommand\httpurl[2]    {\href{http://#1}{#2}}

\end{document}